\numberwithin{equation}{section}
\newtheorem{theorem}{Theorem}[section]
\newtheorem{lemma}[theorem]{Lemma}
\newtheorem{proposition}[theorem]{Proposition}
\newtheorem{corollary}[theorem]{Corollary}
\newtheorem{conjecture}[theorem]{Conjecture}
\begin{document}
\title[Engel graph]{The Engel graph of almost simple groups}
\author[A. Lucchini]{Andrea Lucchini}
\address{Andrea Lucchini, Dipartimento di Matematica \lq\lq Tullio Levi-Civita\rq\rq,\newline
 University of Padova, Via Trieste 53, 35121 Padova, Italy} 
\email{lucchini@math.unipd.it}
         
\author[P. Spiga]{Pablo Spiga}
\address{Pablo Spiga, Dipartimento di Matematica Pura e Applicata,\newline
 University of Milano-Bicocca, Via Cozzi 55, 20126 Milano, Italy} 
\email{pablo.spiga@unimib.it}
\subjclass[2010]{primary 20F99, 05C25}
\keywords{simple groups; Engel elements}        
	\maketitle

        \begin{abstract}
Given a finite group $G$,  the Engel graph of $G$ is a directed graph encoding  pairs of elements satisfying some Engel word. From the work of Detomi, Lucchini and Nemmi~\cite{DLN}, the strongly connectivity of the Engel graph of an arbitrary group $G$ is reduced to the understanding of the strongly connectivity of the Engel graph of non-abelian simple groups. 

In this paper, we investigate the strongly connectivity of the Engel graph of finite non-abelian simple groups.
          \end{abstract}
          
          \tableofcontents
\section{Introduction}
Let $\omega$ be a word in the free group of rank $2$ and let $G$ be a group. There is a gadget that gives a combinatorial measure of how often the word $\omega$ is satisfied in $G$. This gadget is the directed graph $\Lambda_\omega(G)$ having vertex set $G$ and where $(x,y)$ is an arc in $\Lambda_\omega(G)$ if and only if $\omega(x,y)=1$. There are some strong reasons to actually study only a certain subgraph of $\Lambda_\omega(G)$. Indeed, let 
\begin{align*}
I_{\textrm{right},\omega}(G)&:=\{g\in G\mid \omega(g,x),\,\forall x\in G\},\\
I_{\textrm{left},\omega}(G)&:=\{g\in G\mid \omega(x,g),\,\forall x\in G\},\\
I_\omega(G)&:=I_{\textrm{right},\omega}(G)\cap I_{\textrm{left},\omega}(G).
\end{align*}
The elements in $I_\omega(G)$ are the vertices of $\Lambda_\omega(G)$ which are in-neighbours and out-neighbours to every other vertex. Therefore, for studying connectedness properties, it is important to introduce the subgraph $\Gamma_\omega(G)$ induced by $\Lambda_\omega(G)$ on $G\setminus I_\omega(G)$.

A  graph in this family is the commuting graph of a group, where $\omega(x,y):=[x,y]=x^{-1}y^{-1}xy$ is the commutator word. Clearly, the commuting graph is undirected because the commutator word is symmetric in the variable $x$ and $y$, but in general $\Gamma_\omega(G)$ is directed. This key feature makes it interesting to study the connectivity and the strongly connectivity of $\Gamma_\omega(G)$.  A directed graph is \textit{\textbf{strongly connected}} if, for any two vertices, there exists a directed path from the first to the second.

In this paper we are interested in two directed graphs, both introduced in~\cite{DLN}, which encode some information on the pairs of elements of $G$ satisfying some Engel word.
Let $x$ and $y$ be free generators of a free group of rank $2$. We define recursively $[x,_0y]:=x$ and $$[x,_{i+1}y]:=[[x,_{i}y],y],$$ for every $i\ge 0$. The word $[x,_ny]$ is the \textit{\textbf{$n^{\mathrm{th}}$ Engel word}}. In this paper we are interested in the $n^{\mathrm{th}}$ \textit{\textbf{Engel graph}} $$\Gamma_{n}(G):=\Gamma_{\omega}(G),$$ where $\omega:=[x,_ny]$ is the $n^{\mathrm{th}}$ Engel word. Clearly, when $n:=1$, we recover the commuting graph of $G$.

 We are also interested in a ``cumulative'' version of $\Gamma_n(G)$. (Let $1={\bf Z}_0(G)\le {\bf Z}_1(G)\le{\bf Z}_2(G)\le \cdots $ be the series of subgroups of $G$, where ${\bf Z}_{n+1}(G)/{\bf Z}_n(G)={\bf Z}(G)$. The subgroup $${\bf Z}_\infty(G):=\bigcup_{n\ge 0}{\bf Z}_n(G)$$ is called the \textit{\textbf{hypercenter}} of $G$.) We let $\Gamma(G)$ be the directed graph having vertex set $G\setminus{\bf Z}_\infty(G)$ and where $(x,y)$ is an arc of $\Gamma(G)$ if and only if $[x,_ny]=1$, for some positive integer $n$. The reason for taking as vertex set $G\setminus{\bf Z}_\infty(G)$ is in~\cite[Introduction]{DLN}.

Theorem~1.1 in~\cite{DLN} shows that $\Gamma(G)$ is always connected (of diameter at most $10$). Therefore, it becomes  interesting to study the strongly connectivity of $\Gamma(G)$. A first investigation on the strongly connectivity of $\Gamma(G)$ is also in~\cite{DLN}. In this context, the main result is the following.
\begin{theorem}[Theorem~1.2,~\cite{DLN}]\label{prel}Suppose that $G/{\bf Z}_\infty (G)$ is not an almost simple
group. Then the Engel graph of $G$ is strongly connected if and only if $G/{\bf Z}_\infty(G)$ is
not a Frobenius group.
\end{theorem}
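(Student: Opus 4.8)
The plan is: (1) reduce to the case $\mathbf Z_\infty(G)=1$; (2) settle the implication ``$G/\mathbf Z_\infty(G)$ Frobenius $\Rightarrow$ not strongly connected'' by exhibiting a proper nonempty set of vertices with no arc leaving it; (3) for the converse, anchor everything on the generalised Fitting subgroup, using that nilpotent subgroups span complete subdigraphs. Here is more detail.

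\emph{Reduction and the Frobenius direction.} Put $\bar G=G/\mathbf Z_\infty(G)$. One checks that the quotient map carries arcs of $\Gamma(G)$ to arcs of $\Gamma(\bar G)$, and that conversely $[x,_ny]\in\mathbf Z_m(G)$ forces $[x,_{n+m}y]=1$ since $[\mathbf Z_m(G),\underbrace{G,\dots,G}_m]=1$; as any two vertices of $\Gamma(G)$ lying over the same vertex of $\Gamma(\bar G)$ are joined by arcs both ways, $\Gamma(G)$ is strongly connected if and only if $\Gamma(\bar G)$ is. So I may assume $\mathbf Z_\infty(G)=1$: the vertex set is $G\setminus\{1\}$, $\mathbf Z(G)=1$, and $G$ is not almost simple. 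If $G=K\rtimes H$ is Frobenius (then $\mathbf Z_\infty(G)=1$ automatically), the kernel $K$ is nilpotent, so $K\setminus\{1\}$ spans a complete subdigraph; and if $x\in K\setminus\{1\}$ and $y\notin K$, then $y$ lies in a conjugate of $H$ and acts fixed-point-freely on the normal subgroup $K$, so $[x,_ny]\in K\setminus\{1\}$ for all $n$. Thus $K\setminus\{1\}$ is a proper nonempty set with no arc leaving it, and $\Gamma(G)$ is not strongly connected.

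\emph{The converse: reduction to one hard case.} Now assume $G$ is neither Frobenius nor almost simple; by Theorem~1.1 of~\cite{DLN}, $\Gamma(G)$ is connected. I would record two facts: (i) a nilpotent subgroup $N$ spans a complete, hence strongly connected, subdigraph on $N\setminus\{1\}$; (ii) if $O_p(G)\neq1$ then every vertex $x$ has an arc to every $v\in Z(O_p(G))\setminus\{1\}$, because $[x,v]\in O_p(G)$ and $[[x,v],v]=1$. Since for any prime $p$ dividing $|x|$ the vertex $x$ is joined both ways to its $p$-part $x_p$, facts (i)--(ii) show that every vertex lies in the strong component of $P\setminus\{1\}$ for some Sylow subgroup $P$; hence $\Gamma(G)$ is strongly connected if and only if all the sets $P\setminus\{1\}$, over all Sylow subgroups $P$, lie in one strong component. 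If $F(G)=1$ then $F^{*}(G)=E(G)$ is a direct product of $k\ge2$ nonabelian simple groups (and $G$ is then not Frobenius), and I would argue directly: every $g\neq1$ centralises some $s\in E(G)\setminus\{1\}$ (else $g$ induces a fixed-point-free automorphism of the nonsolvable group $E(G)$, which is impossible), and since distinct direct factors centralise one another, $E(G)\setminus\{1\}$ spans a strongly connected subdigraph absorbing every vertex. So assume $F(G)\neq1$, set $\pi=\pi(F(G))$, and let $\mathcal C$ be the strong component of $F(G)\setminus\{1\}$; by (i)--(ii) every vertex has an arc into $\mathcal C$, so it remains to reach every vertex from $\mathcal C$. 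If $|y|$ is divisible by a prime of $\pi$, this follows from (i) applied to a Sylow subgroup containing $O_p(G)$ and $y_p$; if $y$ is a $\pi'$-element centralising some nontrivial element of $F(G)$, or (when $E(G)\neq1$) some nontrivial element of $E(G)$ — which, by the non-existence of fixed-point-free automorphisms, it always does — then, since $[E(G),F(G)]=1$ forces $E(G)\setminus\{1\}\subseteq\mathcal C$, again $y\in\mathcal C$. This leaves exactly one hard case: $F^{*}(G)=F(G)$, and there is a nontrivial $\pi'$-element $y$ with $C_{F(G)}(y)=1$.

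\emph{The hard case and the main obstacle.} Here I would first replace $y$ by a prime-order element (if a nontrivial power $y^i$ centralises a nontrivial element of $F(G)$, then $y^i\in\mathcal C$ and hence $y\in\mathcal C$ already), so that $\langle y\rangle$ is a Frobenius complement for $F(G)\rtimes\langle y\rangle$. To pull $y$ into $\mathcal C$ one keeps enlarging $\mathcal C$ through cheap arcs: $y$ is joined both ways to each of its powers and to each element of $C_G(y)$, and $z\to y$ whenever $z$ normalises $\langle y\rangle$ (then $[z,y]\in\langle y\rangle$, so $[z,_2y]=1$); iterating, the only way $y$ can stay outside $\mathcal C$ is if it sits in a region cut off from the $\pi$-elements, that is, if $y$ exhibits Frobenius-complement-like behaviour. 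The expectation is that this, combined with $F^{*}(G)=F(G)$, forces the Gruenberg--Kegel graph of $G$ to be disconnected, whence by the classification of groups with disconnected prime graph $G$ is a Frobenius group (excluded) or a $2$-Frobenius group $G=ABC$ with $AB,BC$ Frobenius with kernels $A=F(G)$ and $B$; in the latter case, exploiting $\pi(A)=\pi(C)$ and that a $p$-group acting on a nontrivial $p$-group has nontrivial fixed points, each element of $C$ is joined to $A\setminus\{1\}$ through its $p$-part and so lies in $\mathcal C$, and then — since a complement of a Frobenius group has arcs to its kernel — so does each element of $B$, whence all Sylow subgroups of $G$ lie in $\mathcal C$ and $\Gamma(G)$ is strongly connected. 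I expect the main obstacle to be precisely the control of these ``Frobenius-like but not Frobenius'' groups: turning the loose ``region isolated from $\pi$'' condition into the Gruenberg--Kegel dichotomy (which draws on the classification of finite simple groups, both through the prime-graph classification and through the non-existence of fixed-point-free automorphisms of nonabelian simple groups), and then making the Sylow-bridging arguments in the $2$-Frobenius case watertight.
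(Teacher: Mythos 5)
This statement is imported from~\cite{DLN} and not proved in the present paper, so I can only judge your argument on its own terms. Up to the point you call the ``hard case'' it is sound: the reduction to ${\bf Z}_\infty(G)=1$, the Frobenius direction, the case $F(G)=1$ (where $E(G)$ is a product of at least two simple factors because $G$ is not almost simple, and fixed-point-free automorphisms of non-soluble groups do not exist), and the observation that every vertex whose order meets $\pi(F(G))$, or which centralises a nontrivial element of $F^*(G)$, lies in the strong component of $F(G)\setminus\{1\}$ — these are all correct, and the cheap arcs you exploit are exactly the ones this paper records (e.g.\ Lemma~\ref{norm}).

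The hard case, however, contains genuine gaps, and it is precisely where the content of the theorem lies. First, the inference from ``the element $y$ sits in a region cut off from the $\pi$-elements'' to ``the prime graph of $G$ is disconnected'' is only asserted as an expectation and does not follow: the prime graph is a global invariant, whereas your obstruction constrains only the conjugacy class of $y$; a different class of elements of the same order could commute with $\pi$-elements and keep the relevant prime attached to $\pi(F(G))$. Second, even granting disconnectedness, the dichotomy ``Frobenius or $2$-Frobenius'' is the Gruenberg--Kegel conclusion only for \emph{soluble} groups; in general there is a third branch in which $G$ has a nilpotent normal $\pi_1$-subgroup with an almost simple group acting above it, and nothing in your hypotheses excludes it (for instance $G=V\rtimes S$ with $V$ nilpotent and $S$ simple is neither almost simple nor covered by your two branches). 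Third, your resolution of the $2$-Frobenius case rests on ``$\pi(A)=\pi(C)$'', which is false: for $G=\mathrm{A}\Gamma\mathrm{L}_1(8)$ one has the $2$-Frobenius series $1<C_2^3<\mathrm{AGL}_1(8)<G$ with $\pi(A)=\{2\}$ and $\pi(G/K)=\{3\}$, so the step ``join each element of $C$ to $A\setminus\{1\}$ through its $p$-part'' has nothing to act on (here the field automorphism is instead reached because it centralises $\mathbb{F}_2\subseteq\mathbb{F}_8$, a fact your argument does not supply in general). Until these three points are repaired the converse direction is not established.
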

Clearly, this result demands an investigation on the Engel graph of almost simple groups and this is what we do in this paper.

\begin{theorem}\label{main}
Let $G$ be a non-abelian simple group. Then the Engel graph of $G$ is not strongly connected if and only if $G$ is isomorphic to one of the following groups:
\begin{enumerate}
\item $\mathrm{PSL}_2(q)$ with $q\ge 4$ even or with $q\equiv 5\pmod 8$,
\item ${}^2B_2(q)$ with $q\ge 8$.
\end{enumerate}
\end{theorem}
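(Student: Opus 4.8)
The plan is to prove the two implications of the equivalence separately, using the Classification of Finite Simple Groups for the direction ``$G$ not in the list $\Rightarrow$ $\Gamma(G)$ strongly connected''. Since $G$ is simple we have $\mathbf{Z}_\infty(G)=1$, so $\Gamma(G)$ has vertex set $G\setminus\{1\}$ and $(x,y)$ is an arc exactly when the sequence $x_0=x$, $x_{i+1}=[x_i,y]=x_i^{-1}x_i^y$ reaches $1$; since $[x,{}_ny]$ depends only on $\langle x,y\rangle$, we have $(x,y)\in\Gamma(G)$ if and only if $(x,y)\in\Gamma(\langle x,y\rangle)$, which reduces everything to two-generator subgroups. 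I would first isolate two tools. A \emph{sufficient} criterion: $(x,y)\in\Gamma(G)$ whenever $\langle y\rangle$ acts nilpotently on $\langle x^{\langle y\rangle}\rangle$, because then $[x,{}_ny]\in[\langle x^{\langle y\rangle}\rangle,{}_n\langle y\rangle]=1$ for $n$ large; in particular this holds whenever $\langle x,y\rangle$ is nilpotent (so every Sylow subgroup and every maximal torus of a group of Lie type induces a complete subdigraph in both directions and lies in a single strongly connected component), and whenever $y$ lies in the unipotent radical $U$ of a parabolic subgroup with $x$ in the Levi part, since then $[\langle x,y\rangle,{}_{c+1}y]\le[U,{}_cy]=1$ with $c$ the nilpotency class of $U$. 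An \emph{obstruction}: $(x,y)\notin\Gamma(G)$ whenever $\langle x,y\rangle$ has a $y$-invariant abelian section $V$ with $C_V(y)=1$ and nontrivial image of $x$, because the image of $[x,{}_ny]$ in $V$ equals $(y-1)^n\bar x\neq0$ for all $n$.

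For the ``if'' direction I would exhibit, in each exceptional group, a nonempty proper vertex set closed under out-arcs; its existence means $\Gamma(G)$ is not strongly connected. For $G=\mathrm{PSL}_2(q)$ the candidate is $T\setminus\{1\}$, where $T$ is the non-split maximal torus, of order $q+1$ (resp.\ $(q+1)/2$ when $q$ is odd); it is abelian and hence a single strongly connected component, and one shows it has no out-arc to $G\setminus(T\cup\{1\})$. Running through Dickson's classification, for $s\in T\setminus\{1\}$ the subgroup $\langle s,y\rangle$ is either dihedral inside some $N_G(T^g)$, contained in a Borel subgroup, isomorphic to $\mathrm{PSL}_2(q_0)$, or one of $A_4,S_4,A_5$; the dihedral case falls to the obstruction with $V=T$ (inversion of $T$ is fixed-point-free precisely because $|T|$ is odd when $q$ is even or $q\equiv5\pmod8$), a Borel cannot contain the semisimple element $s$ since its semisimple elements lie in a split torus and $\gcd(|T|,q-1)$ is $1$ or $2$, the $\mathrm{PSL}_2(q_0)$ case is removed by induction on $q$ after reducing to $\Gamma(\mathrm{PSL}_2(q_0))$, and the small cases $A_4,S_4,A_5$ do not occur because under these congruences $T$ contains no element of order $3$ (nor, after a direct check in $A_5$, one of order $5$) sitting inside such a subgroup. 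The parity and congruence hypotheses are exactly what make this go through, which is why $q\equiv\pm1\pmod8$ and $q\equiv3\pmod8$ are absent from the list: there $T\setminus\{1\}$ genuinely acquires out-arcs (through an $A_4$, or through a split torus containing an involution). The Suzuki groups ${}^2B_2(q)$, $q\ge8$, are handled identically using Suzuki's subgroup list, with $T$ a cyclic torus of order $q\pm\sqrt{2q}+1$ (again of odd order); since ${}^2B_2(q)$ is a $CN$-group whose overgroups of a nontrivial $s\in T$ are dihedral over $T$ or all of $G$, the obstruction with $V=T$ once more closes $T\setminus\{1\}$.

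For the ``only if'' direction I would show, via CFSG, that every remaining non-abelian simple group has strongly connected $\Gamma(G)$, combining the sufficient criterion with the weak connectivity of $\Gamma(G)$ already established in~\cite{DLN}. Concretely, one fixes a well-chosen set — typically the union of the unipotent radicals of the parabolics together with a maximal torus and its centralizer — and proves that it is reached from, and reaches, every vertex, using the Levi/unipotent-radical mechanism (and its opposite) to produce the arcs, and that it is internally strongly connected by routing through common parabolics. This generic argument suffices for groups of Lie type of rank $\ge2$, while the alternating groups, the sporadic groups, and the rank-one groups ($\mathrm{PSL}_2(q)$ with $q$ odd and $q\not\equiv5\pmod8$, $\mathrm{PSU}_3(q)$, ${}^2G_2(q)$) require separate and more computational treatment — in particular to manufacture arcs out of unipotent elements and into tori that do not contain them.

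The main obstacle is the negative direction carried out rigorously: to prove that a candidate sink has \emph{no} outgoing arc one must control the orbits of all the maps $x\mapsto[x,y]$, and the abelian-section obstruction only settles the $y$ that normalize the torus; the real difficulty is excluding arcs into non-normalizing overgroups $\langle s,y\rangle$, which is what forces the case-by-case subgroup analysis and what makes the boundary of the classification delicate (why $q\equiv5$ but not $q\equiv3\pmod8$ for $\mathrm{PSL}_2$, and why the Suzuki groups but no other rank-one twisted family appear). A secondary difficulty is the low-rank analysis in the positive direction, where no single clean criterion is available.
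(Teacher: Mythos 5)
Your proof of the ``only if'' direction for $\mathrm{PSL}_2(q)$ with $q\equiv 5\pmod 8$ rests on a false claim: the set $T\setminus\{1\}$, for $T$ the non-split torus of order $(q+1)/2$, is \emph{not} closed under out-arcs. Already for $q=5$, an element $s$ of order $3$ in $T$ lies in a subgroup $A_4=V_4\rtimes\langle s\rangle$ of $A_5\cong\mathrm{PSL}_2(5)$, and for any $y\in V_4\setminus\{1\}$ one has $[s,y]=(y^s)^{-1}y\in V_4$ and hence $[s,y,y]=1$: an out-arc from $T$ to an involution. More generally, Lemma~\ref{nr1} of the paper constructs, for every $q\equiv 1\pmod 4$ with $q\ne 9$ (so in particular for every $q\equiv 5\pmod 8$), a matrix $x$ with no eigenvalues in $\mathbb{F}_q$ --- hence projecting to a nontrivial element of a non-split torus --- and an involution $z$ with $[x,z,z]=1$ in $\mathrm{PSL}_2(q)$. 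The genuine obstruction sits at the unipotent elements: by Lemmas~\ref{2comm},~\ref{norm} and~\ref{NC}, an element $x$ of order $p$ has out-neighbours only inside $\langle x\rangle$ and among involutions $y$ for which $[x,y]$ is a $2$-element (Lemma~\ref{3comm}), and a class-constant computation in the character table (Lemma~\ref{lemma:3}) shows that no such involution exists precisely when $q\equiv 5\pmod 8$. So the correct sink is $P\setminus\{1\}$ for $P$ a Sylow $p$-subgroup, and the dichotomy between $q\equiv 5$ and $q\equiv 1,3,7\pmod 8$ is an arithmetic fact about the product of the two unipotent classes, not a property of the torus; your explanation of the boundary of the classification inherits this error.

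A second, structural gap affects both directions and also your treatment of ${}^2B_2(q)$: reducing an arc $(s,y)$ to the subgroup $\langle s,y\rangle$ and running through Dickson's or Suzuki's subgroup lists does not dispose of the generic case $\langle s,y\rangle=G$, where your ``induction on $q$'' is circular, and your abelian-section obstruction only rules out those $y$ that normalize $T$. The paper's substitute is Lemma~\ref{NC}: if $y$ lies in a self-normalizing subgroup $K$ meeting its conjugates in the appropriate trivial-intersection sense, then $x\not\mapsto y$ for every $x\notin K$ --- a non-arc criterion that does not see $\langle x,y\rangle$ at all. Finally, in the positive direction the arcs from the isolated tori into the main component are exactly where the paper has to work hardest (Paley-graph counting for Lemma~\ref{nr1}, Hermitian-form constructions via Hilbert 90 for the unitary groups, and permutation-character estimates proving $H\mathcal{C}=G\setminus H$ for the exceptional groups); the Levi/unipotent-radical mechanism you propose produces none of these arcs, since the isolated tori are disjoint from every parabolic subgroup.
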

When $\Gamma(G)$ is strongly connected, an interesting question is determining the smallest $n \in \mathbb N$ such that the subgraph $\Gamma_n(G)$ is already strongly connected. Indeed, we prove that, if $\Gamma$ is strongly connected, then
$\Gamma_3(G)$ is strongly connected, expect when $G=\mathrm{PSL}_2(q)$ and $q\equiv 3\pmod 4$. In this case, $\Gamma_n(G)$ is strongly connected if and only if $n > a$ where $2^a$ is the largest power of $2$ dividing $(q+1)/2$ (see Theorem \ref{thrm:tired}). In particular it follows from Dirichlet's theorem on arithmetic progressions, that for every $n \in \mathbb N,$ there exists a simple group $G$ such that $\Gamma_{n+1}(G)$ is strongly connected, but $\Gamma_n(G)$ is not.

The proof of Theorem~\ref{main} is quite involved and we briefly discuss our approach in Section~\ref{not&prel}. When $G$ is an alternating group or a sporadic simple group, Theorem~\ref{main} is proved in~\cite{DLN}. In particular, our task here is dealing with simple groups of Lie type.

Once that the behaviour of the Engel graph of the finite simple groups is completely understood, it is not difficult to have a more complete result including all the almost simple groups.

\begin{corollary}\label{corcorcor}
Let $G$ be an almost simple group with socle $L$ and with $L<G$. Then $\Gamma_3(G)$ is strongly connected, except
when $G:=\mathrm{Aut}({}^2B_2(2^e))$ with e is an odd prime.
\end{corollary}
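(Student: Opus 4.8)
The plan is to bootstrap from Theorem~\ref{main} and Theorem~\ref{thrm:tired} for the socle $L$. Since $G$ is almost simple, $C_G(L)=1$, so $\mathbf{Z}(G)=\mathbf{Z}_\infty(G)=1$ and $\mathbf{F}(G)=1$; as the bounded left Engel elements of a finite group lie in its Fitting subgroup, the set removed in forming $\Gamma_3(G)$ is trivial, so $\Gamma_3(G)$ has vertex set $G\setminus\{1\}$. Moreover $[x,_3y]$ takes the same value in $L$ and in $G$ when $x,y\in L$, so the subgraph of $\Gamma_3(G)$ induced on $L\setminus\{1\}$ is exactly $\Gamma_3(L)$.

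Next I would attach every $g\in G\setminus L$ to $\Gamma_3(L)$ by directed $2$-cycles: since $[a,b]=1$ forces $[a,_3b]=[b,_3a]=1$, commuting non-identity elements span a directed $2$-cycle. Put $d:=|gL|\geq 2$. If $|g|>d$ then $1\neq g^{d}\in L$ commutes with $g$. If $|g|=d$ then $\langle g\rangle\cap L=1$, and for any prime $p\mid d$ the element $g':=g^{d/p}$ has order $p$ and lies outside $L$; it therefore induces a nontrivial automorphism of $L$ of prime order, which by Thompson's theorem is not fixed-point-free, so $g'$ centralises some $z\in L\setminus\{1\}$, giving $g\leftrightarrow g'\leftrightarrow z$. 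Consequently, whenever $\Gamma_3(L)$ is strongly connected, so is $\Gamma_3(G)$. By Theorems~\ref{main} and~\ref{thrm:tired}, $\Gamma_3(L)$ is strongly connected unless $L\cong\mathrm{PSL}_2(q)$ with $q$ even, or $q\equiv 5\pmod 8$, or $q\equiv 3\pmod 4$ with $8\mid(q+1)/2$, or $L\cong{}^2B_2(q)$ with $q\geq 8$; only these socles remain.

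For the remaining socles the outer vertices are already attached to $\Gamma_3(L)$, so it suffices to link by directed paths through $G\setminus L$ the finitely many strongly connected components of $\Gamma_3(L)$. For $L\cong\mathrm{PSL}_2(q)$ I would run through the conjugacy classes (unipotent elements, split and nonsplit tori and their dihedral normalisers) and verify, for $G=\mathrm{PGL}_2(q)$, for the extensions of $\mathrm{PSL}_2(q)$ by field automorphisms, and for their joins, that a suitable outer element $y$ provides a $3$-Engel relation across the offending component --- typically $y$ acts on a torus (or inverts a unipotent element) so that $[s,_1y],[s,_2y],[s,_3y]$ reaches a part of $L$ no inner element reaches. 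For $L\cong{}^2B_2(q)$ with $q=2^{e}$ ($e$ odd, $e\geq 3$) one has $\mathrm{Out}(L)\cong C_e$, so the overgroups of $L$ are $G_d:=L.C_d$ for the divisors $d>1$ of $e$, and $G_d$ contains a field automorphism of order $d$ whose centraliser in $L$ is ${}^2B_2(2^{e/d})$. If $d<e$ then $e/d\geq 3$ and this centraliser is a nontrivial simple Suzuki subgroup, and the same holds for a field automorphism of suitable prime order inside $\mathrm{Aut}(L)$ whenever $e$ is composite; using the unipotent elements and tori of such a subgroup, and the way field automorphisms fuse the unipotent classes of $L$, one bridges the strongly connected components of $\Gamma_3(L)$ and obtains that $\Gamma_3(G_d)$ is strongly connected, for every $d\mid e$ with $1<d$, unless $d=e$ and $e$ is prime.

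The hard part --- and the source of the exception in the statement --- is $L\cong{}^2B_2(2^{e})$ with $e$ prime, when $\mathrm{Aut}(L)=L.C_e$ is the unique overgroup: the outer elements of order exactly $e$ have centraliser in $L$ isomorphic to ${}^2B_2(2)\cong C_5\rtimes C_4$, too small and too soluble to help, and one must show that no outer vertex at all provides an arc escaping the sink component of $\Gamma_3(L)$, so that $\Gamma_3(\mathrm{Aut}(L))$ fails to be strongly connected. Establishing this --- that, unlike for $\mathrm{PSL}_2(q)$ and for every other Suzuki overgroup, enlarging the group does not repair strong connectivity at $n=3$ --- requires a careful analysis of the word $[s,_3y]$ with $s$ a unipotent element of order $4$ (or in the relevant torus) and $y$ ranging over a coset $L\sigma$, using the structure of the Suzuki $2$-groups and the action of field automorphisms on the unipotent classes; I expect this to be the most delicate step of the argument.
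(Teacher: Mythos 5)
Your reduction step is sound and is essentially the paper's: for $g\in G\setminus L$ you produce a nontrivial element of $L$ commuting with (a power of) $g$, so every outer vertex sits on a directed $2$-cycle into $L\setminus\{1\}$, and strong connectivity of $\Gamma_3(L)$ transfers to $\Gamma_3(G)$. Your list of surviving socles ($\mathrm{PSL}_2(q)$ with $q$ even, $q\equiv 5\pmod 8$, or $q\equiv 3\pmod 4$ with $8\mid (q+1)/2$; and ${}^2B_2(q)$) is also correct. The genuine gap is that for these surviving socles you only announce a programme rather than execute it, and this is exactly where all the work in the corollary lies. For socle $\mathrm{PSL}_2(q)$ the paper proves (Theorem~\ref{thrm:tiredtired}) that $\Gamma_2(G)$ is already strongly connected for \emph{every} proper overgroup, and this rests on explicit constructions you do not supply: the computation in $\mathrm{PGL}_2(q)$ showing $[g,y]^2=1$ for unipotent $g$ and the outer involution $y$ with $b=a^2/2$ (Proposition~\ref{andrea}); the computation $[g,\alpha]=\left(\begin{smallmatrix}1&x+x^\alpha\\0&1\end{smallmatrix}\right)$ linking the torus of order $q+1$ to an involutory field automorphism when $q=q_0^2$ with $q_0$ even; and the separate treatment of odd-degree field extensions, including the delicate subcase $q_0=2$ where one must reach the elements of order $q-1$ by a further explicit matrix computation. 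Without at least one such verified Engel relation per offending strongly connected component, the claim that ``a suitable outer element $y$ provides a $3$-Engel relation across the offending component'' is an expectation, not a proof.

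For the Suzuki socle your structural observations (only $G_d=L.C_d$ with $d\mid e$ occur; a field automorphism of prime order $p$ with $e/p\ge 3$ centralises a simple ${}^2B_2(2^{e/p})$) match Section~\ref{Suzuki}, but again the bridging argument is missing: the paper needs the congruence-dependent divisibility of $q_0\pm\sqrt{2q_0}+1$ into $q\pm\sqrt{2q}+1$ (split according to $p\bmod 8$) to certify that elements of \emph{both} exceptional tori of $L$ are reached from ${\bf C}_L(\beta)$, plus the argument that all involutions of $L$ lie in one strongly connected component. Finally, you substantially overestimate the difficulty of the exceptional case $G=\mathrm{Aut}({}^2B_2(2^e))$, $e$ prime: no analysis of $[s,_3y]$ over a coset $L\sigma$ is needed. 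The paper simply takes $K={\bf N}_G(T)$ for $T$ a maximal torus of order $q+\sqrt{2q}+1$; since $K$ is maximal (hence self-normalising) and an element $y\in T$ of full order lies in $K^g$ only if $K^g=K$, Lemma~\ref{NC} gives $x\not\mapsto y$ for all $x\notin K$, and non--strong-connectivity follows at once. Your proposal neither gives this short argument nor any substitute for it, so the exceptional case is also left unproved.
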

\begin{proof}
When $L\not\cong\mathrm{PSL}_2(q)$ and when $L\not\cong{}^2B_2(q)$, the strongly connectivity of $\Gamma_3(G)$ follows from the strongly connectivity of $\Gamma_3(L)$, which can be deduced from Propositions~\ref{proposition:PSL},~\ref{proposition:PSUUU},~\ref{proposition:symp},~\ref{proposition:omega},~\ref{proposition:omegaplus} and~\ref{proposition:omegaminus} for the classical groups and from Section~\ref{sec:exceptionalspecial} for exceptional Lie groups. Indeed, for every $g\in G\setminus L$, we have ${\bf C}_L(g)\ne 1$ and hence $g$ has in- and out-neighbours in $L$.

We postpone the proof of the case $L=\mathrm{PSL}_2(q)$ to Section~\ref{almostsimplePSL} and the case $L={}^2B_2(q)$ to Section~\ref{Suzuki}.
\end{proof}

It can be easily proved that $\Gamma(G)$ is strongly connected if and only
if $\Gamma(G/{\bf Z}_\infty(G))$ is strongly connected (see~\cite[Lemma 2.1]{DLN}), so putting together the information from Theorems \ref{prel} and \ref{main} and Corollary~\ref{corcorcor}, we achieve a complete classification of the finite groups whose Engel graph is strongly connected.

\begin{corollary}
Let $G$ be a finite group. Then $\Gamma(G)$ is not strongly connected if and only if one of the following cases occur:
\begin{enumerate}
	\item $G/{\bf Z}_\infty(G)$ is a Frobenius groups;
	\item $G/{\bf Z}_\infty(G)\cong \mathrm{PSL}_2(q)$ with $q\ge 4$ even or with $q\equiv 5\pmod 8$;
	\item $G/{\bf Z}_\infty(G)\cong {}^2B_2(q)$ with $q\ge 8$.
	\item $G/{\bf Z}_\infty(G)\cong \mathrm{Aut}({}^2B_2(2^e))$ with e is an odd prime.
\end{enumerate}
\end{corollary}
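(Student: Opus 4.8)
The plan is to reduce at once to a group with trivial hypercenter and then read the answer off the three classification results already obtained. Set $\bar G:=G/{\bf Z}_\infty(G)$. By~\cite[Lemma~2.1]{DLN}, $\Gamma(G)$ is strongly connected if and only if $\Gamma(\bar G)$ is strongly connected, so it suffices to decide strong connectivity for $\bar G$. Since ${\bf Z}_\infty(G/{\bf Z}_\infty(G))=1$, the group $\bar G$ has trivial hypercenter; in particular $\bar G/{\bf Z}_\infty(\bar G)=\bar G$, so Theorem~\ref{prel}, Theorem~\ref{main} and Corollary~\ref{corcorcor} may all be invoked with $\bar G$ playing the role of $G$. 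From here I would split into three exhaustive, pairwise disjoint cases according to the structure of $\bar G$.

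\textbf{(i) $\bar G$ is not almost simple.} Theorem~\ref{prel} applied to $\bar G$ states that $\Gamma(\bar G)$ is strongly connected exactly when $\bar G$ is not a Frobenius group, which is item~(1). I would also record the elementary remark that no almost simple group is a Frobenius group (a Frobenius kernel is a nontrivial nilpotent normal subgroup, whereas the unique minimal normal subgroup of an almost simple group is non-abelian simple), so that item~(1) is genuinely disjoint from the other three. \textbf{(ii) $\bar G$ is almost simple and equals its socle.} Then $\bar G$ is a non-abelian simple group and Theorem~\ref{main} says that $\Gamma(\bar G)$ fails to be strongly connected precisely when $\bar G\cong\mathrm{PSL}_2(q)$ with $q\ge 4$ even or $q\equiv 5\pmod 8$, or $\bar G\cong{}^2B_2(q)$ with $q\ge 8$; these are items~(2) and~(3).

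\textbf{(iii) $\bar G$ is almost simple with socle $L$ and $L<\bar G$.} If $\bar G\cong\mathrm{Aut}({}^2B_2(2^e))$ with $e$ an odd prime, then the analysis carried out in Section~\ref{Suzuki} shows that $\Gamma(\bar G)$ is not strongly connected; this is item~(4). For any other such $\bar G$, Corollary~\ref{corcorcor} yields that $\Gamma_3(\bar G)$ is strongly connected, and I would upgrade this to $\Gamma(\bar G)$ as follows. By Baer's theorem the right Engel elements of a finite group form exactly its hypercenter, so $I_\omega(\bar G)$ is trivial for $\omega=[x,_3y]$; hence $\Gamma_3(\bar G)$ and $\Gamma(\bar G)$ both have vertex set $\bar G\setminus\{1\}$, and every arc of $\Gamma_3(\bar G)$ is an arc of $\Gamma(\bar G)$ because $[x,_3y]=1$ forces $[x,_ny]=1$ for all $n\ge 3$. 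Thus $\Gamma_3(\bar G)$ is a spanning subdigraph of $\Gamma(\bar G)$, and strong connectivity passes from the former to the latter.

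Putting cases (i)--(iii) together and translating back through~\cite[Lemma~2.1]{DLN} gives both directions of the asserted equivalence. The only step here that is not pure bookkeeping on top of Theorems~\ref{prel} and~\ref{main} and Corollary~\ref{corcorcor} is the treatment of $\mathrm{Aut}({}^2B_2(2^e))$ with $e$ an odd prime in case (iii): Corollary~\ref{corcorcor} records only that $\Gamma_3$ of that group fails to be strongly connected, and one still has to verify that the obstruction persists when arbitrarily long Engel words are allowed, namely that $\Gamma$ of that group is not strongly connected. I expect this to be the main obstacle, and it is exactly what the detailed computations of Section~\ref{Suzuki} are designed to provide.
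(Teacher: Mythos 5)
Your proposal is correct and follows the same route the paper takes: the paper's entire proof of this corollary is the sentence preceding it, namely reducing to $G/{\bf Z}_\infty(G)$ via \cite[Lemma~2.1]{DLN} and then combining Theorems~\ref{prel} and~\ref{main} with Corollary~\ref{corcorcor}. The details you make explicit --- the disjointness of the Frobenius case from the almost simple ones, the upgrade from strong connectivity of $\Gamma_3(\bar G)$ to that of $\Gamma(\bar G)$, and the observation that for $\mathrm{Aut}({}^2B_2(2^e))$ Section~\ref{Suzuki} obstructs arcs of $\Gamma$ itself (via Lemma~\ref{NC}) and not merely of $\Gamma_3$ --- are precisely the points the paper leaves implicit.
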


\section{Notation and preliminaries}\label{not&prel}
In what follows, we write $x\mapsto y$ to denote the arc $(x,y)$ of $\Gamma(G)$ and we write $x\mapsto_n y$ to denote the arc $(x,y)$ of $\Gamma_n(G)$.

To prove Theorem~\ref{main} we use various ingredients. The first is a relation among the Engel graph, the \textit{\textbf{prime graph}} and the \textit{\textbf{commuting graph}}. Given a finite group $G$, we denote by $\pi(G)$ the set of prime divisors of the order of $G$. (More generally, given a positive integer $n$, we denote by $\pi(n)$ the set of prime divisors of $n$.) Now, the prime graph $\Pi(G)$ of $G$ is the graph having vertex set $\pi(G)$ and where two distinct primes $r$ and $s$ are declared to be adjacent if and only if $G$ contains an element having order divisible by $rs$. The commuting graph is the graph having vertex set $G\setminus{\bf Z}(G)$ and where two distinct elements of $G$ are declared to be adjacent if they commute; in our current terminology, we may denote the commuting graph by $\Gamma_1(G)$. 

Suppose now that ${\bf Z}(G)=1$. Observe that $\Gamma_1(G)$ is a subgraph of the Engel graph $\Gamma_n(G)$, in particular, the connected components of the commuting graph $\Gamma_1(G)$ give useful information on the strongly connected components of $\Gamma(G)$. Now, a key result of Williams~\cite{Williams} (see also~\cite[Theorem~4.4]{mp}) gives a method to control the connected components of $\Gamma_1(G)$ using the connected components of $\Pi(G)$.
\begin{theorem}[Theorem~4.4,~\cite{mp}]\label{theorem44}
Let $G$ be a  non-soluble group with ${\bf Z} (G ) = 1$, let $\Psi$ be a connected component of the commuting graph of $G$ and let $\psi:=\Pi(\Psi)$. Suppose $2\notin \psi$. Then $G$ has an abelian Hall $\psi$-subgroup $H$ which is isolated in the commuting graph of $G$ and $\Psi = H\setminus\{1\}$. In particular, $\psi$ is a connected component of the prime graph of $G$.
 \end{theorem}
 Theorem~\ref{theorem44} describes the disconnected components of the commuting graphs of non-soluble groups with trivial center.
They consist of perhaps more than one connected component containing involutions and then all the
remaining connected components are complete graphs. Moreover, these remaining connected components determine (and are determined) by the connected components of the prime graph consisting of odd primes. In particular, the classification of Williams~\cite{Williams} of the connected components of the prime graph of simple groups is important in our work.

 In the case that $G$ has at least two conjugacy classes of involutions the Brauer-Fowler theorem comes to rescue to handle even order elements in the commuting graph.
\begin{theorem}[Lemma~3.5,~\cite{mp}]\label{theorem11}
Let $G$ be a group with trivial center and with at least two conjugacy classes of involutions. Then, the commuting graph has a
unique connected component containing all the elements of even order in $G$.
\end{theorem}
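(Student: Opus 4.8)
The plan is to reduce the statement to a claim purely about involutions, and then to run a short Brauer--Fowler-type argument built on the structure of dihedral groups generated by two involutions.

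First I would observe that every $x\in G$ of even order commutes with the involution $x^{|x|/2}$, and since ${\bf Z}(G)=1$ both are vertices of $\Gamma_1(G)$, necessarily lying in the same connected component. Hence it suffices to show that all involutions of $G$ lie in a single component $\Psi$ of $\Gamma_1(G)$: that component then contains every element of even order, and it is the \emph{unique} component with this property, since any component meeting the set of even-order elements already shares such an element with $\Psi$.

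The heart of the matter is a dichotomy for a pair of distinct involutions $a,b$. Put $n:=|ab|$ and $c:=ab$, so that $D:=\langle a,b\rangle$ is dihedral of order $2n$ and both $a$ and $b$ invert $c$. If $n$ is even, then $z:=c^{n/2}$ is a non-identity involution centralized by both $a$ and $b$ (each inverts $c$, hence centralizes the involution $z$), yielding a path of length at most $2$ from $a$ to $b$ in $\Gamma_1(G)$ through $z$; so $a$ and $b$ lie in the same component whenever $|ab|$ is even. If $n$ is odd, then since $\gcd(2,n)=1$ conjugation by powers of $c$ carries $a$ onto all $n$ reflections of $D$, in particular $b=ac$ is conjugate to $a$ in $D$, hence in $G$. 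Reading the odd case contrapositively: two involutions that are \emph{not} conjugate in $G$ have product of even order, hence lie in the same component of $\Gamma_1(G)$.

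Finally I would invoke the hypothesis that $G$ has at least two conjugacy classes of involutions. Given arbitrary involutions $a,a'$: if they are not conjugate in $G$ they are already in the same component by the previous step; if they are conjugate, choose an involution $b$ lying in a different conjugacy class, so that $b$ is conjugate to neither $a$ nor $a'$; then $a,b$ are in the same component and $a',b$ are in the same component, forcing $a,a'$ into the same component. Thus all involutions lie in one component, which by the first reduction is the asserted unique component containing every element of even order. The only subtle point is this last bridging step, and it is precisely there that the ``at least two classes of involutions'' hypothesis is essential; the remaining ingredients are the elementary dihedral computations above.
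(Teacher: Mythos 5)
Your proof is correct, and since the paper quotes this statement from \cite{mp} without reproducing a proof, there is nothing to diverge from: your argument (reduce to involutions via $x\mapsto x^{|x|/2}$, then the dihedral dichotomy on $|ab|$ and the bridging involution from a second conjugacy class) is precisely the standard Brauer--Fowler-style proof given in the cited reference. All the small points that need checking do check out: the central involution $c^{n/2}$ is a genuine third vertex since the reflections $a,b$ lie outside $\langle c\rangle$, and for $n$ odd the computation $a^{c^k}=ac^{2k}$ does sweep out all reflections.
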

In particular, when $G$ has at least two conjugacy classes of involutions, ${\bf Z}(G)=1$ and $\Pi(G)$ is connected, $\Gamma_n(G)$ is also connected, because so is the commuting graph. Actually, for simple groups of Lie type the work of Morgan and Parker~\cite{mp} simplifies further our analysis.
\begin{theorem}[Proposition~8.10,~\cite{mp}]\label{theorem111}
Let $G$ be a simple group of Lie type. Then, except when $G\cong\mathrm{PSL}_2(q)$, $G\cong{}^2B_2(q)$, $G\cong {}^2G_2(q)$, $G\cong {}^2F_4(q)$ and $G\cong\mathrm{PSL}_3(4)$, the commuting graph of $G$ has a connected component containing all elements of even order of $G$.
\end{theorem}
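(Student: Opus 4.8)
The plan is to reduce the statement, by means of Theorems~\ref{theorem44} and~\ref{theorem11}, to the connectivity of the involutions of $G$ inside the commuting graph, and then to settle the residual case through the classification of the simple groups of Lie type with a single class of involutions. Since $G$ is non-abelian simple, ${\bf Z}(G)=1$, so Theorem~\ref{theorem44} applies: the connected components of $\Gamma_1(G)$ are the (possibly several) ones containing an involution, together with some isolated complete graphs, each an abelian Hall subgroup corresponding to an odd-prime component of $\Pi(G)$. Since every element $g$ of even order commutes in $\Gamma_1(G)$ with a suitable power of itself which is an involution (using ${\bf Z}(G)=1$), it follows that $\Gamma_1(G)$ has a connected component containing all elements of even order if and only if all involutions of $G$ lie in one connected component of $\Gamma_1(G)$. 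If $G$ has at least two conjugacy classes of involutions, the latter is immediate from Theorem~\ref{theorem11}. So it remains to treat the groups of Lie type having exactly one conjugacy class of involutions.

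For this I would use the explicit (and short) list of such groups, obtained from the description of the semisimple involutions in odd characteristic and of the unipotent involutions in characteristic $2$: apart from the families $\mathrm{PSL}_2(q)$, ${}^2B_2(q)$, ${}^2G_2(q)$ and ${}^2F_4(q)$ excluded in the statement, it reduces essentially to $\mathrm{PSL}_3(q)$, $\mathrm{PSU}_3(q)$ and, for $q$ odd, $G_2(q)$ and ${}^3D_4(q)$. In each of these groups, a fixed involution $t$ centralizes an involution other than itself: indeed ${\bf C}_G(t)$ contains a copy of $\mathrm{SL}_2(q)$ or $\mathrm{GL}_2(q)$ in odd characteristic, and the full long-root subgroup through $t$ in characteristic $2$, all of whose non-identity elements are involutions. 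The key step is then the connectivity of the commuting graph on involutions. By Bender's theorem, a simple group of Lie type has a strongly embedded subgroup precisely when it is $\mathrm{PSL}_2(2^f)$, ${}^2B_2(2^f)$ or $\mathrm{PSU}_3(2^f)$ with $f\ge 2$; if $G$ is none of these, then having a single class of involutions and no strongly embedded subgroup forces the commuting graph on involutions to be connected, and the conclusion follows as above. The first two exceptional Bender families are already excluded; for $\mathrm{PSU}_3(2^f)$, which is not, I would argue directly: unlike $\mathrm{PSL}_2(2^f)$ and ${}^2B_2(2^f)$, this group contains elements of order $2r$ for some prime $r$ dividing $q+1$, and such an element commutes both with the involution it powers to and with an involution of ${\bf C}_G(h)\cong\mathrm{GL}_2(q)$, where $h$ is its part of order $r$; these ``bridges'' connect the various cliques of involutions into a single component of $\Gamma_1(G)$.

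Combining the two cases yields the assertion for every simple group of Lie type outside the stated list. The main obstacle is the one-class-of-involutions case: it requires the precise list of these groups, explicit structural information on their involution centralizers, and Bender's classification of groups with a strongly embedded subgroup, plus the ad hoc bridging argument for $\mathrm{PSU}_3(2^f)$. The families discarded in the statement are of two kinds: for $\mathrm{PSL}_2(q)$ with $q$ even and for ${}^2B_2(q)$ the assertion genuinely fails, since there the elements of even order are $2$-elements and the subgraph of $\Gamma_1(G)$ they span breaks up as a disjoint union with one piece inside each Sylow $2$-subgroup; the remaining families ${}^2G_2(q)$, ${}^2F_4(q)$, $\mathrm{PSL}_3(4)$ (and $\mathrm{PSL}_2(q)$ with $q$ odd) are set aside merely because they are handled by the separate, detailed analysis carried out elsewhere.
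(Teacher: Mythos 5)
First, note that the paper does not prove this statement at all: it is quoted verbatim as Proposition~8.10 of~\cite{mp}, so there is no internal proof to compare yours against. Your reconstruction is a reasonable strategy and its skeleton (reduce to the involutions via Theorem~\ref{theorem44} and the fact that every element of even order commutes with an involution power of itself, dispose of the groups with at least two classes of involutions by Theorem~\ref{theorem11}, and invoke Bender's classification for the rest) would work; your final paragraph also correctly identifies that only $\mathrm{PSL}_2(2^f)$ and ${}^2B_2(q)$ are genuine failures, the remaining exclusions being for convenience.

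However, two steps are asserted where they need to be proved. (1) The implication ``one class of involutions and no strongly embedded subgroup forces the commuting graph on involutions to be connected'' is the entire content of your second case, and you give no argument for it. It is true and classical: if that graph is disconnected, then so is the graph on Sylow $2$-subgroups joined when they intersect nontrivially, and the setwise stabilizer of a connected component of the latter is a strongly embedded subgroup (see Aschbacher, \emph{Finite Group Theory}, \S 46). But phrased this way the hypothesis ``one class of involutions'' is irrelevant, so your detour through the classification of Lie-type groups with a unique class of involutions (which is, incidentally, not quite right: ${}^2F_4(q)$ has two classes) is unnecessary --- the Bender dichotomy alone splits all simple groups of Lie type into the three families with a strongly embedded subgroup and the rest. (2) For $\mathrm{PSU}_3(2^f)$ your bridges only show that the clique formed by the involutions of a fixed Sylow $2$-subgroup is joined to the involutions of the $q+1$ Sylow $2$-subgroups met by an $\mathrm{SU}_2(q)\le {\bf C}_G(h)$ (note that ${\bf C}_G(h)\cong\mathrm{GU}_2(q)$, not $\mathrm{GL}_2(q)$); you still must argue that these local links merge everything into a single component. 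One way to finish: the setwise stabilizer of a component containing an involution would contain both a full Sylow $2$-subgroup and a nonsoluble $\mathrm{SU}_2(q)$, and no proper subgroup of $\mathrm{PSU}_3(2^f)$ does both, since the Borel subgroup is the only maximal overgroup of a Sylow $2$-subgroup and it is soluble.
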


Combining Theorems~\ref{theorem44} and~\ref{theorem111}, we obtain this useful reduction.
\begin{corollary}\label{cor}
Let $n$ be a positive integer, let $G$ be a simple group of Lie type with $G$ not isomorphic to $\mathrm{PSL}_2(q)$, ${}^2B_2(q)$, ${}^2G_2(q)$, ${}^2F_4(q)$ and $\mathrm{PSL}_3(4)$, and  let $\mathcal{I}$ be the connected component of the commuting graph of  $G$ containing all elements of even order (whose existence is guaranteed by Theorem~$\ref{theorem111}$). Then $\Gamma_n(G)$ is strongly connected if and only if, for every connected component $\psi$ of the prime graph of $G$ with $2\notin\psi$ and for every Hall $\psi$-subgroup $H$ of $G$ (whose existence is guaranteed by Theorem~$\ref{theorem44}$), there exists $h\in H\setminus \{1\}$ and $x,y\in\mathcal{I}$ with $x\mapsto_n h$ and $h\mapsto_n y$.
\end{corollary}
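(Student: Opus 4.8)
The plan is to deduce Corollary~\ref{cor} by carefully stitching together Theorems~\ref{theorem44} and~\ref{theorem111} with the elementary observation that the commuting graph $\Gamma_1(G)$ is a subgraph of the Engel graph $\Gamma_n(G)$ (and hence of $\Gamma(G)$), since $[x,y]=1$ forces $[x,_ny]=1$ for every $n\ge 1$. First I would fix $G$ among the listed Lie type groups outside the exceptional list, note that $G$ is non-abelian simple so ${\bf Z}(G)=1$ and $G\setminus{\bf Z}_\infty(G)=G\setminus\{1\}$, and recall from Theorem~\ref{theorem111} that the commuting graph has a distinguished connected component $\mathcal I$ containing every element of even order. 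By Theorem~\ref{theorem44}, applied to each connected component $\Psi\ne\mathcal I$ of the commuting graph, every such $\Psi$ consists of the non-identity elements of an abelian Hall $\psi$-subgroup $H$ with $\psi=\Pi(\Psi)$ a connected component of the prime graph not containing $2$, and $H$ is \emph{isolated} in the commuting graph, meaning no element of $H\setminus\{1\}$ commutes with anything outside $H$. Conversely, every connected component $\psi$ of the prime graph with $2\notin\psi$ arises this way, and for a fixed $\psi$ all Hall $\psi$-subgroups are conjugate (so it suffices to check the condition for one of them).

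Next I would prove the two directions. For the ``only if'' direction, suppose $\Gamma_n(G)$ is strongly connected, fix a component $\psi$ with $2\notin\psi$ and a Hall $\psi$-subgroup $H$, and pick any $h\in H\setminus\{1\}$; strong connectivity gives directed paths in $\Gamma_n(G)$ from some fixed vertex of $\mathcal I$ to $h$ and from $h$ back to $\mathcal I$, and walking along these paths until one first leaves, respectively last enters, $H$ produces elements $x,y\in G\setminus H$ together with $h',h''\in H\setminus\{1\}$ with $x\mapsto_n h'$ and $h''\mapsto_n y$; since $x\notin H$ and $H\setminus\{1\}$ is isolated in the \emph{commuting} graph one still needs $x,y$ to lie in $\mathcal I$, which follows because $G\setminus H$ is contained in $\mathcal I\cup(\text{other Hall subgroups})$ and an Engel arc from a different isolated Hall subgroup $H_1\setminus\{1\}$ into $H$ is impossible — an element of odd prime-power-ish order in $H_1$ centralizing a nontrivial power-structure forces membership — so after replacing $h',h''$ by a common element using that $H$ is abelian (hence $\Gamma_1(H)=\Gamma_n(H)$ restricted to $H$ is complete) we land exactly in the stated condition. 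For the ``if'' direction, assume the condition holds; then $\mathcal I$ is strongly connected (the commuting graph restricted to $\mathcal I$ is connected and undirected, hence already strongly connected as a directed graph, and it sits inside $\Gamma_n(G)$), and for each exceptional isolated Hall $\psi$-subgroup $H$ the hypothesis provides $x\mapsto_n h\mapsto_n y$ with $x,y\in\mathcal I$, so every vertex of $H\setminus\{1\}$ is reachable from and reaches $\mathcal I$ (using again that $H$ is abelian so $\Gamma_n(G)$ restricted to $H$ is complete), and since $G\setminus\{1\}=\mathcal I\cup\bigcup H$ this shows $\Gamma_n(G)$ is strongly connected.

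The main obstacle I anticipate is the bookkeeping in the ``only if'' direction: one must rule out the possibility that a directed Engel path sneaks from one isolated Hall subgroup $H_1$ to another $H$ without passing through $\mathcal I$, and that the witnessing vertices $x,y$ genuinely lie in $\mathcal I$ rather than in some third isolated component. This is where the precise statement of Theorem~\ref{theorem44} — that $H$ is isolated in the \emph{commuting} graph and that its prime support is a whole prime-graph component — must be used, together with the fact that the only vertices not in $\mathcal I$ and not in $H$ live in other isolated abelian Hall subgroups whose prime supports are disjoint from $\psi$; any Engel arc $u\mapsto_n h$ with $u$ of order coprime to $|H|$ and $h\in H\setminus\{1\}$ would, by taking an appropriate power, contradict isolation in the commuting graph. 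Once this localization is in place, the reduction to checking a single arc through a single Hall $\psi$-subgroup for each odd component $\psi$ is immediate, and this is precisely the computational task carried out in the later sections referenced in Corollary~\ref{corcorcor}.
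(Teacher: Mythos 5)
The paper offers no written proof of this corollary (it is presented as an immediate combination of Theorems~\ref{theorem44} and~\ref{theorem111}), so your proposal is really a reconstruction of the intended argument. Your \emph{if} direction is correct and complete, and it is the only direction the paper ever uses: the vertex set $G\setminus\{1\}$ is partitioned into $\mathcal I$ and the isolated sets $H\setminus\{1\}$, each of which lies in a single strongly connected component of $\Gamma_n(G)$ because $\Gamma_1(G)$ is a symmetric subgraph of $\Gamma_n(G)$ and $H$ is abelian, and the hypothesised arcs $x\mapsto_n h\mapsto_n y$ glue everything to $\mathcal I$.

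The \emph{only if} direction, however, has a genuine gap exactly at the step you flag as the main obstacle. You claim that an Engel arc $u\mapsto_n h$ with $u\in H_1\setminus\{1\}$ ($H_1$ a different isolated Hall subgroup) and $h\in H\setminus\{1\}$ is impossible, ``by taking an appropriate power, contradicting isolation in the commuting graph.'' This argument does not work, and the statement it is meant to prove is false as a general principle. Indeed, if $x\in{\bf N}_G(H)$ and $h\in H$, then $[x,h]=(x^{-1}h^{-1}x)h\in H$, so $[x,h,h]\in[H,H]=1$ and $x\mapsto_2 h$; this holds even when $x$ acts fixed-point-freely on $H$, i.e.\ when $x$ centralizes nothing in $H$, so no power of anything ever lands you in a commuting pair that violates isolation. (Taking $n$ minimal in $[u,_nh]=1$ only shows $[u,_{n-1}h]\in{\bf C}_G(h)=H$, which is no contradiction.) Consequently, if some nontrivial element of another isolated Hall subgroup $H_1$ happened to lie in ${\bf N}_G(H)\setminus H$, there \emph{would} be an Engel arc from $H_1\setminus\{1\}$ into $H\setminus\{1\}$, and your localization of the witnesses $x,y$ to $\mathcal I$ would break down. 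What actually closes the gap is the extra structural fact that ${\bf N}_G(H)\setminus H\subseteq\mathcal I$, equivalently that every prime dividing $|{\bf N}_G(H):H|$ lies in the prime-graph component of $2$: for a single odd component this follows formally (a $\psi$-element of ${\bf N}_G(H)=H\rtimes Q$ maps to a $\psi$-element of the $\psi'$-group $Q$, hence lies in $H$), but for groups with several odd components (e.g.\ $E_8(q)$, ${}^2E_6(2)$, $E_7(2)$, $E_7(3)$) it requires the explicit normalizer data from Williams and Kondrat'ev--Mazurov, not merely Theorems~\ref{theorem44} and~\ref{theorem111}. A similar remark applies to your ``replace $h',h''$ by a common element'' step: commutativity of $H$ gives a directed path through $H$, not a single $h$ with $x\mapsto_n h\mapsto_n y$; one gets a common $h$ only after observing that $x\in{\bf N}_G(H)$ forces $x\mapsto_2 h$ for \emph{every} $h\in H$.
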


From Corollary~\ref{cor}, the proof of Theorem~\ref{main} is reduced 
\begin{itemize}
\item to $G$ isomorphic to  $\mathrm{PSL}_2(q)$, ${}^2B_2(q)$, ${}^2G_2(q)$, ${}^2F_4(q)$ or $\mathrm{PSL}_3(4)$ (which will be dealt with ad-hoc methods), and
\item  to non-abelian simple groups not isomorphic to one of the groups above (which will be dealt with by studying the connected components of the prime graph). 
\end{itemize}

Before concluding this section we make two comments. First, some of the methods developed in this paper for dealing with the second case above turn out to be also useful to  deal with a conjecture of John Thompson (see~\cite[Problem~8.75]{KM}) on the \textit{\textbf{derangememts}} of a primitive permutation group, see for instance Lemma~\ref{3D4lemma1} and~\ref{2G2lemma1}.
\begin{conjecture}Let $G$ be a finite primitive permutation group on a set $\Omega$. Then, for any two distinct elements $\alpha$ and $\beta$ of $\Omega$, there exists a derangement of $G$ (that is, a fixed-point-free element) with $\alpha^g=\beta$.
\end{conjecture}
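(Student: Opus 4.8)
\medskip
\noindent\textbf{A strategy for the conjecture.}
A statement of this generality ultimately depends on the classification of finite simple groups, so the plan is to dispose of the easy configurations first and then to reduce to the almost simple case, where the techniques of this paper---in the form of Lemmas~\ref{3D4lemma1} and~\ref{2G2lemma1}---do the real work. The elementary input is a coset count. Put $H:=G_\alpha$ and fix $x\in G$ with $\alpha^x=\beta$; then $\{g\in G\mid\alpha^g=\beta\}$ is exactly the left coset $Hx$, and since $x\notin H$ one has $Hx\cap H=\varnothing$ and $Hx\cap H^x=\varnothing$ (if $h_1x=x^{-1}h_2x$ then $x=h_2h_1^{-1}\in H$), so no element of $Hx$ fixes $\alpha$ or $\beta$. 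For any other point $\delta$, the set $Hx\cap G_\delta$ is empty or a coset of $H\cap G_\delta$, whence by the orbit-counting lemma the number of elements of $Hx$ fixing some point is at most $\sum_{\delta\ne\alpha,\beta}|H\cap G_\delta|=|H|(r-1)-|H\cap G_\beta|\le|H|(r-1)-1$, where $r$ is the permutation rank. As this is smaller than $|Hx|=|H|$ precisely when $r\le 2$, the conjecture is immediate for $2$-transitive groups, and for rank $3$---and somewhat beyond---it follows as soon as one shows that sufficiently many elements of $Hx$ fix two or more points.

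I would next invoke the O'Nan--Scott description of the primitive group $G$. If $G$ has a regular normal subgroup---covering the affine, twisted-wreath and holomorph types---then the unique element of that subgroup sending $\alpha$ to $\beta$ is non-trivial, hence fixed-point-free, and we are done. In the simple-diagonal, compound-diagonal and product-action types the socle $N$ is transitive and a derangement of $N$ on $\Omega$ is a derangement of $G$, and the problem should reduce to the almost simple case along lines familiar from the study of derangements: a non-trivial element of $N$ supported on a single simple direct factor (or a suitably diagonal element) is fixed-point-free, and moving $\alpha$ to $\beta$ is arranged coordinate by coordinate. After this we may assume that $G$ is almost simple with socle a simple group $T$ and that $H=G_\alpha$ is maximal in $G$.

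For the almost simple case I would proceed family by family according to the possibilities for $H$. When $|G:H|$ is large relative to $|H|$, the proportion of derangements in $G$ is bounded below (Fulman and Guralnick, and in softer form Cameron and Cohen); localising such estimates, together with fixed-point-ratio bounds in the spirit of Liebeck and Shalev feeding into the refined count above, should force a derangement into the single coset $Hx$. For the sporadic groups and those of small Lie rank one falls back on explicit character tables and lists of maximal subgroups. The substantive case is $G$ of Lie type: given $Hx$, the aim is to exhibit in it an element that is not conjugate into $H$, and the efficient way is to produce a regular semisimple element lying in a maximal torus $S$ with $S\cap H=1$, or an element of order divisible by a prime $\ell\nmid|H|$. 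Determining which maximal tori are met by each coset, and which primes divide the orders of the relevant maximal subgroups, is exactly the computation performed in Lemmas~\ref{3D4lemma1} and~\ref{2G2lemma1} for $T={}^3D_4(q)$ and $T={}^2G_2(q)$, and the same templates should handle the other exceptional groups and the classical groups of bounded rank.

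The hard part will be the classical and exceptional groups of \emph{unbounded} rank. There the number of classes of maximal subgroups grows, subfield, imprimitive and tensor-product subgroups share many prime divisors with $|T|$ so the coprime-order argument is unavailable, and one must show that \emph{every} coset $Hx$ with $x\notin H$---not merely a positive proportion of the elements of $G$, and not merely those cosets meeting a prescribed torus---contains a derangement. The plan there is to combine the refined coset count (with $|H\cap G_\delta|$ controlled through fixed-point ratios of the relevant actions) with a uniform supply of regular semisimple classes, showing that $Hx$ always meets a class of regular semisimple elements whose torus type is generic enough not to be $G$-conjugate into $H$. Making this uniform over all maximal-subgroup families, while keeping the small-rank and defining-characteristic exceptions under control, is where the bulk of the work, and the main risk, lies.
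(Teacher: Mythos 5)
The statement you are asked to prove is labelled a \emph{conjecture} in the paper, and the paper contains no proof of it: this is Thompson's problem on derangements (\cite[Problem~8.75]{KM}), which is open. The authors only remark that Lemmas~\ref{3D4lemma1} and~\ref{2G2lemma1} are relevant to it; those lemmas show $H\mathcal{C}=G\setminus H$ for one specific maximal subgroup $H$ of ${}^3D_4(q)$ and of ${}^2G_2(q)$ respectively, which verifies the conjecture only for those two particular primitive actions (every coset $Hx$ with $x\notin H$ meets the class of regular semisimple elements $\mathcal{C}$, whose order is coprime to $|H|$, so each such coset contains a derangement). Your submission is, by its own account, a strategy outline rather than a proof, so it cannot be matched against a proof in the paper.

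Within your outline, the elementary parts are correct: the identification of $\{g\mid\alpha^g=\beta\}$ with the coset $Hx$, the count via the orbit-counting lemma giving the $2$-transitive case, and the regular-normal-subgroup case (every non-identity element of a regular normal subgroup is a derangement, and exactly one of them carries $\alpha$ to $\beta$). The genuine gaps are two. First, the reduction of the diagonal and product-action types to the almost simple case is asserted, not proved: exhibiting \emph{some} derangement in the socle is not the issue --- the difficulty is producing a derangement inside the single prescribed coset $Hx$, and your ``coordinate by coordinate'' remark does not address this, since the special elements you name (supported on one simple factor) need not lie in $Hx$. Second, the almost simple case --- which is the entire content of the conjecture --- is left as a programme whose hardest step (every coset of every maximal subgroup of a classical group of unbounded rank meets a class not fusing into $H$) you yourself flag as the main risk. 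So what you have is a correct restatement of known partial results and a plausible plan consistent with the paper's methods, not a proof; and no proof should be expected here, since the paper does not claim one.
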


Second, Abdollahi~\cite{Ab} has also defined an Engel graph associated to a finite group $G$ (essentially the complement of the undirected version of our graph $\Gamma(G)$). Our definition differs from the definition proposed by Abdollahi and was introduced for the first time by Cameron in~\cite[Section 11.1]{cam}.

In this paper, we denote by ${\bf o}(g)$ the order of the group element $g\in G$ and by ${\bf N}_G(H)$ the normalizer of the subgroup $H\le G$.

In our investigation we will use the following results.

\begin{lemma}\label{2comm}Let $G$ be a group and let  $x,y \in G.$ If ${\bf o}(y)=2,$ then $[x,_ny]=[x,y]^{(-2)^{n-1}}.$
\end{lemma}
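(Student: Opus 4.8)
The plan is to prove this by induction on $n$. The base case $n=1$ is immediate: $[x,_1y]=[x,y]$ and $(-2)^{0}=1$, so $[x,y]=[x,y]^{(-2)^{0}}$ holds trivially. For the inductive step, assume $[x,_ny]=[x,y]^{(-2)^{n-1}}$ and we want to compute $[x,_{n+1}y]=[[x,_ny],y]$. The key observation is that $[x,_ny]$ is a power of the single element $c:=[x,y]$, so everything takes place inside the cyclic group $\langle c\rangle$, \emph{provided} we can control how conjugation by $y$ acts on $c$. Here is where the hypothesis ${\bf o}(y)=2$ enters: I claim that $c^{y}=c^{-1}$, i.e. conjugation by the involution $y$ inverts $[x,y]$. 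Indeed, $[x,y]^{y}=(x^{-1}y^{-1}xy)^{y}=y^{-1}x^{-1}y^{-1}xy\cdot y=y^{-1}x^{-1}yx=(x^{-1}y^{-1}xy)^{-1}=[x,y]^{-1}$, using $y^{2}=1$ so that $y^{-1}=y$.

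Granted this, the inductive step is a short computation. Writing $c=[x,y]$ and $m=(-2)^{n-1}$, the inductive hypothesis gives $[x,_ny]=c^{m}$, and then
\begin{align*}
[x,_{n+1}y]=[c^{m},y]=(c^{m})^{-1}(c^{m})^{y}=c^{-m}(c^{y})^{m}=c^{-m}(c^{-1})^{m}=c^{-m}c^{-m}=c^{-2m}.
\end{align*}
Since $-2m=-2\cdot(-2)^{n-1}=(-2)^{n}$, this is exactly $[x,y]^{(-2)^{n}}=[x,_{(n+1)}y]$ as claimed, completing the induction. I would present the identity $[x,y]^{y}=[x,y]^{-1}$ either as a preliminary one-line lemma or inline within the induction.

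There is essentially no serious obstacle here: the only thing one must be a little careful about is the direction of the conjugation convention (whether $a^{b}$ means $b^{-1}ab$ or $bab^{-1}$) and the corresponding convention for the commutator $[a,b]$, but the statement is convention-independent in the sense that whichever consistent choice the paper uses, the same manipulation ${\bf o}(y)=2\implies [x,y]^{y}=[x,y]^{-1}$ goes through, and the exponent arithmetic $-2m=(-2)\cdot(-2)^{n-1}=(-2)^{n}$ is unchanged. So the "hard part" is merely bookkeeping; the mathematical content is entirely the observation that an involution inverts any commutator in which it is one of the two arguments, which reduces the whole Engel word to iterated squaring (up to sign) inside a cyclic group.
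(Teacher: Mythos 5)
Your proof is correct and follows essentially the same route as the paper: induction on $n$, with the key step being that the involution $y$ inverts the commutator $[x,y]$ (the paper's chain of equalities $[y,x]^{(-2)^{n-1}}y[x,y]^{(-2)^{n-1}}y=[y,x]^{(-2)^{n-1}}[y,x]^{(-2)^{n-1}}$ is exactly your identity $(c^m)^y=(c^{-1})^m$ written inline). The only difference is presentational: you isolate $[x,y]^y=[x,y]^{-1}$ as an explicit preliminary observation, which the paper leaves implicit.
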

\begin{proof}
We prove the statement by induction on $n$. $$[x,_{n+1}y]=[[x,_ny],y]=[[x,y]^{(-2)^{n-1}},y]=[y,x]^{(-2)^{n-1}}y[x,y]^{(-2)^{n-1}}y=[y,x]^{(-2)^{n-1}}[y,x]^{(-2)^{n-1}}=[x,y]^{(-2)^{n}}\qedhere
$$
\end{proof}

The next two lemmas are taken from~\cite{DLN}.

\begin{lemma}\label{norm} Let $G$ be a group and let  $x,y \in G.$ If $x \in {\bf N}_G(\langle y \rangle),$ then $[x,_2y]=1.$
\end{lemma}

\begin{lemma}\label{NC}
	Let $y$ be contained in a subgroup $ K$ of a finite group $G$ and suppose that  the following assumptions are satisfied:
	\begin{enumerate}
		\item\label{eq:NC1} ${\bf N}_G(K)=K$;
		\item\label{eq:NC2} $y\in K^g$ if and only if $K^g=K$.
	\end{enumerate}
	Then $x\notin K$ implies $x\not\mapsto y$.
\end{lemma}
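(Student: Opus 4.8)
My plan is first to repackage the two hypotheses into a single clean statement: for every $g\in G$,
\[
y^{g}\in K \iff g\in K .
\]
To prove this I would observe that $y^{g}\in K$ is equivalent to $y\in gKg^{-1}=K^{g^{-1}}$; applying condition~(\ref{eq:NC2}) to the element $g^{-1}$ forces $K^{g^{-1}}=K$, i.e.\ $g^{-1}\in{\bf N}_G(K)$, and then condition~(\ref{eq:NC1}) gives $g^{-1}\in K$, hence $g\in K$. The reverse implication is immediate since $y\in K$ and $K$ is a subgroup. For what follows I will only need the implication $y^{g}\in K\Rightarrow g\in K$.

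Next I would argue by contradiction: suppose $x\notin K$ but $x\mapsto y$, so that $[x,_{n}y]=1$ for some positive integer $n$. Fix such an $n$ and set $c_i:=[x,_iy]$ for $0\le i\le n$, so that $c_0=x$ and $c_n=1$, and record the elementary identity $c_i^{\,y}=c_i\,c_{i+1}$, which is just a rewriting of $[x,_{i+1}y]=[x,_iy]^{-1}[x,_iy]^{y}$. The heart of the argument is a downward induction on $i$ showing that $c_i\in K$ for every $i$ with $0\le i\le n-1$; taking $i=0$ then yields $x=c_0\in K$, the desired contradiction. For the base case $i=n-1$, the relations $c_n=1$ and $c_{n-1}^{\,y}=c_{n-1}c_n$ give $c_{n-1}^{\,y}=c_{n-1}$, so $y$ commutes with $c_{n-1}$ and hence $y^{c_{n-1}}=y\in K$; by the implication established above, $c_{n-1}\in K$. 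For the inductive step, assuming $c_{i+1}\in K$ with $0\le i\le n-2$, I would use $c_{i+1}=[c_i,y]=c_i^{-1}y^{-1}c_iy$ together with $y\in K$ to get $c_i^{-1}y^{-1}c_i=c_{i+1}y^{-1}\in K$, hence $y^{c_i}=(c_i^{-1}y^{-1}c_i)^{-1}\in K$, and therefore $c_i\in K$ once more.

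I do not expect a genuine obstacle here, since the whole argument is short and elementary; the only delicate point is the bookkeeping in translating condition~(\ref{eq:NC2}) into the equivalence $y^{g}\in K\iff g\in K$, that is, keeping the conjugation convention and the inverses straight. Once that reformulation is in place, the downward induction along the Engel chain $c_0,c_1,\dots,c_n$ runs automatically.
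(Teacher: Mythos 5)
Your proof is correct. The paper itself states this lemma without proof (it is quoted from~\cite{DLN}), so there is nothing in the present text to compare against; your reformulation of the two hypotheses as the equivalence $y^{g}\in K\iff g\in K$, followed by the downward induction along the chain $c_i=[x,_iy]$ using $c_{i+1}y^{-1}=c_i^{-1}y^{-1}c_i$, is exactly the natural argument and fills the gap cleanly.
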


\section{Projective special linear groups of Lie rank $1$}\label{sec:lierank1}
In this section, we consider the Engel graph of the projective special linear group $\mathrm{PSL}_2(q)$. When $q$ is even, the Engel graph of $\mathrm{PSL}_2(q)$ is not strongly connected, see~\cite{DLN}. Therefore, for the rest of this section, we only need to focus on the case $q$ odd. We divide our argument depending on whether $q\equiv 1\pmod 4$ or $q\equiv 3\pmod 4$.
\subsection{Case $q\equiv 1\pmod 4$}\label{sec:q=1}
The main scope of this section is to prove the following technical lemma. Its proof (or better, our proof) relies on some weak random properties of Paley graphs~\cite{aaa}.
\begin{lemma}\label{nr1}
Let $q$ be a prime power with $q\equiv 1\pmod 4$ and with $q\ne 9$, let $\mathbb{F}_q$ be the finite field of cardinality $q$, let $i\in\mathbb{F}_q$ with $i^2=-1$ and let
\[
z:=
\begin{pmatrix}
0&1\\
-1&0
\end{pmatrix}\in \mathrm{SL}_2(q),\quad
z':=
\begin{pmatrix}
-i&0\\
0&i
\end{pmatrix}\in \mathrm{SL}_2(q).
\]
Then, there exists a matrix
\begin{equation}\label{eq:eqeq1}
x:=
\begin{pmatrix}
a&b\\
c&d
\end{pmatrix}\in \mathrm{SL}_2(q)
\end{equation}
with no eigenvalues in $\mathbb{F}_q$
such that
\begin{equation}\label{eq:eqeq2}
x^{-1}z^{-1}xz=[x,z]=z'.
\end{equation}
In particular,
\begin{equation*}
[x,z,z]=\begin{pmatrix}
-1&0\\
0&-1
\end{pmatrix}.
\end{equation*} 
\end{lemma}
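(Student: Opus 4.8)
The plan is to reduce the matrix equation $[x,z]=z'$ to an explicit system of linear and quadratic conditions on the entries $a,b,c,d$, and then use the random-like properties of the Paley graph to show the system admits a solution with $x$ having no eigenvalues in $\mathbb{F}_q$. First I would compute $[x,z]=x^{-1}z^{-1}xz$ directly. Since $z=\begin{pmatrix}0&1\\-1&0\end{pmatrix}$, conjugation by $z$ sends $\begin{pmatrix}a&b\\c&d\end{pmatrix}$ to $\begin{pmatrix}d&-c\\-b&a\end{pmatrix}$, and $x^{-1}=\begin{pmatrix}d&-b\\-c&a\end{pmatrix}$ because $\det x=1$. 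Multiplying out $x^{-1}\cdot(z^{-1}xz)$ gives a matrix whose entries are quadratic forms in $a,b,c,d$. Setting this equal to $z'=\operatorname{diag}(-i,i)$ yields: the off-diagonal entries must vanish, and the diagonal entries must equal $-i$ and $i$. I expect the off-diagonal conditions to force a relation such as $ac=bd$ (or $ac+bd$ combined with $a^2+c^2$, $b^2+d^2$ type expressions), and the diagonal conditions together with $\det x=1$ to pin down the remaining degrees of freedom up to the choice of a parameter.

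Next I would parametrize the solution set. After imposing the vanishing of the off-diagonal entries and $ad-bc=1$, one free parameter should remain; call it $t$. The diagonal entries of $[x,z]$ then become explicit rational (in fact polynomial) functions of $t$, and requiring them to equal $\mp i$ imposes one further equation, which I expect to be equivalent to a condition of the form ``$t$ is a square'' or ``$1+t^2$ is a (non)square'' in $\mathbb{F}_q$ — precisely the kind of statement governed by quadratic residues. Simultaneously, the requirement that $x$ have no eigenvalue in $\mathbb{F}_q$ is the condition that its characteristic polynomial $\lambda^2-(a+d)\lambda+1$ be irreducible over $\mathbb{F}_q$, i.e. that $(a+d)^2-4$ be a non-square in $\mathbb{F}_q$. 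Writing $a+d=\tau(t)$ for the appropriate function $\tau$, this becomes a second quadratic-residue condition on $t$.

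The heart of the argument is then to show that there exists $t\in\mathbb{F}_q$ simultaneously satisfying both the solvability condition for \eqref{eq:eqeq2} and the irreducibility condition $(a+d)^2-4\in\mathbb{F}_q\setminus(\mathbb{F}_q^\times)^2$, while also avoiding the finitely many bad values of $t$ where the parametrization degenerates (e.g. where $c=0$, or where an eigenvalue accidentally lands in $\mathbb{F}_q$, or $t$ makes a denominator vanish). This is exactly where the weak pseudorandomness of the Paley graph of order $q$ enters: both conditions can be phrased as ``$f(t)$ and $g(t)$ lie in a prescribed pair of residue classes (square / non-square)'' for explicit quadratics or quartics $f,g$, and the number of $t$ with prescribed values of the two Legendre symbols $\left(\tfrac{f(t)}{q}\right)$, $\left(\tfrac{g(t)}{q}\right)$ is $q/4+O(\sqrt q)$ by a character-sum (Weil) estimate of the type underlying Paley graph edge-distribution results. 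Hence for $q$ large enough the desired $t$ exists; the hypothesis $q\ne 9$ (and more generally small-$q$ cases) would be handled by a direct check, which I expect to be the only place a genuinely case-by-case verification is needed. The final sentence, $[x,z,z]=-I$, is then immediate from Lemma~\ref{2comm}: since $z'=[x,z]$ and we have $[x,z,z]=[x,z]^{[z]}$-type recursion, one simply computes $[z',z]$, or alternatively notes $z'$ has order $4$ and $z$ inverts $\langle z'\rangle$, so $[z',z]=z'^{-2}=-I$. The main obstacle is getting the character-sum bound clean enough (with explicit constants, or by citing~\cite{aaa}) to cover all $q\equiv 1\pmod 4$ with $q\ne 9$ rather than merely asymptotically.
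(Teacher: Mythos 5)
Your proposal follows essentially the same route as the paper: a direct computation of $[x,z]$ reduces the problem to simultaneous quadratic-residue conditions on a single remaining parameter (in the paper these come out as ``$a^2-i$ is a square and $a^2-2i$ is a non-square''), which are then met using the pseudorandomness of the Paley graph --- the paper treats $q\equiv 5\pmod 8$ by an elementary strongly-regular-graph parameter count and $q\equiv 1\pmod 8$ by citing the adjacency-property result of~\cite{aaa}, precisely the character-sum input you anticipate, with $q=9$ checked (and excluded) directly. Your argument for the final claim, namely that $z$ inverts the order-$4$ element $z'$ so that $[z',z]=z'^{-2}=-I$, also matches the paper's conclusion.
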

\begin{proof}
Using the coefficients $a,b,c,d$ of $x$ in~\eqref{eq:eqeq1}, we see with a matrix computation that
$$
[x,z]=\begin{pmatrix}b^2+d^2&-ab-cd\\-ab-cd&a^2+c^2\end{pmatrix}.$$
In particular, if $[x,z]=z'$, then
\begin{align}\label{eq:eqeq212}
\begin{pmatrix}b^2+d^2&-ab-cd\\-ab-cd&a^2+c^2\end{pmatrix}=\begin{pmatrix}-i&0\\0&i\end{pmatrix}.
\end{align}
As $1=\mathrm{det}(x)=ad-bc$, we obtain
$$0=(-ab-cd)\cdot d=-abd-cd^2=-b-b^2c-cd^2=-b-c(b^2+d^2)=-b+ic.$$
Therefore, $c=-ib$. In particular, $c^2=-b^2$. Summing the elements in coordinate $(1,1)$ and $(2,2)$ of the matrices in~\eqref{eq:eqeq212}, we obtain
$$0=-i+i=(b^2+d^2)+(a^2+c^2)=a^2+d^2$$
and hence $d^2=-a^2$, that is, $d=\varepsilon ia$ for some $\varepsilon\in \{-1,1\}$. Now, the element in coordinate $(1,1)$ in~\eqref{eq:eqeq212} gives
$$-i=b^2-a^2.$$
As $1=\det(x)=ad-bc=\varepsilon ia^2+ib^2$, we get $-i=\varepsilon a^2+b^2$. Therefore, 
$$\varepsilon a^2+b^2=-i=b^2-a^2$$
and hence $\varepsilon=-1$.
Summing up, we have
\[
\left\{
\begin{array}{ccc}
c&=&-ib,\\
d&=&-ia,\\
a^2-b^2&=&i.
\end{array}
\right.
\]
The first two conditions uniquely determine $c$ and $d$ as a function of $a$ and $b$. Whereas, the third condition implies
\begin{equation}\label{eq:eqeq3}
a^2-i \hbox{ is a square in }\mathbb{F}_q.
\end{equation}

Now, the characteristic polynomial of $x$ is
$$T^2+a(i-1)T+1.$$
In particular, $x$ has no eigenvalues in $\mathbb{F}_q$ if and only if the discriminant
\begin{equation*}
(a(i-1))^2-4=-2ia^2-4 \hbox{ is not a square in }\mathbb{F}_q.
\end{equation*}
Dividing $-2ia^2-4$ by $-2i$ and recalling $i^2=-1$, we obtain the equivalent condition
\begin{equation}\label{eq:eqeq4444}
\begin{cases}
a^2-2i \hbox{ is not a square in }\mathbb{F}_q,&\textrm{when }2i \textrm{ is a square},\\
a^2-2i \hbox{ is a square in }\mathbb{F}_q,&\textrm{when }2i \textrm{ is not a square}.
\end{cases}
\end{equation}
Now, observe that $i$ is a square in $\mathbb{F}_q$ if and only if the multiplicative group $\mathbb{F}_q^\ast$ contains an element of order $8$, that is, $q\equiv 1\pmod 8$. Using Gauss quadratic reciprocity and using the fact that $q\equiv 1\pmod 4$, we see that $2$ is a square in $\mathbb{F}_q$ if and only if $q\equiv 1\pmod 8$. In particular, when $q\equiv 1\pmod 8$, $i$ and $2$ are both squares in $\mathbb{F}_q$ and hence so is $2i$. Similarly, when $q\equiv 5\pmod 8$, neither $i$ nor $2$ are squares in $\mathbb{F}_q$ and hence their product $2i$ is a square in $\mathbb{F}_q$. Therefore, $2i$ is a square in $\mathbb{F}_q$ for every $q\equiv 1\pmod 4$. Using this observation,~\eqref{eq:eqeq4444} becomes
\begin{equation}\label{eq:eqeq44444}
a^2-2i \hbox{ is not a square in }\mathbb{F}_q.
\end{equation}
Summing up, the existence of $x\in\mathrm{SL}_2(q)$ satisfying~\eqref{eq:eqeq2} is equivalent to the existence of $a\in\mathbb{F}_q$ satisfying~\eqref{eq:eqeq3} and~\eqref{eq:eqeq44444}.

To prove that there exists $a\in\mathbb{F}_q$ satisfying~\eqref{eq:eqeq3} and~\eqref{eq:eqeq44444} we use an auxiliary graph: this is purely aesthetic, but it makes the argument more clear in our opinion.

Let $P_q$ be the  graph having vertex set $\mathbb{F}_q$ and where two distinct vertices $x$ and $y$ are declared to be adjacent in $P_q$ if $x-y$ is a square in $\mathbb{F}_q$. In algebraic graph theory, $P_q$ is called the Paley graph. The fact that $q\equiv 1\pmod 4$ implies that $P_q$ is undirected. 
Recall that $P_q$ is a strongly regular graph with parameters
$$\left(q,\frac{1}{2}(q-1),\frac{1}{4}(q-5),\frac{1}{4}(q-1)\right).$$
This means that 
\begin{itemize}
\item $P_q$ has $q$ vertices, 
\item each vertex of $P_q$ has valency $(q-1)/2$, 
\item any two distinct adjacent vertices of $P_q$ have $(q-5)/4$ common neighbours, 
\item any two distinct non-adjacent vertices have $(q-1)/4$ common neighbours.
\end{itemize} 

Suppose first that $q\equiv 5\pmod 8$ and recall that in this case $i$ is not a square. In particular, $0$ and $i$ are non-adjacent vertices of $P_q$ and hence $i$ and $0$ have $(q-1)/4$ neighbours in common. Since $2i$ is a square, $0$ and $2i$ are adjacent vertices of $P_q$ and hence  $2i$ and $0$ have $(q-5)/4$ neighbours in common.  Since $(q-5)/4<(q-1)/4$, there exists a vertex $\alpha$ with $\alpha$ adjacent to $0$ and $i$ and with $\alpha$ not adjacent to $2i$. In other words, $\alpha=\alpha-0$ is a square, $\alpha-i$ is a square and $\alpha-2i$ is not a square. In particular, $\alpha$ satisfies~\eqref{eq:eqeq3} and~\eqref{eq:eqeq44444}.

Suppose next that $q\equiv 1\pmod 8$ and recall that in this case $i$ and $2i$ are squares. When $q>(1+2\sqrt{2})^2>14.6$, Theorem~4.1 in~\cite{aaa} applied with $k:=1$ yields that there exists a vertex $\alpha$ adjacent to $0$ and $i$, but $\alpha$ not adjacent  to $2i$. Therefore, as above, $\alpha$ and $\alpha-i$ are squares and $\alpha-2i$ is not a square. In particular, when $q>14.6$, $\alpha$ satisfies~\eqref{eq:eqeq3} and~\eqref{eq:eqeq44444}. Since $q\equiv 1\pmod 8$, the only remaining case is $q:=9$ and it can be verified that the matrix $x$ does not exist in this case.
\end{proof}

We observe that in the last part of the proof of Lemma~\ref{nr1} we did not need to divide the proof depending on the residue class of $q$ modulo $8$, because~\cite{aaa} deals with arbitrary Paley graphs. We have included a different argument when $q\equiv 5\pmod 8$ for its simplicity.

\subsection{Case $q\equiv 3\pmod 4$: character theoretic considerations}\label{sec:q=3}
In this section, we are mainly concerned with the case that $q\equiv 3\pmod 4$. However, since some of our arguments are rather general, we make a restriction on $q$ only when needed.

Let $q:=p^f$ be a prime 
power with $p$ an odd prime.
We start with some considerations concerning $\mathrm{SL}_2(q)$. We tend to use a ``tilde'' to denote the elements of the special linear group $\mathrm{SL}_2(q)$.
We consider the action of $\mathrm{SL}_2(q)$ on the projective line $\Omega$ and on the non-zero row vectors of $\mathbb{F}_q^2\setminus\{0\}$. Fix $e_1:=(1,0)\in \mathbb{F}_q^2$, let $\omega:=\langle (1,0)\rangle=\langle e_1\rangle\in \Omega$, let $$\tilde{P}:=
\left\{\begin{pmatrix}1&0\\ x&1\end{pmatrix}\mid x\in\mathbb{F}_q\right\}$$ be the stabilizer of $e_1$ in $\mathrm{SL}_2(q)$. In particular, $\tilde{P}$ is a Sylow $p$-subgroup of $\mathrm{SL}_2(q)$. Let 
\[
\tilde{z}:=\begin{pmatrix}0&1\\-1&0\\
\end{pmatrix}.
\]
Finally, when $q\equiv 1\pmod 4$, let $i\in\mathbb{F}_q$ with $i^2=-1$.
\begin{lemma}\label{le:2}
We have
\begin{equation}\label{eq:55}
\{e_1\cdot {\tilde{g}^{-1}\tilde{z}\tilde{g}}\mid \tilde{g}\in \mathrm{SL}_2(q)\}=
\begin{cases}
\mathbb{F}_q^2\setminus\omega&\textrm{when }q\equiv 3\pmod 4,\\
(\mathbb{F}_q^2\setminus\omega)\cup\{ie_1,-ie_1\}&\textrm{when }q\equiv 1\pmod 4.
\end{cases}
\end{equation}
\end{lemma}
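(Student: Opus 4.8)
The plan is to compute directly the orbit of the row vector $e_1$ under the set of all conjugates of $\tilde z$ inside $\mathrm{SL}_2(q)$. First I would note that $\tilde z$ has order $4$, its square is $-I$, and it has no eigenvectors in $\mathbb{F}_q^2$ (its characteristic polynomial is $T^2+1$, which has a root in $\mathbb{F}_q$ exactly when $q\equiv 1\pmod 4$). Hence for $q\equiv 3\pmod 4$, every conjugate $\tilde g^{-1}\tilde z\tilde g$ is again a fixed-point-free element of order $4$ on $\Omega$, so $e_1\cdot\tilde g^{-1}\tilde z\tilde g$ is never a scalar multiple of $e_1$, giving the inclusion $\subseteq$ of the right-hand side; and when $q\equiv 1\pmod 4$ the two eigenvectors of $\tilde z$ itself are (up to scalars) $(1,-i)$ and $(1,i)$, which one checks lie in the $\mathrm{SL}_2(q)$-orbit of $ie_1,-ie_1$ respectively — this accounts for the extra two points in that case and shows the two special conjugates (powers of $\tilde z$ that fix $\omega$) are exactly the ones producing $\pm i e_1$.

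For the reverse inclusion, the natural move is a counting/transitivity argument. The group $\mathrm{SL}_2(q)$ acts on nonzero row vectors with two orbits: the $q^2-1$ vectors, but scalars $\pm 1$ are in $\mathrm{SL}_2$, so actually $\mathrm{SL}_2(q)$ is transitive on $\mathbb{F}_q^2\setminus\{0\}$ (the stabilizer of $e_1$ is $\tilde P$ of order $q$, and $|\mathrm{SL}_2(q)|=q(q^2-1)$, so the orbit has size $q^2-1$). Fix any target vector $v$ not in $\omega=\langle e_1\rangle$. I want $\tilde g$ with $e_1\tilde g^{-1}\tilde z\tilde g=v$, equivalently $(e_1\tilde g^{-1})\tilde z=v\tilde g^{-1}$; writing $w:=e_1\tilde g^{-1}$ this says $w\tilde z = v\tilde g^{-1}$, so it suffices to find $\tilde g$ sending $e_1\mapsto w$ and simultaneously $w\tilde z\mapsto v$ under $\tilde g$, i.e. $\tilde g^{-1}$ must send the ordered pair $(w, w\tilde z)$... rather, more cleanly: I need the pair $(e_1, v)$ to be in the $\mathrm{SL}_2(q)$-orbit of a pair of the form $(e_1\tilde h, e_1\tilde h\cdot\tilde h^{-1}\tilde z\tilde h)=(e_1\tilde h, e_1\tilde z\tilde h)$ for some $\tilde h$. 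So I should show: for every $v\notin\omega$ there exists $\tilde h\in\mathrm{SL}_2(q)$ with $e_1\tilde h = e_1$ adjusted... Let me instead phrase it via the well-known fact that $\mathrm{SL}_2(q)$ is transitive on ordered bases up to determinant. Concretely, $e_1$ and $e_1\tilde z=(0,1)$ form the standard basis; given $v\notin\omega$, the pair $(e_1,v)$ is a basis, and there is a unique $\tilde g\in\mathrm{GL}_2(q)$ with $e_1\tilde g=e_1$, $(e_1\tilde z)\tilde g=v$; this $\tilde g$ has determinant $=$ the second coordinate of $v$, call it $\lambda\ne 0$, and then scaling (using that $v$ can be replaced by any scalar multiple, since we only need $v$ up to the freedom already present) I can normalise to land in $\mathrm{SL}_2(q)$. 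This yields $e_1\tilde g^{-1}\tilde z\tilde g = v$ up to scalar, hence every vector outside $\omega$ is hit.

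The remaining case, covering the two exceptional points $\pm ie_1$ when $q\equiv 1\pmod 4$, I would handle by exhibiting explicit conjugating elements: taking $\tilde g$ to be a suitable element of the torus/normaliser that sends the eigenvector $(1,\mp i)$ of $\tilde z$ to $e_1$, one gets $\tilde g^{-1}\tilde z\tilde g$ diagonal with entries $\pm i$, so $e_1$ times it equals $\pm i e_1$; and conversely any conjugate of $\tilde z$ fixing the point $\omega$ must be diagonal in the $e_1$-eigenbasis and so produces exactly $\pm i e_1$, which also shows nothing \emph{else} on $\omega$ occurs. Combining the three pieces gives both inclusions in~\eqref{eq:55}.

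The step I expect to be the main obstacle is the bookkeeping around scalars: passing between $\mathrm{GL}_2(q)$ (where transitivity on bases is clean) and $\mathrm{SL}_2(q)$, and correctly tracking that ``$v$ hit up to scalar'' is enough because the set on the left of~\eqref{eq:55} is already closed under the relevant scalings — and separately verifying that the eigenvectors of $\tilde z$ map precisely to $ie_1$ and $-ie_1$ (with the right signs) and to nothing else on the line $\omega$. Everything else is a short direct matrix computation.
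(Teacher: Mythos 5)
Your reduction of the problem and your treatment of the projective fixed points ($q\equiv 3\pmod 4$: no conjugate of $\tilde z$ fixes $\omega$, so nothing on $\omega$ is hit; $q\equiv 1\pmod 4$: a conjugate fixing $\omega$ is diagonal in a basis containing $e_1$ with eigenvalues $\pm i$, producing exactly $\pm ie_1$) match the paper and are fine. The genuine gap is exactly at the step you flag as "the main obstacle", and your proposed resolution does not close it. Your basis-transitivity argument produces, for a given $v=(v_1,v_2)\notin\omega$, a matrix $\tilde g\in\mathrm{GL}_2(q)$ with $e_1\tilde g^{-1}\tilde z\tilde g=v$ but $\det\tilde g=v_2$. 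Rescaling $\tilde g$ by a scalar $\mu$ multiplies the determinant by $\mu^2$ and leaves the conjugate $\tilde g^{-1}\tilde z\tilde g$ unchanged, so this only lands you in $\mathrm{SL}_2(q)$ when $v_2$ is a square; and "replacing $v$ by a scalar multiple" changes the target, so at best you show that on each line $\langle v\rangle\ne\omega$ \emph{some} vector is hit — not that every vector of $\mathbb{F}_q^2\setminus\omega$ is hit, which is what the lemma asserts. The claim that the left-hand set of~\eqref{eq:55} "is already closed under the relevant scalings" is precisely what would need proof; it is not automatic.

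There are two honest ways to fill this. The paper's way: for each $\lambda\in\mathbb{F}_q^\ast$ it solves $\lambda^2a^2+b^2=\lambda$ (possible by counting squares) and writes down an explicit $\tilde g\in\mathrm{SL}_2(q)$ with $\tilde g^{-1}\tilde z\tilde g=\left(\begin{smallmatrix}0&\lambda\\-\lambda^{-1}&0\end{smallmatrix}\right)$, so that every $\lambda e_2$ is hit; it then right-translates by the stabilizer $\tilde P$ of $e_1$ (whose orbits on $\{v:v_2=\lambda\}$ are full) to sweep out all of $\mathbb{F}_q^2\setminus\omega$. This quadratic-form computation is the real content that your scaling remark replaces with a gesture. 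Alternatively, your route can be repaired by one additional observation: the $\mathrm{GL}_2(q)$-conjugacy class of $\tilde z$ coincides with its $\mathrm{SL}_2(q)$-class, because $\det$ is surjective from ${\bf C}_{\mathrm{GL}_2(q)}(\tilde z)\cong\mathbb{F}_q[\tilde z]^\ast$ onto $\mathbb{F}_q^\ast$; hence you may left-multiply your $\tilde g$ by a centralizing element of determinant $v_2^{-1}$ without changing $\tilde g^{-1}\tilde z\tilde g$, landing in $\mathrm{SL}_2(q)$ and hitting $v$ exactly. Without one of these two ingredients the proof is incomplete.
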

\begin{proof}
Let $\omega':=\omega^{\tilde{z}}$, let $e_2:=(0,1)\in \omega'$ and observe that $e_1\tilde{z}=e_2$. 
For each $\lambda\in\mathbb{F}_q\setminus\{0\}$, let $a,b\in \mathbb{F}_q$ satisfying $\lambda^2a^2+b^2=\lambda$. The existence of $a$ and $b$ is guaranteed by an elementary counting argument; indeed, the set $\{a^2\mid a\in\mathbb{F}_q\}$ has cardinality $(q+1)/2>|\mathbb{F}_q|/2$. Now, consider
\[
\tilde{g}:=\begin{pmatrix}
a&b\\
-\lambda^{-1} b&\lambda a
\end{pmatrix}
\]
and observe that $\det(\tilde{g})=\lambda a^2+\lambda^{-1}b^2=\lambda^{-1}(\lambda^2 a^2+b^2)=\lambda^{-1}\lambda=1$ and hence $\tilde{g}\in \mathrm{SL}_2(q)$. We have
\[
\tilde{g}^{-1}\tilde{z}\tilde{g}=\begin{pmatrix}
0&\lambda\\
-\lambda^{-1}&0
\end{pmatrix}
\]
and hence $e_1\tilde{g}^{-1}\tilde{z}\tilde{g}=(0,\lambda)=\lambda e_2$. Since $\lambda$ is an arbitrary element of $\mathbb{F}_q\setminus\{0\}$, this shows that
$$\{\lambda e_2\mid \lambda\in\mathbb{F}_q\setminus\{0\}\}\subseteq \{e_1{\tilde{g}^{-1}\tilde{z}\tilde{g}}\mid \tilde{g}\in \mathrm{SL}_2(q)\}.$$

Observe that
$$
\bigcup_{\tilde{u}\in\tilde{P}}
\{\lambda e_2\mid \lambda\in\mathbb{F}_q\setminus\{0\}\}
\cdot \tilde{u}=
\bigcup_{\tilde{u}\in\tilde{P}}
\{\lambda e_2\tilde{u}\mid \lambda\in\mathbb{F}_q\setminus\{0\}\}
=\mathbb{F}_q^2\setminus\langle e_1\rangle=\mathbb{F}_q^2\setminus\omega.$$
Moreover, for each $\tilde{g}\in \mathrm{SL}_2(q)$ and for each $\tilde{u}\in \tilde{P}$, as $e_1\tilde{u}=e_1$, we have
$$e_1\tilde{g}^{-1}\tilde{z}\tilde{g}\tilde{u}=e_1\tilde{u}^{-1}\tilde{g}^{-1}\tilde{z}\tilde{g}\tilde{u}=e_1(\tilde{g}\tilde{u})^{-1}\tilde{z}(\tilde{g}\tilde{u}).$$ Therefore,
$$\{e_1{\tilde{g}^{-1}\tilde{z}\tilde{g}}\mid \tilde{g}\in \mathrm{SL}_2(q)\}\supseteq\mathbb{F}_q^2\setminus\langle e_1\rangle=\mathbb{F}_q^2\setminus\omega.$$

When $q\equiv 3\pmod 4$, the matrix $\tilde{z}$ has no fixed points in its action on the projective line $\Omega$ and hence there exists no $\tilde{g}\in \mathrm{SL}_2(q)$ with $\omega{\tilde{g}^{-1}\tilde{z}\tilde{g}}=\omega$. This proves~\eqref{eq:55} when $q\equiv 3\pmod 4$.

When $q\equiv 1\pmod 4$, it only remains to determine the vectors in $\omega=\langle e_1\rangle$ of the form $e_1\tilde{g}^{-1}\tilde{z}\tilde{g}$, for some $\tilde{g}\in\mathrm{SL}_2(q)$. In particular, $\tilde{g}^{-1}\tilde{z}\tilde{g}$ fixes the projective point $\omega$. An elementary matrix computation shows that, if $\tilde{g}^{-1}\tilde{z}\tilde{g}$ fixes $\omega$, then
\[
\tilde{g}^{-1}\tilde{z}\tilde{g}=\pm\begin{pmatrix}

i&0\\
0&-i
\end{pmatrix}.
\] 
This proves~\eqref{eq:55} when $q\equiv 1\pmod 4$.
\end{proof}

We now work in the projective special linear group $\mathrm{PSL}_2(q)$, we omit the tilde to denote the corresponding elements and subgroups of $G:=\mathrm{PSL}_2(q)$. For instance, $P$ is the projection of $\tilde{P}$ in $\mathrm{PSL}_2(q)$. Now let $x\in\mathrm{PSL}_2(q)$ be an element of order $p$.  Replacing $x$ by a suitable conjugate, we may suppose that $x\in P$. Observe that ${\bf C}_{\mathrm{PSL}_2(q)}(x)=P$ and that $\mathrm{PSL}_2(q)$ has two conjugacy classes of elements of order $p$ with representatives $x_+$ and $x_-$, say. 

Set $$\mathcal{C}_+:=x_+^G=\{g^{-1}\cdot x_+\cdot g\mid g \in G\}\,\hbox{ and }\,\mathcal{C}_-:=x_-^G=\{g^{-1}\cdot x_-\cdot g\mid g \in G\}.$$ Now, let $z$ be an involution in $G$ and observe that $G$ has a unique conjugacy class of involutions. Let 
\begin{align*}
\mathcal{C}&:=z^G,\\
\mathfrak{C}_+&:=\{x_+^g\mid g\in \mathcal{C}\}=\{g^{-1}\cdot x_+\cdot g\mid g\in \mathcal{C}\}\subseteq\mathcal{C}_+,\\
\mathfrak{C}_-&:=\{x_-^g\mid g\in \mathcal{C}\}=\{g^{-1}\cdot x_-\cdot g\mid g\in\mathcal{C}\}\subseteq\mathcal{C}_-.\end{align*}
When $q\equiv 1\pmod 4$, let 
$$\tilde{\iota}:=\begin{pmatrix}i&0\\0&i^{-1}\end{pmatrix}.$$
\begin{lemma}\label{lemma21}
We have
\begin{equation}\label{eq:1}
P\mathcal{C} = 
\begin{cases}
G\setminus {\bf N}_G(P),&\textrm{when }q\equiv 3\pmod 4,\\
(G\setminus {\bf N}_G(P))\cup P\iota\,&\textrm{when }q\equiv 1\pmod 4,\\
\end{cases}
\end{equation}
where $P\mathcal{C}:=\{gh\mid g\in P,h\in \mathcal{C}\}$ and $\iota$ is the projection of $\tilde{\iota}$ in $\mathrm{PSL}_2(q)$. 
\end{lemma}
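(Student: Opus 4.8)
The plan is to reduce the statement about the product set $P\mathcal{C}$ in $\mathrm{PSL}_2(q)$ to the computation already carried out in Lemma~\ref{le:2} about the orbit of the vector $e_1$ under conjugates of the involution $\tilde z$. The key observation is that an element $g\in G$ lies in $P\mathcal{C}$ precisely when $g = u c$ for some $u\in P$ and some involution $c\in \mathcal{C}$; since $P$ is the stabiliser of the projective point $\omega=\langle e_1\rangle$ (equivalently, $P=\tilde P/\{\pm 1\}$ stabilises $e_1$ up to sign in the natural module), it is natural to test membership by how $g$ moves $\omega$. Concretely, write $g = uc$ with $u\in P$; then $\omega g = \omega u c = \omega c$, so $\omega g$ is the image of $\omega$ under an involution in $\mathcal{C}=z^G$. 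Conversely, if $\omega g = \omega c$ for some involution $c$, then $gc^{-1}$ fixes $\omega$, hence lies in ${\bf N}_G(P)$; one then has to push $gc^{-1}$ into $P$ itself (rather than the full normaliser) by adjusting $c$ within its conjugacy class, using that all involutions of the cyclic torus normalising $P$ are $G$-conjugate to $z$.

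The heart of the matter is therefore to identify $\{\omega c \mid c\in \mathcal{C}\}$, the set of images of the point $\omega$ under the conjugates of $z$, and this is exactly where Lemma~\ref{le:2} enters. Lifting to $\mathrm{SL}_2(q)$, the element $c$ lifts to $\tilde g^{-1}\tilde z\tilde g$ for suitable $\tilde g$, and $\omega c$ corresponds to the line spanned by $e_1\cdot\tilde g^{-1}\tilde z\tilde g$. Lemma~\ref{le:2} tells us that as $\tilde g$ ranges over $\mathrm{SL}_2(q)$, these vectors fill up $\mathbb{F}_q^2\setminus\omega$ when $q\equiv 3\pmod 4$, and $\mathbb{F}_q^2\setminus\omega$ together with $\{\pm i e_1\}$ when $q\equiv 1\pmod 4$; projectivising, the set of image points $\omega c$ is all of $\Omega\setminus\{\omega\}$ in the first case, and all of $\Omega$ in the second case (since $\pm i e_1$ spans $\omega$ itself, so the involution can also fix $\omega$). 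This immediately gives $P\mathcal{C} \supseteq G\setminus {\bf N}_G(P)$ in both cases, because for any $g$ with $\omega g\ne\omega$ we can find an involution $c$ with $\omega c = \omega g$, whence $gc^{-1}\in P$ and $g\in P\mathcal{C}$. The reverse inclusion in the case $q\equiv 3\pmod 4$ follows because no conjugate of $z$ fixes $\omega$, so $\omega(uc)\ne\omega$ for all $u\in P$, $c\in\mathcal{C}$; hence $P\mathcal{C}$ avoids the point-stabiliser ${\bf N}_G(P)$ of $\omega$.

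For the extra coset $P\iota$ appearing when $q\equiv 1\pmod 4$, I would argue as follows. By Lemma~\ref{le:2} there is an involution $c\in\mathcal{C}$ with $e_1 c = i e_1$ up to the lift, i.e. whose lift is $\tilde g^{-1}\tilde z\tilde g = \pm\mathrm{diag}(i,-i)$; that element, modulo $\{\pm 1\}$, is precisely $\iota$ (note $\tilde\iota = \mathrm{diag}(i,i^{-1}) = \mathrm{diag}(i,-i)$ since $i^{-1}=-i$). So $\iota\in\mathcal{C}$ and then $P\iota\subseteq P\mathcal{C}$, while conversely every element of $P\mathcal{C}\cap{\bf N}_G(P)$ must send $\omega$ to $\omega$, forcing its "$\mathcal{C}$-part" to be one of the involutions fixing $\omega$, which by the $\mathrm{SL}_2$ computation are exactly the two lifting to $\pm\mathrm{diag}(i,-i)$, i.e. $\iota$; hence $P\mathcal{C}\cap{\bf N}_G(P) = P\iota$. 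Putting the two inclusions together yields~\eqref{eq:1} in the case $q\equiv 1\pmod 4$.

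The step I expect to be the main obstacle is the careful bookkeeping between $\mathrm{SL}_2(q)$ and $\mathrm{PSL}_2(q)$ and, relatedly, making sure that in the reverse inclusion one really lands in $P$ and not merely in ${\bf N}_G(P)$: a priori $gc^{-1}$ only fixes $\omega$, so it could be in the diagonal torus part of ${\bf N}_G(P)$ rather than in $P$ itself. Resolving this requires replacing the chosen involution $c$ by a conjugate $c'$ (still in $\mathcal{C}$, since there is only one class of involutions) with the same image point $\omega c' = \omega g$ but differing from $c$ by an element of $P$, which is possible because $P$ acts transitively on the set of involutions whose fixed point on $\Omega$ is $\omega c$ — or, more simply, by directly enumerating, as in Lemma~\ref{le:2}, all involutions in $\mathrm{SL}_2(q)$ sending $e_1$ to a prescribed vector of $\mathbb{F}_q^2\setminus\omega$ and checking one of them conjugates $x$ appropriately. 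The sign ambiguity $i$ versus $-i$ (equivalently $\iota$ versus $\iota^{-1}$) needs to be tracked but is harmless since $\iota^2=1$ in $\mathrm{PSL}_2(q)$ would be false — rather $\iota$ has order dividing $(q-1)/\gcd(2,q-1)$ — so one records $P\iota$ as a single coset and notes $P\iota = P\iota^{-1}$ only if $\iota^2\in P$, which it is not in general; however only the coset $P\iota$ (not $\iota$'s order) matters for the statement, and $\mathrm{diag}(i,-i)$ and $\mathrm{diag}(-i,i)$ are conjugate by $\tilde z\in{\bf N}_{\mathrm{SL}_2(q)}(\tilde P)$, giving the same coset $P\iota$ after a further harmless adjustment.
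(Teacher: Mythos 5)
Your overall strategy coincides with the paper's: both reduce the identity to the description of $\{e_1\tilde g^{-1}\tilde z\tilde g\mid \tilde g\in\mathrm{SL}_2(q)\}$ in Lemma~\ref{le:2}. The difference is that you run the argument on the projective point $\omega$, whereas the paper runs it on the \emph{vector} $e_1$, and this is exactly what creates the obstacle you flag. The paper's $\tilde P$ is the full stabiliser in $\mathrm{SL}_2(q)$ of the vector $e_1$ (not merely of the line $\omega$), and~\eqref{eq:55} is a statement about vectors: given $\tilde y$ with $e_1\tilde y\in\mathbb{F}_q^2\setminus\omega$, one chooses $\tilde g$ with $e_1\tilde g^{-1}\tilde z\tilde g=e_1\tilde y$ \emph{as vectors}, so that $\tilde y(\tilde g^{-1}\tilde z\tilde g)^{-1}$ fixes $e_1$ and therefore lies in $\tilde P$ on the nose; projecting then gives the reverse inclusion with no correction needed. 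Your secondary suggestion (``directly enumerating \ldots all involutions in $\mathrm{SL}_2(q)$ sending $e_1$ to a prescribed vector'') is precisely this argument. Your primary suggestion, however, is false: $P$ does \emph{not} act transitively by conjugation on the involutions with a prescribed image of $\omega$. For $1\ne p\in P$ and an involution $c$ one has $\omega c^p=(\omega c)p$, and a nontrivial unipotent $p$ fixes only $\omega$, so conjugation by $p$ \emph{changes} the image point $\omega c$ whenever $\omega c\ne\omega$; the set of $P$-conjugates of $c$ with the same image point is just $\{c\}$. So as written, the main step of the reverse inclusion rests on an incorrect transitivity claim, and the argument only closes once you fall back on the vector-level formulation.

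A second, smaller error: you assert that ``$\iota^2=1$ in $\mathrm{PSL}_2(q)$ would be false --- rather $\iota$ has order dividing $(q-1)/\gcd(2,q-1)$''. In fact $\tilde\iota^2=\mathrm{diag}(i^2,i^{-2})=-I$, so $\iota$ \emph{is} an involution of $\mathrm{PSL}_2(q)$; this is not a harmless side issue, since $\iota\in\mathcal{C}=z^G$ (and hence $P\iota\subseteq P\mathcal{C}$) only because $\iota$ is an involution and $G$ has a single class of them --- a fact you use correctly earlier in the same paragraph. The closing discussion of $\iota$ versus $\iota^{-1}$ is therefore moot, since $\iota=\iota^{-1}$, and $\mathrm{diag}(i,-i)$ and $\mathrm{diag}(-i,i)$ already project to the same element of $\mathrm{PSL}_2(q)$.
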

\begin{proof}
Observe that $\tilde{g}\in \mathrm{SL}_2(q)$ fixes $\omega=\langle e_1\rangle$ if and only if $\tilde{g}\in{\bf N}_{\mathrm{SL}_2(q)}{(\tilde{P})}$. Therefore 
$$\{\tilde{g}\in\mathrm{SL}_2(q)\mid e_1\tilde{g}\in \mathbb{F}_q^2\setminus \omega\}=\mathrm{SL}_2(q)\setminus {\bf N}_{\mathrm{SL}_2(q)}{(\tilde{P})}.$$
For the time being, assume $q\equiv 3\pmod 4$.
Now, let $\tilde{x}\in\tilde{P}$ and let $\tilde{g}\in \mathrm{SL}_2(q)$. Then, since $e_1\tilde{x}=e_1$, from~\eqref{eq:55}, we get 
$e_1\tilde{x}\tilde{g}^{-1}\tilde{z}\tilde{g}\in\mathbb{F}_q^2\setminus \omega$. Hence $\tilde{x}\tilde{g}^{-1}\tilde{z}\tilde{g}\in \mathrm{SL}_2(q)\setminus {\bf N}_{\mathrm{SL}_2(q)}{(\tilde{P})}.$ Therefore,
\begin{equation}\label{diavolo2}\tilde{P}\{\tilde{g}^{-1}\tilde{z}\tilde{g}\mid \tilde{g}\in \mathrm{SL}_2(q)\}\subseteq \mathrm{SL}_2(q)\setminus {\bf N}_{\mathrm{SL}_2(q)}{(\tilde{P})}.\end{equation}
 Conversely, given $\tilde{y} \in\mathrm{SL}_2(q)\setminus {\bf N}_{\mathrm{SL}_2(q)}{(\tilde{P})}$, we have $e_1\tilde{y}\in \mathbb{F}_q^2\setminus \omega$ and hence, from~\eqref{eq:55}, there exists $\tilde{g}\in \mathrm{SL}_2(q)$ with $e_1\tilde{y}=e_1\tilde{g}^{-1}\tilde{z}\tilde{g}$. Therefore, $\tilde{y}(\tilde{g}^{-1}\tilde{z}\tilde{g})^{-1}$ fixes the vector $e_1$ and hence $\tilde{y}(\tilde{g}^{-1}\tilde{z}\tilde{g})^{-1}=\tilde{x}\in\tilde{P}$. Thus
$\tilde{y}=\tilde{x}\tilde{g}^{-1}\tilde{z}\tilde{g}$. Therefore,
\begin{equation}\label{diavolo1}\mathrm{SL}_2(q)\setminus {\bf N}_{\mathrm{SL}_2(q)}{(\tilde{P})}\subseteq \tilde{P}\{\tilde{g}^{-1}\tilde{z}\tilde{g}\mid \tilde{g}\in \mathrm{SL}_2(q)\}.
\end{equation}
Projecting~\eqref{diavolo2} and~\eqref{diavolo1} in $\mathrm{PSL}_2(q)$, we obtain~\eqref{eq:1}.

The argument when $q\equiv 1\pmod 4$ is entirely similar.
\end{proof}

\begin{lemma}
Let $\varepsilon\in \{-,+\}$. We have
\begin{equation}\label{eq:2}
\begin{cases}
\mathfrak{C}_\varepsilon=\mathcal{C}_\varepsilon\setminus (\mathcal{C}_\varepsilon\cap P),&\textrm{when }q\equiv 3\pmod 4,\\
\mathfrak{C}_\varepsilon=(\mathcal{C}_\varepsilon\setminus (\mathcal{C}_\varepsilon\cap P))\cup\{x_\varepsilon^{-1}\},&\textrm{when }q\equiv 1\pmod 4.
\end{cases}
\end{equation}
\end{lemma}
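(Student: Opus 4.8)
The statement is really a bookkeeping exercise built on top of Lemma~\ref{lemma21}: conjugating $x_\varepsilon$ by an involution amounts to asking for which $g\in G$ the element $g$ lies in the ``fat coset'' $P\mathcal{C}$, and Lemma~\ref{lemma21} computes $P\mathcal{C}$ exactly. The first thing I would do is record the identity
\[
\mathfrak{C}_\varepsilon=\{x_\varepsilon^{\,g}\mid g\in P\mathcal{C}\}.
\]
The inclusion $\subseteq$ is trivial since $\mathcal{C}\subseteq P\mathcal{C}$. For the reverse inclusion, write $g=ph$ with $p\in P$ and $h\in\mathcal{C}$; since $P$ is a Sylow $p$-subgroup of $\mathrm{PSL}_2(q)$ it is elementary abelian, so $x_\varepsilon^{\,p}=x_\varepsilon$ and therefore $x_\varepsilon^{\,g}=(x_\varepsilon^{\,p})^{h}=x_\varepsilon^{\,h}\in\mathfrak{C}_\varepsilon$. (Here I am also using ${\bf C}_{\mathrm{PSL}_2(q)}(x_\varepsilon)=P$, already noted in the excerpt.)

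Next I would identify the conjugates of $x_\varepsilon$ by elements of ${\bf N}_G(P)$ with $\mathcal{C}_\varepsilon\cap P$. The inclusion $\{x_\varepsilon^{\,g}\mid g\in {\bf N}_G(P)\}\subseteq \mathcal{C}_\varepsilon\cap P$ holds because ${\bf N}_G(P)$ stabilises $P$. For the reverse inclusion, note that ${\bf N}_G(P)/P$ acts on $P\setminus\{1\}\cong\mathbb{F}_q\setminus\{0\}$ as the group of multiplications by squares of $\mathbb{F}_q^\ast$ (the torus $\mathrm{diag}(t,t^{-1})$ acts on the bottom-left entry by $x\mapsto t^2x$), hence with exactly two orbits; since every nontrivial element of $P$ has order $p$, the two $G$-classes $\mathcal{C}_+\cap P$ and $\mathcal{C}_-\cap P$ partition $P\setminus\{1\}$ into two nonempty ${\bf N}_G(P)$-invariant pieces, so each is a single ${\bf N}_G(P)$-orbit, giving $\mathcal{C}_\varepsilon\cap P=x_\varepsilon^{\,{\bf N}_G(P)}$. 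Combining this with the obvious decomposition $\mathcal{C}_\varepsilon=\{x_\varepsilon^{\,g}\mid g\in {\bf N}_G(P)\}\cup\{x_\varepsilon^{\,g}\mid g\in G\setminus {\bf N}_G(P)\}$, and observing that $x_\varepsilon^{\,g}\in P$ with $g\notin {\bf N}_G(P)$ is impossible (it would force $g\in P\,n\subseteq {\bf N}_G(P)$ using ${\bf C}_G(x_\varepsilon)=P$), I get $\{x_\varepsilon^{\,g}\mid g\in G\setminus {\bf N}_G(P)\}=\mathcal{C}_\varepsilon\setminus(\mathcal{C}_\varepsilon\cap P)$.

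Finally I would feed in Lemma~\ref{lemma21}. When $q\equiv 3\pmod 4$ we have $P\mathcal{C}=G\setminus {\bf N}_G(P)$, and the two displays above give $\mathfrak{C}_\varepsilon=\mathcal{C}_\varepsilon\setminus(\mathcal{C}_\varepsilon\cap P)$ directly. When $q\equiv 1\pmod 4$ we have $P\mathcal{C}=(G\setminus {\bf N}_G(P))\cup P\iota$; since the lift $\tilde\iota=\mathrm{diag}(i,i^{-1})$ is diagonal, $\iota\in {\bf N}_G(P)$ and the union is disjoint. For $g=p\iota\in P\iota$ we get $x_\varepsilon^{\,g}=x_\varepsilon^{\,\iota}$, and a one-line matrix computation shows conjugation by $\tilde\iota$ sends the bottom-left entry $x$ to $i^2x=-x$, i.e.\ $x_\varepsilon^{\,\iota}=x_\varepsilon^{-1}$; thus $\{x_\varepsilon^{\,g}\mid g\in P\iota\}=\{x_\varepsilon^{-1}\}$, and since $-1$ is a square modulo $q$ this element lies in $\mathcal{C}_\varepsilon\cap P$, so it genuinely enlarges the set. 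This yields $\mathfrak{C}_\varepsilon=(\mathcal{C}_\varepsilon\setminus(\mathcal{C}_\varepsilon\cap P))\cup\{x_\varepsilon^{-1}\}$.

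\textbf{Main obstacle.} Everything here is formal manipulation of cosets and conjugacy classes except the orbit count in the second paragraph; the one place requiring care is checking that the two ${\bf N}_G(P)$-orbits on $P\setminus\{1\}$ are exactly $\mathcal{C}_+\cap P$ and $\mathcal{C}_-\cap P$ (rather than, say, a single orbit being split across both classes or two orbits inside one class), which is what licenses the identification $\mathcal{C}_\varepsilon\cap P=x_\varepsilon^{\,{\bf N}_G(P)}$.
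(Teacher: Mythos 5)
Your proof is correct and follows essentially the same route as the paper: reduce to $\mathfrak{C}_\varepsilon=\{x_\varepsilon^{\,g}\mid g\in P\mathcal{C}\}$ via $P={\bf C}_G(x_\varepsilon)$, invoke Lemma~\ref{lemma21} for the description of $P\mathcal{C}$, identify $x_\varepsilon^{\,{\bf N}_G(P)}$ with $\mathcal{C}_\varepsilon\cap P$, and compute $x_\varepsilon^{\iota}=x_\varepsilon^{-1}$ in the case $q\equiv 1\pmod 4$. The only difference is that you supply details the paper leaves implicit (the two-orbit count for ${\bf N}_G(P)$ acting on $P\setminus\{1\}$ and the check that $x_\varepsilon^{\,g}\in P$ forces $g\in{\bf N}_G(P)$), which is fine.
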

\begin{proof}
Observe that $P={\bf C}_G(x_\varepsilon)$. In particular, $\mathfrak{C}_\varepsilon=\{x_\varepsilon^{gc}\mid g\in P,c\in \mathcal{C}\}$. Now, the set $P\mathcal{C}$ is described in~\eqref{eq:1}. Observe that, $\{x_\varepsilon^{x}:x\in {\bf N}_G(P)\}=\mathcal{C}_\varepsilon\cap P$. In particular,~\eqref{eq:2} follows immediately when $q\equiv 3\pmod 4$. When $q\equiv 1\pmod 4$, we also need to take into account the contribution given by $P\iota$ in $P\mathcal{C}$. A computation gives $x_\varepsilon^\iota=x_\varepsilon^{-1}$ and hence ~\eqref{eq:2} follows also when $q\equiv 1\pmod 4$.
\end{proof}

For the next lemma we need to observe that, for each $\varepsilon\in \{+,-\}$, $x_\varepsilon,x_\varepsilon^{-1}$ are in the same $G$-conjugacy class when $q\equiv 1\pmod 4$, whereas they are in distinct conjugacy classes when $q\equiv 3\pmod 4$. In particular, when $q\equiv 3\pmod 4$, $x_\varepsilon^{-1}\in \mathcal{C}_{-\varepsilon}.$

\begin{lemma}\label{lemma:3}
Assume $q\equiv 3\pmod 4$. For every $\varepsilon\in \{+,-\}$, there exists $g\in z^G=\mathcal{C}$ with
$[x_\varepsilon,g]$ having order a power of $2$. Moreover, if ${\bf o}([x_\varepsilon,g])=2^a$ for some $a\ge 1$, then $2^a$ is the largest power of $2$ dividing $(q+1)/2$.

Assume $q\equiv 1\pmod 4$. For every $\varepsilon\in \{+,-\}$, there exists $g\in z^G=\mathcal{C}$ with
$[x_\varepsilon,g]$ having order a power of $2$ if and only if $q\not\equiv 5\pmod 8$. 
\end{lemma}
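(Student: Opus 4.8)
The plan is to lift everything to $\mathrm{SL}_2(q)$ and read the answer off from a single commutator trace. First I would fix the lift $\tilde x_\varepsilon=\begin{pmatrix}1&\mu\\0&1\end{pmatrix}$ of $x_\varepsilon$, where $\mu$ is $1$ or a fixed non-square of $\mathbb F_q$ according as $\varepsilon=+$ or $\varepsilon=-$, and observe that every involution of $G=\mathrm{PSL}_2(q)$ lifts to a trace-zero matrix $\tilde g=\begin{pmatrix}\alpha&\beta\\\gamma&-\alpha\end{pmatrix}\in\mathrm{SL}_2(q)$ (these are exactly the square roots of $-I$, and $G$ has a unique class of involutions). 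A direct matrix multiplication gives $\mathrm{tr}\,[\tilde x_\varepsilon,\tilde g]=2+(\gamma\mu)^2$; taking for instance $\alpha=0$ and $\beta=-\gamma^{-1}$ makes $\tilde g$ a genuine square root of $-I$, and as $\gamma$ runs over $\mathbb F_q^\ast$ the number $s:=(\gamma\mu)^2$ runs over all non-zero squares of $\mathbb F_q$; the only other possibility, $\gamma=0$, requires $-1$ to be a square and yields a unipotent commutator. Since $[\tilde x_\varepsilon,\tilde g]$ projects onto $[x_\varepsilon,g]$, and $[x_\varepsilon,g]\ne1$ always (because ${\bf C}_G(x_\varepsilon)=P$ has no involutions), the statement reduces to: for which $q$ is there a non-zero square $s\ne-4$ of $\mathbb F_q$ for which the element of $G$ determined by a determinant-$1$, trace-$(2+s)$ matrix has $2$-power order, and what is that order?

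Next I would translate this into elementary field arithmetic. If $M\in\mathrm{SL}_2(q)$ has determinant $1$ and trace $2+s$, its eigenvalues are $\zeta^{\pm1}$ with $\zeta+\zeta^{-1}=2+s$, hence $s=(\zeta-1)^2/\zeta$; since $2+s\ne\pm2$ the image $\bar M$ of $M$ in $G$ is non-trivial, and $\bar M$ has $2$-power order if and only if $\zeta$ does, in which case ${\bf o}(\bar M)={\bf o}(\zeta)/2$. Write ${\bf o}(\zeta)=2^k$ with $k\ge2$. If $\zeta\in\mathbb F_q$ then $(\zeta-1)^2$ is a square, so $s$ is a square in $\mathbb F_q$ if and only if $\zeta$ is, that is, if and only if $2^k\mid(q-1)/2$. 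If $\zeta\notin\mathbb F_q$ then $\zeta\in\mathbb F_{q^2}$ with $\zeta^q=\zeta^{-1}$, and a short manipulation of $s=(\zeta-1)^2/\zeta$ gives $s^{(q-1)/2}=-\zeta^{-(q+1)/2}$, so $s$ is a square in $\mathbb F_q$ if and only if $\zeta^{(q+1)/2}=-1$.

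For $q\equiv3\pmod4$: since $(q-1)/2$ is odd, a $2$-element $\zeta$ of order $\ge4$ cannot lie in $\mathbb F_q$, so $\zeta\in\mathbb F_{q^2}$, $2^k\mid q+1$, whence $k\le a+1$ where $2^{a+1}\|q+1$, and ${\bf o}(\bar M)=2^{k-1}$. Because $-1$ is the unique involution of $\langle\zeta\rangle$, the condition $\zeta^{(q+1)/2}=-1$ becomes $(q+1)/2\equiv2^{k-1}\pmod{2^k}$; writing $(q+1)/2=2^au$ with $u$ odd, this holds if and only if $k=a+1$. I then conclude: a suitable $g$ exists (take $\zeta$ of order $2^{a+1}$, which makes $s$ a non-zero square by the above and $s\ne-4$ since $-1$ is a non-square), and for every such $g$ the order of $[x_\varepsilon,g]$ is $2^{k-1}=2^a$, the largest power of $2$ dividing $(q+1)/2$; moreover $a\ge1$ because $4\mid q+1$.

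For $q\equiv1\pmod4$: now $(q+1)/2$ is odd, so symmetrically a $2$-element $\zeta$ of order $\ge4$ must lie in $\mathbb F_q^\ast$, and $s$ is a square in $\mathbb F_q$ if and only if $2^k\mid(q-1)/2$; such a $k\ge2$ exists exactly when $8\mid q-1$, that is, when $q\not\equiv5\pmod8$. When $q\equiv1\pmod8$ I would just check that $\zeta$ of order $4$ works — then $s=-2$ and $s+4=2$ are non-zero squares because $2$ is a square in $\mathbb F_q$ — producing $[x_\varepsilon,g]$ of order $2$. This gives the claimed equivalence. The genuinely delicate point in all of this is the identity $s^{(q-1)/2}=-\zeta^{-(q+1)/2}$ and the way it interacts with the $2$-adic valuations of $q\mp1$; once that is in place, both assertions follow from the parities of $(q\mp1)/2$ together with a routine matrix computation, so I expect the write-up to be short.
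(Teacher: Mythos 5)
Your proof is correct; I verified the trace formula $\mathrm{tr}\,[\tilde x_\varepsilon,\tilde g]=2+(\gamma\mu)^2$ (using $\tilde g^{-1}=-\tilde g$ for a trace-zero $\tilde g$), the identity $(\zeta-1)^{q-1}=-\zeta^{-1}$ giving $s^{(q-1)/2}=-\zeta^{-(q+1)/2}$, and the $2$-adic bookkeeping in both congruence classes, and each step holds. It is, however, a genuinely different argument from the one in the paper. The paper computes the class algebra constants $a_{ijv}$ of $\mathrm{PSL}_2(q)$ via the character table (formula (3.9) of Isaacs), and combines this with the earlier description of $\mathfrak{C}_\varepsilon=\{x_\varepsilon^g\mid g\in\mathcal{C}\}$ in Lemmas~\ref{le:2}--\eqref{eq:2}, which show that $x_\varepsilon^{-1}\mathfrak{C}_\varepsilon$ differs from $x_\varepsilon^{-1}\mathcal{C}_{\pm\varepsilon}$ only by elements of order $p$; the vanishing or non-vanishing of $\sum_\chi\chi(x_\varepsilon)\chi(x_{\pm\varepsilon})\chi(y^{-\ell})/\chi(1)$ according to the parity of $\ell$ then decides which torus elements arise as commutators with involutions. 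You replace all of this by a single matrix computation plus quadratic-residue arithmetic in $\mathbb{F}_q$ and $\mathbb{F}_{q^2}$: your route is more elementary and self-contained (no character table, and Lemmas~\ref{le:2} and~\ref{lemma21} become unnecessary for this purpose), and it pins down the exact order $2^a$ of every $2$-power commutator rather than merely detecting which conjugacy classes occur; what the paper's machinery buys in exchange is uniformity, since the same structure-constant technique is reused in Section~\ref{sec:exceptional} for exceptional groups where explicit matrix models are not available. Two points worth making explicit in a final write-up: for $q\equiv 3\pmod 4$ the degenerate case $\gamma=0$ cannot occur (as $-1$ is a non-square), so every commutator with an involution really has trace $2+s$ with $s$ a non-zero square; and for $q\equiv 5\pmod 8$ the achievable value $s=-4$ (which \emph{is} a square there) yields trace $-2$ and hence a commutator of order $p$ rather than a $2$-element, so it does not threaten the non-existence direction — you address both, but only in passing.
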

\begin{proof}
Let $\mathcal{C}_1,\ldots,\mathcal{C}_\kappa$ be the conjugacy classes of $G$ and let $x_1,\ldots,x_\kappa$ be a set of representatives with $x_i\in \mathcal{C}_i$, for every $i\in \{1,\ldots,\kappa\}$. For any conjugacy class $\mathcal{C}_i$ in $G$, we define 
$$\hat{\mathcal{C}}_i:=\sum_{c\in \mathcal{C}_i}c\in\mathbb{C}G,$$
where $\mathbb{C}G$ is the complex group algebra over $G$. Then
$$\hat{\mathcal{C}}_i\hat{\mathcal{C}}_j=\sum_{v=1}^ka_{ijv}\hat{\mathcal{C}}_v,$$
where $a_{ijv}\in\mathbb{N}$ are the class constants of $G$. There is a combinatorial interpretation of the $a_{ijv}$, which comes from the conjugacy class association scheme of $G$. Indeed, 
\begin{equation}\label{scheme}a_{ijv}:=|\{(a,b)\in \mathcal{C}_i\times\mathcal{C}_j\mid ab=x_v\}|=|\{(b,c)\in\mathcal{C}_j\times\mathcal{C}_v\mid x_ib=c\}|.
\end{equation} The class constants can be computed using the character table of $G=\mathrm{PSL}_2(q).$ From~\cite[(3.9)]{Isaacs}, we have
$$\frac{|G|}{|\mathcal{C}_i||\mathcal{C}_j|}a_{ijv}:=\sum_{\chi\in \mathrm{Irr}(G)}\frac{\chi(x_i)\chi(x_j)\chi(x_v^{-1})}{\chi(1)}.$$

Let $g\in z^G$. Now observe that, using the notation that we have established above and~\eqref{eq:2}, we have
$$
[x_\varepsilon,g]=x_\varepsilon^{-1}x_\varepsilon^g\in
 x_\varepsilon^{-1}\mathfrak{C}_{\varepsilon}=
 \begin{cases}
x_\varepsilon^{-1}\mathcal{C}_\varepsilon\setminus (x_\varepsilon^{-1}(\mathcal{C}_\varepsilon\cap P)) &\textrm{when }q\equiv 3\pmod 4,\\
x_\varepsilon^{-1}\mathcal{C}_\varepsilon\setminus (x_\varepsilon^{-1}(\mathcal{C}_\varepsilon\cap P))\cup\{x_\varepsilon^{-2}\} &\textrm{when }q\equiv 1\pmod 4.
 \end{cases}$$
 This shows that $x_\varepsilon^{-1}\mathfrak{C}_{\varepsilon}$  differs from $x_\varepsilon^{-1}\mathcal{C}_\varepsilon$ only by elements having order $p$. In particular, we may investigate the elements having order a power of $2$ in $x_{\varepsilon}^{-1}\mathfrak{C}_{\varepsilon}$, by investigating the elements having order a power of $2$ in $x_\varepsilon^{-1}\mathcal{C}_\varepsilon$ using the class constants and the character formula above.

Now, the character table of $\mathrm{PSL}_2(q)$ is well-known and can be found online. A good reference for the character table of $\mathrm{PSL}_2(q)$ is also~\cite{Fritzsche}. Its structure depends on the congruence of $q$ modulo $4$.

Assume $q\equiv 3\pmod 4$. Let $y\in G$ having order $(q+1)/2$ and let $\ell$ be a divisor of $(q+1)/2$ with $\ell<(q+1)/2$. Let $\mathcal{C}_\ell:=(y^\ell)^G$. We may use the character table of $G$ for computing $a_{ijv}$, where $\mathcal{C}_i=\mathcal{C}_{-\varepsilon}$, $\mathcal{C}_j=\mathcal{C}_\varepsilon$ and $\mathcal{C}_v=\mathcal{C}_\ell$, we find
$$
\sum_{\chi\in\mathrm{Irr}(G)}\frac{\chi(x_\varepsilon)\chi(x_\varepsilon^{-1})\chi(y^{-\ell})}{\chi(1)}=
\begin{cases}
0&\textrm{ when }\ell \textrm{ is even},\\
\frac{2q}{q-1}&\textrm{ when }\ell \textrm{ is odd}.\\
\end{cases}
$$
Now the lemma immediately follows. Write $(q+1)/2:=2^a b$, with $b$ odd. Apply the argument above with $\ell:=b$, the element $y^b$ has order $2^a$.

Assume $q\equiv 1\pmod 4$. In this case any 2-element of $G$ is a power of a suitable element of order $(q-1)/2$. Let $y\in G$ having order $(q-1)/2$ and let $\ell$ be a divisor of $(q-1)/2$ with $\ell<(q-1)/2$. Let $\mathcal{C}_\ell:=(y^\ell)^G$. We may use the character table of $G$ for computing $a_{ijv}$, where $\mathcal{C}_i=\mathcal{C}_{\varepsilon}$, $\mathcal{C}_j=\mathcal{C}_\varepsilon$ and $\mathcal{C}_v=\mathcal{C}_\ell$, we find
$$
\sum_{\chi\in\mathrm{Irr}(G)}\frac{\chi(x_\varepsilon)^2\chi(y^{-\ell})}{\chi(1)}=
\begin{cases}
0&\textrm{ when }\ell \textrm{ is odd},\\
\frac{2q}{q+1}&\textrm{ when }\ell \textrm{ is even}.\\
\end{cases}
$$
Now the lemma immediately follows. Write $(q-1)/2:=2^a b$, with $b$ odd. The element $y^\ell$ has order a power of $2$ only when $\ell$ is divisible by $b$. However, the above computation shows that $y^\ell$ is conjugate to an element of the form $[x_\varepsilon,g]$, for some $g\in z^G$, only when $\ell$ is even. Therefore $2b$ divides $\ell$. When $a=1$, we have $\ell=(q-1)/2$ and hence $y^\ell$ is the identity element and there is no $2$-element of the form  $[x_\varepsilon,g]$. This latter case occurs when $(q-1)/2$ is twice an  odd number, that is, $q\equiv 5\pmod 8$.
\end{proof}

\subsection{The Engel graph of $\mathrm{PSL}_2(q)$}\label{sec:lie1}
Let $n$ be a positive integer. Our aim is to study the strong connectivity of the graphs $\Gamma_n(\mathrm{PSL}_2(q))$ for any prime power $q.$ As we mentioned at the start of Section~\ref{sec:lierank1}, when $q$ is even, it is already known that there is no $n$ for which $\Gamma_n(\mathrm{PSL}_2(q))$ is strongly connected, so we will assume that $q$ is odd.

\begin{lemma}\label{3comm}
Let $G=\mathrm{PSL}_2(q)$ with $q=p^f$, where $p$ is an odd prime. If ${\bf o}(x)=p$, then
$x\mapsto_n y$ if and only if 
\begin{itemize}
\item either $y \in \langle x\rangle$, or
\item ${\bf o}(y)=2$ and $[x,y]^{2^{n-1}}=1.$
\end{itemize}
\end{lemma}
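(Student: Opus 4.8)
The plan is to analyze the possible arcs $x\mapsto_n y$ when $x$ has order $p$ by using the fact that ${\bf C}_G(x)=P$ is a Sylow $p$-subgroup, together with the structure of $\mathrm{PSL}_2(q)$ and the two lemmas on Engel words already established.

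First I would handle the ``if'' direction, which is easy. If $y\in\langle x\rangle$, then $x$ commutes with $y$, so $[x,y]=1$ and hence $[x,_ny]=1$; thus $x\mapsto_n y$. If ${\bf o}(y)=2$ and $[x,y]^{2^{n-1}}=1$, then by Lemma~\ref{2comm} we have $[x,_ny]=[x,y]^{(-2)^{n-1}}$, whose order divides $2^{n-1}$ exactly when $[x,y]^{2^{n-1}}=1$ — wait, more carefully, $[x,_ny]=1$ iff $[x,y]^{(-2)^{n-1}}=1$ iff $[x,y]^{2^{n-1}}=1$ since $[x,y]$ and $[x,y]^{-1}$ have the same order; so $x\mapsto_n y$.

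Next, the ``only if'' direction, which is the main content. Suppose $x\mapsto_n y$, i.e. $[x,_ny]=1$; I want to show $y\in\langle x\rangle$ or (${\bf o}(y)=2$ and $[x,y]^{2^{n-1}}=1$). The key structural input is that $P=\langle x\rangle$ has order $q$ (or rather $\langle x\rangle$ has order $p$ but $P={\bf C}_G(x)$ is the full Sylow $p$-subgroup), and the normalizer ${\bf N}_G(P)$ is a Borel subgroup, a Frobenius group with kernel $P$ and cyclic complement of order $(q-1)/\gcd(2,q-1)$. I would look at the chain $x=[x,_0y], [x,_1y], [x,_2y],\dots$ and use that it eventually reaches $1$. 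A natural approach: if $y$ normalizes $\langle x\rangle$ then by Lemma~\ref{norm} we get $[x,_2y]=1$, and one checks directly that elements of ${\bf N}_G(P)$ acting on $P$ with $[x,_2y]=1$ — since the complement acts fixed-point-freely on $P\setminus\{1\}$ — forces $[x,y]\in\langle x\rangle$ to be... I should instead argue that if $y\in{\bf N}_G(P)$, then either $y$ centralizes $x$ (so $y\in P$, and then one must further locate $y$ inside $\langle x\rangle$; but actually $y$ could be any element of $P$, not just $\langle x\rangle$ — so I need $[x,y]=1$ with $y\in P$; since $P$ is abelian when... no, $P$ is elementary abelian of order $q$, so $P$ is abelian and every $y\in P$ commutes with $x$, giving $[x,_ny]=1$, yet the claimed conclusion is $y\in\langle x\rangle$). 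This shows the claim as stated must be using that $P=\langle x\rangle$, i.e. $q=p$; but the lemma allows $q=p^f$. Hmm — so more likely the intended reading is that the \emph{only} way to have $x\mapsto_n y$ with $y\notin\langle x\rangle$ of order $p$ is excluded because if $y\in P\setminus\langle x\rangle$ then $[x,y]=1$ still gives an arc. Let me reconsider: perhaps the claim is that $x\mapsto_n y$ iff $[x,y]=1$ (equivalently $y\in P$, subsuming $\langle x\rangle$) or ${\bf o}(y)=2$ with the divisibility condition — and ``$y\in\langle x\rangle$'' is shorthand when one has reduced to $q=p$. In any case, the real work is: if $y$ does \emph{not} centralize $x$ and ${\bf o}(y)\ne 2$, then $[x,_ny]\ne 1$ for all $n$. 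For this I would argue by tracking where the iterated commutators live. Since $[x,y]=x^{-1}x^y$ and $x^y\in\mathcal C_+\cup\mathcal C_-$ (the two $G$-classes of order-$p$ elements), $[x,y]$ is a product of two elements of order $p$; such a product has order $p$ only if the two elements lie in a common Sylow $p$-subgroup, which (since $P={\bf C}_G(x)$) happens iff $x^y\in P$ iff $y\in{\bf N}_G(P)$. Off that case, $[x,y]$ is a non-trivial element of order coprime to $p$ (it lies in a torus or is an involution). Then I would apply Lemma~\ref{norm}-style reasoning: if $x$ normalizes $\langle y\rangle$, then $[y,_2x]=1$; combined with the structure, this pins down ${\bf o}(y)=2$. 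The cleanest route is probably: use the well-known fact that in $\mathrm{PSL}_2(q)$ a non-central element has cyclic centralizer lying in a unique maximal torus or is a transvection, classify the pairs $(x,y)$ with $x$ of order $p$, and in each case compute the Engel chain — it either stabilizes at a non-trivial value, or (in the torus case where $y$ is an involution inverting a torus containing $[x,y]$) we are in the Lemma~\ref{2comm} situation.

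The step I expect to be the main obstacle is showing that when ${\bf o}(y)\notin\{2,p\}$ and $y\notin{\bf N}_G(P)$, the Engel chain never terminates. The idea: $[x,y]$ then has order dividing $(q-1)/2$ or $(q+1)/2$ (it lies in a maximal torus $T$, being a product $x^{-1}x^y$ of two non-commuting unipotents — actually this needs care: such a product can be unipotent, semisimple, or of order $p$ only in the Borel case). Once $[x,_1y]=:w$ is a non-trivial semisimple element, subsequent terms $[w,y],[[w,y],y],\dots$ are obtained by conjugating by $y$ and multiplying; the subgroup they generate is contained in $\langle w^{y^j}\rangle$-type data, and one shows $1$ is never hit unless $y$ inverts the relevant torus, i.e. ${\bf o}(y)=2$ modulo the torus, forcing the quadratic behaviour of Lemma~\ref{2comm}. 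I would make this rigorous by passing to the dihedral or Borel subgroup $\langle w,y\rangle$ generated by $w=[x,y]$ and $y$: it is a (generalized) dihedral or Frobenius group, and in such a group the Engel chain $[w,_ky]$ is trivial for some $k$ iff $y$ inverts $w$ (dihedral case) or centralizes it. This dichotomy, translated back, yields exactly the two bullet points; so the crux is the elementary but slightly fiddly classification of the subgroup generated by $[x,y]$ and $y$ and the computation of Engel chains inside it.
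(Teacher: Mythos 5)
Your ``if'' direction is fine, and your observation that the statement's conclusion ``$y\in\langle x\rangle$'' should really read ``$y\in{\bf C}_G(x)=P$'' when $f>1$ (since the Sylow $p$-subgroup $P\supseteq\langle x\rangle$ is elementary abelian of order $q$, so any $y\in P$ gives $[x,y]=1$) is a legitimate catch: the paper's own proof writes ``${\bf C}_G(x)=\langle x\rangle$'' at this point, which is only literally correct for $q=p$, though the imprecision is harmless for the later applications. However, your ``only if'' direction has a genuine gap. The crux --- showing that $x\not\mapsto y$ whenever ${\bf o}(y)$ is coprime to $p$ and greater than $2$ --- is not established. You propose to pass to $\langle [x,y],y\rangle$ and claim it is ``(generalized) dihedral or Frobenius,'' with the Engel chain terminating iff $y$ inverts or centralizes $[x,y]$. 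That structural claim is false in general: $[x,y]$ and $y$ can generate a Borel subgroup or all of $\mathrm{PSL}_2(q)$, and even granting some classification, ``the chain never terminates'' is not something you can read off from the first step $[x,y]$ alone --- Engel chains can wander without stabilizing, so one needs an invariant preserved along the whole chain to certify non-termination. Your sketch never supplies one, and phrases like ``I would make this rigorous by\dots'' leave precisely the hard part undone.

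The paper's route supplies exactly that invariant via Lemma~\ref{NC}, which you never invoke (you only cite Lemma~\ref{norm}, which produces arcs, not non-arcs). Concretely: if ${\bf o}(y)=p$, take $K$ to be the unique Borel subgroup containing $y$; if $1<{\bf o}(y)\mid(q\pm1)/2$ with ${\bf o}(y)>2$, take $K={\bf N}_G(\langle y\rangle)$, a dihedral group of order $q\pm1$. In both cases $K$ is self-normalizing and contains $y$ in a unique conjugate, so Lemma~\ref{NC} gives $x\not\mapsto y$ unless $x\in K$; since $x$ has order $p$, this forces either $y\in P$ (first case) or ${\bf o}(y)=2$ (second case, as $p\nmid|K|$), after which Lemma~\ref{2comm} pins down the exact condition $[x,y]^{2^{n-1}}=1$. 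You should rebuild the ``only if'' direction around Lemma~\ref{NC} rather than around an ad hoc analysis of $\langle [x,y],y\rangle$.
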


\begin{proof}
We distinguish the different possibilities for the order of $y.$

 Assume ${\bf o}(y)=p$ and let $K$ the unique parabolic subgroup of $G$ containing $y.$ By Lemma~\ref{NC}, $x\mapsto y$ if and only if $y\in K$, i.e., if and only if $y \in {\bf C}_G(x)=\langle x \rangle$.

 Assume ${\bf o}(y)$ divides $(q+\varepsilon)/2,$ 
	with $\varepsilon=\pm 1$ and let $K={\bf N}_G(\langle y\rangle)$. Then $K$ is a dihedral group of order $q+\varepsilon$ and,  by Lemma \ref{NC}, $x\mapsto y$ only if $y$ is an involution. In the latter case, by Lemma~\ref{2comm}, $x\mapsto_n y$ if and only if $[x,y]^{2^{n-1}}=1.$
\end{proof}

\begin{lemma}\label{cruciale}
	Let $G=\mathrm{PSL}_2(q)$  with $q=p^f$, where $p$ is an odd prime, and let $n\geq 2.$ Then $\Gamma_n(G)$ is strongly connected if and only if, for any $x\in G$ of order $p$, there exists $y$ in $G$ with  ${\bf o}(y)=2$ and $[x,y]^{2^{n-1}}=1.$
\end{lemma}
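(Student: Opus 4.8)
The plan is to reduce the strong connectivity of $\Gamma_n(G)$ to a statement about arcs into and out of the elements of order $p$, and then feed in Lemma~\ref{3comm}. First I would record the easy direction: if $\Gamma_n(G)$ is strongly connected and $x$ has order $p$, then $x$ has some out-neighbour $y$, and by Lemma~\ref{3comm} either $y\in\langle x\rangle$ or ${\bf o}(y)=2$ with $[x,y]^{2^{n-1}}=1$. To rule out the first alternative being the only option, note that $\langle x\rangle$ is a $p$-group, so if every out-neighbour of $x$ lay in $\langle x\rangle$, then $x$ would only reach $p$-elements, contradicting strong connectivity (the orbit of $x$ under the reachability relation would be contained in the conjugates of $P$, which do not exhaust $G$; alternatively, iterate Lemma~\ref{3comm} to see that the out-neighbourhood of $x$ inside $\langle x\rangle$ again reaches only $\langle x\rangle$). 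Hence some out-neighbour $y$ of $x$ has order $2$ and satisfies $[x,y]^{2^{n-1}}=1$, which is exactly the asserted condition.

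For the converse, suppose that for every $x\in G$ of order $p$ there is an involution $y=y(x)$ with $[x,y]^{2^{n-1}}=1$, equivalently (by Lemmas~\ref{2comm} and~\ref{3comm}) $x\mapsto_n y$. I would argue that $G$ satisfies the hypotheses needed to apply the connectivity machinery: $G=\mathrm{PSL}_2(q)$ with $q$ odd has trivial centre, and for $q$ odd $\mathrm{PSL}_2(q)$ has a unique conjugacy class of involutions, so Theorem~\ref{theorem11} does not apply directly; instead I would invoke the explicit subgroup structure. The key point is that $\Gamma_1(G)$ has a controlled component structure: the non-$p$ elements of $G$ all lie in cyclic subgroups of order dividing $q\pm1$ or $(q\pm1)/2$, the involutions among them are all conjugate, and every element of even order powers up to an involution. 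I would show that the involutions, together with all elements commuting with an involution (hence a large ``even part'' of the commuting graph), form a single strongly connected piece of $\Gamma_n(G)$ — this uses only Lemma~\ref{2comm}/\ref{norm}-type facts, that commuting implies an arc, and that $n\ge2$ so normalizing implies a length-$2$ path. Then the hypothesis provides, for each $x$ of order $p$, an arc $x\mapsto_n y(x)$ into this even part; and since $x\mapsto_n y$ for $y\in\langle x\rangle$ and ${\bf N}_G(\langle x\rangle)$ contains elements of even order, Lemma~\ref{norm} gives an arc from an even-order element into $\langle x\rangle$, hence into $x$ (walking inside the $p$-group and using that $x\mapsto_n$ its powers). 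Combining, every vertex reaches the even part and is reached from it, so $\Gamma_n(G)$ is strongly connected.

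The main obstacle I anticipate is the converse direction, specifically verifying cleanly that the ``even part'' of $\Gamma_n(G)$ is itself strongly connected and that each $\langle x\rangle$ (for $x$ of order $p$) is both reachable from and reaches that even part. Reachability \emph{into} $\langle x\rangle$ is the delicate bit: one needs an element $g$ of even order with $g\mapsto_n x$, and the natural candidate is an involution or other even-order element in ${\bf N}_G(\langle x\rangle)$, but in $\mathrm{PSL}_2(q)$ the normalizer of a Sylow $p$-subgroup is a Frobenius group $P\rtimes C_{(q-1)/2}$ whose complement has even order, so Lemma~\ref{norm} applied to such a complement element $g$ and $y=x$ gives $[x,_2g]=1$, hence (for $n\ge2$, after adjusting with powers) an arc $g\mapsto_n x$; one must only check the order bookkeeping so that $g$ genuinely lies in the even part. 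I expect this to go through with a short computation in the Borel subgroup, and everything else — the easy direction and the internal connectivity of the even part — to be routine given Lemma~\ref{3comm} and the dihedral/Frobenius subgroup structure of $\mathrm{PSL}_2(q)$.
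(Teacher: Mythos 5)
Your forward direction is fine and matches the paper: by Lemma~\ref{3comm}, if no suitable involution exists then every out-neighbour of $x$ lies in $\langle x\rangle$, so $x$ cannot reach the rest of the graph. The converse, however, has two concrete gaps. First, your mechanism for getting an arc \emph{into} $\langle x\rangle$ from the even part rests on the claim that ${\bf N}_G(\langle x\rangle)\le{\bf N}_G(P)\cong P\rtimes C_{(q-1)/2}$ contains elements of even order. This is false whenever $q\equiv 3\pmod 4$: there $(q-1)/2$ is odd, so $|{\bf N}_G(P)|=q(q-1)/2$ is odd and no even-order element normalizes $\langle x\rangle$. The paper instead routes through the odd-order elements of order dividing $(q-1)/2$: an involution in the dihedral group of order $q-1$ maps (via Lemma~\ref{norm}) onto such an element $g$, and $g$ in turn normalizes a Sylow $p$-subgroup, giving $g\mapsto_2 x$. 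So the entry into $A$ is a length-two detour through vertices that are \emph{not} in your ``even part''.

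Second, and more seriously, your argument never accounts for the non-identity elements of odd order prime to $p$ whose centralizer has odd order --- the set the paper calls $B_\varepsilon$, consisting of elements of order dividing $(q+\varepsilon)/2$ where $4\mid q-\varepsilon$. These vertices commute with no involution, so they lie outside your even part, and you must show they both reach and are reached from it. Being reached is easy (involutions in the dihedral normalizer of order $q\pm 1$, via Lemma~\ref{norm}). But reaching \emph{out} is the real obstruction when $q\equiv 1\pmod 4$: an element $g$ of order dividing $(q+1)/2$ centralizes only its own cyclic torus and normalizes nothing of even order (since ${\bf C}_G(g)$ has odd order, $g$ cannot normalize $\langle y\rangle$ for an involution $y$ without centralizing it), so Lemmas~\ref{norm} and~\ref{NC} give no exit. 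The paper escapes via Lemma~\ref{nr1}, the Paley-graph construction producing an explicit $x$ of order dividing $(q+1)/2$ and an involution $z$ with $[x,z,z]=1$; this is a substantive ingredient that your plan neither invokes nor replaces, so as written the converse does not close.
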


\begin{proof} 
By Lemma \ref{2comm}, if for some $x$ of order $p,$ there is no involution $y$ with the property that the order of $[x,y]$ divides $2^{n-1}$, then there is no path in the directed graph $\Gamma_n(G)$ starting from $x$ and ending to an element $z \notin \langle x \rangle,$ so $\Gamma_n(G)$ is not strongly connected. 
	
Conversely assume that, for any  $x\in G$ of order $p$, there exists $y$ in $G$ with  ${\bf o}(y)=2$ and $[x,y]^{2^{n-1}}=1.$	
Let $\varepsilon \in \{-1,1\}$ so that
4 divides $q-\varepsilon.$ Recall that, if $g\in G$, then either ${\bf o}(g)=p$ or ${\bf o}(g)$ divides $(q\pm 1)/2.$
Denote by $A$ the set of the elements of $G$ of order $p$ and by $B_\varepsilon$ and $B_{-\varepsilon}$ the set of non-identity elements of $G$ whose order divides, respectively, $(q+\varepsilon)/2$ and $(q-\varepsilon)/2.$
 
By~\cite[Theorem 1.1]{bbpr}, all the involutions of $G$, and consequently all the elements whose centralizer has even order,  belong to the same connected component of the commuting graph $\Gamma_1(G).$ Let $\Omega$ be the strong component of $\Gamma_n(G)$ which contains the elements of even order. We have $B_{-\varepsilon} \subseteq \Omega.$ For $X, Y \in \{A,B_{-\varepsilon}, B_{\varepsilon}\},$ we write $X \to_n Y$ to denote that there exists $a \in A$ and $b \in B$ with $a \mapsto_n b.$ By assumption and Lemma \ref{2comm}, for any $a \in A$, there exists an involution $y$ with $a\mapsto_n y$, so $A \to_n B_{-\varepsilon}.$

 First assume $q\equiv 3\pmod 4$. 
 If $g\in B_\varepsilon$, then there is an involution $y$ in ${\bf N}_G(\langle g \rangle)$  and $y \mapsto_2 g$ by Lemma \ref{norm}. 
 Moreover $g$ normalizes $\langle x\rangle$ for some $x\in A,$ so, again by Lemma~\ref{norm},
 $g \mapsto_2 x$. Hence  $B_{-\varepsilon} \to_n B_{\varepsilon}\to_n A\to B_{-\varepsilon}$ and consequently $A\cup B_\varepsilon \cup B_{-\varepsilon} \subseteq \Omega.$

Now assume $q\equiv 1\pmod 4$. If $g \in A$, then $\langle g \rangle$ is normalized by an element of $B_{-\varepsilon}$ so $A \to_n B_{-\varepsilon}\to A.$ Moreover $B_\varepsilon \to_n B_{-\varepsilon},$ by Lemmas~\ref{2comm} and~\ref{nr1}, and
$B_{-\varepsilon} \to_n B_{\varepsilon},$ because the normalizer of the subgroup generated by an element of $B_{\varepsilon}$ contains an involution.
\end{proof}

\begin{theorem}\label{thrm:tired}Let $G=\mathrm{PSL}_2(q)$  with $q$ odd and let $n\geq 2.$
	\begin{itemize}
		\item If $q\equiv 3\pmod 4$, then $\Gamma_n(G)$ is strongly connected if and only if $n > a$ where $2^a$ is the largest power of $2$ dividing $(q+1)/2$.
		\item If $q\equiv 1\pmod 8$ and $q\ne 9$, then  $\Gamma_2(G)$ is strongly connected.
\item If $q=9$, then $\Gamma_n(G)$ is strongly connected if and only if $n\geq 3.$
\item If $q\equiv 5\pmod 8$, then $\Gamma(G)$ is not strongly connected.
	\end{itemize}
\end{theorem}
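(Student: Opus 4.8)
The plan is to deduce Theorem~\ref{thrm:tired} from the structural results already assembled in this section, principally Lemma~\ref{cruciale} (which reduces strong connectivity of $\Gamma_n(G)$ to a statement purely about commutators $[x,y]$ with $x$ of order $p$ and $y$ an involution) and Lemma~\ref{lemma:3} (which, via the character-theoretic computation with class constants, pins down exactly which $2$-power orders arise as ${\bf o}([x_\varepsilon,g])$ for $g\in z^G$). So the proof is essentially bookkeeping: translate the arithmetic output of Lemma~\ref{lemma:3} through Lemma~\ref{cruciale} in each of the four congruence regimes for $q$.

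First I would handle $q\equiv 3\pmod 4$. By Lemma~\ref{cruciale}, $\Gamma_n(G)$ is strongly connected iff every element $x$ of order $p$ admits an involution $y$ with $[x,y]^{2^{n-1}}=1$, i.e.\ with ${\bf o}([x,y])$ dividing $2^{n-1}$. Since there are only two classes of elements of order $p$, represented by $x_+$ and $x_-$, and since $z^G$ is the unique class of involutions, this is exactly the condition that for each $\varepsilon\in\{+,-\}$ there is $g\in z^G$ with ${\bf o}([x_\varepsilon,g])$ a $2$-power dividing $2^{n-1}$. Lemma~\ref{lemma:3} says such a $g$ exists with ${\bf o}([x_\varepsilon,g])=2^a$ where $2^a\|(q+1)/2$ — and moreover every $2$-power so obtained equals this same $2^a$. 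Hence the condition holds iff $2^a \mid 2^{n-1}$, i.e.\ $a\le n-1$, i.e.\ $n>a$. (One should note $a\ge 1$ here, since $q\equiv 3\pmod 4$ forces $4\mid q+1$, so $2\mid (q+1)/2$; and one should double-check the edge case $n-1=0$, but the theorem is stated for $n\ge 2$.)

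Next, for $q\equiv 1\pmod 8$ with $q\ne 9$: Lemma~\ref{cruciale} again reduces to finding, for each $x$ of order $p$, an involution $y$ with $[x,y]^{2^{n-1}}=1$; for $n=2$ this is ${\bf o}([x,y])\in\{1,2\}$. Here Lemma~\ref{nr1} is the relevant tool — it produces, for the specific involution $z$ and element $z'=[x,z]$ of order $4$ (wait: $z'$ has order $4$ in $\mathrm{SL}_2(q)$ but its image in $\mathrm{PSL}_2(q)$ has order $2$), an $x$ with $[x,z,z]=-I$, i.e.\ in $\mathrm{PSL}_2(q)$ we get $[x,_2 z]=1$. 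Combined with Lemma~\ref{2comm} (${\bf o}(z)=2$ so $[x,_2z]=[x,z]^{-2}$), this yields ${\bf o}([x,z])\mid 2$ for a representative $x$ of order $p$ up to conjugacy, and conjugating covers both classes $x_\pm$ since in $\mathrm{PSL}_2(q)$ with $q\equiv 1\pmod 4$ inverting swaps appropriately — or more cleanly, invoke the second paragraph of Lemma~\ref{lemma:3}, which states that for $q\not\equiv 5\pmod 8$ and each $\varepsilon$ there is $g\in z^G$ with ${\bf o}([x_\varepsilon,g])$ a $2$-power; tracing its proof, when $q\equiv 1\pmod 8$ one has $a\ge 2$ in $(q-1)/2=2^ab$, and the $2$-element obtained, $y^{2b}$, then has order $2$, which suffices for $n=2$. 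So $\Gamma_2(G)$ is strongly connected. For $q=9$: Lemma~\ref{nr1} explicitly excludes $q=9$, and one checks directly (small group, $G\cong A_6$) that no involution $y$ gives $[x,y]$ of order dividing $2$, so $\Gamma_2(G)$ is not strongly connected; for $n\ge 3$ one exhibits an involution with $[x,y]$ of order $4$ (which does exist — $a=1$ for $(q-1)/2=4$, wait $q=9$ gives $(q-1)/2=4=2^2$, so $a=2$, hmm — so actually the Lemma~\ref{lemma:3} mechanism gives order $2$... the exception $q=9$ in Lemma~\ref{nr1} must come from the ``no eigenvalues'' constraint, not from the $2$-power order; I would simply state this case is verified by direct computation in $A_6$, yielding strong connectivity of $\Gamma_n(A_6)$ iff $n\ge 3$). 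Finally, $q\equiv 5\pmod 8$: the last sentence of Lemma~\ref{lemma:3} states there is no $g\in z^G$ with $[x_\varepsilon,g]$ of $2$-power order at all (the only candidate $y^\ell$ would be the identity), so by Lemma~\ref{cruciale} no $\Gamma_n(G)$ is strongly connected for $n\ge 2$; and $\Gamma_1(G)$ (the commuting graph) is disconnected by Theorem~\ref{theorem44} applied to the prime graph component $\{p\}$ — hence $\Gamma(G)$ is not strongly connected.

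The main obstacle, to my mind, is not any single deduction but keeping the congruence case analysis honest at the boundary values — in particular the $q=9$ special case (where Lemma~\ref{nr1} is silent and one must fall back on a direct check in $A_6$), reconciling the ``order of $[x,z]$ in $\mathrm{SL}_2$ versus in $\mathrm{PSL}_2$'' shift of a factor of $2$ throughout (Lemma~\ref{nr1} is phrased in $\mathrm{SL}_2$, Lemma~\ref{cruciale} in $\mathrm{PSL}_2$), and making sure that a single conjugacy-class representative $x$ of order $p$ really does suffice, i.e.\ that whatever we prove for one of $x_+, x_-$ transfers to the other — which it does, since for $q\equiv 1\pmod 4$ there is an outer-diagonal element swapping the two classes while centralizing a conjugate of $z$, and for $q\equiv 3\pmod 4$ Lemma~\ref{lemma:3} already treats both $\varepsilon$ symmetrically. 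I would also double-check that the $q\equiv 3\pmod 4$ claim correctly handles the sub-case where $(q+1)/2$ is odd (so $a=0$): then the condition $n>a=0$ is automatic for $n\ge 2$, matching the fact that an involution $y$ normalizing $\langle x_\varepsilon^g\rangle$-type arguments are unavailable but the class-constant computation still produces a commutator of odd ($=1$) $2$-part, i.e.\ order $1$, meaning $[x,y]=1$ — consistent, though one should confirm this degenerate reading against Lemma~\ref{lemma:3}'s phrasing ``if ${\bf o}([x_\varepsilon,g])=2^a$ for some $a\ge 1$.''
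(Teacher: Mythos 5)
Your proposal is correct and follows the same route as the paper, whose entire proof of this theorem is the one-line remark that for $q\ne 9$ the result follows by combining Lemmas~\ref{nr1}, \ref{lemma:3} and~\ref{cruciale} (with a direct computation for $q=9$); you have simply filled in the bookkeeping the paper omits, and your case analysis lands correctly in all four congruence regimes. Two small slips worth fixing: the element $x$ produced by Lemma~\ref{nr1} has no eigenvalues in $\mathbb{F}_q$ and hence is regular semisimple of order dividing $q+1$ --- it is used inside the proof of Lemma~\ref{cruciale} to establish $B_{\varepsilon}\to_n B_{-\varepsilon}$, not to control commutators of order-$p$ elements --- so your first route for $q\equiv 1\pmod 8$ rests on a misreading and your fallback via the proof of Lemma~\ref{lemma:3} is the right one, except that the element of order exactly $2$ arising there is $y^{2^{a-1}b}$ rather than $y^{2b}$ (which has order $2^{a-1}$).
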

\begin{proof}When $q\ne 9$, the proof follows immediately combining Lemmas~\ref{nr1},\ref{lemma:3} and~\ref{cruciale}. When $q=9$, proof follows with a computation.  
\end{proof}

\begin{corollary}
The Engel graph $\Gamma(\mathrm{PSL}_n(q))$ is strongly connected if and only if  $q\not\equiv 5\pmod 8$ or  $q$ is even.
\end{corollary}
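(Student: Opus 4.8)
The plan is to read this classification off from Theorem~\ref{main} and Theorem~\ref{thrm:tired}, keeping the linear rank $n$ arbitrary rather than collapsing to $n=2$ at the outset. I would begin by simplifying the stated condition. Since $5$ is odd, every even prime power $q$ already satisfies $q\not\equiv 5\pmod 8$; consequently the disjunction ``$q\not\equiv 5\pmod 8$ or $q$ even'' collapses to the single requirement $q\not\equiv 5\pmod 8$, with the even-characteristic groups landing squarely on the strongly connected side. It is this cleaner equivalence---that $\Gamma(\mathrm{PSL}_n(q))$ is strongly connected exactly when $q\not\equiv 5\pmod 8$---that I would establish.

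Next I would invoke Theorem~\ref{main} to pin down the possible exceptions. Scanning its exceptional list, the only members of the family $\{\mathrm{PSL}_n(q)\}$ that can fail to be strongly connected are the rank-one groups $\mathrm{PSL}_2(q)$; for every $n\ge 3$ the group $\mathrm{PSL}_n(q)$ does not appear there and is therefore strongly connected for all $q$. In particular, the even-characteristic groups of rank at least two are strongly connected, so no obstruction arises from even $q$, which is precisely the ``$q$ even'' half of the equivalence. This step is also what lets me treat the general statement about $\mathrm{PSL}_n$ rather than silently reverting to $\mathrm{PSL}_2$.

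It then remains to analyse the rank-one groups, and here I would quote Theorem~\ref{thrm:tired} directly. For $q\equiv 3\pmod 4$ it furnishes an integer $a$, the $2$-adic valuation of $(q+1)/2$, with $\Gamma_n$ strongly connected for every $n>a$; since the cumulative graph $\Gamma$ contains every arc of each $\Gamma_n$, this already makes $\Gamma$ strongly connected. For $q\equiv 1\pmod 8$ the theorem gives strong connectivity of $\Gamma_2$ (and of $\Gamma_3$ in the single exceptional case $q=9$), hence of $\Gamma$. For $q\equiv 5\pmod 8$ it records the failure of strong connectivity of $\Gamma$. Assembling the three odd residue classes, the sole surviving obstruction is $q\equiv 5\pmod 8$, which is exactly the negation of the simplified condition; together with the higher-rank and even-characteristic cases this yields the asserted equivalence.

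The only genuine difficulty is concentrated in the rank-one analysis for odd $q$: for an element $x$ of order $p$ one must decide whether there exists an involution $y$ making $[x,y]$ a $2$-element, and this is precisely what separates $q\equiv 1\pmod 8$ from $q\equiv 5\pmod 8$. That dichotomy is supplied by Lemmas~\ref{nr1} and~\ref{lemma:3}---the Paley-graph argument producing the required matrix when $q\not\equiv 5\pmod 8$, and the class-constant computation ruling it out when $q\equiv 5\pmod 8$. Once this dichotomy is in hand, everything else reduces to routine bookkeeping over the residue of $q$ modulo $8$.
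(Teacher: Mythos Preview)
Your argument has a genuine gap at the rank-one, even-characteristic case. You correctly observe that the disjunction collapses to the single condition $q\not\equiv 5\pmod 8$, and you then assert that ``the even-characteristic groups land squarely on the strongly connected side.'' But Theorem~\ref{main}, which you invoke, explicitly lists $\mathrm{PSL}_2(q)$ with $q\ge 4$ even among the groups whose Engel graph is \emph{not} strongly connected; this is also stated at the opening of Section~\ref{sec:lierank1}. Your subsequent analysis of the rank-one case appeals only to Theorem~\ref{thrm:tired}, which is stated for $q$ odd, so the even-$q$ case of $\mathrm{PSL}_2$ is never checked. Concretely, take $q=4$: then $q\not\equiv 5\pmod 8$, so your simplified condition holds, yet $\Gamma(\mathrm{PSL}_2(4))$ is not strongly connected. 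The equivalence you set out to prove is therefore false as stated.

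The underlying issue is that the corollary, placed in Section~\ref{sec:lie1} immediately after Theorem~\ref{thrm:tired}, is intended to summarise the $\mathrm{PSL}_2(q)$ situation; the ``$n$'' is a typographical slip for $2$, and the logical connective is also garbled. The correct reading, consistent with Theorems~\ref{main} and~\ref{thrm:tired}, is that $\Gamma(\mathrm{PSL}_2(q))$ is strongly connected if and only if $q$ is odd and $q\not\equiv 5\pmod 8$ (equivalently: it fails to be strongly connected precisely when $q$ is even or $q\equiv 5\pmod 8$). Your treatment of the odd residue classes via Theorem~\ref{thrm:tired} is fine; what is missing is acknowledging that even $q$ must go on the non-connected side for $\mathrm{PSL}_2$, which also shows why the statement cannot hold uniformly for all $\mathrm{PSL}_n(q)$ in the form you attempted.
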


\subsection{The Engel graph of $\mathrm{PGL}_2(q)$}

Assume $G=\mathrm{PGL}_2(q)$ with $q=p^f$, where $p$ is an odd prime. Then $G$ has two conjugacy classes of involutions, and by \cite[Lemma 3.5]{mp},
there is a unique connected component, say $\Omega,$ of $\Gamma_1(G)$ containing all the elements in $G$ whose centralizer has even order.

Then $\Omega$ contains all the elements of $G$, except the ones of order $p.$  Recall that, if $g$ has order $p$ and $P$ is a Sylow $p$-subgroup of $G$ with $g\in  P$, then ${\bf N}_G(P)$ contains an element $z$ of order $q-1,$ and consequently $z \mapsto_2 g.$ 
Conversely, up to conjugation, we may assume $g=\begin{pmatrix}1&a\\0&1\end{pmatrix}Z$, where $Z:={\bf Z}(\mathrm{GL}_2(q))$ is the center of $\mathrm{GL}_2(q)$.
Consider $y=\begin{pmatrix}0&b\\1&0\end{pmatrix}Z.$
We have 
$$u:=\begin{aligned}
	\begin{pmatrix}1&a\\0&1\end{pmatrix} \begin{pmatrix}0&b\\1&0\end{pmatrix}
	\begin{pmatrix}1&-a\\0&1\end{pmatrix} \begin{pmatrix}0&b\\1&0\end{pmatrix}=
	\begin{pmatrix}-a^2+b&ab\\-a&b\end{pmatrix}
\end{aligned}
$$
Choose $b=a^2/2.$
Then 
$$u^2=\begin{pmatrix}a^4+b^2-3a^2b&ab(2b-a^2)\\a(a^2-2b)&-a^2b+b^2\end{pmatrix}=
\begin{pmatrix}-b^2&0\\0&-b^2	\end{pmatrix}
.$$
Hence $[g,y]^2=1$ and $g\mapsto_2 y.$ 
We have so proved:
\begin{proposition}\label{andrea}If $q$ is odd, then $\Gamma_2(\mathrm{PGL}_2(q))$ is strongly connected.
\end{proposition}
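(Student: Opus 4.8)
The plan is to split $\mathrm{PGL}_2(q)$ into the elements of order $p$ and everything else, collect the latter into one strongly connected component, and then join each order-$p$ element to that component in both directions. Since $q$ is odd, $\mathrm{PGL}_2(q)$ has trivial centre and exactly two conjugacy classes of involutions (those inside $\mathrm{PSL}_2(q)$ and those outside), so Theorem~\ref{theorem11} yields a unique connected component $\Omega$ of the commuting graph $\Gamma_1(G)$ containing every element of even order; as $\Gamma_1(G)$ is an undirected subgraph of $\Gamma_2(G)$, $\Omega$ is a strongly connected set of vertices of $\Gamma_2(G)$. Every non-identity element of $\mathrm{PGL}_2(q)$ is unipotent of order $p$ or semisimple of order dividing $q\pm 1$; a nontrivial semisimple element centralizes a maximal torus, of even order $q\mp 1$, hence lies in $\Omega$, whereas an order-$p$ element has centralizer a Sylow $p$-subgroup, of odd order, so it lies outside $\Omega$. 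It therefore remains to show that, in $\Gamma_2(G)$, every element $g$ of order $p$ has an in-neighbour and an out-neighbour in $\Omega$.

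For the in-neighbour I would argue structurally: let $P$ be the Sylow $p$-subgroup with $g\in P$ and $B={\bf N}_G(P)$ the corresponding Borel subgroup, and pick $z\in B$ of order $q-1$. Since $B/P$ is abelian we have $[z,g]\in P$, and since $P$ is abelian $[z,g]$ commutes with $g$; hence $[z,_2g]=[[z,g],g]=1$, that is $z\mapsto_2 g$, and $z\in\Omega$ because $q-1$ is even.

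For the out-neighbour I would exhibit an involution $y$ with $[g,y]^2=1$; then Lemma~\ref{2comm}, applied with ${\bf o}(y)=2$ and $n=2$, gives $[g,_2y]=[g,y]^{(-2)^{2-1}}=[g,y]^{-2}=1$, so $g\mapsto_2 y$ with $y\in\Omega$. After conjugating I may take $g=\begin{pmatrix}1&a\\0&1\end{pmatrix}Z$ with $a\ne 0$ and $Z={\bf Z}(\mathrm{GL}_2(q))$, and then try $y=\begin{pmatrix}0&b\\1&0\end{pmatrix}Z$, which is an involution for every $b\ne 0$. Computing the product $\begin{pmatrix}1&a\\0&1\end{pmatrix}\begin{pmatrix}0&b\\1&0\end{pmatrix}\begin{pmatrix}1&-a\\0&1\end{pmatrix}\begin{pmatrix}0&b\\1&0\end{pmatrix}$, squaring the result, and demanding that the square be scalar singles out the choice $b=a^2/2$ (legitimate since $q$ is odd), for which the square equals $-b^2 I\in Z$, so $[g,y]^2=1$ in $\mathrm{PGL}_2(q)$. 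With the two arcs in hand, strong connectivity follows: any two order-$p$ elements $g,g'$ are joined by $g\mapsto_2 y\rightsquigarrow z\mapsto_2 g'$ with $y,z\in\Omega$, while all other pairs of vertices already lie in $\Omega$. The in-arc is essentially free from the metabelian structure of the Borel; the genuine obstacle is the out-arc, i.e.\ guessing the family $y=\begin{pmatrix}0&a^2/2\\1&0\end{pmatrix}Z$ and checking that the relevant commutator squares to a central element.
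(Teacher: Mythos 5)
Your proposal is correct and is essentially the paper's own proof: the same use of the two classes of involutions and Lemma~3.5 of~\cite{mp} to collect all elements of order prime to $p$ into one strongly connected set $\Omega$, the same in-arc $z\mapsto_2 g$ from an element of order $q-1$ in the Borel subgroup, and the identical matrix computation with $y=\begin{pmatrix}0&a^2/2\\1&0\end{pmatrix}Z$ giving $[g,y]^2=1$. The only (harmless) difference is that you spell out why $[z,{}_2g]=1$ via $[z,g]\in P$ and the abelianness of $P$, where the paper simply asserts the arc.
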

When $q$ is even, $\mathrm{PSL}_2(q)=\mathrm{PGL}_2(q)$ is not strongly connected, see~\cite{DLN}.
\subsection{Almost simple groups having socle $\mathrm{PSL}_2(q)$}\label{almostsimplePSL}
The main scope in this section is determining the strongly connectivity of $\Gamma_n(G)$, where $G$ is an almost simple group with socle $L:=\mathrm{PSL}_2(q)$. 

\begin{theorem}\label{thrm:tiredtired}
Let $G$ be an almost simple group with socle $L:=\mathrm{PSL}_2(q)$ and with $L<G$. Then $\Gamma_2(G)$ is strongly connected.
\end{theorem}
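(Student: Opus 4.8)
The plan is to reduce the problem to what is already known about $L=\mathrm{PSL}_2(q)$ itself, $\mathrm{PGL}_2(q)$ (Proposition~\ref{andrea}), and the individual conjugacy classes analysed in Section~\ref{sec:q=3}, and then to upgrade these to an arbitrary almost simple $G$ sitting between $\mathrm{PSL}_2(q)$ and $\mathrm{P\Gamma L}_2(q)$. First I would fix the structural picture: $G/L$ is a subgroup of the (cyclic or metacyclic) outer automorphism group $\mathrm{Out}(\mathrm{PSL}_2(q))$, generated by a diagonal automorphism $\delta$ of order $\gcd(2,q-1)$ and a field automorphism of order $f$ (where $q=p^f$); every element $g\in G\setminus L$ has nontrivial centraliser in $L$, since an element acting on $\mathrm{PSL}_2(q)$ always fixes a nontrivial semisimple or unipotent element, so $g$ has both in- and out-neighbours inside $L$. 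The strategy is therefore: identify a strong component $\Omega$ of $\Gamma_2(G)$ that already contains ``most'' of $G$, and then show the two potentially problematic families of vertices — elements of order $p$ and elements of $G\setminus L$ — also lie in $\Omega$.

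The key steps, in order, are as follows. (1) Show that $\Gamma_2(G)$ has a connected component $\Omega$ containing all elements of even order: $G$ has trivial centre and, for $q$ odd, at least two classes of involutions (in $L$ already when $q\equiv 1\pmod 4$, and otherwise provided by $\mathrm{PGL}_2(q)\le G$ or by a field automorphism), so Theorem~\ref{theorem11} applies; for $q$ even the field automorphisms contribute involutions with distinct centraliser structure, again giving $\ge 2$ classes. Since $\Gamma_1(G)\subseteq\Gamma_2(G)$, this $\Omega$ is also a single strong-ish region once we show arcs go both ways. (2) Handle elements $x\in L$ of order $p$: by Lemma~\ref{3comm} (applied inside $L$) or by the $\mathrm{PGL}$-computation in the paragraph preceding Proposition~\ref{andrea}, there is an involution $y$ with $x\mapsto_2 y$ and, conversely, ${\bf N}_G(\langle P\rangle)$ contains an element of even order $z$ with $z\mapsto_2 x$ via Lemma~\ref{norm}; hence $x\in\Omega$. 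The delicate point here is that when $q\equiv 5\pmod 8$ this fails inside $L$ (Lemma~\ref{lemma:3}), so one genuinely needs the extra elements of $G$: the diagonal automorphism in $\mathrm{PGL}_2(q)$ supplies an element $y$ of order $q-1$ (hence even, since $4\mid q-1$ when $q\equiv 5\pmod 8$) normalising the relevant torus, and more importantly the computation in Proposition~\ref{andrea} produces, for any order-$p$ element $g$, an involution $y\in\mathrm{PGL}_2(q)$ with $[g,y]^2=1$, i.e. $g\mapsto_2 y$. This is exactly why passing to $G>L$ repairs the obstruction. (3) Handle $g\in G\setminus L$: here I would use that ${\bf C}_L(g)\ne 1$; if ${\bf C}_L(g)$ contains an element of even order we are immediately done via a commuting arc into $\Omega$, and if ${\bf C}_L(g)$ is a $p'$-group of odd order, then it lies in a cyclic torus $T$, and ${\bf N}_G(T)$ contains an involution inverting $T$, giving an arc $g\mapsto_2(\text{involution})$ by Lemma~\ref{norm} applied to $\langle g\rangle$ normalised appropriately — more carefully, one picks an involution $t$ with $t\in {\bf N}_G(\langle c\rangle)$ for a suitable $c\in{\bf C}_L(g)$ and leverages Lemma~\ref{2comm}. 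One also needs an arc into $g$: any $c\in{\bf C}_L(g)\setminus\{1\}$ gives $c\mapsto_1 g$, hence $c\mapsto_2 g$, and $c\in\Omega$ by steps (1)–(2). So $g\in\Omega$, and $\Omega=G\setminus{\bf Z}_\infty(G)=G\setminus\{1\}$, proving strong connectivity.

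The main obstacle I expect is step (2) in the case $q\equiv 5\pmod 8$ together with the bookkeeping of \emph{which} almost simple groups actually contain the diagonal automorphism — if $G/L$ is generated purely by a field automorphism and $q\equiv 5\pmod 8$, then $\mathrm{PGL}_2(q)\not\le G$ and the Proposition~\ref{andrea} computation is unavailable inside $G$. In that subcase one must argue directly with field automorphisms: a field automorphism $\varphi$ of even order has even order itself, and for $x$ of order $p$ one can conjugate so that $x$ lies in a Sylow $p$-subgroup normalised by $\langle\varphi\rangle$, producing an element of even order in ${\bf N}_G(\langle x\rangle)$ and hence an arc into $x$; for the arc out of $x$ one still needs an involution $y$ with $[x,y]^{2^{n-1}}=1$, and when $n=2$ this is $[x,y]^2=1$. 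Here I would fall back on the observation that $\mathrm{PSL}_2(q_0)\le L$ for $q_0=p$ (the prime subfield copy), inside which $q_0=p$ may satisfy $p\equiv 3\pmod 4$ or $p\equiv 1\pmod 8$ for a good choice, OR — cleaner — note that $x$ of order $p$ in $\mathrm{PSL}_2(p^f)$ with $f$ even lies in $\mathrm{PSL}_2(p^{f/1})$ and we may instead use the $\mathrm{PGL}_2(q)$-element which, even if not in $G$, conjugates $x$ to another order-$p$ element inside $L$, so the \emph{target} involution can be taken in $L$ after all provided $L$ has the right involution; when $q\equiv 5\pmod 8$, $L$ does contain involutions, so the only failure in Lemma~\ref{lemma:3} was the non-existence of $[x_\varepsilon,g]$ of $2$-power order with $g\in z^G$, which is an $L$-internal statement and can be by the Proposition~\ref{andrea} calculation adapted verbatim to $\mathrm{SL}_2(q)$, giving $[x,y]^2=1$ with $y$ an involution of $\mathrm{PGL}_2(q)$ — and such a $y$ lies in $L$ or in $\mathrm{PGL}_2(q)$; if it lies in $\mathrm{PGL}_2(q)\setminus L\not\le G$ one replaces it by an $L$-conjugate involution, which works because $[x,y]=1$-type relations are preserved under the conjugation action and the needed relation is on $x$ and the \emph{class} of $y$. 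Resolving this case distinction carefully is the real content; everything else is an assembly of Theorem~\ref{theorem11}, Lemmas~\ref{2comm},~\ref{norm},~\ref{NC},~\ref{3comm} and Proposition~\ref{andrea}.
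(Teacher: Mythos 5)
Your overall architecture (a large component $\Omega$ containing the even-order elements, then arcs into and out of $\Omega$ for the order-$p$ elements and the outer elements) matches the paper's, but two of your key steps fail. First, $\mathrm{PSL}_2(q)$ has a \emph{single} conjugacy class of involutions for every odd $q$, so your step (1) cannot invoke Theorem~\ref{theorem11} when $G/L$ is generated by a field automorphism of odd order: all involutions of $G$ then lie in $L$ and form one class. Note that $q\equiv 5\pmod 8$ forces $q=p^f$ with $f$ odd (odd squares are $1$ mod $8$), so if moreover $\mathrm{PGL}_2(q)\not\le G$ then $|G:L|$ is odd and this is exactly the situation where you need the two-classes argument most. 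The paper instead proves directly that all involutions of $L$ lie in one strong component by routing paths through Sylow $2$-subgroups meeting ${\bf C}_L(\alpha)\cong\mathrm{PSL}_2(q_0)$ and through a conjugate of $\alpha$.

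Second, and more seriously, your repair of the case $q\equiv 5\pmod 8$ with $\mathrm{PGL}_2(q)\not\le G$ is wrong. The relation $[x,y]^2=1$ is an invariant of the pair $(x,y)$ up to simultaneous conjugation, not of $x$ together with the conjugacy class of $y$, and Lemma~\ref{lemma:3} says precisely that for $q\equiv 5\pmod 8$ \emph{no} involution $y\in L$ satisfies $[x,y]^{2^{n-1}}=1$ when ${\bf o}(x)=p$; by Lemma~\ref{3comm} such an $x$ then has no out-neighbour in $L$ outside $\langle x\rangle$. So ``replacing $y$ by an $L$-conjugate involution'' cannot produce the missing out-arc. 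The out-arc must land in $G\setminus L$: the point (implicit in the paper's odd-index case) is that ${\bf C}_L(\alpha)\cong\mathrm{PSL}_2(q_0)$ contains order-$p$ elements of both $L$-classes, because when $|G:L|$ is odd a nonsquare of $\mathbb{F}_{q_0}$ remains a nonsquare in $\mathbb{F}_q$; hence every $x$ of order $p$ commutes with an $L$-conjugate of $\alpha$ and $x\mapsto_1\alpha^g\in\Omega$. Two further gaps: in step (3) you reverse the direction of Lemma~\ref{norm} (an involution $t$ normalizing $\langle c\rangle$ gives $t\mapsto_2 c$, an arc \emph{into} the torus element, not an arc out of $g$); and you never produce out-arcs for the odd-order semisimple elements of $L$ (order dividing $(q+1)/2$ when $q\equiv 1\pmod 4$, or dividing $q\pm 1$ when $q$ is even). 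For $q$ even this is the entire obstruction in $L$, since the maximal tori are isolated cliques of the commuting graph, and it requires the explicit commutator computations with the field automorphism (the element $[g,\alpha]$ of order $2$ for $g$ of order $q+1$, and the separate calculation when $q_0=2$) that the paper carries out and your proposal omits.
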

\begin{proof}
In the light of Proposition~\ref{andrea}, we may suppose $L<G\ne \mathrm{PGL}_2(q)$.

Let $e:=|G:L|$ and suppose first that $e$ is even. Then $G\ge H\ge L$ with $|H:L|=2$. Now, we have at most three choices for $H$: $H=\mathrm{PSL}_2(q)\langle\iota\rangle=\mathrm{PGL}_2(q)$, $H=\mathrm{PSL}_2(q)\langle\alpha\rangle=\mathrm{P}\Sigma\mathrm{L}_2(q)$ or $H=\mathrm{PSL}_2(q)\langle\iota\alpha\rangle$, where $\alpha:\mathbb{F}_q\to\mathbb{F}_q$ is an involutory field automorphism and $\iota$ is the projective image of
\[
\begin{pmatrix}
0&1\\
1&
0
\end{pmatrix}.
\]

In the first case, $G\ge\mathrm{PGL}_2(q)$ and $q$ is odd and, for this case, the strongly connectivity of $\Gamma_2(G)$ follows using the strongly connectivity of $\Gamma_2(\mathrm{PGL}_2(q))$ in Proposition~\ref{andrea}.

In the second case and in the third case, $q=q_0^2$ for some prime power $q_0$. If $q_0$ is odd, then Theorem~\ref{thrm:tired} gives that $\Gamma_2(\mathrm{PSL}_2(q))$ is strongly connected, because $q=q_0^2\equiv 1\pmod 8$. (The exceptional case of $q=9$ can be handled with a computer.) Assume then $q_0$ even. Observe that $\iota\in\mathrm{SL}_2(q)$ and hence we only need to deal with the second case. Now, $G$ has two conjugacy classes of involutions, and by~\cite[Lemma 3.5]{mp}, there is a unique connected component, say $\Omega,$ of $\Gamma_1(G)$ containing all the elements of even order in $G.$ Since the centralizer in $L$ of $\alpha$  contains a subgroup isomorphic to $\mathrm{SL}_2(q_0)$,
$\Omega$ contains all the elements of $L=\mathrm{PSL}_2(q)$ of order $2$ of or order a divisor of $q-1$. Therefore, it sufficies to consider the elements having order a divisor of $q+1$. Let 
\[
g=
\begin{pmatrix}
0&1\\
1&x
\end{pmatrix}\in\mathrm{SL}_2(q),
\]
having order $q+1$ for some $x\in\mathbb{F}_q$. Then
$$[g,\alpha]=
\begin{pmatrix}
1&x+x^\alpha\\
0&1
\end{pmatrix}.
$$
As $[g,\alpha]$ has order $2$, we deduce $g\mapsto_2\alpha$ and from this it follows immediately that $\Gamma_2(G)$ is strongly connected. 

\smallskip 

Finally, we suppose that $|G:L|=e$ is odd. In particular, $G=L\rtimes\langle\alpha\rangle$ for some field automorphism of odd order $e$. Set $q_0:=q^{1/e}$ and observe that, since $e$ is odd, $q_0-1$ divides $q-1$ and $q_0+1$ divides $q+1$. Now, the centralizer of $\alpha$ in $L$ contains a subgroup isomorphic to $\mathrm{PSL}_2(q_0)$.
We claim that any two elements of $L$ having order $2$ are in the same strongly connected component of $L$. Let $x,y\in L$ with ${\bf o}(x)={\bf o}(y)=2$. Let $P$ be a Sylow $2$-subgroup of $L$ with $x\in P$. If $y\in P$, then $[x,y,y]=1$ because $P$ has nilpotency class $2$. Therefore, suppose $y\notin P$. Let $P^-$ be the opposite Sylow $2$-subgroup of $L$ (here we are thinking of fixing a root system for the Lie group $\mathrm{PSL}_2(q)$). Now, $P$ acts transitively on the set of Sylow $2$-subgroups of $L$ distinct from $P$. Therefore, replacing $y$ and $x$ with a suitable $P$-conjugate, we may suppose that $y\in P^-$. 
Replacing the field authomorphism $\beta$ with a suitable $L$-conjugate if necessary, we may suppose that $Q:=P\cap {\bf C}_L(\alpha)$ is a Sylow $2$-subgroup of ${\bf C}_L(\alpha)=\mathrm{PSL}_2(q_0)$. In particular, $Q^-=P^-\cap {\bf C}_L(\alpha)$ is the opposite Sylow $2$-subgroup of ${\bf C}_L(\alpha)=\mathrm{PSL}_2(q_0)$. Now, let $x_0\in Q\setminus \{1\}$ and $y_0\in Q^-\setminus\{1\}$. We have $[x,x_0,x_0]=1$, $[x_0,\beta]=1$, $[\beta,y_0]=1$ and $[y_0,y,y]=1$, that is,
$$x\mapsto_2 x_0\mapsto_1\alpha\mapsto_1 y_0\mapsto_2y,$$ 
which is what we wanted to prove. 

Let $\Omega$ be a strongly connected component of $\Gamma_2(G)$ containing an element of order $2$. From the paragraph above, $\Omega$ contains all the elements having even order and contains all the $L$-conjugates of $\alpha$.

From these facts, it is not hard to deduce  that $\Gamma_2(G)$ is strongly connected, except when $q_0-1=1$. Indeed, when $q_0-1=1$, we cannot guarantee that we reach the elements of order $q-1$ in $L$ from elements of ${\bf C}_L(\alpha)$. 

Suppose $q_0-1=1$, that is, $q_0=2$. Let $a\in\mathbb{F}_q^\ast$ having order $q-1$ and set
\[
x:=
\begin{pmatrix}
a^{-1}&0\\
0&a\\
\end{pmatrix},
z:=
\begin{pmatrix}
1&0\\
1&1\\
\end{pmatrix},
z':=
\begin{pmatrix}
0&1\\
1&0\\
\end{pmatrix}.
\]
Then
\[
[x,z]=\begin{pmatrix}
1&0\\
a^{-2}+1&1\\
\end{pmatrix}
\]
has order $2$ and hence $x\mapsto_2 z$. Similarly, $[z',x]=x^{-1}$; thus $[z',x,x]=1$ and $z'\mapsto_2 x$.
  Thus $\Gamma_2(G)$ is strongly connected also in this case.

 From these comments it easily follows that $\Gamma_2(G)$ is connected.
\end{proof}
\section{Projective special linear groups of Lie rank $>1$}\label{sec:lie>1}
Before embarking in studying the connectivity of the Engel graph of $\mathrm{PSL}_m(q)$ with $m\ge 3$, we prove a technical lemma.
\begin{lemma}\label{PSL}
Let $m$ be an odd integer with $m\ge 3$, let $q$ be a prime power and let $\mathbb{F}_q$ be a finite field of cardinality $q$. Then there exists an element $g\in\mathrm{SL}_m(q)$ acting irreducibly on $\mathbb{F}_q^m$ and an involution $z\in \mathrm{SL}_m(q)\setminus {\bf Z}(\mathrm{SL}_m(q))$ such that 
$${\bf o}([g,z])=
\begin{cases}
2,&\textrm{when }q\textrm{ is odd, }\\
4,&\textrm{when }q\textrm{ is even.}
\end{cases}
$$
\end{lemma}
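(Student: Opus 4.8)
The plan is to identify $\mathbb{F}_q^m$ with the field $K:=\mathbb{F}_{q^m}$, to take $g$ to be multiplication by a suitably chosen generator $\zeta$ of $K$ over $\mathbb{F}_q$ of norm $1$, and to take $z$ to be the involution attached to the $\mathbb{F}_q$-linear form $\phi:=\mathrm{Tr}_{K/\mathbb{F}_q}(a\,\cdot\,)$ on $K$, for a carefully chosen $a\in K^{\times}$: a reflection when $q$ is odd, a transvection when $q$ is even. The virtue of this set-up is that the commutator $[g,z]=g^{-1}zgz$ becomes an operator whose ``nonidentity part'' has image inside the $2$-dimensional $\mathbb{F}_q$-subspace $P:=\langle 1,\zeta^{-1}\rangle$ of $K$, so that its order is governed by the two scalars $\mathrm{Tr}(a\zeta)$ and $\mathrm{Tr}(a\zeta^{-1})$.

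For the generator I would invoke Zsygmondy's theorem: since $m\ge 3$ is odd, the pair $(q,m)$ avoids all its exceptions, so $q^m-1$ has a primitive prime divisor $r$; an element $\zeta\in K^{\times}$ of order $r$ then generates $K$ over $\mathbb{F}_q$ and satisfies $N_{K/\mathbb{F}_q}(\zeta)=\zeta^{(q^m-1)/(q-1)}=1$ (because $r\mid (q^m-1)/(q-1)$), so multiplication by $\zeta$ defines an irreducible $g\in\mathrm{SL}_m(q)$. The property of $\zeta$ I would use throughout is that $1,\zeta,\zeta^{-1}$ are $\mathbb{F}_q$-linearly independent: a relation among them would force $\zeta$ to be algebraic of degree at most $2$ over $\mathbb{F}_q$, contradicting $[K:\mathbb{F}_q]=m\ge 3$. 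Dually, the three $\mathbb{F}_q$-linear forms $\mathrm{Tr}(\cdot)$, $\mathrm{Tr}(\zeta\,\cdot\,)$, $\mathrm{Tr}(\zeta^{-1}\,\cdot\,)$ on $K$ are linearly independent.

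Next I would choose $a\in K^{\times}$ so that exactly two of $\mathrm{Tr}(a),\mathrm{Tr}(a\zeta),\mathrm{Tr}(a\zeta^{-1})$ vanish: for $q$ odd I want $\mathrm{Tr}(a\zeta)=\mathrm{Tr}(a\zeta^{-1})=0\ne\mathrm{Tr}(a)$, and for $q$ even I want $\mathrm{Tr}(a)=\mathrm{Tr}(a\zeta)=0\ne\mathrm{Tr}(a\zeta^{-1})$. Such an $a$ exists by the independence just noted: imposing two of the conditions leaves an $\mathbb{F}_q$-subspace of dimension $m-2\ge 1$, and every nonzero point of it automatically lies outside the kernel of the remaining form. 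Put $v:=1$ and $\phi:=\mathrm{Tr}(a\,\cdot\,)$. If $q$ is odd, let $z$ be the reflection fixing $v$ and acting as $-1$ on the hyperplane $\ker\phi$ (legitimate since $\phi(v)=\mathrm{Tr}(a)\ne 0$); then $z$ is an involution with $\det z=(-1)^{m-1}=1$, so $z\in\mathrm{SL}_m(q)$, and $z$ is not scalar, so $z\notin{\bf Z}(\mathrm{SL}_m(q))$. If $q$ is even, let $z\colon x\mapsto x+\phi(x)v$; since $\phi(v)=\mathrm{Tr}(a)=0$ this is a transvection, hence an involution of $\mathrm{SL}_m(q)$ which is again not scalar.

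Finally I would compute $[g,z]=g^{-1}zgz$. Conjugating $z$ by $g$ replaces the data $(v,\phi)$ of $z$ by $(g^{-1}v,\phi\circ g)=(\zeta^{-1},\mathrm{Tr}(a\zeta\,\cdot\,))$, and multiplying out the two resulting rank-$\le 1$ perturbations of $\pm I$ gives $[g,z]=I+M$, where $\operatorname{im}M\subseteq P=\langle\zeta^{-1},1\rangle$ and $M$ vanishes on $\ker(\phi\circ g)\cap\ker\phi$ (and one checks $K=P\oplus(\ker(\phi\circ g)\cap\ker\phi)$). Computing $M$ on the basis $(\zeta^{-1},1)$ of $P$ in terms of $s:=\mathrm{Tr}(a\zeta)$ and $p:=\mathrm{Tr}(a\zeta^{-1})$ gives: for $q$ odd (where $s=p=0$, and the cross term in the product vanishes) that $M|_P=-2\cdot\mathrm{id}_P$, so $[g,z]=I-2\pi$ with $\pi$ the projection of $K$ onto $P$ along $\ker(\phi\circ g)\cap\ker\phi$, an involution different from $I$ since $\pi\ne 0$; hence ${\bf o}([g,z])=2$. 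For $q$ even, $M|_P=\left(\begin{smallmatrix} sp & s \\ p & 0\end{smallmatrix}\right)=\left(\begin{smallmatrix} 0 & 0 \\ p & 0\end{smallmatrix}\right)$ is a nonzero nilpotent $2\times 2$ matrix; as $\operatorname{im}M\subseteq P$ and $M|_P$ is nilpotent this forces $M^3=0$, while $M^2\ne 0$ because $M|_P\ne 0$, so $[g,z]$ is unipotent of Jordan type $(3,1^{m-3})$ and ${\bf o}([g,z])=4$. The main obstacles I foresee are the use of Zsygmondy to secure the norm-one generator and the bookkeeping in the identity $[g,z]=I+M$; the one genuinely conceptual point --- explaining why the target order jumps from $2$ to $4$ in characteristic $2$ --- is that there a product of two transvections has order $2$ only if it is already trivial on $P$, whereas the choice making exactly one of $s,p$ vanish is precisely what pins down the Jordan type $(3,1^{m-3})$, all of whose elements have order $4$.
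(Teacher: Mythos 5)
Your argument is correct and lands on the same structural configuration as the paper's proof --- an irreducible determinant-one cyclic element together with an involution that fixes a line and negates a hyperplane when $q$ is odd, or a transvection when $q$ is even --- but it realizes and verifies this configuration quite differently. The paper produces $g$ as the companion matrix of an irreducible polynomial with constant term $-1$ (via Hilbert's Theorem 90), fixes one explicit block matrix $z$, and checks the order of $[g,z]$ by direct matrix multiplication; you instead work in the field model $K=\mathbb{F}_{q^m}$, obtain $g$ from a Zsygmondy prime (legitimately, since $m\ge 3$ odd avoids all exceptional cases), and encode $z$ by the pair $(1,\mathrm{Tr}(a\,\cdot\,))$, so that $[g,z]-I$ becomes a rank-two operator supported on $\langle 1,\zeta^{-1}\rangle$ whose order is read off from the three scalars $\mathrm{Tr}(a)$, $\mathrm{Tr}(a\zeta)$, $\mathrm{Tr}(a\zeta^{-1})$. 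The price is that you must choose $a$; the payoff is that the dichotomy between order $2$ and order $4$, and the Jordan type $(3,1^{m-3})$ in characteristic $2$, become transparent rather than emerging from a coordinate computation. One sentence needs repair: ``every nonzero point of it automatically lies outside the kernel of the remaining form'' is false for $m\ge 5$, because the three kernels meet in a subspace of dimension $m-3>0$; what you actually need --- and what the linear independence of the three trace forms already gives --- is only that the $(m-2)$-dimensional double kernel is not \emph{contained} in the third kernel, so a suitable $a$ exists. With that one-line fix, all the remaining computations check out.
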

\begin{proof}
 Using Hilbert's theorem 90, we may deduce that there exists an irreducible polynomial 
$$p(x)=x^m-a_1x^{m-1}-\cdots-a_{m-1}x-1$$ over $\mathbb{F}_q$. Now consider the companion matrix of the polynomial $p(x)$:
\[
g:=\begin{pmatrix}
0&1&0&0&\cdots& 0\\
0&0&1&0&\cdots& 0\\
0&0&0&1&\cdots& 0\\
\vdots&\vdots&\vdots&\vdots&\ddots &\vdots\\
0&0&0&0&\cdots&1\\
1&a_{m-1}&a_{m-2}&a_{m-3}&\cdots &a_1
\end{pmatrix}.
\]
As $\mathrm{det}(g)=1$, we deduce $g\in\mathrm{SL}_m(q)$ and, as $p(x)$ is irreducible, we deduce that $\langle g\rangle$ acts irreducibly as a matrix group on $\mathbb{F}_q^m$. A computation gives
\[
g^{-1}:=\begin{pmatrix}
-a_{m-1}&-a_{m-2}&\cdots&-a_1&1\\
1&0&0&\cdots& 0\\
0&1&0&\cdots& 0\\
\vdots&\vdots&\ddots&\vdots &\vdots\\
0&0&\cdots&1&0
\end{pmatrix}.
\]

Suppose $q$ odd. Consider the block diagonal matrix
\[
z:=\begin{pmatrix}
-I_{\frac{m-1}{2}}&&\\
&1&\\
&&-I_{\frac{m-1}{2}}
\end{pmatrix},
\]
where $I_{(m-1)/2}$ is the identity matrix with $(m-1)/2$ rows and columns.
Observe that $z$ is an involution of $\mathrm{SL}_m(q)$ and $z\notin{\bf Z}(\mathrm{SL}_m(q))$, because $m\ge 3$.

We have
\[
[g,z]=g^{-1}zgz=
\begin{pmatrix}
I_{\frac{m-1}{2}}&  &2a_{(m-1)/2}  &\\
                 &-1&  &\\
                 &  &-1&\\
                 &  &  &I_{\frac{m-3}{2}}

\end{pmatrix}.
\]
In particular, $[g,z]$ is an involution.

Suppose $q$ even. Here, an entirely similar construction works simply by taking $z$ to be the transvection
\[
z=
\begin{pmatrix}
1&1&\\
0&1&\\
 & &I_{m-2}
\end{pmatrix}.
\]
A straightforward computation yields that
\[
[g,z]=
\begin{pmatrix}
1&1&a_{m-1}&\\
0&1&1&\\
0&0&1&\\
& & &I_{m-3}
\end{pmatrix}
\]
and hence $[g,z]$ has order $4$.
\end{proof}
\begin{proposition}\label{proposition:PSL}
Let $m\ge 3$ and let $q$ be a prime power. Then $\Gamma_2(\mathrm{PSL}_m(q))$ is strongly connected when $q$ is odd and $\Gamma_3(\mathrm{PSL}_m(q))$ is strongly connected when $q$ is even.
\end{proposition}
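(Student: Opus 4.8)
The plan is to reduce the strong connectivity of $\Gamma_n(\mathrm{PSL}_m(q))$ to the combinatorial criterion of Corollary~\ref{cor}, i.e.\ to show that every odd-prime connected component $\psi$ of the prime graph of $L:=\mathrm{PSL}_m(q)$ contains a nontrivial element $h$ of some Hall $\psi$-subgroup $H$ that is reachable from, and can reach, the ``even component'' $\mathcal I$ in at most $n$ Engel steps. First I would record that $L$ is not one of the excluded small groups $\mathrm{PSL}_2(q),\,{}^2B_2(q),\,{}^2G_2(q),\,{}^2F_4(q),\,\mathrm{PSL}_3(4)$ — the one genuine exception $\mathrm{PSL}_3(4)$ (when $q=4$, $m=3$) will have to be checked separately by hand or machine, and I expect its Engel graph to be strongly connected at level $3$ as well. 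With $L$ in the scope of Corollary~\ref{cor}, the isolated odd-prime components of the prime graph of $\mathrm{PSL}_m(q)$ are, by Williams' classification, governed by primitive prime divisors $r$ of $q^k-1$ for the relevant $k$; the associated Hall $\psi$-subgroup $H$ is cyclic, generated (up to conjugacy and up to the central quotient) by an element acting irreducibly, or with a large irreducible block, on $\mathbb F_q^m$.

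The heart of the argument is then Lemma~\ref{PSL}. Since $m\ge 3$ we may write $m=m'$ with $m'$ odd if $m$ is odd, and otherwise factor off a suitable odd piece; but in fact the cleanest route is: when $m$ is odd, apply Lemma~\ref{PSL} directly to get an irreducible $g\in\mathrm{SL}_m(q)$ and a non-central involution $z$ with $[g,z]$ of order $2$ (if $q$ odd) or $4$ (if $q$ even). When $m$ is even, one reduces to the odd case by working inside a subgroup $\mathrm{SL}_{m-1}(q)\times\mathrm{SL}_1(q)$-type Levi or, more simply, by choosing $g$ to act irreducibly on a subspace of odd dimension $m$ or $m-1$ and trivially (as a unit-determinant scalar block) on the complement, so that the primitive prime divisor element still lies in $\langle g\rangle$; one then reuses the same involution $z$. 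Either way one obtains, in $L$, an element $h$ lying in the isolated Hall $\psi$-subgroup $H$ together with an involution $z$ (hence $z\in\mathcal I$) such that the order of $[h,z]$ is $2$ or $4$. By Lemma~\ref{2comm}, $[h,_2 z]=[h,z]^{-2}$, which is trivial when ${\bf o}([h,z])\mid 4$ is $1$ or $2$, and $[h,_3 z]=[h,z]^{4}$, which is trivial when ${\bf o}([h,z])=4$. Thus $h\mapsto_2 z$ when $q$ is odd and $h\mapsto_3 z$ when $q$ is even. For the reverse arc, $z\in {\bf N}_L(\langle h\rangle)$ is not automatic, but $z$ normalizes a torus containing a conjugate of $h$; more robustly, one notes $h\in{\bf N}_L(\langle h\rangle)$ and by Lemma~\ref{norm} any element of ${\bf N}_L(\langle h\rangle)$ maps to $h$ in two steps, and ${\bf N}_L(\langle h\rangle)$ meets $\mathcal I$ (it has even order, being the normalizer of a cyclic torus in a classical group), so some $w\in\mathcal I$ satisfies $w\mapsto_2 h$. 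This verifies both halves of the criterion in Corollary~\ref{cor} for every odd-prime component $\psi$, with $n=2$ for $q$ odd and $n=3$ for $q$ even.

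To assemble the proof I would proceed as follows. Step one: dispose of $\mathrm{PSL}_3(4)$ by direct computation. Step two: for all other $(m,q)$ with $m\ge3$, invoke Corollary~\ref{cor}, so it remains to handle each isolated odd-prime component of the prime graph. Step three: for each such component, identify the Hall subgroup $H$ as (a subgroup of the image of) a cyclic maximal torus, and use Lemma~\ref{PSL} — extended from odd $m$ to general $m\ge3$ by padding with a scalar block as above — to produce the commuting/Engel data. Step four: combine Lemma~\ref{2comm} with the order bound on $[g,z]$ to get the arc $h\mapsto_n z$ for the right $n$, and Lemma~\ref{norm} applied inside ${\bf N}_L(\langle h\rangle)$ to get an arc from $\mathcal I$ into $h$. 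Step five: conclude via Corollary~\ref{cor}. The main obstacle I anticipate is Step three in the case $m$ even: one must ensure that the padded element $g$ still has unit determinant (adjust the scalar block accordingly, possible since one only needs the odd-prime part of its order and can rescale within $\mathbb F_q^\ast$), still generates a Hall $\psi$-subgroup up to the center (the primitive prime divisor of $q^{m}-1$ or $q^{m-1}-1$ survives passage to $\mathrm{PSL}$ because it is coprime to $\gcd(m,q-1)$ for $m$ large enough, with finitely many tiny exceptions to check), and that $z$ remains non-central — all of which are routine but require care about the small cases $m=3,4$ and small $q$, where the prime graph components and the structure of torus normalizers can degenerate and may need a separate computational check.
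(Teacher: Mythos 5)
Your overall strategy coincides with the paper's: reduce to Corollary~\ref{cor}, read off the isolated odd components of the prime graph of $\mathrm{PSL}_m(q)$ (which occur only when $m$ is an odd prime, or when $m-1$ is an odd prime and $q-1\mid m$), use Lemma~\ref{PSL} together with Lemma~\ref{2comm} to get the outgoing arc $h\mapsto_2 z$ (resp.\ $h\mapsto_3 z$ for $q$ even) into an involution, and use Lemma~\ref{norm} inside ${\bf N}_L(\langle h\rangle)$ for the incoming arc. However, your incoming-arc step contains a genuine error. You justify the existence of $w\in\mathcal I$ with $w\mapsto_2 h$ by asserting that ${\bf N}_L(\langle h\rangle)$ ``has even order, being the normalizer of a cyclic torus in a classical group''. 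This is false: here the normalizer is the Frobenius group $\langle h\rangle\rtimes\langle z\rangle$ of order $m\cdot(q^m-1)/\bigl((q-1)\gcd(m,q-1)\bigr)$ (resp.\ $(m-1)\cdot(q^{m-1}-1)/(q-1)$), and both factors are odd, since $1+q+\cdots+q^{k-1}$ is odd whenever $k$ is odd, for any $q$. So this normalizer contains no involution at all. The repair --- which is exactly what the paper does --- is to take $w:=z$, the Frobenius complement of order $m$ (resp.\ $m-1$): Lemma~\ref{norm} still yields $w\mapsto_2 h$, and $w$ does lie in $\mathcal I$ because the prime $m$ (resp.\ each prime divisor of $m-1$) belongs to the connected component of $2$ in $\Pi(L)$ (it divides $q$ or some $q^d-1$ with $d\le m-1$ by Fermat), whence by Theorem~\ref{theorem44} the commuting-graph component of $w$ contains elements of even order and therefore equals $\mathcal I$. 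Without this argument your verification of the criterion in Corollary~\ref{cor} is incomplete.

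A secondary omission: besides $\mathrm{PSL}_3(4)$ you must also set aside $(m,q)=(3,2)$, both because $\mathrm{PSL}_3(2)\cong\mathrm{PSL}_2(7)$ falls under the exclusions of Corollary~\ref{cor} and because its prime graph has three components rather than the two of the generic case; the paper disposes of $(3,2)$ and $(3,4)$ by direct computation (noting in particular that $\Gamma_2(\mathrm{PSL}_3(2))$ is \emph{not} strongly connected, which is consistent with the statement only because $q=2$ is even). Your padding of the Lemma~\ref{PSL} construction from dimension $m-1$ to $m$ by a determinant-$1$ scalar block is fine and matches what the paper does implicitly; note also that for $m$ odd but not prime, and for $m$ even with $m-1$ not prime, the prime graph is connected and nothing needs to be proved, so the odd/even dichotomy on $m$ in your Step three is not quite the right organizing principle.
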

\begin{proof}
Let $G:=\mathrm{PSL}_m(q)$. When $(m,q)=(3,2)$, we have $\mathrm{PSL}_3(2)\cong\mathrm{PSL}_2(7)$. It follows from Section~\ref{sec:lie1} that 
$\Gamma_n(G)$ is strongly connected when $n\ge 3$. It can be verified with \texttt{magma} that $\Gamma_2(G)$ has 9 strongly connected components and $\Gamma_1(G)$ has 37 strongly connected components. Similarly, when $(m,q)=(3,4)$, it can be verified with \texttt{magma} that $\Gamma_1(G)$ has 3257 strongly connected components, $\Gamma_2(G)$ has 961 strongly connected components and $\Gamma_3(G)$ is strongly connected. In particular, for the rest of our argument, we do exclude $(m,q)\in \{(3,2),(3,4)\}$ from further considerations.

By Theorem~\ref{theorem111}, all elements of even order of $G$ are in the same connected component of $\Gamma_1(G)$. In particular, we use the prime graph of $G$ to deduce properties of the commuting graph of $G$, see Corollary~\ref{cor}. When the prime graph of $G$ is connected, then so is $\Gamma_1(G)$ and hence so is $\Gamma_n(G)$. Therefore, we may suppose that $\Pi(G)$ is disconnected. We have reported in Table~\ref{table1PSL} informations on the connected components of $\Pi(G)$. This information is taken from~\cite{KoMa}, adapted to our current notation.
\begin{table}\centering
\begin{tabular}{cll}
\toprule[1.5pt]
conditions&nr. components & components\\
\midrule[1.5pt]
$(m,q)=(3,4)$&4&$\{2\}$, $\{3\}$,  $\{5\}$,  $\{7\}$\\ 
$m$ odd prime, $(m,q)\notin\{(3,2),(3,4)\}$ &2&$\pi\left(q\prod_{i=1}^{m-1}(q^i-1)\right)$, $\pi\left(\frac{q^m-1}{(q-1)\gcd(m,q-1)}\right)$\\
$m-1$ odd prime, $q-1\mid m$&2&$\pi\left(q(q^m-1)\prod_{i=1}^{m-2}(q^i-1)\right)$, $\pi\left(\frac{q^{m-1}-1}{q-1}\right)$\\
\bottomrule[1.5pt]
\end{tabular}
\caption{Cases when $\mathrm{PSL}_m(q)$ has disconnected prime graph}\label{table1PSL}
\end{table}

We deal with each line in Table~\ref{table1PSL} in turn. When $m$ is prime, Lemma~\ref{PSL} guarantees that there exists an element $g$ having order divisible by an element in the second connected component of $\Pi(G)$ and an involution 
$z$ with $g\mapsto_2 z$ when $q$ is odd and with $g\mapsto_3z$ when $q$ is even. Conversely, let $g\in \mathrm{PSL}_m(q)$ with ${\bf o}(g)=(q^m-1)/[(q-1)\gcd(m,q-1)]$. Then ${\bf N}_G(\langle g\rangle)=\langle g\rangle\rtimes \langle z\rangle$ where ${\bf o}(z)=m$. In particular, $[z,g,g]=1$ and $z\mapsto_2g$.

Assume now that $m-1$ is prime and $q-1\mid m$. Lemma~\ref{PSL} (applied with $m$ replaced by $m-1$) guarantees that, there exists an element $g$ having order divisible by an element in the second connected component of $\Pi(G)$ and an involution 
$z$ with $g\mapsto_2 z$ when $q$ is odd and with $g\mapsto_3z$ when $q$ is even. Conversely, let $g\in \mathrm{PSL}_m(q)$ with ${\bf o}(g)=(q^{m-1}-1)/(q-1)$. Then ${\bf N}_G(\langle g\rangle)=\langle g\rangle\rtimes \langle z\rangle$ where ${\bf o}(z)=m-1$. In particular, $[z,g,g]=1$ and $z\mapsto_2g$.
\end{proof}

\section{Unitary groups}\label{sec:unitary}

Before embarking into the study of the Engel graph of unitary groups, we need to prove the following.
\begin{theorem}\label{theorem:PSU}
Let $m$ be an odd prime and let $q$ be a prime power. If $(m,q)\ne (3,2)$, then there exists an element  $g\in \mathrm{SU}_m(q)\setminus{\bf Z}(\mathrm{SU}_m(q))$ having order a divisor of $(q^m+1)/(q+1)$ and an element $z\in \mathrm{SU}_m(q)\setminus{\bf Z}(\mathrm{SU}_m(q))$ having order a divisor of $q+1$ such that $[g,z,z]=1$.
\end{theorem}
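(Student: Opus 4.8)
The idea mirrors the construction in Lemma~\ref{PSL} for $\mathrm{SL}_m(q)$, but now carried out inside the unitary group relative to a Hermitian form. First I would realize $\mathrm{SU}_m(q)$ concretely: fix the Hermitian form on $\mathbb{F}_{q^2}^m$ given by the matrix $J$ with $1$'s on the anti-diagonal, so that $\mathrm{SU}_m(q)=\{A\in\mathrm{SL}_m(q^2)\mid A^{*}JA=J\}$, where $A^{*}$ is the conjugate-transpose with entries raised to the $q$-th power. The element $g$ of order dividing $(q^m+1)/(q+1)$ is a generator of a Singer-type torus: it comes from viewing $\mathbb{F}_{q^{2m}}^{*}$ acting on $\mathbb{F}_{q^{2m}}\cong\mathbb{F}_{q^2}^m$, and taking the norm-one subgroup with respect to the extension $\mathbb{F}_{q^{2m}}/\mathbb{F}_{q^2}$; an element of order exactly $(q^m+1)/(q+1)$ lies in $\mathrm{SU}_m(q)$, acts irreducibly on $\mathbb{F}_{q^2}^m$ (since $m$ is prime), and is non-central because its order exceeds $\gcd(m,q+1)$ for $(m,q)\neq(3,2)$. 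Equivalently, and more usefully for the commutator computation, I would take $g$ to be (a conjugate of) the companion matrix of an appropriate self-reciprocal-up-to-$q$-twisted irreducible polynomial, so that $g^{-1}$ has an equally explicit sparse form.

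\textbf{The involution-type element $z$.} The element $z$ should have order dividing $q+1$ and be built to interact nicely with $g$. The cleanest choice depends on the parity of $q$. When $q$ is odd, $q+1$ is even and I would take $z$ to be a diagonal (with respect to the anti-diagonal form) involution of the shape $\mathrm{diag}(-I_{(m-1)/2},1,-I_{(m-1)/2})$ — this lies in $\mathrm{SU}_m(q)$ because it is $\pm 1$ on an orthogonal decomposition of the Hermitian space, is non-central since $m\geq 3$, and has order $2\mid q+1$. When $q$ is even there are no involutions available among elements of order dividing $q+1$ in the way we need, so instead I would take $z$ to be an element of order $p$ where $p\mid q+1$ is an odd prime (which exists once $q\geq 2$, i.e. always, since $q+1\geq 3$), realized as a suitable short root element or a block matrix fixing a nondegenerate subspace; one then uses Lemma~\ref{2comm}'s analogue only loosely and instead computes $[g,z,z]$ directly. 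Actually, re-reading the statement, $z$ only needs order dividing $q+1$ and $[g,z,z]=1$, with no constraint that $z$ be an involution — so the construction has real freedom, and I would exploit it by choosing $z$ so that $[g,z]$ is forced into a torus or unipotent subgroup normalized appropriately.

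\textbf{The core computation.} The heart of the proof is the verification $[g,z,z]=1$. With the companion-matrix form of $g$ and the block form of $z$, the commutator $[g,z]=g^{-1}z^{-1}gz$ is a product of two sparse matrices and its entries can be written down explicitly, exactly as in the displayed computation in the proof of Lemma~\ref{PSL}. One then checks that $[g,z]$ lies in a subgroup on which $z$ acts with the property that conjugation-by-$z$ composed with multiplication makes the second commutator trivial; concretely, that $[g,z]$ is centralized by $z$, or that $[g,z]$ has order $2$ (when $q$ odd) and lies in $\langle z\rangle \cdot Z$ modulo scalars, or that $z$ normalizes $\langle [g,z]\rangle$ so that Lemma~\ref{norm} applies. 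I expect the case split on $q$ even versus odd to be the technical nuisance, and the main genuine obstacle to be arranging the Hermitian-form constraint $z^{*}Jz=J$ simultaneously with the commutator identity — in $\mathrm{SL}_m$ any block matrix works, but in $\mathrm{SU}_m$ one must respect the form, which restricts which diagonal/unipotent patterns are admissible. A secondary obstacle is the exceptional small cases (e.g. $(m,q)=(3,3)$ or $(3,4)$ or $(5,2)$) where the Singer element might be central or the generic block construction degenerates; these I would dispose of by a direct \texttt{magma} check, exactly as is done elsewhere in the paper.
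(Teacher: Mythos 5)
Your plan correctly identifies the overall shape of the argument --- a Singer-type element $g$ of order dividing $(q^m+1)/(q+1)$, a companion element $z$ of order dividing $q+1$, and an explicit commutator computation --- and you correctly flag the real difficulty, namely making $z$ respect the Hermitian form while keeping $[g,z]$ computable. But the proposal stops exactly at that difficulty and offers nothing to get past it, and that is where the actual content of the proof lies. The paper's solution is to work in Huppert's model, in which $\mathbb{F}_{q^{2m}}$ is an $m$-dimensional Hermitian space over $\mathbb{F}_{q^2}$ via $\psi(x,y)=\mathrm{Tr}(xy^{q^m})$ and $g$ is multiplication by a field element $a$ of the right order, and then to prove (Lemmas~\ref{lemma:unitary2} and~\ref{lemma:unitary1}) that there is a \emph{non-degenerate} vector $v$ with $va\in v^\perp$. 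That existence statement is a genuine field-theoretic counting argument (at most $2(q^2-1)$ bad elements, against $|\mathcal{A}|=(q^m+1)/(q+1)-\gcd(m,q+1)$, plus a refinement for $m=3$). Only with such a $v$ can one complete $v_1:=v$, $v_2:=va$ to an orthogonal basis and take $z:=\mathrm{diag}(\alpha^{1-m},\alpha,\dots,\alpha)$ with $\alpha^{q+1}=1$, which is unitary precisely because the basis is orthogonal and which visibly commutes with the resulting diagonal $[g,z]$. Nothing in your proposal supplies this adapted basis, and without it neither the anti-diagonal-form companion-matrix picture nor your block-diagonal $z$ can be made to coexist with $g$: the key missing idea is the lemma producing $v$ with $va\perp v$.

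Two further points. First, your expectation that for $q$ odd $[g,z]$ will be an involution, so that Lemma~\ref{2comm} finishes the job as in Lemma~\ref{PSL}, is not what happens here: in the paper's computation $[g,z]$ is represented by $\mathrm{diag}(\alpha^{-m},\alpha^{m},1,\dots,1)$, a semisimple element of order dividing $q+1$ that is in general not an involution; the identity $[g,z,z]=1$ holds because $[g,z]$ and $z$ are simultaneously diagonal, not because $[g,z]^2=1$. Second, the degenerate case you propose to dispose of by a direct \texttt{magma} check is not a finite list of small groups: the construction of $z$ degenerates exactly when $\alpha^m=1$, i.e.\ when $m=q+1$ (so $q$ even and $m$ a Fermat prime, e.g.\ $(m,q)=(5,4),(17,16),\dots$), and the paper needs a separate construction there (Lemma~\ref{lemma:unitary1bis}, which demands both $va\in v^\perp$ and $va^2\in v^\perp$, together with a different diagonal $z$). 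As written, the proposal therefore has a genuine gap at its core step.
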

To prove Theorem~\ref{theorem:PSU}, we recall some observations of Huppert~\cite{H}. As usual, let $\mathbb{F}_{q^{2m}}$ be the finite field having $q^{2m}$ elements. The function 
\begin{alignat*}{2}
\varphi:\mathbb{F}_{q^{2m}}\times\mathbb{F}_{q^{2m}}&\longrightarrow&\mathbb{F}_{q^{2m}}\\ 
(x,y)&\longmapsto& xy^{q^m}
\end{alignat*} is a Hermitian form for the vector space $\mathbb{F}_{q^{2m}}$, viewed as a $1$-dimensional vector space over the field $\mathbb{F}_{q^{2m}}$. Let $a\in\mathbb{F}_{q^{2m}}^\ast$. Then, the function 
\begin{alignat*}{2}
\rho_a:\mathbb{F}_{q^{2m}}&\longrightarrow&\mathbb{F}_{q^{2m}}\\
x&\longmapsto &ax
\end{alignat*} lies in the general unitary group with respect to the Hermitian form $\varphi$ if and only if $$a^{q^m+1}=1,$$ that is, ${\bf o}(a)$ divides $q^m+1$.

Now, consider the trace function 
\begin{alignat*}{2}
\mathrm{Tr}:\mathbb{F}_{q^{2m}}&\longrightarrow&\mathbb{F}_{q^2}\\
x&\longmapsto &\sum_{i=0}^{m-1}x^{q^{2i}}
\end{alignat*}
 with respect to the Galois extension $\mathbb{F}_{q^{2m}}/\mathbb{F}_{q^2}$. Then, Huppert~\cite{H} shows that the function 
\begin{alignat}{2}\label{defpsi}
\psi:\mathbb{F}_{q^{2m}}\times\mathbb{F}_{q^{2m}}&\longrightarrow&\mathbb{F}_{q^2}\\
(x,y)&\longmapsto &\mathrm{Tr}(xy^{q^m})\nonumber
\end{alignat} is a non-degenerate Hermitian form for the vector space $\mathbb{F}_{q^{2m}}$, viewed as an $m$-dimensional vector space over the field $\mathbb{F}_{q^{2}}$. Clearly, $\rho_a$ lies in the general unitary group with respect to the Hermitian form $\psi$, what is more, $\rho_a$ lies in the special unitary group if and only if
 $$a^{\frac{q^m+1}{q+1}}=1  \text { and }a \notin \mathbb{F}_{q^{2}} \text { or }
a^m=1\text { and } a \in \mathbb{F}_{q^{2}}
$$
that is, ${\bf o}(a)$ divides $(q^m+1)/(q+1)$ and ${\bf o}(a)\nmid \gcd(m,q+1).$

In the proof of the next result, we omit the assumption that $m$ is prime, because it is not necessary.
\begin{lemma}\label{lemma:unitary2}Let $q$ be a prime power and let $m$ be an odd integer with $(m,q)\ne (3,2)$. There exists $y\in \mathbb{F}_{q^m}$ and $a\in\mathbb{F}_{q^{2m}}^\ast$ with ${\bf o}(a)\mid (q^m+1)/(q+1)$ and ${\bf o}(a)\nmid \gcd(m,q+1)$ such that $\mathrm{Tr}(y)\ne 0$ and $\mathrm{Tr}(ay)=0$.
\end{lemma}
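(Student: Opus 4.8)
\textbf{Proof strategy for Lemma~\ref{lemma:unitary2}.}
The plan is to reduce the statement to a counting argument over $\mathbb{F}_{q^{m}}$. First I would fix the setup: write $N:=(q^m+1)/(q+1)$ and let $\mu_N\le\mathbb{F}_{q^{2m}}^\ast$ be the (cyclic) subgroup of order $N$. The hypothesis we need on $a$ is that $a\in\mu_N$ and ${\bf o}(a)\nmid\gcd(m,q+1)$; since $\gcd(m,q+1)\mid N$ and since, for $(m,q)\ne(3,2)$, $N$ is strictly larger than $\gcd(m,q+1)$ (a quick size estimate: $N\ge q^{m-1}-q^{m-2}\pm\cdots$ dominates $\gcd(m,q+1)\le m$ once $q$ or $m$ is not tiny, and the single small case $(m,q)=(3,2)$ has been excluded), there do exist such $a$. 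So it suffices to find $y\in\mathbb{F}_{q^m}$ with $\mathrm{Tr}(y)\ne 0$ and some $a$ of the required form with $\mathrm{Tr}(ay)=0$, where $\mathrm{Tr}$ is the trace $\mathbb{F}_{q^{2m}}\to\mathbb{F}_{q^2}$.

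Next I would handle the condition $\mathrm{Tr}(ay)=0$. The kernel of $\mathrm{Tr}:\mathbb{F}_{q^{2m}}\to\mathbb{F}_{q^2}$ is an $\mathbb{F}_{q^2}$-subspace $K$ of dimension $m-1$, hence $|K|=q^{2(m-1)}$. We want $ay\in K$ for some $y\in\mathbb{F}_{q^m}^\ast$ with $\mathrm{Tr}(y)\ne 0$ and some $a\in\mu_N$. Equivalently, letting $a$ range over $\mu_N$ and $y$ over the set $S:=\{y\in\mathbb{F}_{q^m}:\mathrm{Tr}(y)\ne 0\}$, we want the product set $\mu_N\cdot S$ to meet $K\setminus\{0\}$. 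The set $S$ is large: $\mathrm{Tr}$ restricted to $\mathbb{F}_{q^m}$ is the composition of the norm-type map with $\mathbb{F}_{q^m}\hookrightarrow\mathbb{F}_{q^{2m}}$, and one checks $\mathrm{Tr}(y)=y^{q^m}$-type terms collapse so that $\mathrm{Tr}$ is surjective onto $\mathbb{F}_{q^2}$ with fibres of size $q^{m}/q^{2}$ when $2\mid m$... — more carefully, I would just record that $|S|\ge q^m-q^{m-1}$, since the zero-trace locus inside $\mathbb{F}_{q^m}$ has size at most $q^{m-1}$ (it is a proper $\mathbb{F}_q$-subspace or $\mathbb{F}_{q^2}$-subspace of $\mathbb{F}_{q^m}$, depending on parity). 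Then a union-bound / pigeonhole: the number of pairs $(a,y)\in\mu_N\times S$ is $N|S|$, and for each we look at $ay\in\mathbb{F}_{q^{2m}}^\ast$; if no such $ay$ landed in $K$, then all $N|S|$ products lie in $\mathbb{F}_{q^{2m}}^\ast\setminus K$, which has size $q^{2m}-q^{2(m-1)}$. Comparing $N|S|$ with $q^{2m}-q^{2(m-1)}$ is not by itself enough because products can repeat, so instead I would count with multiplicity: for a fixed target $t\in K\setminus\{0\}$, the number of $(a,y)$ with $ay=t$ is controlled by $|\mu_N\cap (t\cdot \mathbb{F}_{q^m}^\ast)|$; summing a character-sum estimate (expand the indicator of $\mu_N$ as $\frac1{|\mathbb{F}^\ast_{q^{2m}}/\mu_N|}\sum_{\chi}\chi$ over characters trivial on $\mu_N$, i.e. characters factoring through the norm to $\mathbb{F}_{q+1}$-ish quotient) shows the count of solutions with $ay\in K$, $y\in S$ is $\frac{N|S|}{q^{2m}-1}\cdot(|K|-1) + (\text{error})$, with error bounded by Weil's bound times the number of nontrivial characters. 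The main term is of order $q^{m-1}\cdot q^m\cdot q^{m-2}/q^m\sim q^{2m-3}$-ish and dominates the error $\sqrt{q^{2m}}\cdot q \sim q^{m+1}$ once $m\ge 3$ and $q$ is not too small; the finitely many exceptional small $(m,q)$ (and there really are only a couple beyond the excluded $(3,2)$) I would verify directly by machine.

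The step I expect to be the genuine obstacle is the character-sum bookkeeping in the previous paragraph: getting a clean estimate for the number of $y\in\mathbb{F}_{q^m}$ with $\mathrm{Tr}_{\mathbb{F}_{q^{2m}}/\mathbb{F}_{q^2}}(y)\ne 0$ lying in a prescribed $\mu_N$-coset of the zero-trace hyperplane. An alternative, perhaps cleaner route — and the one I would try first — avoids Weil bounds entirely: note $\mathbb{F}_{q^m}$ and $K$ are both "large" $\mathbb{F}_q$-subspaces of the $2m$-dimensional space $\mathbb{F}_{q^{2m}}$ over $\mathbb{F}_q$, of dimensions $m$ and $2(m-1)$; for a well-chosen single element $a_0$ of order exactly $N$ (or a small power giving order not dividing $\gcd(m,q+1)$), the subspaces $a_0\mathbb{F}_{q^m}$ and $K$ have dimensions summing to $m+2m-2 = 3m-2 > 2m$, so they intersect in dimension at least $m-2\ge 1$; picking a nonzero $y$ with $a_0 y\in K$ and then checking $\mathrm{Tr}(y)\ne 0$ — which fails only on a codimension-$\ge 1$ subspace, so is avoidable provided $m-2\ge 1$ leaves room, i.e. $m\ge 3$ with a tiny bit of slack handled separately when the intersection is forced into the bad hyperplane — finishes the argument. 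I would also double-check the borderline case $m=3$ by hand, since there the intersection dimension is exactly $1$ and one must ensure the unique line is not contained in $\ker(\mathrm{Tr}|_{\mathbb{F}_{q^m}})$, which reduces to a concrete non-vanishing of a trace that holds except for the already-excluded $(3,2)$.
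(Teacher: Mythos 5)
Your second (dimension-counting) route is the right skeleton, but it has a genuine gap at exactly the point where the paper has to work hardest. Writing $\mathbb{F}_{q^{2m}}=\mathbb{F}_{q^2}\otimes_{\mathbb{F}_q}\mathbb{F}_{q^m}$ and $a=a_1+a_2\varepsilon$ with $a_1,a_2\in\mathbb{F}_{q^m}$, the subspace $W_a:=\{y\in\mathbb{F}_{q^m}\mid \mathrm{Tr}(ay)=0\}$ is the intersection of the two $\mathbb{F}_q$-hyperplanes $\mathrm{tr}(a_1y)=0$ and $\mathrm{tr}(a_2y)=0$ of $\mathbb{F}_{q^m}$ (here $\mathrm{tr}=\mathrm{Tr}_{\mathbb{F}_{q^m}/\mathbb{F}_q}$, and since $m$ is odd one checks $\mathrm{Tr}$ restricted to $\mathbb{F}_{q^m}$ equals $\mathrm{tr}$), so indeed $\dim_{\mathbb{F}_q}W_a\ge m-2\ge 1$; that much of your argument is fine. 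But you then need some $y\in W_a\setminus\{0\}$ with $\mathrm{Tr}(y)\ne 0$, i.e.\ you need $W_a\not\subseteq\ker(\mathrm{tr})$, and this is \emph{not} a condition that ``fails only on a codimension-$\ge 1$ subspace'': the bad locus is $W_a\cap\ker(\mathrm{tr})$, which can be all of $W_a$. By nondegeneracy of the trace form, $W_a\subseteq\ker(\mathrm{tr})$ happens precisely when $1\in\mathrm{Span}_{\mathbb{F}_q}(a_1,a_2)$, and for a given admissible $a$ this can perfectly well occur, for every $m$ and not only for $m=3$. Your ``well-chosen $a_0$'' is therefore never actually produced, and the lemma is not yet proved.

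The paper's proof is devoted essentially entirely to this missing step. It first proves (via trace-form duality, as a Claim) that the good $a$ are exactly those with $1\notin\mathrm{Span}_{\mathbb{F}_q}(a_1,a_2)$, and then bounds the number of bad $a$: every admissible $a$ has norm $1$ with respect to $\mathbb{F}_{q^{2m}}/\mathbb{F}_{q^m}$, so $(a_1,a_2)$ lies on the nonsingular conic $a_1^2+a_2^2+a_1a_2(\varepsilon+\varepsilon^q)=1$, each of the $q^2-1$ lines $\lambda_1a_1+\lambda_2a_2=1$ meets this conic in at most two points, and hence at most $2(q^2-1)$ admissible $a$ are bad; this is smaller than the number $(q^m+1)/(q+1)-\gcd(m,q+1)$ of admissible $a$ except when $m=3$, which then needs a separate refinement (passing to the full torus of order $q^3+1$ and correcting by a scalar in $\mathbb{F}_{q^2}^\ast$) together with a machine check at $q=5$. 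Your first, character-sum route runs into the same obstruction in disguise (the count $\sum_a |W_a\cap S|$ is positive if and only if some $W_a\not\subseteq\ker(\mathrm{tr})$), and in any case the Gauss-sum error terms are not obviously dominated by the main term, since $\mu_N$ has index roughly $(q+1)(q^m-1)$ in $\mathbb{F}_{q^{2m}}^{\ast}$. In short, the proposal sets up the right reduction but omits the actual content of the lemma.
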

\begin{proof}
Since $m$ is odd, $$\mathbb{F}_{q^{2m}}=\mathbb{F}_{q^2}\otimes_{\mathbb{F}_q}\mathbb{F}_{q^m}.$$ Let $1,\varepsilon\in\mathbb{F}_{q^2}$ be an $\mathbb{F}_q$-basis of $\mathbb{F}_{q^2}$. In the quadratic extension $\mathbb{F}_{q^2}/\mathbb{F}_q$, there exist $q+1$ elements having norm $1$. In particular, as $q+1>|\mathbb{F}_q|$, replacing $\varepsilon$ with a suitable element, we may suppose that $\varepsilon$ has norm $1$ with respect to the quadratic extension $\mathbb{F}_{q^2}/\mathbb{F}_q$, that is, 
\begin{align}\label{eq:silly0}\varepsilon^{q+1}=1.
\end{align} (This is not very relevant for the rest of the proof, but it makes a computation later easier to follow.) 

Therefore, we may write every element $x\in \mathbb{F}_{q^{2m}}$ in the form $$x=x_1+x_2\varepsilon,$$ for a unique choice of $x_1,x_2\in \mathbb{F}_{q^m}$.
In what follows, for every $x\in\mathbb{F}_{q^{2m}}$, the elements $x_1,x_2\in\mathbb{F}_{q^m}$ denote the unique elements with $x=x_1+x_2\varepsilon.$

Let $x\in \mathbb{F}_{q^m}$. Now, $x^{q^m}=x$ and hence $x^{q^{m+i}}=x^{q^i}$, for each $i$. Therefore, using that $m$ is odd, we have
\begin{align*}
\mathrm{Tr}(x)&=x+x^{q^2}+x^{q^4}+\cdots +x^{q^{2(m-1)}}\\
&=x+x^{q^2}+x^{q^4}+\cdots+x^{q^{m-1}}+x^{q^{m+1}}+x^{q^{m+3}}+\cdots+ x^{q^{m+(m-2)}}\\
&=x+x^{q^2}+x^{q^4}+\cdots+x^{q^{m-1}}+x^{q}+x^{q^3}+\cdots+ x^{q^{m-2}}\\
&=x+x^q+\cdots+x^{q^{m-1}}\\
&=\mathrm{Tr}_{\mathbb{F}_{q^m}/\mathbb{F}_q}(x).
\end{align*}
In other words, if we denote with $\mathrm{tr}:\mathbb{F}_{q^m}\to \mathbb{F}_q$ the trace map $\mathrm{Tr}_{\mathbb{F}_{q^m}/\mathbb{F}_q}$, we obtain
\begin{align*}
\mathrm{Tr}(x)=\mathrm{tr}(x),\quad\forall x\in\mathbb{F}_{q^m}.
\end{align*}

Now, the trace map $\mathrm{Tr}:\mathbb{F}_{q^{2m}}\to \mathbb{F}_{q^2}$ is obtained by tensoring the identity map $\mathrm{id}:\mathbb{F}_{q^2}\to \mathbb{F}_{q^2}$ with $\mathrm{tr}$. Indeed,
\begin{align}\label{eq:silly}
\mathrm{Tr}(x)&=\mathrm{Tr}(x_1+x_2\varepsilon)=\mathrm{Tr}(x_1)+\mathrm{Tr}(x_2)\varepsilon=\mathrm{tr}(x_1)+\mathrm{tr}(x_2)\varepsilon.
\end{align}
In particular, $\mathrm{Tr}(x)=0$ if and only if $\mathrm{tr}(x_1)=\mathrm{tr}(x_2)=0$.

Let 
$$\mathcal{A}:=\left\{a\in\mathbb{F}_{q^{2m}}^\ast: {\bf o}(a)\mid \frac{q^m+1}{q+1} \hbox{ and }{\bf o}(a)\nmid \gcd(m,q+1)\right\}.$$ From~\eqref{eq:silly}, given  $y\in \mathbb{F}_{q^m}$ and $a\in\mathcal{A}$, we have $\mathrm{Tr}(y)\ne 0$ and  $\mathrm{Tr}(ay)=0$ if and only if $\mathrm{tr}(y)\ne 0$ and $\mathrm{tr}(a_1y)=\mathrm{tr}(a_2y)=0$.

Before proceeding further in the proof, we prove a preliminary fact.

\smallskip

\noindent\textsc{Claim:} Given $x=x_1+x_2\varepsilon\in\mathbb{F}_{q^{2m}}$, there exists $y\in\mathbb{F}_{q^m}$ with 
$\mathrm{tr}(y)\ne 0$ and $\mathrm{tr}(x_1y)=\mathrm{tr}(x_2y)=0$ if and only if $1$ is not an $\mathbb{F}_q$-linear combination of $x_1$ and $x_2$. 

\smallskip

\noindent Suppose first that $1$ is an $\mathbb{F}_q$-linear combination of $x_1$ and $x_2$, that is, $1=\lambda_1 x_1+\lambda_2 x_2$, for some $\lambda_1,\lambda_2\in \mathbb{F}_q$. For every $y\in\mathbb{F}_{q^m}$, we have $y=\lambda_1 x_1y+\lambda_2 x_2y$ and hence
$$\mathrm{tr}(y)=\mathrm{tr}(\lambda_1 x_1y+\lambda_2 x_2y)=\lambda_1\mathrm{tr}(x_1y)+\lambda_2\mathrm{tr}(x_2y).$$ $\mathrm{tr}(yx_1)=\mathrm{tr}(yx_2)=0$, then  $\mathrm{tr}(y)=\mathrm{tr}(\lambda_1 y x_1+\lambda_2 y x_2)=
\lambda_1 \mathrm{tr}(y x_1)+\lambda_2 \mathrm{tr}(y x_2)=0$. In particular, there is no $y\in\mathbb{F}_{q^m}$ with $\mathrm{tr}(y)\ne 0$ and $\mathrm{tr}(x_1y)=\mathrm{tr}(x_2y)=0$.

Suppose next that, for every $y\in \mathbb{F}_{q^m}$ with $\mathrm{tr}(x_1y)=\mathrm{tr}(x_2y)=0$, we have $\mathrm{tr}(y)=0$. Recall now that the mapping
\begin{alignat*}{3}
\theta:\mathbb{F}_{q^{m}}&\longrightarrow&\mathrm{Hom}_{\mathbb{F}_{q}}(\mathbb{F}_{q^m},\mathbb{F}_q)&&\\
z&\longmapsto &\theta_z:\mathbb{F}_{q^m}&\longrightarrow&\mathbb{F}_q\\
&&\zeta&\longmapsto \zeta z
\end{alignat*}
is an $\mathbb{F}_q$-isomorphism between $\mathbb{F}_{q^m}$ and the dual space $\mathbb{F}_{q^m}$. This isomorphism can be used to construct a non-degenerate bilinear form
\begin{alignat*}{2}
\langle\cdot,\cdot\rangle_\theta:\mathbb{F}_{q^{m}}\times\mathbb{F}_{q^m}&\longrightarrow\mathbb{F}_{q^m}\\
(z_1,z_2)&\longmapsto &\langle z_1,z_2\rangle_\theta=\mathrm{tr}(z_1z_2).
\end{alignat*}
If $1$ is not a linear combination of $x_1$ and $x_2$, then $1\notin\mathrm{Span}_{\mathbb{F}_{q}}(x_1,x_2)$ and hence by taking the orthogonal complements we deduce $\mathrm{Span}_{\mathbb{F}_q}(x_1,x_2)^{\perp_\theta}\nleq \mathrm{Span}_{\mathbb{F}_q}(1)^{\perp_\theta}$. In other words, there exists $y\in \mathrm{Span}_{\mathbb{F}_q}(x_1,x_2)^{\perp_\theta}$ with $y\notin\mathrm{Span}_{\mathbb{F}_q}(1)^{\perp_\theta}$, that is, $\mathrm{tr}(x_1y)=\mathrm{tr}(x_2y)=0$ and $\mathrm{tr}(y)\ne 0$. However, this is contrary to our assumption and hence $1$ is a linear combination of $x_1$ and $x_2$.~$_\blacksquare$ 

\smallskip

In what follows, we apply the previous claim when $x$ is an element in $\mathcal{A}$. Indeed, from the previous claim, to conclude the proof of this lemma, it suffices to prove that there exists $a=a_1+a_2\varepsilon\in\mathcal{A}$ such that $1$ is not an $\mathbb{F}_q$-linear combination of $a_1$ and $a_2$.

 Let $\mathcal{B}$ be the collection of all elements $a=a_1+a_2\varepsilon\in\mathcal{A}$ such that $1$ is an $\mathbb{F}_q$-linear combination of $a_1$ and $a_2$. We now show that  $\mathcal{B}$ is strictly contained in $\mathcal{A}$, that is, $\mathcal{B}\subsetneq \mathcal{A}$.

As $a^{q^m+1}=1$ for every $a\in\mathcal{A}$, the set $\mathcal{A}$ is contained in the subgroup of order $q^m+1$ of the multiplicative group of $\mathbb{F}_{q^{2m}}$.  Thus the norm, with respect to the quadratic extension $\mathbb{F}_{q^{2m}}/\mathbb{F}_{q^m}$, of each element $a=a_1+a_2\varepsilon$ of $\mathcal{A}$ is $1$. Using~\eqref{eq:silly0}, we get
\begin{align}\label{eq:silly2}1&=(a_1+a_2\varepsilon)(a_1+a_2\varepsilon)^{q^m}=a_1^2+a_2^2\varepsilon^{q+1}+a_1a_2(\varepsilon+\varepsilon^q)\nonumber\\
&=a_1^2+a_2^2+a_1a_2(\varepsilon+\varepsilon^q).
\end{align}
Observe that~\eqref{eq:silly2} is a non-singular quadratic equation in $a_1,a_2\in\mathbb{F}_{q^m}$. 

For each $\lambda_1,\lambda_2\in\mathbb{F}_q$, not both zero, $$\lambda_1a_1+\lambda_2a_2=1$$
is a linear equation in $a_1,a_2\in\mathbb{F}_{q^m}$. There can be at most two solutions to a linear equation together with a non-singular quadratic equation in two variables, since (after substituting) it gives a nontrivial quadratic equation in one variable.
Summing over all possible choices of $\lambda_1,\lambda_2$, the number of elements $a$ of $\mathcal{A}$ such that $1$ is a linear combination of $a_1$ and $a_2$ is at most $2(q^2-1)$. So we can only have all elements of $\mathcal{A}$
with this property if $$\frac{q^m+1}{q+1}-\gcd(m,q+1)=|\mathcal{A}|\le 2(q^2-1).$$ However,  this is possible only when $m=3$. This concludes the proof of the lemma when $m\ne 3$.

Suppose then $m=3$. Here we simply refine the argument above. Let $\tilde{\mathcal{A}}$ be the subgroup of the multiplicative group of $\mathbb{F}_{q^6}$ having order $q^3+1$, let $\mathcal{A}'$ be the subgroup of $\tilde{\mathcal{A}}$ having order $q+1$ and we redefine $\mathcal{A}$ as the subgroup of the multiplicative group of $\mathbb{F}_{q^6}$ having order $(q^3+1)/(q+1)=q^2-q+1$. Since $\tilde{\mathcal{A}}$ is cyclic, $\mathcal{A}\circ\mathcal{A}'$ is a subgroup of $\tilde{\mathcal{A}}$ 
 with $|\tilde{\mathcal{A}}:\mathcal{A}\circ\mathcal{A}'|=\gcd(3,q+1).$ 

Observe that, when $q\notin\{2,5\}$, we have $$|\mathcal{A}\circ\mathcal{A}'|=\frac{q^3+1}{\gcd(3,q+1)}>2(q^2-1)$$ and hence the argument above shows that there exists an element $a'\in\mathcal{A}\circ\mathcal{A}'$ and an element $y\in\mathbb{F}_{q^3}$ with ${\bf o}(a')\mid q^3+1$, $\mathrm{Tr}(y)\ne 0$ and $\mathrm{Tr}(a'y)=0$. Now, multiplying $a'$ by a suitable element $\lambda\in \mathbb{F}_{q^2}^\ast$ we have ${\bf o}(\lambda a')\mid q^2-q+1=(q^3+1)/(q+1)$. Set $a:=a'\lambda$. Then ${\bf o}(a)\mid (q^3+1)/(q+1)$ and
$$\mathrm{Tr}(ay)=\mathrm{Tr}(a'\lambda y)=\lambda\mathrm{Tr}(a'y)=\lambda\cdot 0=0.$$
If ${\bf o}(a)\mid \gcd(3,q+1)$, then $a\in\mathbb{F}_{q^2}$ and hence
$$0=\mathrm{Tr}(ay)=a\mathrm{Tr}(y)\ne 0,$$
because $\mathrm{Tr}(y)\ne 0$.
Therefore, this proves the lemma when $q\notin\{3,5\}$. Finally, when $q=5$, the result can be checked with the auxiliary help of a computer and, when $q=2$, we obtain the exceptional case excluded in the statement of the lemma.
\end{proof}

\begin{lemma}\label{lemma:unitary3}Let $q$ be an even prime power and let $m$ be an odd integer with $m\ge 5$ and $m=q+1$. There exists $y\in \mathbb{F}_{q^m}$ and $a\in\mathbb{F}_{q^{2m}}^\ast$ with ${\bf o}(a)\mid (q^m+1)/(q+1)$ and ${\bf o}(a)\nmid \gcd(m,q+1)$ such that $\mathrm{Tr}(y)\ne 0$, $\mathrm{Tr}(ay)=0$ and $\mathrm{Tr}(a^2y)=0$.
\end{lemma}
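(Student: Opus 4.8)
The statement of Lemma~\ref{lemma:unitary3} is a strengthening of Lemma~\ref{lemma:unitary2} in the special regime $q$ even, $m$ odd, $m=q+1\ge 5$: instead of producing $y$ and $a$ annihilating a \emph{single} linear functional (besides not annihilating $\mathrm{tr}$), we now need $y$ and $a$ such that $\mathrm{Tr}(ay)=0$ \emph{and} $\mathrm{Tr}(a^2y)=0$, while $\mathrm{Tr}(y)\ne 0$. The plan is to re-run the $\mathbb{F}_q$-linear-algebra set-up of the proof of Lemma~\ref{lemma:unitary2} almost verbatim. Write $a=a_1+a_2\varepsilon$ and $a^2=b_1+b_2\varepsilon$ with $a_i,b_i\in\mathbb{F}_{q^m}$ (using $\varepsilon^{q+1}=1$ as in~\eqref{eq:silly0} to keep the algebra clean; in characteristic $2$ one has $\varepsilon+\varepsilon^q=\mathrm{Tr}_{\mathbb{F}_{q^2}/\mathbb{F}_q}(\varepsilon)\in\mathbb{F}_q$, so $b_1,b_2$ are explicit quadratic expressions in $a_1,a_2$). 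By the same trace-splitting computation as in~\eqref{eq:silly}, the three conditions $\mathrm{Tr}(y)\ne 0$, $\mathrm{Tr}(ay)=0$, $\mathrm{Tr}(a^2y)=0$ are equivalent to $\mathrm{tr}(y)\ne 0$ and $\mathrm{tr}(a_1y)=\mathrm{tr}(a_2y)=\mathrm{tr}(b_1y)=\mathrm{tr}(b_2y)=0$ for the $\mathbb{F}_q$-trace $\mathrm{tr}:\mathbb{F}_{q^m}\to\mathbb{F}_q$.

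Next I would invoke the same non-degenerate pairing $\langle z_1,z_2\rangle_\theta=\mathrm{tr}(z_1z_2)$ on $\mathbb{F}_{q^m}$ used in the Claim inside the proof of Lemma~\ref{lemma:unitary2}. The existence of a good $y$ is then exactly the statement that $1\notin\mathrm{Span}_{\mathbb{F}_q}(a_1,a_2,b_1,b_2)^{\perp_\theta\perp_\theta}=\mathrm{Span}_{\mathbb{F}_q}(a_1,a_2,b_1,b_2)$; i.e.\ we need an $a\in\mathcal{A}$ (the set of elements with ${\bf o}(a)\mid(q^m+1)/(q+1)$ and ${\bf o}(a)\nmid\gcd(m,q+1)$) for which $1$ is \emph{not} an $\mathbb{F}_q$-linear combination of $a_1,a_2,b_1,b_2$. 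So everything reduces to a counting estimate: bound the number of ``bad'' elements $a\in\mathcal{A}$, those for which $1\in\mathrm{Span}_{\mathbb{F}_q}(a_1,a_2,b_1,b_2)$, and show this count is strictly less than $|\mathcal{A}|=(q^m+1)/(q+1)-\gcd(m,q+1)$.

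For the counting I would proceed as before. Each bad $a$ satisfies an $\mathbb{F}_q$-linear relation $\lambda_0\cdot 1=\lambda_1 a_1+\lambda_2 a_2+\lambda_3 b_1+\lambda_4 b_2$; since the $b_i$ are fixed quadratic polynomials in $a_1,a_2$ over $\mathbb{F}_q$, each such relation is a polynomial equation of degree at most $2$ in the two coordinates $(a_1,a_2)\in\mathbb{F}_{q^m}^2$. Together with the norm equation $a_1^2+a_2^2+a_1a_2\,\mathrm{Tr}_{\mathbb{F}_{q^2}/\mathbb{F}_q}(\varepsilon)=1$ (the analogue of~\eqref{eq:silly2}, which cuts out a curve on which all of $\mathcal{A}$ lives), a fixed nonzero tuple $(\lambda_0,\dots,\lambda_4)$ therefore carves out only $O(1)$ points of $\mathcal{A}$ — a bounded number, $c$ say, obtainable e.g.\ from B\'ezout or just by substitution reducing to a bounded-degree one-variable equation. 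Summing over the $\le q^5$ essentially-different tuples $(\lambda_0,\dots,\lambda_4)$ (projectively, $\le(q^5-1)/(q-1)$ of them), the number of bad $a$ is at most $cq^5/(q-1)$, which is $O(q^4)$. Since $m=q+1\ge 5$, we have $|\mathcal{A}|\ge (q^{q+1}+1)/(q+1)-\gcd(q+1,q+1)\ge (q^{6}+1)/(q+1)-(q+1)$, which exceeds $O(q^4)$ for all admissible $q$ (the smallest case being $q=4$, $m=5$), so a good $a$ exists. The main obstacle is pinning down the constant in the counting bound precisely enough that the inequality $|\mathcal{A}|>(\text{bad count})$ holds for the smallest relevant pair $q=4,m=5$ without a computer check; if the clean estimate is not quite tight there, the fallback is exactly as in Lemma~\ref{lemma:unitary2} — handle the finitely many small $q$ (here only $q=4$, and possibly $q=8$) by an explicit verification, and give the asymptotic counting argument for $q\ge 8$ or $q\ge 16$.
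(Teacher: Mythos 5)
Your proposal is correct and takes essentially the same route as the paper, whose entire proof of this lemma is the single sentence that it ``follows verbatim the proof of Lemma~\ref{lemma:unitary2}'' --- i.e.\ exactly the decomposition $a=a_1+a_2\varepsilon$ (now with $a^2=b_1+b_2\varepsilon$), the trace-splitting, the orthogonal-complement reformulation, and the count of bad $a$ against the norm conic that you spell out. Your observation that the resulting $O(q^4)$ bound on bad elements does not beat $|\mathcal{A}|=(q^m+1)/(q+1)-\gcd(m,q+1)$ in the smallest admissible case $q=4$, $m=5$ (where $|\mathcal{A}|=200$ while the crude bound is on the order of a thousand), so that this one case needs a separate explicit verification, is a genuine point that the paper's ``verbatim'' glosses over, and your proposed fallback is the right fix.
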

\begin{proof}
The proof follows verbatim the proof of Lemma~\ref{lemma:unitary2}.
\end{proof}

Now, we apply Lemmas~\ref{lemma:unitary2} and~\ref{lemma:unitary3} to deduce some properties on non-degenerate Hermitian spaces. Recall that the mapping $\psi$ defined in~\eqref{defpsi} endows $\mathbb{F}_{q^{2m}}$ of the structure of a non-degenerate Hermitian space over $\mathbb{F}_{q^2}$. In particular, $v\in\mathbb{F}_{q^{2m}}$ is non-degenerate when $\psi(v,v)\ne 0$, $v$ is isotropic when $\psi(v,v)=0$ and $v^\perp:=\{w\in\mathbb{F}_{q^{2m}}\mid \psi(v,w)=0\}$. For not making the notation too cumbersome, we identify $\rho_a$ with $a$ itself.

\begin{lemma}\label{lemma:unitary1}
Let $q$ be a prime power and let $m$ be an odd integer with $(m,q)\ne (3,2)$. There exists a non-degenerate vector $v\in \mathbb{F}_{q^{2m}}$ and an element $a\in\mathbb{F}_{q^{2m}}^\ast$  with ${\bf o}(a)\mid (q^m+1)/(q+1)$ and ${\bf o}(a)\nmid \gcd(m,q+1)$ such that 
$va\in v^\perp$.
\end{lemma}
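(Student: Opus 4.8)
The plan is to translate the conclusion of Lemma~\ref{lemma:unitary2} into the language of the Hermitian space $(\mathbb{F}_{q^{2m}},\psi)$. Recall that $\psi(x,y)=\mathrm{Tr}(xy^{q^m})$, where $\mathrm{Tr}=\mathrm{Tr}_{\mathbb{F}_{q^{2m}}/\mathbb{F}_{q^2}}$. First I would apply Lemma~\ref{lemma:unitary2} to obtain $y\in\mathbb{F}_{q^m}$ and $a\in\mathbb{F}_{q^{2m}}^\ast$ with ${\bf o}(a)\mid (q^m+1)/(q+1)$, ${\bf o}(a)\nmid\gcd(m,q+1)$, $\mathrm{Tr}(y)\ne 0$ and $\mathrm{Tr}(ay)=0$. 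The idea is to produce the required non-degenerate vector $v$ from $y$. Since $y\in\mathbb{F}_{q^m}$ we have $y^{q^m}=y$, so
\[
\psi(y,y)=\mathrm{Tr}(y\cdot y^{q^m})=\mathrm{Tr}(y^2)=\mathrm{tr}(y^2),
\]
which is not quite $\mathrm{Tr}(y)=\mathrm{tr}(y)$; so $y$ itself need not be non-degenerate, and a small adjustment is needed.

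The cleaner route is to look for $v$ of the form $v=\lambda$ with $\lambda$ chosen so that $\psi(v,v)\ne 0$ while still $\psi(v,v a)=0$ up to the symmetry of $\psi$. Observe that for any $v$,
\[
\psi(va,v)=\mathrm{Tr}(va\cdot v^{q^m})=\mathrm{Tr}\bigl(a\cdot v^{1+q^m}\bigr),
\]
and $v^{1+q^m}=\mathrm{N}_{\mathbb{F}_{q^{2m}}/\mathbb{F}_{q^m}}(v)\in\mathbb{F}_{q^m}$. So the condition $va\in v^\perp$ is exactly $\mathrm{Tr}(a w)=0$ where $w:=v^{1+q^m}\in\mathbb{F}_{q^m}$, while $v$ is non-degenerate iff $\psi(v,v)=\mathrm{Tr}(a^0 w)=\mathrm{tr}(w)\ne 0$ — wait, $\psi(v,v)=\mathrm{Tr}(v^{1+q^m})=\mathrm{Tr}(w)=\mathrm{tr}(w)$. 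Thus what I actually need is $w\in\mathbb{F}_{q^m}$ that is a norm from $\mathbb{F}_{q^{2m}}$ with $\mathrm{tr}(w)\ne 0$ and $\mathrm{Tr}(aw)=\mathrm{tr}(aw)=0$; but every element of $\mathbb{F}_{q^m}^\ast$ is such a norm since $\mathrm{N}_{\mathbb{F}_{q^{2m}}/\mathbb{F}_{q^m}}$ is surjective onto $\mathbb{F}_{q^m}^\ast$. Hence I would simply take $w:=y$ from Lemma~\ref{lemma:unitary2}, pick any $v\in\mathbb{F}_{q^{2m}}^\ast$ with $v^{1+q^m}=y$, and then $\psi(v,v)=\mathrm{tr}(y)\ne 0$ shows $v$ is non-degenerate, while $\psi(va,v)=\mathrm{Tr}(ay)=0$ gives $va\in v^\perp$ (using that $v^\perp=\{u:\psi(v,u)=0\}$ and Hermitian forms satisfy $\psi(u,v)=0\iff\psi(v,u)=0$).

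The only genuine point to check is the surjectivity of the relative norm $\mathrm{N}_{\mathbb{F}_{q^{2m}}/\mathbb{F}_{q^m}}:\mathbb{F}_{q^{2m}}^\ast\to\mathbb{F}_{q^m}^\ast$, which is standard (the norm map for any finite extension of finite fields is surjective), together with the elementary facts $\mathrm{Tr}(x)=\mathrm{tr}(x)$ for $x\in\mathbb{F}_{q^m}$ (already proved inside Lemma~\ref{lemma:unitary2}) and the $\mathbb{F}_{q^2}$-semilinearity of $\mathrm{Tr}$ in its relevant slot so that $\mathrm{Tr}(ay)\in\mathbb{F}_{q^2}$ vanishing is exactly what was produced. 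Since $a$ retains the order constraints from Lemma~\ref{lemma:unitary2}, namely ${\bf o}(a)\mid (q^m+1)/(q+1)$ and ${\bf o}(a)\nmid\gcd(m,q+1)$, the lemma follows. I do not expect any serious obstacle here: the work has all been done in Lemma~\ref{lemma:unitary2}, and this statement is essentially a repackaging, the main care being to match $v^\perp$ to the trace condition via the norm $v^{1+q^m}$ and to confirm we may realise an arbitrary prescribed value of that norm.
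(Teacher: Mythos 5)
Your proposal is correct and follows essentially the same route as the paper: both take $y,a$ from Lemma~\ref{lemma:unitary2}, choose $v$ with $v^{q^m+1}=y$ (your surjectivity of the relative norm is exactly the paper's cyclicity argument for $\mathbb{F}_{q^{2m}}^\ast\to\mathbb{F}_{q^m}^\ast$), and compute $\psi(v,v)=\mathrm{Tr}(y)\ne 0$ and $\psi(va,v)=\mathrm{Tr}(ay)=0$. No gaps.
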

\begin{proof}
Let $y\in\mathbb{F}_{q^m}^\ast$ and $a\in\mathbb{F}_{q^{2m}}^\ast$ be as in the conclusion of Lemma~\ref{lemma:unitary2}.

Since $\mathbb{F}_{q^{2m}}^\ast$ is cyclic of order $q^{2m}-1$ and $\mathbb{F}_{q^m}^\ast$ is cyclic of order $q^m-1$, there exists $v\in\mathbb{F}_{q^{2m}}^\ast$ with $v^{q^{m}+1}=y$. Now, we have
\begin{align*}
\psi(v,v)&=\mathrm{Tr}(vv^{q^m})=\mathrm{Tr}(v^{q^m+1})=\mathrm{Tr}(y)\ne 0,\\
\psi(av,v)&=\mathrm{Tr}(avv^{q^m})=\mathrm{Tr}(av^{q^m+1})=\mathrm{Tr}(ay)= 0.
\end{align*}
In particular, $v$ is non-degenerate and $va\in v^\perp$ with respect to the Hermitian form $\psi$.
\end{proof}

\begin{lemma}\label{lemma:unitary1bis}
Let $q$ be an even prime power and let $m$ be an odd integer with $m\ge 5$ and $\gcd(m,q+1)=q+1$. There exists a non-degenerate vector $v\in \mathbb{F}_{q^{2m}}$ and an element $a\in\mathbb{F}_{q^{2m}}^\ast$  with ${\bf o}(a)\mid (q^m+1)/(q+1)$ and ${\bf o}(a)\nmid \gcd(m,q+1)$ such that 
$va\in v^\perp$ and $va^{2}\in v^\perp$.
\end{lemma}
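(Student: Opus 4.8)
The plan is to repeat, essentially verbatim, the argument used to prove Lemma~\ref{lemma:unitary1}, feeding it the stronger output of Lemma~\ref{lemma:unitary3} in place of that of Lemma~\ref{lemma:unitary2}. First I would invoke Lemma~\ref{lemma:unitary3} (its hypotheses are satisfied here, since the proof of that lemma only uses $q+1\mid m$, which is exactly $\gcd(m,q+1)=q+1$) to obtain $y\in\mathbb{F}_{q^m}$ and $a\in\mathbb{F}_{q^{2m}}^\ast$ with ${\bf o}(a)\mid (q^m+1)/(q+1)$, ${\bf o}(a)\nmid \gcd(m,q+1)$, and $\mathrm{Tr}(y)\ne 0$, $\mathrm{Tr}(ay)=0$, $\mathrm{Tr}(a^2y)=0$.

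Next I would use that the norm map $x\mapsto x^{q^m+1}$ from $\mathbb{F}_{q^{2m}}^\ast$ to $\mathbb{F}_{q^m}^\ast$ is surjective (both groups are cyclic, of orders $q^{2m}-1$ and $q^m-1$, with $q^m-1\mid q^{2m}-1$), so there exists $v\in\mathbb{F}_{q^{2m}}^\ast$ with $v^{q^m+1}=y$. A direct computation with the Hermitian form $\psi(x,w)=\mathrm{Tr}(xw^{q^m})$ of~\eqref{defpsi} then yields $\psi(v,v)=\mathrm{Tr}(v^{q^m+1})=\mathrm{Tr}(y)\ne 0$, so $v$ is non-degenerate; $\psi(va,v)=\mathrm{Tr}(av^{q^m+1})=\mathrm{Tr}(ay)=0$, so $va\in v^\perp$; and $\psi(va^2,v)=\mathrm{Tr}(a^2v^{q^m+1})=\mathrm{Tr}(a^2y)=0$, so $va^2\in v^\perp$. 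Under the identification of $\rho_{a}$ and $\rho_{a^2}$ with multiplication by $a$ and $a^2$ this is precisely the desired conclusion.

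There is essentially no obstacle: all the content has been pushed into Lemma~\ref{lemma:unitary3}, which is itself the verbatim analogue of Lemma~\ref{lemma:unitary2}, and the translation from trace identities to orthogonality relations with respect to $\psi$ is the same bookkeeping as in Lemma~\ref{lemma:unitary1}. The only minor point to watch is that the additional requirement $\mathrm{Tr}(a^2y)=0$ genuinely gives $va^2\in v^\perp$, which is immediate from the displayed identity $\psi(va^2,v)=\mathrm{Tr}(a^2y)$; and that $a$ really lies in $\mathrm{SU}_m(q)\setminus{\bf Z}(\mathrm{SU}_m(q))$, which follows from the conditions ${\bf o}(a)\mid (q^m+1)/(q+1)$ and ${\bf o}(a)\nmid\gcd(m,q+1)$ exactly as recorded after~\eqref{defpsi}.
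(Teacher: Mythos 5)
Your proposal is correct and coincides with the paper's own proof: the paper likewise takes $y$ and $a$ from Lemma~\ref{lemma:unitary3}, uses surjectivity of the norm map to find $v$ with $v^{q^m+1}=y$, and verifies $\psi(v,v)=\mathrm{Tr}(y)\ne 0$, $\psi(av,v)=\mathrm{Tr}(ay)=0$ and $\psi(a^2v,v)=\mathrm{Tr}(a^2y)=0$. Your remark that the hypothesis $\gcd(m,q+1)=q+1$ suffices to invoke Lemma~\ref{lemma:unitary3} (whose statement reads $m=q+1$) is a sensible reading of the slight mismatch already present in the paper.
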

\begin{proof}
Let $y\in\mathbb{F}_{q^m}^\ast$ and $a\in\mathbb{F}_{q^{2m}}^\ast$ be as in the conclusion of Lemma~\ref{lemma:unitary3}.

Since $\mathbb{F}_{q^{2m}}^\ast$ is cyclic of order $q^{2m}-1$ and $\mathbb{F}_{q^m}^\ast$ is cyclic of order $q^m-1$, there exists $v\in\mathbb{F}_{q^{2m}}^\ast$ with $v^{q^{m}+1}=y$. Now, we have
\begin{align*}
\psi(v,v)&=\mathrm{Tr}(vv^{q^m})=\mathrm{Tr}(v^{q^m+1})=\mathrm{Tr}(y)\ne 0,\\
\psi(av,v)&=\mathrm{Tr}(avv^{q^m})=\mathrm{Tr}(av^{q^m+1})=\mathrm{Tr}(ay)= 0,\\
\psi(a^2v,v)&=\mathrm{Tr}(a^2vv^{q^m})=\mathrm{Tr}(a^2v^{q^m+1})=\mathrm{Tr}(a^{2}y)= 0.
\end{align*}
In particular, $v$ is non-degenerate and $va,va^{2}\in v^\perp$ with respect to the Hermitian form $\psi$.
\end{proof}

\begin{proof}[Proof of Theorem~$\ref{theorem:PSU}$]
Let $v\in\mathbb{F}_{q^{2m}}$ and $a\in\mathbb{F}_{q^{2m}}^\ast$ be as in the conclusion of Lemma~\ref{lemma:unitary1}. Then $a\in \mathrm{SU}_m(q)\setminus {\bf Z}(\mathrm{SU}_m(q))$ and ${\bf o}(a)\mid (q^m+1)/(q+1)$. 

Assume first that $m\neq q+1$. Let $\alpha\in \mathbb{F}_{q^2}^\ast$ with ${\bf o}(\alpha)=q+1$. Consider the $\mathbb{F}_{q^2}$-linear mapping $z:\mathbb{F}_{q^{2m}}\to\mathbb{F}_{q^{2m}}$ with $wz=\alpha w$, for every $w\in v^\perp$, and $vz=\alpha^{1-m}v$. With respect to a suitable $\mathbb{F}_{q^2}$-basis of $\mathbb{F}_{q^{2m}}$ the element $z$ is represented by the matrix
\[
\begin{pmatrix}
\alpha^{1-m}&&&\\
&\alpha&&\\
&      &\ddots&\\
&&&\alpha
\end{pmatrix}.
\]
It is not hard to verify that $z\in\mathrm{GU}_m(q)$ with respect to the Hermitian form $\psi$. Moreover, as $\det(z)=\alpha^{1-m}\alpha^{m-1}=1$, we deduce $z\in\mathrm{SU}_m(q)$. Now, $z\in{\bf Z}(\mathrm{SU}_m(q))$ if and only if $\alpha^{1-m}=\alpha$, that is $\alpha^m=1$. Since ${\bf o}(\alpha)=q+1$, this happens if and only if $q+1=m$, which we are excluding for the time being.

Now, we fix an orthogonal $\mathbb{F}_{q^2}$-basis $(v_1,\ldots,v_m)$ of $\mathbb{F}_{q^{2m}}$ with respect to $\psi$. Without loss of generality, we may suppose that $v_1:=v$ and $v_2:=va$, because $va\in v^\perp$. We now determine the matrix representation of $[a,z]$ with respect to this basis. Observe that $v=va (a^{-1})\in (v^\perp)a^{-1}=(va^{-1})^\perp$ and hence $va^{-1}\in v^\perp$. We have
\begin{align*}
v_1[a,z]&=v_1a^{-1}z^{-1}az=(v_1a^{-1})z^{-1}az=\alpha^{-1} (v_1a^{-1})az=\alpha^{-1} v_1z\\
&=\alpha^{-1} \alpha^{1-m}v_1=\alpha^{-m}v_1,\\
v_2[a,z]&=v_2a^{-1}z^{-1}az=v_1z^{-1}az=\alpha^{m-1} v_1a z=\alpha^{m-1} v_2z=\alpha^{m-1} \alpha v_2=\alpha^{m}v_2.
\end{align*}
When $i\in \{3,\ldots,m\}$, observe that $v_i\in v_2^\perp=(v_1a)^\perp=(v_1^\perp)a$ and hence $v_ia^{-1}\in v_1^\perp$. Therefore, we have
\begin{align*}
v_i[a,z]&=v_ia^{-1}z^{-1}az=(v_ia^{-1})z^{-1}az=\alpha^{-1} v_ia^{-1}az=\alpha^{-1} v_iz=\alpha^{-1} \alpha v_i=v_i.
\end{align*}
Therefore, the matrix representing $[a,z]$ is
\[
\begin{pmatrix}
\alpha^{-m}&&&&\\
&\alpha^m&&&\\
&&1&&\\
&&      &\ddots&\\
&&&&1
\end{pmatrix}.
\]
Clearly, $[a,z]$ commutes with $z$ and hence $[a,z,z]=1$. 

\smallskip

Suppose now $m=q+1$. Since $m$ is odd, $q$ is even and hence $q$ is a power of $2$. Observe that $m\ge 5$, because $(m,q)\ne(3,2)$ by hypothesis.  As above, let $v_1,\ldots,v_m$ be an orthogonal $\mathbb{F}_{q^2}$-basis. Using Lemma~\ref{lemma:unitary1bis}, we may suppose that $v_2=v_1a$ and $v_2a=v_3$. Let $\alpha\in\mathbb{F}_{q^2}^\ast$ with ${\bf o}(\alpha)=q+1$. Let $z$ be the $\mathbb{F}_{q^2}$-linear map with $v_1z=\alpha v_1$, $v_2z=\alpha^{-1}v_2$ and $v_iz=z$, for each $i\in \{3,\ldots,m\}$. Then  the element $z$ is represented by the matrix
\[
\begin{pmatrix}
\alpha&0&\\
0&\alpha^{-1}&\\
&&I
\end{pmatrix}.
\]
It is not hard to verify that $z\in\mathrm{GU}_m(q)$ with respect to the Hermitian form $\psi$. Moreover, as $\det(z)=1$, we deduce $z\in\mathrm{SU}_m(q)$. Clearly, $z\notin{\bf Z}(\mathrm{SU}_m(q))$. As $v_1\in \langle v_2,v_3\rangle^\perp$, we have $v_1a^{-1}\in \langle v_2a^{-1},v_3a^{-1}\rangle^\perp=\langle v_1,v_2\rangle^\perp=\langle v_3,\ldots,v_m\rangle$. Thus $v_1a^{-1}z=v_1a^{-1}$. We deduce
\begin{align*}
v_1[a,z]&=v_1a^{-1}z^{-1}az=(v_1a^{-1})z^{-1}az=(v_1a^{-1})az=v_1z=\alpha v_1.
\end{align*}
Similarly, we have
\begin{align*}
v_2[a,z]&=v_2a^{-1}z^{-1}az=v_1z^{-1}az=\alpha^{-1} v_1a z=\alpha^{-1} v_2z=\alpha^{-2} v_2,\\
v_3[a,z]&=v_3a^{-1}z^{-1}az=v_2z^{-1}az=\alpha v_2a z=\alpha v_3 z=\alpha v_3.
\end{align*}
When $i\in \{4,\ldots,m\}$, observe that $v_i\in \langle v_1,v_2,v_3\rangle^\perp$ and hence $z$ fixes $v_i$ and $v_ia^{-1}$. Therefore, we have
\begin{align*}
v_i[a,z]&=v_ia^{-1}z^{-1}az=(v_ia^{-1})z^{-1}az=v_ia^{-1}az=v_iz=v_i.
\end{align*} 
Therefore, the matrix representing $[a,z]$ is
\[
\begin{pmatrix}
\alpha&&&\\
&\alpha^{-2}&&\\
&&\alpha&\\
&&      &I\\
\end{pmatrix}.
\]
Clearly, $[a,z]$ commutes with $z$ and hence $[a,z,z]=1$. 
\end{proof}

\subsection{The Engel graph of unitary groups}\label{subsec:unitary}
\begin{proposition}\label{proposition:PSUUU}
Let $m\ge 3$ and let $q$ be a prime power with $(m,q)\ne (3,2)$. Then $\Gamma_2(\mathrm{PSU}_m(q))$ is strongly connected.
\end{proposition}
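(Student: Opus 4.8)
The plan is to mimic the proof of Proposition~\ref{proposition:PSL}, using Theorem~\ref{theorem:PSU} in the role played there by Lemma~\ref{PSL}. Write $G:=\mathrm{PSU}_m(q)$. First I would dispose of the finitely many genuinely small cases (notably $\mathrm{PSU}_4(2)\cong\mathrm{PSp}_4(3)$, together with a handful of similarly small groups where the generic argument below degenerates) by direct computation with \texttt{magma}, exactly as Proposition~\ref{proposition:PSL} does for $\mathrm{PSL}_3(2)$ and $\mathrm{PSL}_3(4)$. For all remaining $(m,q)$, $G$ is not isomorphic to $\mathrm{PSL}_2(q)$, ${}^2B_2(q)$, ${}^2G_2(q)$, ${}^2F_4(q)$ or $\mathrm{PSL}_3(4)$, so Corollary~\ref{cor} applies. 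If the prime graph $\Pi(G)$ is connected then, by Theorems~\ref{theorem44} and~\ref{theorem111}, the commuting graph $\Gamma_1(G)$ is connected, and hence $\Gamma_2(G)\supseteq\Gamma_1(G)$ is strongly connected; so I may assume $\Pi(G)$ is disconnected and, quoting the classification of Williams and Kondrat'ev in a table analogous to Table~\ref{table1PSL}, record each component $\psi$ with $2\notin\psi$ together with the corresponding Hall $\psi$-subgroup $H$. In every such case $H$ is a cyclic maximal torus $T$ of Singer type, of order $(q^m+1)/((q+1)\gcd(m,q+1))$ when $m$ is an odd prime, and an analogous cyclotomic factor in the companion case where it is $m-1$ that is prime.

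Next I would verify the criterion of Corollary~\ref{cor} for $H=T$. For the arc \emph{out} of $T$: Theorem~\ref{theorem:PSU} produces $g\in\mathrm{SU}_m(q)\setminus{\bf Z}(\mathrm{SU}_m(q))$ of order dividing $(q^m+1)/(q+1)$ and $z\in\mathrm{SU}_m(q)\setminus{\bf Z}(\mathrm{SU}_m(q))$ of order dividing $q+1$ with $[g,z,z]=1$; projecting to $G$ gives $h\in T\setminus\{1\}$ with $h\mapsto_2 z$, and from the explicit diagonal shape of $z$ in that proof one sees that ${\bf C}_G(z)$ contains a $\mathrm{GU}_{m-1}(q)$- (or $\mathrm{GU}_{m-2}(q)$-) type subgroup, of even order since $m-1\ge 2$ (respectively $m-2\ge 3$); hence $z\in\mathcal I$. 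For the arc \emph{into} $T$: as $T$ is cyclic and ${\bf N}_G(T)=T\rtimes\langle w\rangle$ with ${\bf o}(w)=m$, the element $w$ normalizes $\langle h\rangle$, so $w\mapsto_2 h$ by Lemma~\ref{norm}; it remains to place $w$ in $\mathcal I$. When $p\nmid m$, $w$ is semisimple and, splitting the natural module into $w$-eigenspaces, ${\bf C}_G(w)$ contains the special unitary group of a non-degenerate $(m-1)$-dimensional Hermitian space, which has even order, so $w\in\mathcal I$. When $p\mid m$ (forcing $m=p$), $w$ is a regular unipotent element and ${\bf C}_G(w)$ has odd order $q^{m-1}\gcd(m,q+1)$; the point here is that ${\bf C}_G(w)$ nonetheless contains a unitary transvection $t$ (e.g.\ $I+N^{m-1}$ with $N$ the nilpotent part of $w$), and $t\in\mathcal I$ — it is an involution if $q$ is even, and it is centralized by a large even-order subgroup if $q$ is odd — whence $w\in\mathcal I$ because $w$ commutes with $t$. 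Taking $x:=w$, $y:=z$ and this $h$, Corollary~\ref{cor} then yields that $\Gamma_2(G)$ is strongly connected; the companion case with $m-1$ prime is handled identically after embedding $\mathrm{SU}_{m-1}(q)$ into $\mathrm{SU}_m(q)$ as the stabilizer of a non-degenerate vector and applying Theorem~\ref{theorem:PSU} there.

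The hardest part will be the bookkeeping around the prime graph: pinning down exactly which pairs $(m,q)$ give a disconnected $\Pi(G)$, what the odd components and their Hall subgroups are, and which small groups must be checked by computer because the generic torus-normalizer argument breaks down. A subtler obstacle inside the main argument is the ``arc into $T$'' step in characteristic $p\mid m$: one must resist concluding $w\notin\mathcal I$ from the fact that ${\bf C}_G(w)$ has odd order, and instead connect $w$ to $\mathcal I$ along a chain of commuting odd-order elements passing through a transvection.
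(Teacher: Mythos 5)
Your proposal follows the paper's proof essentially verbatim: Corollary~\ref{cor} together with the Kondrat'ev--Mazurov prime-graph table reduces everything to the two generic lines, which are handled exactly as in the paper by Theorem~\ref{theorem:PSU} (outgoing arc) and by the normalizer ${\bf N}_G(T)=T\rtimes\langle w\rangle$ with Lemma~\ref{norm} (incoming arc), plus the computer cases $(m,q)=(4,2)$ and $(6,2)$, the latter requiring a small ad hoc argument since its prime graph has three components. The one place you diverge is the laboured verification that $w\in\mathcal{I}$ when $p\mid m$ via the element $I+N^{m-1}$ (which need not preserve the Hermitian form, so this step as written is shaky); it is also unnecessary, because ${\bf o}(w)=m$ does not divide $|T|$, so Theorems~\ref{theorem44} and~\ref{theorem111} already force $w$ to lie in $\mathcal{I}$.
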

\begin{proof}
Let $G:=\mathrm{PSU}_m(q)$. 
By Theorem~\ref{theorem111}, all elements of even order of $G$ are in the same connected component of $\Gamma_1(G)$. In particular, we use the prime graph of $G$ to deduce properties of the commuting graph of $G$, see Corollary~\ref{cor}. When the prime graph of $G$ is connected, then so is $\Gamma_1(G)$ and hence so is $\Gamma_n(G)$. Therefore, we may suppose that $\Pi(G)$ is disconnected. We have reported in Table~\ref{tablePSU} informations on the connected components of $\Pi(G)$. This information is taken from~\cite{KoMa}, adapted to our current notation. (Observe that there is a typo in~\cite[Table~1]{KM}, the authors in line~5 write $(p,q)\ne (3,3),(5,2)$. However, they actually intended to write $(p,q)\ne (3,2),(5,2)$.)
\begin{table}[!ht]\centering
\begin{tabular}{cll}
\toprule[1.5pt]
conditions&nr. components & components\\
\midrule[1.5pt]
$m$ odd prime&2&$\pi\left(q\prod_{i=1}^{m-1}(q^i-(-1)^i)\right)$, $\pi\left(\frac{q^m+1}{(q+1)\gcd(m,q+1)}\right)$\\
$m-1$ odd prime, $q+1\mid m$, &2&$\pi\left(q(q^m-1)\prod_{i=1}^{m-2}(q^i-(-1)^i)\right)$, $\pi\left(\frac{q^{m-1}+1}{q+1}\right)$\\
$(m,q)\notin\{(4,2),(6,2)\}$&&\\
$(m,q)=(4,2)$&2&$\{2,3\}$, $\{5\}$\\
$(m,q)=(6,2)$&3&$\{2,3,5\}$, $\{7\}$, $\{11\}$\\
\bottomrule[1.5pt]
\end{tabular}
\caption{Cases when $\mathrm{PSU}_m(q)$ has disconnected prime graph}\label{tablePSU}
\end{table}

We deal with each line in Table~\ref{tablePSU} in turn. When $m$ is prime, Theorem~\ref{theorem:PSU}  guarantees that, there exists an element $g$ having order divisible by an element in the second connected component of $\Pi(G)$ and an element $z$ in the first connected component of 
$\Pi(G)$ with $g\mapsto_2 z$. Moreover, let $g\in \mathrm{PSU}_m(q)$ with ${\bf o}(g)=(q^m+1)/((q+1)\gcd(m,q+1))$ and let $C:=\langle g\rangle$. Then $C={\bf C}_G(g)$ and ${\bf N}_G(C)=C\rtimes \langle z\rangle$ with ${\bf o}(z)=m$. This gives $z\mapsto_2g$ from which it follows that $\Gamma_2(G)$ is strongly connected.

Assume now that $m-1$ is prime and $q+1\mid m$. Therem~\ref{theorem:PSU} (applied with $m$ replaced by $m-1$) guarantees that, there exists an element $g$ having order divisible by an element in the second connected component of $\Pi(G)$ and an element $z$ in the first connected component of 
$\Pi(G)$ with $g\mapsto_2 z$. Moreover, let $g\in \mathrm{PSU}_m(q)$ with ${\bf o}(g)=(q^{m-1}+1)/(q+1)$ and let $C:=\langle g\rangle$. Then $C={\bf C}_G(g)$ and ${\bf N}_G(C)=C\rtimes \langle z\rangle$ with ${\bf o}(z)=m-1$. This gives $z\mapsto_2g$ from which it follows that $\Gamma_2(G)$ is strongly connected.

When $(m,q)=(4,2)$, a computer computation shows that $\Gamma_1(G)$ has $1297$ strongly connected components and $\Gamma_2(G)$ is strongly connected.

When $(m,q)=(6,2)$, it can be checked that for every $g\in G$ with ${\bf o}(g)=7$, there exist involutions $z_1$ and $z_2$ with $g\mapsto_2 z_1$ and $z_2\mapsto_2 g$. Similarly, it can be checked that, for every $g\in G$ with ${\bf o}(g)=11$, there exists an element $z_1$ of order $5$ with $z_1\mapsto_2 g$ (this is actually clear from the fact that ${\bf N}_G(\langle g\rangle)$ is a Frobenius group of order $55$)  and an element $z_2$ of order $3$ with $g\mapsto_2 z_2$. From this it easy follows that $\Gamma_2(G)$ is strongly connected.
\end{proof}

\section{The Engel graph of symplectic groups}\label{sec:symplectic}
Using our analysis of the Engel graph of linear and unitary groups, our investigation for the remaining classical groups is easier.

Let $q$ be a prime power and let $m$ be a positive integer with $m\ge 2$.  We have reported in Table~\ref{tablePSp} informations on the connected components of $\Pi(\mathrm{PSp}_{2m}(q))$, when it consists of at least two connected components. As usual, this information is taken from~\cite{KoMa}, adapted to our current notation.
\begin{table}[!ht]\centering
\begin{tabular}{cll}
\toprule[1.5pt]
conditions&nr. components & components\\
\midrule[1.5pt]
$m=2^\ell\ge 2$, &2&$\pi\left(q\prod_{i=1}^{m-1}(q^{2i}-1)\right)$, $\pi\left(\frac{q^{m}+1}{\gcd(2,q-1)}\right)$\\
$m$ prime, $q\in \{2,3\}$  &2&$\pi\left(q(q^m+1)\prod_{i=1}^{m-1}(q^{2i}-1)\right)$, $\pi\left(\frac{q^{m}-1}{\gcd(2,q-1)}\right)$\\
\bottomrule[1.5pt]
\end{tabular}
\caption{Cases when $\mathrm{PSp}_{2m}(q)$ has disconnected prime graph}\label{tablePSp}
\end{table}

\begin{proposition}\label{proposition:symp} Let $q$ be a prime power and let $m$ be a positive integer with $m\ge 2$. Then $\Gamma_3(\mathrm{PSp}_{2m}(q))$ is strongly connected. (Recall that, when $q=2$, the group $\mathrm{Sp}_4(2)$ is not simple and $\mathrm{Sp}_4(2)'\cong\mathrm{Alt}(6)$.)
\end{proposition}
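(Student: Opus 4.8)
The strategy mirrors the arguments for the linear and unitary groups in Propositions~\ref{proposition:PSL} and~\ref{proposition:PSUUU}. Set $G:=\mathrm{PSp}_{2m}(q)$. First I would dispose of the small exceptional cases by direct computation: when $q=2$ and $m=2$ the group is not simple (so there is nothing to prove), and for the remaining small pairs $(m,q)$ where $G$ is simple but not covered cleanly by the generic argument---typically $\mathrm{Sp}_4(3)$, $\mathrm{Sp}_6(2)$, $\mathrm{Sp}_8(2)$ and a handful of others appearing in Table~\ref{tablePSp}---I would verify strong connectivity of $\Gamma_3(G)$ with \texttt{magma}. For the generic case, note that $G$ is not one of the groups excluded in Theorem~\ref{theorem111} (those are $\mathrm{PSL}_2(q)$, ${}^2B_2(q)$, ${}^2G_2(q)$, ${}^2F_4(q)$, $\mathrm{PSL}_3(4)$), so all elements of even order lie in a single connected component $\mathcal{I}$ of the commuting graph. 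Hence by Corollary~\ref{cor} it suffices to handle, for each connected component $\psi$ of $\Pi(G)$ with $2\notin\psi$ and each Hall $\psi$-subgroup $H$, the task of finding $h\in H\setminus\{1\}$ and $x,y\in\mathcal{I}$ with $x\mapsto_3 h$ and $h\mapsto_3 y$; and if $\Pi(G)$ is connected there is nothing to do.

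So I would go through the two lines of Table~\ref{tablePSp}. In the case $m=2^\ell$, the ``isolated'' component is $\pi((q^m+1)/\gcd(2,q-1))$, governed by a cyclic torus of order $(q^m+1)/\gcd(2,q-1)$ coming from a $\mathrm{GU}_1(q^m)$-type (Singer-like) subgroup of $\mathrm{Sp}_{2m}(q)$; this is exactly the situation where the unitary analysis applies. Concretely: take $g$ of order dividing $(q^m+1)/\gcd(2,q-1)$ lying in such a torus $C=\langle g\rangle$; then ${\bf N}_G(C)=C\rtimes\langle z\rangle$ with ${\bf o}(z)=m$ (or a $2$-power times $m$), so by Lemma~\ref{norm} we get $z\mapsto_2 g$, and since $z$ has even order when $m$ is a $2$-power (so $z\in\mathcal{I}$), this gives the ``in''-arrow $\mathcal{I}\to g$. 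For the ``out''-arrow $g\mapsto_3 z'$ with $z'\in\mathcal{I}$, I would invoke Theorem~\ref{theorem:PSU}: the natural module $\mathbb{F}_q^{2m}$ carries the symplectic form, and $\mathbb{F}_{q^{2m}}$ with the Hermitian form $\psi$ of Section~\ref{sec:unitary} restricts to a symplectic $\mathbb{F}_q$-space of dimension $2m$; the element $a$ of order dividing $(q^m+1)/(q+1)$ there embeds as (a power of) $g$, and the element $z$ there (of order dividing $q+1$, even since $q+1$ is even, hence in $\mathcal{I}$) satisfies $[a,z,z]=1$, which after squaring up to $[a,z,z,z]=1$ gives $g\mapsto_3 z$ comfortably (we only need $n=3$, so even using Lemma~\ref{2comm} for an involutory piece leaves room). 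For the second line ($m$ prime, $q\in\{2,3\}$), the isolated component is $\pi((q^m-1)/\gcd(2,q-1))$, governed by a torus $C$ of that order with ${\bf N}_G(C)=C\rtimes\langle z\rangle$, ${\bf o}(z)=m$; again Lemma~\ref{norm} gives $z\mapsto_2 g$ for $g\in C$, and an element $g$ of order $(q^m-1)/\gcd(2,q-1)$ acting with a large eigenvalue block can be shown to satisfy $g\mapsto_3 z'$ for a suitable involution $z'$ by an explicit matrix construction analogous to Lemma~\ref{PSL} (a block-diagonal involution $z'$ with $[g,z',z',z']=1$). Since $q\in\{2,3\}$ the ambient groups here are small, so even a direct verification is feasible.

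\textbf{Main obstacle.} The delicate point is the ``out''-direction $g\mapsto_3 z'$: given an element $g$ whose order is divisible by a prime in the isolated component, we must produce a short Engel relation $[g,z',z',z']=1$ with $z'$ of even order (so that $z'$ lands in the component $\mathcal{I}$ of even-order elements). The cleanest route is to realize $g$ (a power of it) inside a Singer-type subgroup modelled on $\mathrm{GU}_m(q)$ and transplant the element $z$ and the identity $[a,z,z]=1$ of Theorem~\ref{theorem:PSU} into $\mathrm{Sp}_{2m}(q)$ via the field-reduction embedding; one must check that $z$, of order dividing $q+1$, is genuinely a nontrivial even-order element of $\mathrm{PSp}_{2m}(q)$ (not swallowed by the center $\{\pm 1\}$), which requires $q+1>2$, i.e. $q\ge 3$, so the case $q=2$ must be routed through the second line of the table or through direct computation. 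The other mild nuisance is bookkeeping the factors $\gcd(2,q-1)$ when passing between $\mathrm{Sp}_{2m}(q)$ and $\mathrm{PSp}_{2m}(q)$ and confirming that $m$ (the order of the normalizing element $z$ in ${\bf N}_G(C)$) is odd in the relevant cases, so that $z\in\mathcal{I}$ requires the even-order multiplier; when $m$ is a $2$-power this is automatic, and when $m$ is an odd prime with $q\in\{2,3\}$ the group is small enough to check by hand or machine. In all cases the bound $n=3$ (rather than $n=2$) is exactly what the even-characteristic constructions of Theorem~\ref{theorem:PSU} and Lemma~\ref{PSL} force, matching the statement.
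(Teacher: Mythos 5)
Your overall framework (reduce via Theorem~\ref{theorem111} and Corollary~\ref{cor} to producing in- and out-arrows between the isolated tori of Table~\ref{tablePSp} and the even-order component $\mathcal{I}$) is the right one, and your treatment of the second line of the table is essentially the paper's: there the torus of order $(q^m-1)/\gcd(2,q-1)$ sits inside a $\mathrm{GL}_m(q).2$ stabilizing a decomposition $V_1\oplus V_2$ into totally isotropic subspaces, and the paper simply imports the strong connectivity of $\Gamma_3(\mathrm{PSL}_m(q))$ from Proposition~\ref{proposition:PSL} (your ``construction analogous to Lemma~\ref{PSL}'' is exactly what powers that proposition). Note, though, that ``$q\in\{2,3\}$ so direct verification is feasible'' is not a valid fallback for that line, since $m$ ranges over all primes.

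The genuine gap is in the first line, $m=2^\ell$, which is the main case. Your out-arrow $g\mapsto_3 z'$ rests on Theorem~\ref{theorem:PSU} and a field-reduction embedding of $\mathrm{GU}_m(q)$ into $\mathrm{Sp}_{2m}(q)$, but Theorem~\ref{theorem:PSU} is proved only for $m$ an odd prime, and for $m$ even it cannot even be formulated: $q+1$ does not divide $q^m+1$, so $(q^m+1)/(q+1)$ is not an integer, and the Coxeter torus of $\mathrm{GU}_m(q)$ has order $q^m-(-1)^m=q^m-1$ rather than $q^m+1$. The cyclic torus of order $(q^m+1)/\gcd(2,q-1)$ in $\mathrm{PSp}_{2m}(q)$ arises instead from the field-extension subgroup $\mathrm{Sp}_2(q^m)\cong\mathrm{SL}_2(q^m)$, not from a unitary subgroup. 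The paper exploits exactly this: for $q$ odd it quotes Theorem~\ref{thrm:tired} for $\mathrm{PSL}_2(q^m)$ (using $q^2\equiv 1\pmod 8$), and for $q$ even --- where $\mathrm{PSL}_2(q^m)$ itself has non-strongly-connected Engel graph, so no reduction to a smaller simple group is available --- it reduces to $\mathrm{Sp}_4(q)$ and exhibits an explicit involution $z$ with $[g,z,z]=1$ for $g$ of order $q^2+1$ by a direct $4\times 4$ matrix computation. Your proposal has no mechanism covering $q$ even and $m=2^\ell$ (your suggestion to route $q=2$ through the second line fails since that line requires $m$ prime, and in any case for $q$ even the element of order dividing $q+1$ you want to land on has odd order, so it is not automatically in $\mathcal{I}$); this case needs a new argument of the kind the paper supplies.
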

\begin{proof}
Let $G:=\mathrm{PSp}_{2m}(q)$. 
By Theorem~\ref{theorem111}, all elements of even order of $G$ are in the same connected component of $\Gamma_1(G)$. In particular, we use the prime graph of $G$ to deduce properties of the commuting graph of $G$, see Corollary~\ref{cor}. When the prime graph of $G$ is connected, then so is $\Gamma_1(G)$ and hence so is $\Gamma_n(G)$. Therefore, we may suppose that $\Pi(G)$ is disconnected and we may use Table~\ref{tablePSp}, which is taken from~\cite{KoMa}.

We deal with each line in Table~\ref{tablePSp} in turn. Suppose $m=2^\ell\ge 2$. We have $$\mathrm{PSp}_{2m}(q)\ge \mathrm{PSp}_2(q^{m})\cong \mathrm{PSL}_2(q^m).$$ Now, as $m\ge 2$, we may deduce the strongly connectivity of the Engel graph of $\mathrm{PSp}_{2m}(q)$ from the strongly connectivity of the Engel graph of $\mathrm{PSL}_{2}(q^m)$, except when $q$ is even. Indeed, when $q$ is odd, we have $q^2\equiv 1\pmod 8$. In particular, when $q\ne 9$, Theorem~\ref{thrm:tired} implies that $\Gamma_2(\mathrm{PSL}_2(q^m))$ is strongly connected. From this it immediately follows that $\Gamma_2(\mathrm{PSp}_{2m}(q))$ is also strongly connected. When $q=9$, it follows from a computer computation that $\Gamma_2(\mathrm{PSp}_4(q))$ is strongly connected.

Assume then $q$ even. We have $$\mathrm{PSp}_{2m}(q)=\mathrm{Sp}_{2m}(q)\ge \mathrm{Sp}_4(q^{m/2}).$$ In particular, arguing as above, we may deduce the strongly connectivity of $\Gamma_2(\mathrm{PSp}_{2m}(q))$ from the strongly connectivity of $\Gamma_2(\mathrm{Sp}_4(q^{m/2}))$. Therefore, without loss in generality, we may suppose that $G=\mathrm{Sp}_4(q)$. We start with $\mathrm{Sp}_2(q^2)=\mathrm{SL}_2(q^2)$. Here, we fix the matrix
\[
J:=\begin{pmatrix}0&1\\1&0\end{pmatrix}\in\mathrm{SL}_2(q).
\]
With respect the symplectic form given by the matrix $J$, the corresponding symplectic group is simply $\mathrm{SL}_2(q^2)$. In particular, the symplectic form $\varphi:\mathbb{F}_{q^2}^2\times\mathbb{F}_{q^2}^2\to\mathbb{F}_{q^2}$ is defined by 
$$\varphi((a,b),(c,d)):=ad+bc.$$
We consider 
$$g=\begin{pmatrix}0&1\\1&\alpha\end{pmatrix}\in \mathrm{SL}_2(q^2)$$
having order $q^2+1$. Composing $\varphi$ with the trace $\mathrm{Tr}:\mathbb{F}_{q^2}\to\mathbb{F}_q$, we obtain a non-degenerate symplectic form $\psi$ for $\mathbb{F}_q^4$. With respect to this form, we have 
$$g=
\begin{pmatrix}
0&0&1&0\\
0&0&0&1\\
1&0&a&b\\
0&1&b&d\end{pmatrix}\in \mathrm{Sp}_4(q).
$$
Moreover, consider
$$z=
\begin{pmatrix}
0&1&0&0\\
1&0&0&0\\
0&0&0&1\\
0&0&1&0\end{pmatrix}.
$$
Observe that $z\in\mathrm{Sp}_4(q)$ and that $z^2=1$. We have
$$[g,z]=g^{-1}zgz=
\begin{pmatrix}
1&0&a+d&b+c\\
0&1&b+c&a+d\\
0&0&1&0\\
0&0&0&1
\end{pmatrix}$$
and
$$[g,z,z]=1.$$
Thus $g\mapsto_2z$. On the other hand, the normalizer in $\mathrm{Sp}_4(q)$
of $\langle g \rangle$ has order $4(q+1)$ (the factor 4 comes from the Galois group of $\mathbb{F}_{q^4}$ over $\mathbb{F}_{q}$), so $z^\prime \mapsto g$ if $z^\prime$ is an involution in this normalizer. From this it easily follows that $\Gamma_2(\mathrm{Sp}_4(q))$ is strongly connected.

\smallskip

We now deal with the second line of Table~\ref{tablePSp}. Suppose $m$ is prime and $q\in \{2,3\}$. The second connected component of $\Pi(G)$ consists of the prime divisors of the order of a torus $T=\langle g\rangle$ having order $(q^m-1)/\gcd(2,q-1)$. This torus acts on the underlying vector space $\mathbb{F}_q^{2m}$ and, in this action, $\mathbb{F}_q^{2m}=V_1\oplus V_2$, where $V_1$ and $V_2$ are two irreducible $\mathbb{F}_qT$-modules. The subspaces $V_1$ and $V_2$ are totally isotropic and, if we denote with $a_i$ the matrix induced by $g$ on $V_i$, then $a_2=(a_1^{-1})^{\mathrm{tr}}$. We have $$\mathrm{Sp}_{2m}(q)\ge \mathrm{GL}_m(q).2,$$ where $\mathrm{GL}_m(q).2$ is the stabilizer of the direct sum decomposition $V_1\oplus V_2$. Therefore, when $m\ne 2$, we may deduce the strongly connectivity of $\Gamma_3(G)$ from the strongly connectivity of $\Gamma_3(\mathrm{PSL}_m(q))$, see Proposition~\ref{proposition:PSL}. When $m=2$, $\Gamma_3(\mathrm{PSp}_4(2)')$ and $\Gamma_3(\mathrm{PSp}_4(3))$ are both strongly connected.
\end{proof}

\section{The Engel graph of orthogonal groups}\label{sec:orthogonal}

\begin{proposition}\label{proposition:omega}
Let $m\ge 3$ be an integer and let $q$ be an odd prime power. The directed graph $\Gamma_2(\Omega_{2m+1}(q))$ is strongly connected.
\end{proposition}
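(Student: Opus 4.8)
The plan is to follow the now-standard template used throughout this paper for classical groups of Lie type: reduce to the prime graph, then handle the (few) cases where it is disconnected. First I would invoke Theorem~\ref{theorem111} to conclude that all elements of even order of $G:=\Omega_{2m+1}(q)$ lie in a single connected component $\mathcal I$ of the commuting graph $\Gamma_1(G)$, so that Corollary~\ref{cor} applies: it suffices to show that for every connected component $\psi$ of $\Pi(G)$ with $2\notin\psi$ and every Hall $\psi$-subgroup $H$, there is some $h\in H\setminus\{1\}$ and $x,y\in\mathcal I$ with $x\mapsto_2 h$ and $h\mapsto_2 y$. In particular, if $\Pi(G)$ is connected then $\Gamma_2(G)$ is already strongly connected, so I may assume $\Pi(G)$ is disconnected and read off from~\cite{KoMa} the (short) list of cases: the relevant extra component for $B_m$ groups is $\pi\!\left(\tfrac{q^m+1}{2}\right)$ when $m=2^\ell$, and $\pi\!\left(\tfrac{q^m-1}{2}\right)$ together with the small-$q$ cases. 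I would present a table analogous to Tables~\ref{tablePSp},~\ref{table1PSL}.

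The key geometric input is that $\Omega_{2m+1}(q)$ contains natural subgroups arising from orthogonal direct-sum decompositions of the $(2m+1)$-dimensional space $V$. For the component associated with a torus of order $(q^m+1)/\gcd(2,\cdot)$, I would write $V = V_{2m}\perp V_1$ with $V_{2m}$ a minus-type orthogonal space of dimension $2m$ and $V_1$ a nondegenerate line; then $G$ contains $\Omega_{2m}^-(q)\le\Omega_{2m+1}(q)$, and since $2m$ is a power of $2$ times $2$, I can further peel off to reach $\mathrm{SL}_2(q^m)\cong\Omega_3(q^m)$ acting on a minus-type $2$-plane tensored up, exactly as in the symplectic case of Proposition~\ref{proposition:symp}. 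Since $q^m\equiv 1\pmod 8$ here (as $q$ is odd and $m\ge 2$ even — wait, $m=2^\ell\ge 2$ so $q^m$ is an odd square, hence $\equiv 1\pmod 8$), Theorem~\ref{thrm:tired} gives that $\Gamma_2(\mathrm{PSL}_2(q^m))$ is strongly connected, and the strong connectivity transfers upward because every element of the relevant torus has nontrivial centralizer meeting $\mathcal I$; concretely, for $g$ generating the torus, $\mathbf N_G(\langle g\rangle)$ is a dihedral-type normalizer containing an involution $z$, so $z\mapsto_2 g$ by Lemma~\ref{norm}, and conversely $g\mapsto_2 z'$ for a suitable involution because the reflection structure of the torus normalizer shows $[g,z']$ is an involution (as in Lemma~\ref{2comm} arguments). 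For the other component, attached to $\mathrm{GL}_m(q)$-type subgroups stabilizing a totally-isotropic $m$-subspace and its complement inside a $2m$-dimensional nondegenerate subspace, I would deduce strong connectivity of $\Gamma_2(G)$ from that of $\Gamma_2(\mathrm{PSL}_m(q))$ via Proposition~\ref{proposition:PSL}, handling the finitely many small exceptional $(m,q)$ by a \texttt{magma} computation.

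The main obstacle I expect is the book-keeping for the \emph{second} endpoint of the path, i.e.\ exhibiting an element $h$ in the Hall $\psi$-subgroup together with $x\in\mathcal I$ with $x\mapsto_2 h$ — the ``into the torus'' direction. The natural way is to use $\mathbf N_G(\langle h\rangle)$: for these tori the normalizer is a cyclic-by-small group, and I must verify it contains an element $z$ of $2$-power order (typically an involution) normalizing $\langle h\rangle$, so that Lemma~\ref{norm} gives $z\mapsto_2 h$ with $z\in\mathcal I$. This requires knowing the precise structure of maximal tori and their Weyl-group pieces in $B_m(q)$; the relevant Singer-type torus of order $q^m+1$ in $\Omega_{2m}^-(q)\le\Omega_{2m+1}(q)$ has normalizer containing a cyclic group of order $2m$ acting as the Frobenius, which does contain an involution inverting $h$ (or at least centralizing a power of $h$ lying in $\psi$). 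The ``out of the torus'' direction $h\mapsto_2 y$ is then handled by the embedded $\mathrm{PSL}_2$ or $\mathrm{PSL}_m$ and Lemma~\ref{2comm}/Lemma~\ref{nr1}: one produces an involution $y$ with $[h,y]$ of $2$-power order, which for the $\mathrm{PSL}_2(q^m)$ case with $q^m\equiv 1\pmod 8$ is guaranteed. A secondary nuisance is that $\Omega_{2m+1}(q)\cong\mathrm{PSp}_{2m}(q)$ when $q$ is odd only up to the exceptional isogeny for $m=2$ — actually $\mathrm{Sp}_{2m}(q)$ and $\mathrm{Spin}_{2m+1}(q)$ are isogenous, so I should note $\Omega_{2m+1}(q)\cong\mathrm{PSp}_{2m}(q)$ and simply \emph{cite Proposition~\ref{proposition:symp}} for $q$ odd, which collapses this entire proof to one line; the honest statement is that for $q$ odd one has $\Omega_{2m+1}(q)\cong\mathrm{PSp}_{2m}(q)$, whence $\Gamma_2(\Omega_{2m+1}(q))$ is strongly connected by Proposition~\ref{proposition:symp} (noting that the sporadic $q=9$ exception there was cleared by computer).
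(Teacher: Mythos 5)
Your main argument is essentially the paper's proof: reduce to the prime graph via Theorem~\ref{theorem111} and Corollary~\ref{cor}, read off the disconnected cases from~\cite{KoMa} ($m=2^\ell$ with component $\pi((q^m+1)/2)$, and $m$ prime with $q=3$), handle the first via the chain $\Omega_{2m+1}(q)\ge\Omega_{2m}^-(q)\ge\cdots\ge\Omega_4^-(q^{m/2})\cong\mathrm{PSL}_2(q^m)$ together with $q^m\equiv 1\pmod 8$ and Theorem~\ref{thrm:tired}, and handle the second via $\Omega_{2m}^+(q)\ge\mathrm{GL}_m(q).2$ and Proposition~\ref{proposition:PSL}. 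That part is fine and matches the paper.

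However, the final ``one-line collapse'' you propose is based on a false statement: for $q$ odd and $m\ge 3$ one does \emph{not} have $\Omega_{2m+1}(q)\cong\mathrm{PSp}_{2m}(q)$. These two families have the same order but are non-isomorphic simple groups (this is the classical family of pairs of non-isomorphic simple groups of equal order); the root systems $B_m$ and $C_m$ differ for $m\ge 3$, and $\mathrm{Spin}_{2m+1}$ and $\mathrm{Sp}_{2m}$ are Langlands dual, not isogenous, so no identification of the finite groups follows. The exceptional isomorphism $\Omega_5(q)\cong\mathrm{PSp}_4(q)$ covers only $m=2$, which is excluded by the hypothesis $m\ge 3$ (and the identification $\mathrm{Sp}_{2m}(q)\cong\mathrm{O}_{2m+1}(q)$ in characteristic $2$ is irrelevant here since $q$ is odd). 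So you cannot simply cite Proposition~\ref{proposition:symp}; you must keep the direct argument you gave in the body of your proposal, which fortunately suffices on its own. One further small point: in the $m=2^\ell$ case the paper gets $\Gamma_2(\mathrm{PSL}_2(q^m))$ strongly connected from Theorem~\ref{thrm:tired} without needing a separate computation for $q^m=9$, since $q^m\ge q^2\ge 9$ forces $q=3$, $m=2$, which is again excluded by $m\ge 3$ in the statement (the paper's chain starts at $m\ge 2$ only in the symplectic setting).
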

\begin{proof}
Let $G:=\Omega_{2m+1}(q)$. 
By Theorem~\ref{theorem111}, all elements of even order of $G$ are in the same connected component of $\Gamma_1(G)$. In particular, we use the prime graph of $G$ to deduce properties of the commuting graph of $G$, see Corollary~\ref{cor}. When the prime graph of $G$ is connected, then so is $\Gamma_1(G)$ and hence so is $\Gamma_n(G)$. Therefore, we may suppose that $\Pi(G)$ is disconnected and we may use Table~\ref{tableOmega}.

\begin{table}[!ht]\centering
\begin{tabular}{cll}
\toprule[1.5pt]
conditions&nr. components & components\\
\midrule[1.5pt]
$m=2^\ell\ge 2$, $q$ odd &2&$\pi\left(q\prod_{i=1}^{m-1}(q^{2i}-1)\right)$, $\pi\left(\frac{q^{m}+1}{2}\right)$\\
$m$ prime, $q=3$ , &2&$\pi\left(q(q^m+1)\prod_{i=1}^{m-1}(q^{2i}-1)\right)$, $\pi\left(\frac{q^{m}-1}{2}\right)$\\
\bottomrule[1.5pt]
\end{tabular}
\caption{Cases when $\Omega_{2m+1}(q)$ has disconnected prime graph}\label{tableOmega}
\end{table}

We deal with each line in Table~\ref{tableOmega} in turn. Suppose $m=2^\ell\ge 2$. We have 
$$
\Omega_{2m+1}(q)\ge
\Omega_{2m}^-(q)= 
\Omega_{2^{\ell+1}}^-(q)\ge \Omega_{2^\ell}^-(q^2)\ge\cdots\ge \Omega_4^-(q^{m/2})\cong\mathrm{PSL}_2(q^m).$$
 Observe that $q^2\equiv 1\pmod 8$. Now, Theorem~\ref{thrm:tired} implies that $\Gamma_2(\mathrm{PSL}_2(q^m))$ is strongly connected. From this it immediately follows that $\Gamma_2(\Omega_{2m+1}(q))$ is also strongly connected.

We now deal with the second line of Table~\ref{tableOmega}. Suppose $m$ is prime and $q=3$. We have $$\Omega_{2m+1}(q)\ge \Omega_{2m}^+(q)\ge \mathrm{GL}_m(q).2,$$ where $\mathrm{GL}_m(q).2$ is the stabilizer of a direct sum decomposition $V_1\oplus V_2$ of $\mathbb{F}_q^{2m}$. Therefore, we may deduce the strongly connectivity of $\Gamma_2(\Omega_{2m+1}(q))$ from the connectivity of $\Gamma_2(\mathrm{PSL}_m(q))$. 
\end{proof}

\begin{proposition}\label{proposition:omegaplus}
Let $m\ge 4$ be an integer and let $q$ be a prime power. The directed graph $\Gamma_3(\mathrm{P}\Omega_{2m}^+(q))$ is strongly connected.
\end{proposition}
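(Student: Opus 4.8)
The plan is to follow the template established in Propositions~\ref{proposition:PSL}, \ref{proposition:PSUUU}, \ref{proposition:symp} and \ref{proposition:omega}: reduce to the case where $\Pi(G)$ is disconnected via Theorem~\ref{theorem111} and Corollary~\ref{cor}, consult the Kondrat'ev--Mazurov classification~\cite{KoMa} of the disconnected prime graphs of $\mathrm{P}\Omega_{2m}^+(q)$, and then treat each line of the resulting table. Recall that for $\mathrm{P}\Omega_{2m}^+(q)$ the prime graph is disconnected essentially only when $m$ itself is an odd prime (so that there is a torus of order related to $(q^{m}-1)/\gcd$ giving an isolated component), together with a few small sporadic exceptions; in each case the ``far'' component is $\pi$ of the order of a maximal torus $T=\langle g\rangle$ that acts on the natural module $\mathbb{F}_q^{2m}$ preserving a direct sum decomposition $V_1\oplus V_2$ into two totally singular $m$-spaces interchanged by the form, so that $\mathrm{P}\Omega_{2m}^+(q)$ contains $\mathrm{GL}_m(q).2$ as the stabilizer of this decomposition.

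The main step is then: for each relevant line, exhibit for every element $g$ whose order lies in the isolated component an in-neighbour and an out-neighbour lying in the even-order component $\mathcal I$ of the commuting graph, within $\Gamma_3$. For the out-neighbour side, when $m$ is an odd prime I would invoke Lemma~\ref{PSL} applied inside the embedded $\mathrm{SL}_m(q)$ (or $\mathrm{GL}_m(q).2$): this produces an element $g$ of the appropriate irreducible order and an involution $z$ with $g\mapsto_2 z$ if $q$ is odd and $g\mapsto_3 z$ if $q$ is even, and the image of $z$ in $\mathrm{P}\Omega_{2m}^+(q)$ has even centralizer, hence lies in $\mathcal I$. For the in-neighbour side, I would take $g$ with $\langle g\rangle$ self-centralizing and $\mathbf N_G(\langle g\rangle)=\langle g\rangle\rtimes\langle z\rangle$ with $\mathbf o(z)$ equal to $m$ (or $2m$), so that $z$ normalizes $\langle g\rangle$ and Lemma~\ref{norm} gives $z\mapsto_2 g$; since $m$ or $2m$ has even order in the relevant cases, or at worst $z$ can be chosen inside a subgroup meeting $\mathcal I$, this places an in-neighbour of $g$ in $\mathcal I$. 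Alternatively, and perhaps more cleanly, since $\mathrm{P}\Omega_{2m}^+(q)\supseteq \mathrm{GL}_m(q).2$ and Proposition~\ref{proposition:PSL} already gives that $\Gamma_2(\mathrm{PSL}_m(q))$ (resp.\ $\Gamma_3$ for $q$ even) is strongly connected, one can try to pull the strong connectivity of the relevant part directly from that of $\mathrm{PSL}_m(q)$, exactly as was done for $\Omega_{2m+1}(q)$ in the proof of Proposition~\ref{proposition:omega}; care is needed because elements of $\mathrm{GL}_m(q)$ of determinant a non-square need not lie in $\Omega_{2m}^+(q)$, but the $.2$ on top, together with the fact that we only need \emph{some} witness element, should absorb this.

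Finally I would clear the small sporadic cases appearing in the table --- the low-rank groups such as $\mathrm{P}\Omega_8^+(2)$, $\mathrm{P}\Omega_8^+(3)$ and any other exceptional $(m,q)$ flagged by~\cite{KoMa} --- by a direct \texttt{magma} computation verifying that $\Gamma_3$ (indeed often $\Gamma_2$) is strongly connected, recording the number of strongly connected components of $\Gamma_1$ and $\Gamma_2$ as was done throughout Section~\ref{sec:lie>1} and Section~\ref{sec:unitary}. One should also double-check the triality-related subtlety for $m=4$: $\mathrm{P}\Omega_8^+(q)$ has several classes of subgroups $\Omega_7(q)$ and of $\mathrm{GL}_4(q).2$, but any one of them suffices to embed the needed witnesses, so triality causes no genuine difficulty here.

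The hard part, as in the companion propositions, will be making the reduction through the parabolic/stabilizer subgroup $\mathrm{GL}_m(q).2$ genuinely rigorous: one must confirm that the element $g$ realizing the isolated prime-graph component is $G$-conjugate into $\mathrm{GL}_m(q)$ and that its order there is exactly (a divisor of) the order dictated by the table, and that the involution $z$ produced by Lemma~\ref{PSL} actually has non-central image in $\mathrm{P}\Omega_{2m}^+(q)$ with even centralizer so that it falls into $\mathcal I$ --- the determinant-versus-spinor-norm bookkeeping for orthogonal groups is the place where an otherwise routine argument can go wrong, and it is where I would concentrate the care. Everything else is a straightforward transcription of the strategy already used for the linear, unitary, symplectic and odd-dimensional orthogonal families.
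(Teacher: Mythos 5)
Your proposal is correct and follows essentially the same route as the paper: the paper likewise reduces to the two disconnected prime-graph families from~\cite{KoMa} ($m\ge 5$ prime with $q\in\{2,3,5\}$, and $m-1$ prime with $q\in\{2,3\}$) and settles both by the embeddings $\Omega_{2m}^+(q)\ge \mathrm{GL}_m(q).2$ and $\Omega_{2m}^+(q)\ge\Omega_{2(m-1)}^+(q)\ge\mathrm{GL}_{m-1}(q).2$ together with Proposition~\ref{proposition:PSL}. Your version is simply more explicit about the witnesses (Lemma~\ref{PSL} plus the torus normalizer) and about the spinor-norm and triality bookkeeping, which the paper leaves implicit.
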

\begin{proof}
Set $G:=\mathrm{P}\Omega_{2m}^+(q)$. As usual, we may only consider the cases when $\Pi(G)$ is disconnected. These cases are reported in Table~\ref{tableOplus}.
\begin{table}[!ht]\centering
\begin{tabular}{cll}
\toprule[1.5pt]
conditions&nr. components & components\\
\midrule[1.5pt]
$m\ge 5$ prime, $q\in \{2,3,5\}$ &2&$\pi\left(q\prod_{i=1}^{m-1}(q^{2i}-1)\right)$, $\pi\left(\frac{q^{m}-1}{q-1}\right)$\\
$m-1$ prime, $q\in \{2,3\}$ , &2&$\pi\left(q(q^{m-1}+1)\prod_{i=1}^{m-2}(q^{2i}-1)\right)$, $\pi\left(\frac{q^{m-1}-1}{\gcd(2,q-1)}\right)$\\
\bottomrule[1.5pt]
\end{tabular}
\caption{Cases when $\mathrm{P}\Omega_{2m}^+(q)$ has disconnected prime graph}\label{tableOplus}
\end{table}

For dealing with the first line of Table~\ref{tableOplus}, we may observe that
$$\Omega_{2m}^+(q)\ge \mathrm{GL}_m(q).2$$
and we may use Proposition~\ref{proposition:PSL}. 
Similarly, for dealing with the second line of Table~\ref{tableOplus}, we may observe that
$$\Omega_{2m}^+(q)\ge  \Omega_{2(m-1)}^+(q)\ge \mathrm{GL}_{m-1}(q).2$$
and we may use Proposition~\ref{proposition:PSL}.
\end{proof}

\begin{proposition}\label{proposition:omegaminus}
Let $m\ge 4$ be an integer and let $q$ be a prime power. The directed graph $\Gamma_2(\mathrm{P}\Omega_{2m}^-(q))$ is strongly connected.
\end{proposition}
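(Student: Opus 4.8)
The plan is to mimic, step by step, the proofs of Propositions~\ref{proposition:PSL}--\ref{proposition:omegaplus}: reduce to the case that the prime graph $\Pi(G)$ of $G:=\mathrm{P}\Omega_{2m}^-(q)$ is disconnected via Theorem~\ref{theorem111} and Corollary~\ref{cor}, tabulate the disconnected cases from~\cite{KoMa}, and in each case embed a smaller classical group whose Engel graph we have already understood, checking that the required arcs survive. First I would record the table of disconnected cases for $\mathrm{P}\Omega_{2m}^-(q)$. By~\cite{KoMa}, $\Pi(\mathrm{P}\Omega_{2m}^-(q))$ is disconnected essentially when $m=2^\ell$ (so that there is a component $\pi((q^m+1)/\gcd(2,q-1))$ coming from a torus of order $q^m+1$ acting irreducibly on the minus-type space), and when $m-1$ is an odd prime with $q$ small (giving a component $\pi((q^{m-1}+1)/(q+1))$ or similar); I would copy the precise statement from~\cite{KoMa} into a \texttt{table} environment analogous to Table~\ref{tableOplus}.

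For the $m=2^\ell$ line the key observation is the tower of subgroups
\[
\Omega_{2m}^-(q)\ \ge\ \Omega_{m}^-(q^2)\ \ge\ \cdots\ \ge\ \Omega_4^-(q^{m/2})\ \cong\ \mathrm{PSL}_2(q^m),
\]
exactly as in the proof of Proposition~\ref{proposition:omega}: here $2m=2^{\ell+1}$, and halving the dimension while squaring the field (together with the minus type being preserved) lands us in $\mathrm{P}\Omega_4^-(q^{m/2})\cong\mathrm{PSL}_2(q^m)$. Since $m\ge 4$ is a power of $2$, we have $q^m\equiv 1\pmod 8$ whenever $q$ is odd, so Theorem~\ref{thrm:tired} gives that $\Gamma_2(\mathrm{PSL}_2(q^m))$ is strongly connected (with $q^m=9$ impossible here since $m\ge 4$), and when $q$ is even we can instead descend only to $\mathrm{Sp}_4$ or use the $\mathrm{Sp}_4(q^{m/2})$ embedding as in Proposition~\ref{proposition:symp}; in either case $\Gamma_2$ of the embedded group is strongly connected, and since the torus giving the second prime-graph component lies inside this embedded subgroup, every element of the isolated Hall subgroup has the required in- and out-neighbours in $\mathcal I$, so Corollary~\ref{cor} applies. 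For the $m-1$ prime line I would use
\[
\Omega_{2m}^-(q)\ \ge\ \Omega_{2(m-1)}^+(q)\ \times\ \Omega_2^-(q)\ \ge\ \mathrm{GL}_{m-1}(q).2,
\]
the stabiliser of a decomposition into a plus-type $(2m-2)$-space and an anisotropic $2$-space, then invoke Proposition~\ref{proposition:PSL} exactly as in Proposition~\ref{proposition:omegaplus}: $\mathrm{GL}_{m-1}(q).2$ contains the relevant torus of order $(q^{m-1}-1)/\gcd(2,q-1)$, and the Engel arcs from Lemma~\ref{PSL} (an irreducible element mapping in two steps to an involution, and an involution normalising a Coxeter torus mapping in two steps back) lift to $G$. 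I would also handle the finitely many small exceptional pairs $(m,q)$ in the table directly with a \texttt{magma} computation, as done elsewhere.

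The main obstacle is bookkeeping rather than conceptual: I must make sure the second prime-graph component in each line really does arise from a torus contained in the claimed embedded subgroup (this is where the type decorations $\pm$ on the orthogonal groups matter and where one must be careful, since a minus-type space of dimension $2^{\ell+1}$ decomposes over the quadratic extension into a minus-type space of half the dimension, not a plus-type one), and that $\mathrm{P}\Omega_{2m}^-(q)$ is not one of the degenerate small cases (e.g.\ $\mathrm{P}\Omega_6^-(q)\cong\mathrm{PSU}_4(q)$ and $\mathrm{P}\Omega_4^-(q)\cong\mathrm{PSL}_2(q^2)$), which is why the hypothesis is $m\ge 4$; the isomorphism $\mathrm{P}\Omega_6^-(q)\cong\mathrm{PSU}_4(q)$ at $m=3$ would instead route through Proposition~\ref{proposition:PSUUU}. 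A secondary point to verify carefully is that when $q$ is even the descent must go to $\mathrm{Sp}_4$ (as in Proposition~\ref{proposition:symp}) rather than to $\mathrm{PSL}_2$ of an even field, whose Engel graph is never strongly connected; once that routing is in place the proof is routine.
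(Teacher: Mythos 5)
Your overall strategy (reduce to disconnected prime graphs via Corollary~\ref{cor}, then embed smaller classical groups containing the isolated tori) is the paper's strategy, and your treatment of the line $m=2^\ell$ with $q$ odd is correct. However, there are two genuine gaps.

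First, for $m=2^\ell$ and $q$ even your fallback fails: $\mathrm{Sp}_4(q^{m/2})$ is \emph{not} a subgroup of $\Omega_{2m}^-(q)$. Already for $m=4$ one checks that $q^2+1$ divides $|\mathrm{Sp}_4(q^2)|=q^8(q^4-1)(q^8-1)$ to the second power but divides $|\Omega_8^-(q)|=q^{12}(q^4+1)(q^2-1)(q^4-1)(q^6-1)$ only to the first power (for $q$ even, $q^2+1$ is coprime to $q^4+1$, to $q^2-1$ and to $q^4+q^2+1$), so Lagrange rules out the embedding. The fix the paper uses is to keep the field-extension decorations along the tower: $\Omega_{2^{\ell+1}}^-(q)\ge \Omega_{2^\ell}^-(q^2).2\ge\cdots\ge\Omega_4^-(q^{m/2}).\frac{m}{2}\cong\mathrm{PSL}_2(q^m).\frac{m}{2}$. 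Since $m/2\ge 2$, this is an almost simple group \emph{properly} containing its socle $\mathrm{PSL}_2(q^m)$, so Theorem~\ref{thrm:tiredtired} gives strong connectivity of its $\Gamma_2$ uniformly in the parity of $q$; plain $\mathrm{PSL}_2(q^m)$ would indeed be useless for $q$ even, as you noted, but the outer part rescues the argument.

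Second, your guess of the remaining lines of the table is wrong, and the argument built on it does not reach the isolated Hall subgroups. For $\mathrm{P}\Omega_{2m}^-(q)$ the other disconnected cases (see Table~\ref{tableOmegaminus}) have isolated components $\pi(q^{m-1}+1)$ or $\pi((q^{m-1}+1)/2)$ when $m=2^\ell+1$ with $q\in\{2,3\}$, and $\pi((q^m+1)/4)$ when $m$ is an odd prime and $q=3$ --- never $\pi((q^{m-1}-1)/\gcd(2,q-1))$. Your proposed subgroup $\mathrm{GL}_{m-1}(q).2\le\Omega_{2(m-1)}^+(q)$ contains no element of order a primitive prime divisor of $q^{2(m-1)}-1$, hence misses the torus of order $q^{m-1}+1$ entirely; one must instead use the minus-type decomposition $\Omega_{2m}^-(q)\ge\Omega_{2(m-1)}^-(q)$ (a minus space is a minus space perpendicular to a hyperbolic plane) and run the $2^\ell$ argument in dimension $2(m-1)$. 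Likewise, for $m$ an odd prime the component $\pi((q^m+1)/4)$ lives in $\mathrm{GU}_m(q)\le\Omega_{2m}^-(q)$, and one concludes by Proposition~\ref{proposition:PSUUU}; this embedding does not appear in your proposal at all.
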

\begin{proof}
Set $G:=\mathrm{P}\Omega_{2m}^-(q)$. As usual, we may only consider the cases when $\Pi(G)$ is disconnected. These cases are reported in Table~\ref{tableOmegaminus}.
\begin{table}[!ht]\centering
\begin{tabular}{cll}
\toprule[1.5pt]
conditions&nr. components & components\\
\midrule[1.5pt]
$m=2^\ell\ge 4$, &2&$\pi\left(q\prod_{i=1}^{m-1}(q^{2i}-1)\right)$, $\pi\left(\frac{q^{m}+1}{\gcd(2,q+1)}\right)$\\
$m=2^\ell+1$, &2&$\pi\left(q(q^m+1)\prod_{i=1}^{m-2}(q^{2i}-1)\right)$, $\pi\left(q^{m-1}+1\right)$\\
$\ell\ge 2$, $q=2$,&&\\ 
$m\ge 5$ prime,  &2&$\pi\left(q\prod_{i=1}^{m-1}(q^{2i}-1)\right)$, $\pi\left(\frac{q^{m}+1}{4}\right)$\\
$m\ne 2^\ell+1$, $q=3$&&\\
$m= 2^\ell+1$,  &2&$\pi\left(q(q^m+1)\prod_{i=1}^{m-2}(q^{2i}-1)\right)$, $\pi\left(\frac{q^{m-1}+1}{2}\right)$\\
$\ell\ge 2$, $m$ not prime, $q=3$&&\\
$m= 2^\ell+1$,  &3&$\pi\left(q(q^{m-1}-1)\prod_{i=1}^{m-2}(q^{2i}-1)\right)$, $\pi\left(\frac{q^{m-1}+1}{2}\right)$, $\pi\left(\frac{3^m+1}{4}\right)$\\
$\ell\ge 1$, $m$ prime, $q=3$&&\\
\bottomrule[1.5pt]
\end{tabular}
\caption{Cases when $\mathrm{P}\Omega_{2m}^-(q)$ has disconnected prime graph}\label{tableOmegaminus}
\end{table}

All cases can be deal with by considering various embeddings between classical groups, in a same manner as in Proposition~\ref{proposition:omegaplus}. Indeed, when $m=2^\ell$, by considering the embedding 
$$\Omega_{2m}^-(q)=\Omega_{2^{\ell+1}}^-(q)\ge \Omega_{2^\ell}^-(q^2).2\ge\cdots\ge\Omega_4^-(q^{m/2}).\frac{m}{2}\cong\mathrm{PSL}_2(q^m).\frac{m}{2},$$
we deduce that $\Gamma_2(G)$ is strongly connected from  Theorem~\ref{thrm:tiredtired}.

When $m$ is an odd prime number, by considering the embedding
$$\Omega_{2m}^-(q)\ge \mathrm{GU}_m(q),$$
we deduce that $\Gamma_2(G)$ is strongly connected from Proposition~\ref{proposition:PSUUU}. 

All other cases can be dealt with similarly. 
\end{proof}

\section{Exceptional groups of Lie type}\label{sec:exceptional}
Our argument for dealing with exceptional simple groups of Lie type consists of two parts. In the first part of the argument we make some general considerations and some general computations that hold for each group under consideration. We do this in Section~\ref{sec:exceptionalgeneral}. Then, we specialize, with a case-by-case analysis, the results obtained in Section~\ref{sec:exceptionalgeneral} to study the connectivity of the Engel graph. We do this in Section~\ref{sec:exceptionalspecial}.

\subsection{A general strategy}\label{sec:exceptionalgeneral}
Let $G$ be a finite group and let $H$ be a subgroup of $G$. We let $$1_H^G=\sum_{\chi\in\mathrm{Irr}(G)}a_\chi\chi$$
be the permutation character for the action of $G$ on the right cosets of $H$ in $G$, where we have also decomposed $1_H^G$ as the sum of irreducible complex characters of $G$.

Let $\mathcal{C}$ be a union of conjugacy classes of $G$ and let $C$ be an abelian subgroup of $G$ satisfying:
\begin{description}
\item[Hypothesis~0]$\mathcal{C}\cap H=\emptyset$,
\item[Hypothesis~1]$\mathcal{C}=\{c^g\mid c\in C\setminus\{1\},g\in G\}$,
\item[Hypothesis~2]for each $g\in G\setminus{\bf N}_G(C)$, we have $C\cap C^g=1$,
\item[Hypothesis~3]${\bf N}_G(C)$ is a Frobenius group with kernel $C$.
\end{description}
Examples of this type are quite common for the groups we need to deal with. For instance, when $G={}^2G_2(q)$, we may take $\mathcal{C}$ to be the collection of all elements of $G$ having order a non-identity divisor of $q+\sqrt{3q}+1$ (respectively, $q-\sqrt{3q}+1$) and we may take $C$ to be a maximal non-split torus of order $q+\sqrt{3q}+1$ (respectively, $q-\sqrt{3q}+1$).

We let $$\Delta(H,\mathcal{C})$$ be the auxiliary digraph having vertex set $\mathcal{C}$ and where two vertices $(x,y)\in\mathcal{C}\times\mathcal{C}$ are declared to be adjacent if and only if $yx^{-1}\in H$. Since $H$ is a subgroup of $G$, we deduce that $(x,y)$ is an arc of $\Delta(H,\mathcal{C})$ if and only if so is $(y,x)$. Therefore, $\Delta(H,\mathcal{C})$ is an undirected graph with a loop at each vertex; however, in our opinion, it is more helpful to think of $\Delta(H,\mathcal{C})$ as a directed graph.  We denote with $V\Delta(H,\mathcal{C})$ the vertex set and with $A\Delta(H,\mathcal{C})$ the arc set of $\Delta(H,\mathcal{C})$.

Let $c_1,\ldots,c_\ell$ be a family of representatives for the non-identity ${\bf N}_G(C)$-conjugacy classes of ${\bf N}_G(C)$ contained in $C$. From Hypothesis~3, we have $$\ell=\frac{|C|-1}{|{\bf N}_G(C):C|}.$$ Observe also that, from Hypothesis~2, any two elements of $C$ are $G$-conjugate if and only if they are ${\bf N}_G(C)$-conjugate.  Now, from Hypothesis~1, we deduce that each element in $\mathcal{C}$ has a conjugate in $C$. In particular, we have
\begin{align*}
|\mathcal{C}|&=\sum_{i=1}^\ell|c_i^G|=\sum_{i=1}^\ell|G:{\bf C}_G(c_i)|=\sum_{i=1}^\ell|G:C|=\ell|G:C|=\frac{|G|(|C|-1)}{|{\bf N}_G(C)|}.
\end{align*}
This shows that 
\begin{equation}\label{eq:vertices}
|V\Delta(H,\mathcal{C})|=\frac{|G|(|C|-1)}{|{\bf N}_G(C)|}.
\end{equation}

In what follows, we obtain a formula for computing the cardinality of the arc set $A\Delta(H,\mathcal{C})$. (For simplicity, we let $\Delta:=\Delta(H,\mathcal{C})$.) Clearly,
\begin{equation*}
|A\Delta|=\sum_{h\in H}|A_h|,\,\,\,
\hbox{ where }
A_h=\{(x,y)\in V\Delta\times V\Delta\mid yx^{-1}=h\}.
\end{equation*}
Observe now that $V\Delta=\mathcal{C}$ is a union of conjugacy classes, say $\mathcal{C}=\mathcal{C}_{1}\cup\cdots\cup\mathcal{C}_\ell$, where $\mathcal{C}_i$ is a $G$-conjugacy class for each $i\in \{1,\ldots,\ell\}$. Fix $x_i\in \mathcal{C}_i$. Now,
\begin{align*}
|A_h|&=\sum_{i,j=1}^\ell|\{(x,y)\in\mathcal{C}_i\times\mathcal{C}_j\mid xy^{-1}=h\}|=\sum_{i,j=1}^\ell\frac{|\mathcal{C}_i||\mathcal{C}_j|}{|G|}\sum_{\chi\in\mathrm{Irr}(G)}\frac{\chi(x_i)\overline{\chi(x_j)}\overline{\chi(h)}}{\chi(1)},
\end{align*}
where the second equality follows from~\cite[page~45,~(3.9)]{Isaacs}. Therefore,
\begin{align*}
|A\Delta|&=
\sum_{h\in H}\sum_{i,j=1}^\ell\frac{|\mathcal{C}_i||\mathcal{C}_j|}{|G|}\sum_{\chi\in\mathrm{Irr}(G)}\frac{\chi(x_i)\overline{\chi(x_j)\chi(h)}}{\chi(1)}=
\sum_{i,j=1}^\ell\frac{|\mathcal{C}_i||\mathcal{C}_j|}{|G|}\sum_{\chi\in\mathrm{Irr}(G)}\frac{\chi(x_i)\overline{\chi(x_j)}}{\chi(1)}\sum_{h\in H}\overline{\chi(h)}\\
&=
\sum_{i,j=1}^\ell\frac{|\mathcal{C}_i||\mathcal{C}_j|}{|G|}\sum_{\chi\in\mathrm{Irr}(G)}\frac{\chi(x_i)\overline{\chi(x_j)}}{\chi(1)}|H|\langle\chi, 1_H^G\rangle
=
\sum_{\chi\in\mathrm{Irr}(G)}
\frac{\langle\chi,1_H^G\rangle}{|G:H|\chi(1)}\sum_{i,j=1}^\ell|\mathcal{C}_i||\mathcal{C}_j|\chi(x_i)\overline{\chi(x_j)}
\\
&=
\sum_{\chi\in\mathrm{Irr}(G)}
\frac{\langle\chi,1_H^G\rangle}{|G:H|\chi(1)}\left(\sum_{x\in \mathcal{C}}\chi(x)\right)^2.
\end{align*}
This shows that, to compute the cardinality of $A\Delta$, we need to determine, for each irreducible constituent $\chi$ of $1_H^G$, the quantity $\sum_{x\in\mathcal{C}}\chi(x)$. Using Hypotheses~1,~2 and~3 (in the same manner as in~\eqref{eq:vertices}), we obtain
\begin{align}\label{eq3D428}
\sum_{x\in\mathcal{C}}\chi(x)&=\frac{|G|}{|{\bf N}_G(C)|}\sum_{x\in C\setminus \{1\}}\chi(x)
=\frac{|G|}{|{\bf N}_G(C)|}(|C|\langle\chi_{|C},1\rangle-\chi(1)).
\end{align}
From~\eqref{eq3D428}, we have
\begin{align}\label{eq3D439edges}
|A\Delta(H,\mathcal{C})|&=
\sum_{\chi\in\mathrm{Irr}(G)}
\frac{\langle\chi,1_H^G\rangle}{|G:H|\chi(1)}\left(\frac{|G|}{|{\bf N}_G(C)|}(|C|\langle\chi_{|C},1\rangle-\chi(1))\right)^2\\\nonumber
&=\frac{|G|^2}{|G:H||{\bf N}_G(C)|^2}
\sum_{\chi\in\mathrm{Irr}(G)}
\frac{\langle\chi,1_H^G\rangle}{\chi(1)}\left(|C|\langle\chi_{|C},1\rangle-\chi(1)\right)^2.
\end{align}
It is clear from~\eqref{eq3D428} that, for computing $|A\Delta(H,\mathcal{C})|$, we need to be able to determine, for each constituent $\chi$ of the permutation character $1_H^G$, the multiplicity $\langle\chi,1_H^G\rangle$ of $\chi$ in $1_H^G$ and the multiplicity $\langle\chi_{|C},1\rangle$ of the principal character of $C$ in the restriction $\chi_{|C}$ of $\chi$ to $C$. The following result of Tiep and Zalesski is of great help for computing the latter, when $\chi$ is unipotent and when each element of $C\setminus\{1\}$ is regular.
\begin{lemma}[Lemma~3.2,~\cite{TZ}]\label{TZ}
Let $G$ be a finite group of Lie type. Let $\chi$ be an irreducible unipotent character of
$G$, and $C$ a maximal torus of $G$. Suppose that every element $1 \ne t \in C$ is regular.
Then $\chi$ is constant on $C\setminus \{1\}$ and $\chi(t) \in \{0, 1, -1\}$. Equivalently, 
$\chi_{|C} = 
\frac{\chi(1)-\eta}{|C|}
\cdot 
\rho_C^{\mathrm{reg}} + \eta \cdot 1$,
where $\eta \in \{0, 1, -1\}$ and where $\rho_C^{\mathrm{reg}}$ is the regular representation of $C$.
\end{lemma}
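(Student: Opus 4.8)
The statement to prove is Lemma~\ref{TZ} (Lemma 3.2 of~\cite{TZ}), so the plan is to prove that an irreducible unipotent character $\chi$ of a finite group of Lie type takes constant value in $\{0,1,-1\}$ on the nontrivial elements of a maximal torus $C$ all of whose nonidentity elements are regular.

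\textbf{Approach.} The key input is Deligne--Lusztig theory. First I would recall that, since $\chi$ is unipotent, it occurs in $R_T^\theta$ only for $\theta$ trivial, and more usefully, by the disjointness/orthogonality relations for Deligne--Lusztig characters, the value $\chi(t)$ for a semisimple element $t$ can be computed via character formulas involving generalized Gelfand--Graev or Green functions evaluated at $t$. The cleanest route: for a \emph{regular} semisimple element $t$ lying in a maximal torus $C$ (with $C = T^F$ for an $F$-stable maximal torus $T$), the character formula of Deligne--Lusztig gives, for any virtual character $R_{T'}^{\theta'}$,
\[
R_{T'}^{\theta'}(t) = \frac{1}{|{}^\circ C_G(t)^F|}\sum_{\{x \in G^F\,:\, t \in (x T' x^{-1})^F\}} \theta'(x^{-1} t x),
\]
and since $t$ is regular, $C_G(t)^\circ$ is a torus, so this simplifies considerably; in fact $R_{T'}^{\theta'}(t)$ is (up to sign) the number of $G^F$-conjugates of $T'$ containing $t$, weighted by $\theta'$. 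Next I would express the unipotent character $\chi$ as a $\mathbb{Z}$-linear combination $\chi = \sum_{w} a_w R_{T_w}^{1}$ of Deligne--Lusztig characters with trivial $\theta$ (possible for unipotent characters, by Lusztig's classification, with rational coefficients; the almost-characters $R_w$ form a basis and unipotent characters are orthonormal transforms). Evaluating at $t \in C\setminus\{1\}$ and using that $t$ is regular, each $R_{T_w}^1(t)$ is an integer of absolute value bounded by the number of maximal tori containing $t$; but the crucial point is a \emph{uniformity} argument.

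\textbf{Key steps in order.} (1) Reduce to showing $\chi$ is constant on $C \setminus \{1\}$: this follows because the Weyl group $N_G(C)^F/C$ acts on $C$, permutes its regular elements transitively enough (or at least $\chi$, being a class function, is $N_G(C)^F$-invariant), and more precisely because the value of a uniform function at a regular semisimple $t$ depends only on the $G^F$-class of the torus $C \ni t$, hence is constant as $t$ ranges over $C\setminus\{1\}$ when all such $t$ are regular --- here one uses that regularity of $t$ forces $C$ to be the \emph{unique} maximal torus containing $t$. (2) Having established that $\chi|_C = \alpha \rho_C^{\mathrm{reg}} + \eta\cdot 1$ for some constants, use $\langle \chi|_C, \chi|_C\rangle_C \le \langle \chi,\chi\rangle_G \cdot (\text{something}) $ — more directly, $\langle\chi|_C,1_C\rangle = \alpha|C| + \eta$ and the value on nonidentity elements is $\alpha$, and one shows $\alpha|C| = \chi(1) - \eta$, giving the displayed formula. (3) Pin down $\eta \in \{0,1,-1\}$: this is where I would invoke that $\chi$ appears with multiplicity $0$ or $\pm 1$ in each $R_{T_w}^1$ combination relevant here, OR argue via the second orthogonality / the fact that $\sum_{t \in C}\chi(t)\overline{\chi(t)}$ and $\sum_t \chi(t)$ are controlled; the boundedness $|\chi(t)| \le 1$ for regular $t$ is essentially the statement that unipotent characters restricted to regular tori are "small", which in turn comes from the explicit formula $\chi(t) = \sum_w a_w \varepsilon_w$ where $\varepsilon_w \in \{0,\pm1\}$ counts tori and the $a_w$ satisfy a normalization forcing the sum into $\{0,\pm1\}$.

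\textbf{Main obstacle.} The hard part is step (3): showing the constant value lies in $\{0,1,-1\}$ rather than merely being a bounded integer. I expect this to require the full strength of Lusztig's parametrization of unipotent characters and the explicit decomposition of $R_{T_w}^1$ into unipotent characters via the nonabelian Fourier transform, together with the observation that for a regular semisimple $t$ the only Deligne--Lusztig characters not vanishing at $t$ are the $R_{T_w}^\theta$ with $t \in T_w$ (a single geometric conjugacy class's worth), so that $\chi(t)$ is a single entry (up to sign) of the Fourier transform matrix, whose entries in the unipotent block have absolute value... well, that's not literally bounded by $1$ in general, so the genuine argument must use that the \emph{relevant} linear combination — the one expressing $\chi$ in terms of the $R_{T_w}^1$ that survive at $t$ — has a very constrained form. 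In the write-up I would simply cite~\cite{TZ} for this delicate point, or reproduce their short argument which leverages that $\langle \chi|_C, \chi|_C \rangle_C \leq 2$ forces $\eta^2 \le 1$ once constancy and integrality are known. Since the excerpt permits assuming earlier results and this lemma is quoted verbatim from~\cite{TZ}, the cleanest honest move is: prove constancy on $C \setminus\{1\}$ from regularity and uniformity as in steps (1)--(2), then cite~\cite[Lemma~3.2]{TZ} for the bound $\eta \in \{0,\pm 1\}$, noting the equivalent reformulation in terms of $\rho_C^{\mathrm{reg}}$ is a trivial rewriting.
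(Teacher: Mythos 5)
The paper does not prove this statement at all: it is imported verbatim as Lemma~3.2 of~\cite{TZ} (Tiep--Zalesski) and used as a black box, so there is no internal proof to compare your sketch against. Judged on its own terms, your proposal is a reasonable outline of the kind of Deligne--Lusztig argument one would expect, but it has a genuine gap precisely at the decisive step, and you acknowledge as much: you never establish that the constant value $\eta$ lies in $\{0,1,-1\}$ rather than merely being some bounded integer, and you resolve this by citing~\cite{TZ} --- i.e.\ by citing the very result you set out to prove. A circular citation is not a proof, so as an independent derivation the proposal is incomplete; as a justification within this paper, the citation alone suffices and the surrounding sketch is superfluous.

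Two technical points in your sketch also need repair. First, it is not true that every unipotent character is a $\mathbb{Q}$-linear combination of the $R_{T_w}^{1}$: non-uniform unipotent characters (e.g.\ cuspidal ones) are not in the span of the Deligne--Lusztig virtual characters. What saves the argument is that the characteristic function of a \emph{regular semisimple} class is a uniform function, so $\chi(t)$ for regular $t$ is determined by the uniform projection of $\chi$, i.e.\ by the multiplicities $\langle\chi,R_{T}^{\theta}\rangle$ for the unique maximal torus $T$ with $t\in T^{F}=C$; this is the correct route to constancy on $C\setminus\{1\}$ and to the integrality and boundedness of $\eta$. Second, your step (2) identity $\alpha|C|=\chi(1)-\eta$ is indeed just the trivial rewriting $\chi(1)=\alpha|C|+\eta$ coming from evaluating at the identity, but for the displayed decomposition to be a genuine decomposition into characters of $C$ one should also note that $(\chi(1)-\eta)/|C|$ is a non-negative integer, which follows from $\langle\chi_{|C},1_{C}\rangle\in\mathbb{Z}_{\ge 0}$ once constancy is known.
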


We let
$$c(\Delta(H,\mathcal{C}))$$  be the number of connected components of $\Delta(H,\mathcal{C})$.

For the following lemma we recall the definition of subdegree: given a transitive group $X$ acting on the set $\Omega$ and given $\omega\in \Omega$, the \textbf{\textit{suborbits}} of $X$ are the orbits of the point stabilizer $X_\omega:=\{x\in X\mid \omega^x=\omega\}$. The cardinality of a suborbit is said to be a \textrm{\textbf{subdegree}} of $G$ and, the subdegree $1$ of the suborbit $\{\omega\}$ is said to be the trivial subdegree.  

\begin{lemma}\label{l:components}
The following hold:
\begin{enumerate}
\item\label{components1}For every connected component $X$ of $\Delta(H,\mathcal{C})$ and for every $x\in X$, the subgraph induced by $\Delta(H,\mathcal{C})$ on  $X$ is complete and $X\subseteq Hx$. 
\item\label{components11}We have $c(\Delta(H,\mathcal{C}))= |H\mathcal{C}|/|H|$,  where $H\mathcal{C}:=\{hc\mid h\in H,c\in \mathcal{C}\}$.
 \item\label{components2}We have 
$$c(\Delta(H,\mathcal{C}))\le |G:H|-1$$
and equality holds if and only if $H\mathcal{C}=G\setminus H$.
\item\label{components3}If $c(\Delta(H,\mathcal{C}))<|G:H|-1$, then
$|G:H|-1-c(\Delta(H,\mathcal{C}))\ge d$, where $d$ is the minimum non-trivial subdegree for the permutation action of $G$ on the right cosets of $H$ in $G$.
\end{enumerate}
\end{lemma}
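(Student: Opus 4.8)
The plan is to analyze the structure of $\Delta(H,\mathcal{C})$ directly from the defining adjacency relation $yx^{-1}\in H$, noting that this is exactly the relation of lying in the same right coset of $H$, restricted to the vertex set $\mathcal{C}$. First I would prove~\eqref{components1}: if $x,y$ are in the same connected component, then by transitivity of ``lying in the same right coset'' (which holds because $H$ is a subgroup) we get $yx^{-1}\in H$, so the component is contained in a single right coset $Hx$; moreover any two elements of $\mathcal{C}$ inside a common right coset are adjacent, so the induced subgraph on each component is complete. In particular the components are precisely the nonempty intersections $\mathcal{C}\cap Hx$ as $Hx$ ranges over right cosets. This immediately gives~\eqref{components11}: the number of components equals the number of right cosets meeting $\mathcal{C}$, which is $|H\mathcal{C}|/|H|$ since $H\mathcal{C}$ is a union of right cosets (it is left-$H$-invariant) and $\mathcal{C}\subseteq H\mathcal{C}$.

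For~\eqref{components2}, I would use Hypothesis~0: $\mathcal{C}\cap H=\emptyset$, so the trivial coset $H$ does not meet $\mathcal{C}$, hence $H\mathcal{C}\subseteq G\setminus H$, giving $c(\Delta(H,\mathcal{C}))=|H\mathcal{C}|/|H|\le (|G|-|H|)/|H|=|G:H|-1$, with equality iff $H\mathcal{C}=G\setminus H$. For~\eqref{components3}, I would pass to the permutation action of $G$ on the right cosets $\Omega:=H\backslash G$ (or rather the coset space on which $G$ acts on the right), with base point $\omega_0$ corresponding to $H$. The set of cosets meeting $\mathcal{C}$ is $G$-invariant under the right action twisted appropriately — more precisely, conjugation: since $\mathcal{C}$ is a union of conjugacy classes, the set $\{Hg : Hg\cap\mathcal{C}\ne\emptyset\}$ maps to itself under the action $Hg\mapsto Hgx$ composed with... here I need to be slightly careful. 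The cleaner route: consider the action of $G$ on $\Omega$ and the ``derangement-type'' partition. The cosets \emph{not} meeting $\mathcal{C}$ together with $H$ itself form a subset $S$ of $\Omega$ containing $\omega_0$; if $c(\Delta(H,\mathcal{C}))<|G:H|-1$ then $|S|\ge 2$, and the key point is that $S$ is a union of suborbits for $G_{\omega_0}=H$ acting on $\Omega$ — because whether $Hg$ meets $\mathcal{C}$ depends only on the $H$-conjugacy-orbit structure, i.e. $Hg$ meets $\mathcal{C}$ iff $Hgh$ meets $\mathcal{C}$ for... again this needs $\mathcal{C}^h=\mathcal{C}$, which holds. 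So $S\setminus\{\omega_0\}$ is a nonempty union of non-trivial suborbits of $H$ on $\Omega$, hence $|S|-1\ge d$ where $d$ is the minimum non-trivial subdegree, i.e. $|G:H|-1-c(\Delta(H,\mathcal{C}))\ge d$.

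The main obstacle I anticipate is getting the bookkeeping in~\eqref{components3} exactly right: one must verify carefully that the set of cosets $Hg$ meeting $\mathcal{C}$ (equivalently, the complement $S$) is genuinely a union of suborbits for the point stabilizer $H$, which relies precisely on $\mathcal{C}$ being conjugation-invariant ($\mathcal{C}^h=\mathcal{C}$ for $h\in H$), and on translating between the left-coset/right-coset conventions consistently. Once that invariance is pinned down, the subdegree bound is immediate, since a union of non-trivial suborbits has size at least the smallest non-trivial subdegree $d$. I would also double-check that Hypotheses~1--3 are not actually needed for this particular lemma (they are used for the cardinality formulas~\eqref{eq:vertices} and~\eqref{eq3D439edges} earlier, but parts~\eqref{components1}--\eqref{components3} seem to need only Hypothesis~0 together with $\mathcal{C}$ being a union of conjugacy classes), and state the proof using only what is required.
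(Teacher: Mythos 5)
Your proposal is correct and follows essentially the same route as the paper: adjacency in $\Delta(H,\mathcal{C})$ is exactly the relation of lying in a common right coset of $H$, the components are the nonempty sets $Hx\cap\mathcal{C}$, Hypothesis~0 excludes the coset $H$ itself, and part~\eqref{components3} comes from the conjugation-invariance $\mathcal{C}^h=\mathcal{C}$, which the paper phrases as $H\mathcal{C}H=H\mathcal{C}$ being a union of $(H,H)$-double cosets while you phrase it as the complement being a union of non-trivial suborbits of $H$ -- these are the same statement. Your closing observation that only Hypothesis~0 and the conjugacy-class invariance of $\mathcal{C}$ are needed here is also accurate and consistent with the paper's proof.
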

\begin{proof}
Let $X$ be a connected component of $\Delta(H,\mathcal{C})$, let $x\in X$ and let $y,z\in V\Delta(H,\mathcal{C})$ with $x$ adjacent to $y$ and $y$ adjacent to $z$. Then, by definition, we have
$yx^{-1},zy^{-1}\in H$. Since $H$ is a group, we have $zx^{-1}=(zy^{-1})\cdot (yx^{-1})\in H$ and hence $x$ is adjacent to $z$ in $\Delta(H,\mathcal{C})$. Moreover, $z,y\in Hx$. From this it follows that the subgraph induced by $\Delta(H,\mathcal{C})$ on $X$  is complete and that $X\subseteq Hx$. What is more, the only connected component of $\Delta(H,\mathcal{C})$ contained in the coset $Hx$ is $X$. This shows~\eqref{components1}. 

Since the set $H\mathcal{C}$ is a union of right $H$-cosets, this argument also shows that there exists a one to one correspondence between the connected components of $\Delta(H,\mathcal{C})$ and the right cosets of $H$ in $G$ containing some element of $\mathcal{C}$; in particular,~\eqref{components11} holds.

 By Hypothesis~0, $H\cap \mathcal{C}=\emptyset$ and hence $H\cap H\mathcal{C}=\emptyset$. In other words, $H\mathcal{C}\subseteq G\setminus H$. From~\eqref{components11}, we have $c(\Delta(H,\mathcal{C}))\le |G:H|-1$ and  equality holds if and only if $H\mathcal{C}= G\setminus H.$ This proves~\eqref{components2}.

Now, for every $h_1,h_2\in H$ and $c\in \mathcal{C}$, we have $h_1ch_2=h_1h_2c^{h_2}\in H\mathcal{C}$ and hence $H\mathcal{C}H=H\mathcal{C}$. Therefore $H\mathcal{C}$ is a union of $(H,H)$-double cosets. In particular, if $H\mathcal{C}$ is a proper subset of $G\setminus H$, then 
$|(G\setminus H)\setminus H\mathcal{C}|\ge d|H|$, where $d$ is the minimum non-trivial subdegree for the action of $G$ on the right cosets of $H$. This shows~\eqref{components3}.
\end{proof}

Using~\eqref{eq:vertices} and~\eqref{eq3D428}, we are now ready to obtain some numerical important information on the number $c:=c(\Delta(H,\mathcal{C}))$ of connected components of $\Delta(H,\mathcal{C})$. Observe that, if we let $n_1,\ldots,n_c$ be the size of the connected components of $\Delta(H,\mathcal{C})$, then from Lemma~\ref{l:components}~\eqref{components1} we have
\begin{align*}
|V\Delta(H,\mathcal{C})|&=\sum_{i=1}^{c}n_i,\quad|A\Delta(H,\mathcal{C})|=\sum_{i=1}^{c}n_i^2,
\end{align*}
because each connected component is a complete graph. In particular,
using the Cauchy-Swartz inequality, we have 
$$c\ge \frac{(\sum_{i=1}^cn_i)^2}{\sum_{i=1}^cn_i^2}=\frac{|V\Delta(H,\mathcal{C})|^2}{|A\Delta(H,\mathcal{C})|}.$$
Now, from~\eqref{eq:vertices} and~\eqref{eq3D439edges}, we obtain
\begin{equation}\label{eureka}
c(\Delta(H,\mathcal{C}))
\geq |G:H|\cdot \frac{(|C|-1)^2}{
\sum_{\chi\in\mathrm{Irr}(G)}\frac{\langle\chi,1_H^G\rangle}{\chi(1)}(|C|\langle\chi_{|C},1 \rangle-\chi(1))^2}.
\end{equation}

\subsection{Application of the  general strategy}\label{sec:exceptionalspecial}
In this section, we apply the results from Section~\ref{sec:exceptionalgeneral} to deduce some information on the connectivity of the Engel graph for simple exceptional groups of Lie type.

\subsubsection{The Steinberg triality group}\label{sec:3D4}
Let $q$ be a prime power and let  $$G:={}^3D_4(q).$$ For information on the subgroup structure of $G$, we use~\cite{Kleidman}. See also~\cite[Table~8.51]{bray}. Now, $G$ has two maximal parabolic subgroups. Here, we consider a maximal parabolic subgroup $H$ with 
\begin{equation}
H\cong E_q^{1+8}:((q-1)\circ\mathrm{SL}_2(q^3)).\gcd(q-1,2),
\end{equation}
see for instance~\cite{Kleidman}. In the action of $G={}^3D_4(q)$ on the associated generalized Hexagon of order $(q,q^3)$, $H$ is a vertex stabilizer.

\begin{lemma}\label{3D4lemma0}
The subdegrees for the primitive action of ${}^3D_4(q)$ on the right cosets of $H$ are 
$1$, $q(q^3+1)$, $q^9$ and $q^5(q^3+1)$. In particular, the minimum non-trivial subdegree is $q(q^3+1)$.
\end{lemma}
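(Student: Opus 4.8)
The plan is to compute the subdegrees of the primitive action of $G={}^3D_4(q)$ on the right cosets of the maximal parabolic $H$ by identifying this action with the natural action of $G$ on the points of the associated generalized hexagon $\mathcal{H}$ of order $(q,q^3)$, and then reading off orbit sizes of a point stabilizer from the incidence geometry. First I would recall the combinatorial structure of a generalized hexagon of order $(s,t)=(q,q^3)$: it is a bipartite incidence graph of diameter $6$ and girth $12$, each point lies on $t+1=q^3+1$ lines and each line contains $s+1=q+1$ points. Fixing a point $p$, the stabilizer $H=G_p$ acts on the point set, and since $G$ is flag-transitive on $\mathcal{H}$ (indeed $2$-transitive on each ``sphere'' in the appropriate sense), the $H$-orbits on points are exactly the sets of points at a fixed distance $0,2,4,6$ from $p$ in the incidence graph (equivalently, at collinearity-distance $0,1,2,3$). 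So there are four subdegrees, and the task reduces to counting the points at each distance.

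The key counting steps are then standard for a generalized hexagon. There is exactly one point at distance $0$ (namely $p$ itself), giving subdegree $1$. The number of points collinear with $p$ (distance $2$) is $(t+1)s = (q^3+1)q$, since $p$ lies on $q^3+1$ lines and each carries $q$ further points, with no coincidences by the girth condition. The number of points at distance $4$ is $(t+1)s\cdot t\cdot s = q(q^3+1)\cdot q^3\cdot q = q^5(q^3+1)$: from each of the $q(q^3+1)$ points at distance $2$ one moves along one of its $t$ other lines ($q^3$ of them) and picks one of the $s$ further points ($q$ of them), again without collision by girth $12$. Finally the number of points at distance $6$ is obtained either by the analogous branching count or, more cleanly, by subtracting: the total number of points of $\mathcal{H}$ is $(1+s)(1+st+s^2t^2)$ with $s=q$, $t=q^3$, i.e. $(q+1)(1+q^4+q^8)$, and subtracting $1 + q(q^3+1) + q^5(q^3+1)$ leaves $q^9$. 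I would double-check this arithmetic: $(q+1)(1+q^4+q^8) = 1 + q + q^4 + q^5 + q^8 + q^9$, while $1 + q(q^3+1) + q^5(q^3+1) = 1 + q + q^4 + q^5 + q^8$, so the difference is indeed $q^9$. This also confirms consistency of the branching counts. Hence the subdegrees are $1$, $q(q^3+1)$, $q^5(q^3+1)$, $q^9$, and comparing $q(q^3+1) = q^4+q$ with $q^9$ and with $q^5(q^3+1)=q^8+q^5$ shows the smallest nontrivial one is $q(q^3+1)$.

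The main obstacle — really the only nontrivial input — is justifying that the $H$-orbits on points are precisely the distance classes, i.e. that $G_p$ acts transitively on the set of points at each fixed distance from $p$. This is where one genuinely uses the structure of $G={}^3D_4(q)$ rather than mere generalized-hexagon combinatorics: it follows from the fact that $G$ acts distance-transitively (in fact $G$ is flag-transitive and satisfies the Moufang condition, so the relevant point stabilizers act transitively on the apartments/paths through a point) on the Moufang hexagon associated to the twisted group. I would cite the subgroup-structure description in~\cite{Kleidman} (and~\cite[Table~8.51]{bray}) to pin down $H$ and its relevant sections, and invoke the known flag-transitivity of ${}^3D_4(q)$ on its generalized hexagon to get the distance-transitivity; alternatively one can verify transitivity on each distance class directly by exhibiting that the unipotent radical $E_q^{1+8}$ together with the Levi factor $(q-1)\circ\mathrm{SL}_2(q^3)$ already acts transitively on each class, using the point counts above to match orbit sizes. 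Either way, once distance-transitivity is in hand the proof is just the four counts above, and the final sentence of the lemma about the minimum nontrivial subdegree is immediate.
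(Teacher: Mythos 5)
Your argument is correct, but it proceeds quite differently from the paper: the paper's entire proof is a citation of Vasil'ev's computation of the minimal permutation representations of the twisted exceptional groups (\cite[Theorem~3, page~13]{Vasilev}), whereas you rederive the subdegrees from the incidence geometry of the generalized hexagon of order $(q,q^3)$. Your counts are right — the point counts $1$, $s(t+1)=q(q^3+1)$, $s(t+1)\cdot st=q^5(q^3+1)$ at collinearity-distances $0,1,2$, and the complementary count $(q+1)(q^8+q^4+1)-\bigl(1+q(q^3+1)+q^5(q^3+1)\bigr)=q^9$ for the opposite points, match the stated subdegrees, and the comparison identifying $q(q^3+1)$ as the minimum is immediate. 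The one step you rightly flag as the genuine input, namely that the $H$-orbits are exactly the distance classes, deserves a cleaner justification than ``flag-transitivity'': flag-transitivity alone does not give distance-transitivity. The standard way to close this is either to invoke strong transitivity of ${}^3D_4(q)$ on its building (transitivity on pairs consisting of a chamber and an apartment containing it), which yields transitivity on pairs of points at each fixed gallery distance, or to observe — as the paper does later via the decomposition $1_H^G=1+\varepsilon_1+\rho_1+\rho_2$ — that the permutation rank is $4$, so the four distance classes, each a union of suborbits, must each be a single suborbit. What your approach buys is a self-contained, geometric proof that makes the meaning of each subdegree transparent (collinear points, points at distance two, opposite points); what the citation buys is brevity and the avoidance of having to justify distance-transitivity at all.
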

\begin{proof}
This follows from~\cite[Theorem~3, page~13]{Vasilev}.
\end{proof}

We let $$\mathcal{C}:=\{g\in G\mid {\bf o}(g)>1, {\bf o}(g)\hbox{ divides } q^4-q^2+1\}$$
and we let $C$ be a maximal torus of $G$ of order $q^4-q^2+1$.
Using the subgroup structure of $G={}^3D_4(q)$ (see~\cite{Kleidman}), it is not hard to verify that Hypotheses~0--4 are satisfied with respect to this choice of $H$, $\mathcal{C}$ and $C$. We prove the following strong result.

\begin{lemma}\label{3D4lemma1}
We have $H\mathcal{C}=G\setminus H$.
\end{lemma}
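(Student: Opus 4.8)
The plan is to use the counting machinery developed in Section~\ref{sec:exceptionalgeneral}, specialised to the Steinberg triality group $G={}^3D_4(q)$, the maximal parabolic $H$, the torus $C$ of order $q^4-q^2+1$ and the conjugacy-class union $\mathcal{C}$. By Lemma~\ref{l:components}\eqref{components2} we know $c(\Delta(H,\mathcal{C}))\le |G:H|-1$ with equality exactly when $H\mathcal{C}=G\setminus H$, so the statement is equivalent to the lower bound $c(\Delta(H,\mathcal{C}))\ge |G:H|-1$. To get this I would invoke the Cauchy--Schwarz inequality~\eqref{eureka}, which reduces the problem to showing
\[
|G:H|\cdot\frac{(|C|-1)^2}{\displaystyle\sum_{\chi\in\mathrm{Irr}(G)}\frac{\langle\chi,1_H^G\rangle}{\chi(1)}\bigl(|C|\langle\chi_{|C},1\rangle-\chi(1)\bigr)^2}\;>\;|G:H|-2,
\]
and then conclude by the integrality of $c(\Delta(H,\mathcal{C}))$ together with Lemma~\ref{l:components}\eqref{components3}: if $c$ were strictly less than $|G:H|-1$, then $|G:H|-1-c\ge d=q(q^3+1)$ by Lemma~\ref{3D4lemma0}, a much larger gap than the denominator estimate will permit.

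The concrete work is therefore to control the sum over $\mathrm{Irr}(G)$. First I would list the irreducible constituents $\chi$ of the permutation character $1_H^G$ of the action on the generalized hexagon, with their multiplicities $\langle\chi,1_H^G\rangle$; since $H$ is a parabolic, all such $\chi$ are unipotent, and this data is classical (it can be read off from the decomposition of $1_H^G$ for the rank-$1$ hexagon geometry). Next, since $C$ is a maximal torus of order $q^4-q^2+1=\Phi_{12}(q)$, every non-identity element of $C$ is regular (its order is divisible only by primitive prime divisors of $q^{12}-1$, so its centralizer is exactly $C$); hence Lemma~\ref{TZ} applies and gives $\langle\chi_{|C},1\rangle=(\chi(1)+\eta_\chi|C|^{-1}\text{-}\text{adjustment})$, more precisely $|C|\langle\chi_{|C},1\rangle-\chi(1)=\eta_\chi(|C|-1)$ with $\eta_\chi\in\{0,1,-1\}$ for unipotent $\chi$, and $=0$ for the trivial character contribution in the obvious way. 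This collapses the denominator of~\eqref{eureka} to $(|C|-1)^2\sum_{\chi}\langle\chi,1_H^G\rangle\eta_\chi^2/\chi(1)$, so that the whole Cauchy--Schwarz bound becomes simply
\[
c(\Delta(H,\mathcal{C}))\;\ge\;\frac{|G:H|}{\displaystyle\sum_{\chi}\frac{\langle\chi,1_H^G\rangle\,\eta_\chi^2}{\chi(1)}},
\]
and the sum in the denominator is an explicit rational function of $q$ whose value is only slightly larger than $1$ (the trivial character contributes $1$ and the remaining unipotent constituents contribute terms of order $1/q$ or smaller). A short estimate then shows $|G:H|/\bigl(\sum_\chi \langle\chi,1_H^G\rangle\eta_\chi^2/\chi(1)\bigr)>|G:H|-q(q^3+1)$, which by the subdegree dichotomy forces equality $c=|G:H|-1$, i.e.\ $H\mathcal{C}=G\setminus H$.

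The main obstacle I anticipate is the bookkeeping on the unipotent characters: one needs the exact list of constituents of $1_H^G$ with multiplicities and, for each, the sign $\eta_\chi$ produced by Lemma~\ref{TZ} (equivalently, the value $\chi(t)$ for a regular element $t$ of the $\Phi_{12}$-torus, which is governed by the Lusztig series / Deligne--Lusztig virtual characters attached to that torus). A small number of low-degree $q$ cases might not satisfy the clean inequality and would then have to be checked directly by computer, in the style already used elsewhere in the paper; I would flag this possibility and dispatch it with a \texttt{magma} verification. Once the character data is pinned down, the inequality is a routine comparison of polynomials in $q$, and the conclusion $H\mathcal{C}=G\setminus H$ follows.
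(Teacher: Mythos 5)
Your proposal follows essentially the same route as the paper's own proof: decompose $1_H^G$ into its (unipotent) irreducible constituents, use Lemma~\ref{TZ} (the paper reads the values off Spaltenstein's tables and notes their consistency with Lemma~\ref{TZ}) to reduce the denominator in~\eqref{eureka} to $(|C|-1)^2\bigl(1+\tfrac{1}{\rho_1(1)}\bigr)$, and then combine the resulting lower bound on $c(\Delta(H,\mathcal{C}))$ with the minimal subdegree $q(q^3+1)$ from Lemma~\ref{3D4lemma0} and Lemma~\ref{l:components}\eqref{components3} to force $c(\Delta(H,\mathcal{C}))=|G:H|-1$. The argument is correct and matches the paper's in all essentials.
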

\begin{proof}
To prove this result, we use Lemma~\ref{l:components} and~\eqref{eureka}.

For some information on the characters of $G={}^3D_4(q)$, we refer to~\cite{Spalstein}. Using~\cite{Spalstein} and the notation therein, we have
\begin{equation}
\label{3D4eq:1}
1_H^G=1+\varepsilon_1+\rho_1+\rho_2,
\end{equation}
where 
$$1(1)=1,\,\varepsilon_1(1)=q(q^4-q^2+1),\,\rho_1(1)=\frac{1}{2}q^3(q^3+1)^2,\,\rho_2(1)=\frac{1}{2}q^3(q+1)^2(q^4-q^2+1).$$
In particular, we deduce
\begin{equation*}
\langle 1_H^G,1_H^G\rangle=
1+1+1+1=4 
\end{equation*}
and hence the rank of the action of $G$ on the right cosets of $H$ is $4$. Incidentally,  the fact that $G$ has rank $4$ in its action on the right cosets of $H$ follows also from Lemma~\ref{3D4lemma0}, where we have listed all subdegrees.

Let $\chi$ be an irreducible constituent of $1_H^G$. The quantity $|C|\langle\chi_{|C},1\rangle-\chi(1)$ can be computed using the work of Spaltenstein~\cite{Spalstein} on the irreducible characters of ${}^3D_4(q)$. We have
\begin{align}
\label{eq3D429}
|C|\langle\chi_{|C},1\rangle-\chi(1)=
\begin{cases}
q^4-q^2=|C|-1&\textrm{when }\chi=1,\\
0&\textrm{when }\chi=\varepsilon_1,\\
-(q^4-q^2)=-|C|+1&\textrm{when }\chi=\rho_1,\\
0&\textrm{when }\chi=\rho_2.
\end{cases}
\end{align}
Observe that, since each irreducible constituent of $1_H^G$ is unipotent and since each non-identity element of $C$ is regular,~\eqref{eq3D429} is perfectly in line with Lemma~\ref{TZ}. From~\eqref{eq3D429} and with a computation, we find
\begin{align}\label{eq3D439}
\sum_{
\chi\in \mathrm{Irr}(G)
}
\frac{\langle\chi,1_H^G\rangle}{\chi(1)}
\left(|C|\langle\chi_{|C},1\rangle-\chi(1)\right)^2&=(|C|-1)^2+\frac{(|C|-1)^2}{q^3(q^3+1)^2/2}\\\nonumber
&=(|C|-1)^2f(q),
\end{align} 
where for simplicity $f(q):=1+1/(q^3(q^3+1)^2/2)$.

Summing up, from~\eqref{eureka} and~\eqref{eq3D439}, we obtain
\begin{equation*}
c(\Delta(H,\mathcal{C}))
\ge
\frac{|G:H|}{f(q)}> |G:H|-1-q(q^3+1),
\end{equation*}
where the last inequality follows with an elementary computation using the function $f(q)$ and using the fact that $|G:H|=(q+1)(q^8+q^4+1)$.
By Lemma~\ref{l:components}~\eqref{components3} and Lemma~\ref{3D4lemma0}, we deduce $c(\Delta(H,\mathcal{C}))=|G:H|-1$.
\end{proof}

\begin{lemma}\label{l:engel3D4}
There exists a non-identity element $g\in G={}^3D_4(q)$ having order a divisor of $q^4-q^2+1$ and a non-identity unipotent element $\iota$ with  $[g,\iota,\iota]=1$.  
\end{lemma}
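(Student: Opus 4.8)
The plan is to leverage the structural result just proved, namely Lemma~\ref{3D4lemma1}, which asserts $H\mathcal{C} = G\setminus H$. First I would pick a non-identity unipotent element $\iota$ lying in the unipotent radical $E_q^{1+8}$ of $H$. Since $H$ is a parabolic subgroup, it has trivial core-free intersection property only up to conjugacy, so the natural move is: take any $g_0 \in \mathcal{C}$ (a non-identity element of order dividing $q^4-q^2+1$, i.e. a generator or power of a generator of the maximal torus $C$). Then I want to produce a conjugate $g$ of $g_0$ inside $\mathcal{C}$ such that $g^{-1}\iota^{-1}g\iota$, or rather the relevant Engel expression, collapses.

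The cleaner approach uses Lemma~\ref{3D4lemma1} directly as follows. Fix a non-identity unipotent $\iota \in H$. Since $H\mathcal{C}=G\setminus H$, every element of $G\setminus H$ — in particular $\iota h$ for suitable $h$, or more to the point any coset representative — can be written as a product of an element of $H$ and an element of $\mathcal{C}$. I would instead argue: consider the element $\iota$ acting by conjugation. I want to find $g\in\mathcal{C}$ with $g\in {\bf N}_G(\langle \iota\rangle)$ — no, that is too strong. The right idea is: I want $[g,\iota]$ to again be unipotent and in fact to have $[g,\iota,\iota]=1$, which by Lemma~\ref{norm} would follow if $[g,\iota]$ normalizes $\langle\iota\rangle$, or more simply if $[g,\iota]=1$, which is false, or if $\iota$ centralizes $[g,\iota]$. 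The key observation is that $[g,\iota] = g^{-1}\cdot g^\iota \in g^{-1}\mathcal{C}$ when $g^\iota\in\mathcal{C}$, which always holds since $\mathcal{C}$ is a union of conjugacy classes; but I need $[g,\iota]$ to lie in a subgroup on which $\iota$ acts trivially. Concretely, I would use that $\iota$ lies in the unipotent radical $U:=E_q^{1+8}$, pick $g\in\mathcal{C}$ normalizing the Levi decomposition appropriately, compute $g^{-1}\iota g \in U$ (since $U\trianglelefteq H$ and is normalized by the torus complement), so $[g,\iota]=g^{-1}\iota^{-1}g\cdot\iota \in U$, and then arrange — by choosing $\iota$ in the center $Z(U)$ or in a suitable term of the lower central series of $U$, which has nilpotency class $2$ — that $[[g,\iota],\iota]=1$. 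Since $U=E_q^{1+8}$ is special of class $2$, any commutator $[u,\iota]$ with $u\in U$ lies in $Z(U)=[U,U]$, hence $[[u,\iota],\iota]=1$ automatically once $\iota\in U$.

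So the concrete step is: choose $\iota\in U=E_q^{1+8}\le H$ non-identity, choose $g\in\mathcal{C}$ a generator of the maximal torus $C$ of order $q^4-q^2+1$ (which is non-identity and unipotent-free). Then I must check $g^\iota\in U$ would be wrong since $g\notin U$; rather I need $[g,\iota]\in U$. We have $\iota^{g}=g^{-1}\iota g$; this need not lie in $U$ because $g\notin H$. This is the main obstacle: $g\notin H$, so conjugating $\iota$ by $g$ leaves the parabolic. Here is where Lemma~\ref{3D4lemma1} enters: since $g\in\mathcal{C}$ and $H\mathcal{C}=G\setminus H$, I should instead look for $\iota\in H$ and $g\in\mathcal{C}$ with $\iota g\in\mathcal{C}$ — no. Let me restate: the honest route is to exhibit a specific pair. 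Take $g$ to generate $C$, take $\iota$ to be a long-root element; then ${\bf C}_G(\iota)$ contains a large unipotent group, and I want $g$ to normalize $\langle\iota\rangle$-related structure. Actually the slickest argument: by Lemma~\ref{3D4lemma1}, $G = H \cup H\mathcal{C}$, so $\mathcal{C}$ together with $H$ generates everything; in particular there exist $x\in\mathcal{C}$ and $h\in H$ with $xh$ equal to a prescribed element. I would use this to show $\mathcal{C}$ is not contained in any single coset of $H$'s normalizer structure and then fall back on a direct matrix/root-group computation in $^3D_4(q)$: realize $C$ inside a torus, realize $\iota$ as $x_\alpha(1)$ for $\alpha$ a root moved by $g$, and compute $[g,\iota,\iota]$ using the Chevalley commutator relations, using that $g$ acts on the root group $U_\alpha$ by a scalar $\lambda\ne 1$ so that $[g,\iota]=x_\alpha(\lambda^{-1}-1)\cdot(\text{higher terms})$ and iterate. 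The main obstacle I anticipate is the bookkeeping of higher-term commutators in the Chevalley relations for $^3D_4(q)$; I expect it resolves because the relevant root subsystem generated by $\alpha$ and its $g$-translates is small (of type $A_2$ or $A_1$ over $\mathbb{F}_{q^3}$), keeping the lower central series short, so that $[g,\iota,\iota]$ lands in a root group fixed pointwise by $\iota$, giving the identity. Alternatively — and this is the fallback I would actually write — invoke Lemma~\ref{norm}: it suffices to find $g\in\mathcal{C}$ and unipotent $\iota\ne 1$ with $g\in{\bf N}_G(\langle\iota\rangle)$; since $C$ acts on a long-root subgroup $U_\alpha\cong\mathbb{F}_{q^3}^+$ (or $\mathbb{F}_q^+$) with a non-trivial fixed cyclic action is impossible as $C$ has order coprime to $p$, one instead uses $[g,_2\iota]$ directly via the class-two structure of the unipotent radical containing $\iota$, which is the argument I would commit to.
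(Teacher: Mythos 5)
Your proposal correctly identifies both the target configuration ($\iota$ a non-identity element of the centre $X_{3\alpha+2\beta}$ of the unipotent radical $U_H$, and the goal $[g,\iota]\in U_H$, which immediately gives $[g,\iota,\iota]=1$ because $\iota$ is central in $U_H$) and the main obstacle: since $\mathcal{C}\cap H=\emptyset$, every $g\in\mathcal{C}$ lies outside $H$, so there is no a priori reason for $\iota^g$, and hence $[g,\iota]=(\iota^{-1})^g\iota$, to land back in $U_H$. But you never overcome this obstacle: you flag it as ``the main obstacle'', circle around it, and then commit to ``the class-two structure of the unipotent radical'', which only finishes the job once $[g,\iota]\in U_H$ is already known. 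The missing idea is the bridge between Lemma~\ref{3D4lemma1} and the root-group geometry. The paper uses that $H={\bf N}_G(X_{3\alpha+2\beta})$, so the conjugation action of $G$ on the set of $G$-conjugates of $X_{3\alpha+2\beta}$ is the action on the right cosets of $H$; hence $H\mathcal{C}=G\setminus H$ translates into $X_{3\alpha+2\beta}^{\mathcal{C}}=\{X_{3\alpha+2\beta}^g\mid g\in G\}\setminus\{X_{3\alpha+2\beta}\}$, i.e.\ conjugation by elements of $\mathcal{C}$ alone realises \emph{every} conjugate of $X_{3\alpha+2\beta}$ except itself. Combined with the fact, read off from the Weyl group in~\cite[Table~2.4]{Geck}, that some $G$-conjugate of $X_{3\alpha+2\beta}$ --- namely $X_{3\alpha+\beta}$ --- is again a root subgroup contained in $U_H$, this produces $g\in\mathcal{C}$ with $\iota^g\in X_{3\alpha+\beta}\le U_H$, after which your centre argument closes exactly as intended.

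Neither of your fallbacks substitutes for this step. The appeal to Lemma~\ref{norm} requires $g\in{\bf N}_G(\langle\iota\rangle)$ for $g$ of order dividing $q^4-q^2+1$, which you yourself discard (correctly: such a $g$ is regular semisimple with centraliser the coprime-order torus $C$, which contains no unipotent elements). The ``direct Chevalley commutator bookkeeping'' is left at ``I expect it resolves'', which is not an argument; and the substantive point --- why a semisimple element of order dividing $q^4-q^2+1$ should move the central root subgroup of $U_H$ onto another root subgroup \emph{inside} $U_H$ --- is precisely what the double-coset statement $H\mathcal{C}=G\setminus H$ is needed for, not something the commutator relations give you.
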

\begin{proof}
We may write $H=L_H\rtimes U_H$, where $L_H$ is the Levi complement of the parabolic subgroup $H$ and $U_H$ is the unipotent radical. We now collect some information on $U_H$, see~\cite{Geck,himstedt}. From~\cite[page~785]{himstedt}, we have $$U_H=X_{\beta} X_{\alpha+\beta} X_{2\alpha+\beta} X_{3\alpha+\beta}X_{3\alpha+2\beta},$$ where each ``$X$'' is a root subgroup with respect to a positive root for the root system $G_2$. 
From~\cite[page~786]{himstedt} and the notation therein, $H={\bf N}_G(X_{3\alpha+2\beta})$;
 moreover, $X_{3\alpha+2\beta}$ is the center, the derived subgroup and the Frattini subgroup of $U_H$. Now, from~\cite[Table~2.4]{Geck}, we see that there exists $n\in G$ with 
 \begin{align}\label{eq:3d4e}X_{3\alpha+2\beta}^n=X_{3\alpha+\beta}.
 \end{align} (Incidentally, the element $n$ in~\cite[Table~2.4]{Geck} is the element labeled $n_\beta$ in the Weyl group.)

As $X_{3\alpha+2\beta}^H=X_{3\alpha+2\beta}$, from Lemma~\ref{3D4lemma1}, we have
\begin{align}\label{eq:3d4ee}
X_{3\alpha+2\beta}^{\mathcal{C}}
=X_{3\alpha+2\beta}^{H\mathcal{C}}=
X_{3\alpha+2\beta}^{G\setminus H}=
\{X_{3\alpha+2\beta}^g\mid g\in G\}\setminus\{X_{3\alpha+2\beta}\}.
\end{align}

Now, from~\eqref{eq:3d4e} and~\eqref{eq:3d4ee}, there exists $g\in \mathcal{C}$ with $X_{3\alpha+2\beta}^g=X_{3\alpha+\beta}$. Let $\iota\in X_{3\alpha+2\beta}$ with $\iota\ne 1$. Then $\iota^g\in X_{3\alpha+\beta}\le U_H$ and $[\iota,g]=\iota^{-1}\iota^g\in U_H$. Since $\iota\in X_{3\alpha+2\beta}\le {\bf Z}(U_H)$, we deduce that 
$\iota $ commutes with $[\iota,g]$, that is, $[g,\iota,\iota]=1$.
\end{proof}

\begin{proposition}
The directed graph $\Gamma_2({}{}^3D_4(q))$ is strongly connected.
\end{proposition}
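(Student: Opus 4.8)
The plan is to apply Corollary~\ref{cor} to $G={}^3D_4(q)$, together with the structural results just obtained. First I recall that ${}^3D_4(q)$ is not one of the exceptional cases $\mathrm{PSL}_2(q)$, ${}^2B_2(q)$, ${}^2G_2(q)$, ${}^2F_4(q)$, $\mathrm{PSL}_3(4)$ excluded in Corollary~\ref{cor}, so by Theorem~\ref{theorem111} there is a connected component $\mathcal{I}$ of the commuting graph $\Gamma_1(G)$ containing all elements of even order. Hence it suffices to show that, for every connected component $\psi$ of the prime graph $\Pi(G)$ with $2\notin\psi$ and every Hall $\psi$-subgroup $H_\psi$, there is a non-identity element $h\in H_\psi$ together with $x,y\in\mathcal{I}$ with $x\mapsto_2 h$ and $h\mapsto_2 y$.

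Next I would invoke Williams' classification of the prime-graph components of simple groups (as used throughout the paper): for $G={}^3D_4(q)$ the components of $\Pi(G)$ not containing $2$ correspond exactly to the primitive prime divisors of $q^4-q^2+1$, and the associated Hall $\psi$-subgroup is cyclic, contained in a conjugate of the maximal non-split torus $C$ of order $q^4-q^2+1$. So it is enough to produce, for an arbitrary non-identity $h\in C$ (equivalently, an arbitrary non-identity element whose order divides $q^4-q^2+1$), elements $x,y$ of even order with $x\mapsto_2 h$ and $h\mapsto_2 y$. For the ``$h\mapsto_2 y$'' direction I use Lemma~\ref{l:engel3D4}: it gives a non-identity unipotent element $\iota$ with $[h,\iota,\iota]=1$ after replacing $h$ by a suitable conjugate (the statement of the lemma produces some $g$ of order dividing $q^4-q^2+1$, and since all such non-identity elements are conjugate into $C$ and the argument there only uses $g\in\mathcal{C}$, it applies to our $h$); since $\iota$ is unipotent it has even order when $q$ is even and in any case $\langle\iota\rangle$ has a non-trivial intersection with... actually $\iota$ lies in a root subgroup $X_{3\alpha+2\beta}$, and I can simply take $y:=\iota$, noting $y$ commutes with some involution or with an element of even order, hence $y\in\mathcal{I}$; when $q$ is even $\iota$ itself has order a power of $2$, so $y=\iota\in\mathcal{I}$ directly, and when $q$ is odd one checks $C_G(\iota)$ has even order so again $\iota\in\mathcal{I}$. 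Thus $h\mapsto_2 \iota$ with $\iota\in\mathcal{I}$. For the ``$x\mapsto_2 h$'' direction I use that ${\bf N}_G(C)$ is a Frobenius group (Hypothesis~3) with cyclic complement; actually more simply, $C={\bf C}_G(h)$ and ${\bf N}_G(C)/C$ has order $4$ (since $\rho_C$ is an $\Phi_{12}$-torus normalized by a cyclic group of order $4$), containing an element $z$ of order $4$, hence an involution $z^2$; then $z^2$ normalizes $\langle h\rangle$, so by Lemma~\ref{norm} $[z^2,h,h]=1$, i.e.\ $z^2\mapsto_2 h$, and $z^2$ has even order so $z^2\in\mathcal{I}$. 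This gives $x:=z^2$.

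Having verified both arcs for every relevant $h$, Corollary~\ref{cor} yields that $\Gamma_2({}^3D_4(q))$ is strongly connected. The main obstacle is the ``$h\mapsto_2 y$'' direction: one must genuinely produce a unipotent element centralizing $[h,\iota]$, and this is exactly the content of Lemma~\ref{l:engel3D4}, whose proof in turn rests on the non-trivial fact (Lemma~\ref{3D4lemma1}) that $H\mathcal{C}=G\setminus H$ for the parabolic $H$. That equality is the crux: it is proved via the character-theoretic lower bound~\eqref{eureka} on the number of connected components of the auxiliary graph $\Delta(H,\mathcal{C})$, combined with the subdegree bound of Lemma~\ref{3D4lemma0}. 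Everything else (identifying the prime-graph components, the torus normalizer structure, and the routine check that $\iota$ and $z^2$ lie in $\mathcal{I}$) is bookkeeping that follows the same pattern already established for the classical groups.
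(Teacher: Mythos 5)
Your proposal is correct and follows essentially the same route as the paper: reduce via Corollary~\ref{cor} and the prime-graph components of ${}^3D_4(q)$, use Lemma~\ref{l:engel3D4} (resting on Lemma~\ref{3D4lemma1}) for the arc out of the torus element, and use the normalizer ${\bf N}_G(C)=C\rtimes\langle z\rangle$ with ${\bf o}(z)=4$ together with Lemma~\ref{norm} for the arc into it. The only cosmetic difference is that you use the involution $z^2$ where the paper uses $z$ itself, and you phrase the requirement for an arbitrary $h\in C\setminus\{1\}$ when a single such $h$ suffices; neither affects correctness.
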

\begin{proof}
Let $G:={}^3D_4(q)$. 
By Theorem~\ref{theorem111}, all elements of even order of $G$ are in the same connected component of $\Gamma_1(G)$. In particular, we use the prime graph of $G$ to deduce properties of the commuting graph of $G$, see Corollary~\ref{cor}. From~\cite[Table~1]{KoMa}, the connected components of the prime graph of $G={}^3D_4(q)$ are 
$$\pi(q(q^6-1))\,\,\,\hbox{ and }\,\,\,\pi(q^4-q^2+1).$$

Lemma~\ref{l:engel3D4} guarantees that, there exists an element $g$ having order divisible by an element in the second connected component of $\Pi(G)$ and an element $z$ having order divisible by an element in the first connected component of $\Pi(G)$ 
with $g\mapsto_2 z$. 

Let $g$ be an element of $G$ with ${\bf o}(g)=q^4-q^2+1$ and let $C:=\langle g\rangle$. From~\cite{Kleidman}, we have ${\bf N}_G(C)=C\rtimes\langle z\rangle$, where ${\bf o}(z)=4$. In particular, $[z,g,g]=1$ and $z\mapsto_2 g$. This shows that there exists an element $g$ having order divisible by an element in the second connected component of $\Pi(G)$ and an element $z$ having order divisible by an element in the first connected component of $\Pi(G)$ 
$z$ with $z\mapsto_2 g$. Therefore, $\Gamma_2(G)$ is strongly connected.
\end{proof}

\subsubsection{The Ree groups of type ${}^2G_2(q)$}\label{sec:2G2}
Let $q$ be a prime power with $q:=3^{2k+1}$, for some positive integer $k\ge 1$, and let $$G:={}^2G_2(q).$$ For information on the subgroup structure and on the conjugacy classes of $G$, we use~\cite[Table~8.43]{bray} and~\cite{Ree}.

Observe that $G$ has a unique conjugacy class of involutions. Let $\iota$ be an involution of $G:={}^2G_2(q)$ and let 
\begin{equation}
H:={\bf C}_G(\iota)\cong C_2\times \mathrm{PSL}_2(q).
\end{equation}
Now, $H$ is a maximal subgroup of $G$. 

\begin{lemma}[Theorem~$1.5$, \cite{HHP}]\label{2G2lemma00}
The subdegrees for the primitive action of $G={}^2G_2(q)$ on the right cosets of $H$ are (not counted with multiplicity)
$1$, $q(q-1)/2$, $(q-1)(q+1)$, $(q-1)q(q+1)/4$, $q(q-1)(q+1)/2$ and $q(q-1)(q+1)$ . In particular, the minimum non-trivial subdegree is $q(q-1)/2$.
\end{lemma}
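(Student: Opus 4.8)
The plan is to deduce the statement from~\cite[Theorem~1.5]{HHP}. First I would observe that, since $G={}^2G_2(q)$ has a unique conjugacy class of involutions and $H={\bf C}_G(\iota)$, the primitive action of $G$ on the right cosets of $H$ is equivalent to the conjugation action of $G$ on the set of its involutions; under this identification the subdegrees are precisely the orbit lengths of ${\bf C}_G(\iota)$ on the involutions of $G$. These orbit lengths are listed explicitly in~\cite[Theorem~1.5]{HHP}, and the six values quoted in the statement are exactly the distinct ones occurring there; from this list the minimum is read off immediately.

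To make the argument self-contained one could instead compute the orbit lengths directly. For two involutions $\iota,\iota'$ of $G$ the subgroup $\langle\iota,\iota'\rangle$ is dihedral of order $2\,{\bf o}(\iota\iota')$, so an orbit of ${\bf C}_G(\iota)$ on the involutions of $G$ is controlled by the $G$-class of the product $\iota\iota'$, together with a bounded amount of extra data recording how the pairs inverting a fixed cyclic subgroup fuse. Using the conjugacy classes and the structure of the cyclic (maximal torus and unipotent) subgroups of ${}^2G_2(q)$ recorded in~\cite{Ree} and~\cite[Table~8.43]{bray}, I would determine, for each relevant element order, whether it arises as such a product and into how many ${\bf C}_G(\iota)$-orbits the corresponding pairs split, computing each orbit length as $|H|/|{\bf C}_H(\iota')|$; a consistency check against $|G:H|=q^2(q^2-q+1)$ confirms the count. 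The hard part in this approach is the fusion analysis, i.e.\ pinning down the multiplicities with which each subdegree occurs; for our application, however, only the set of distinct subdegrees and the smallest one are needed, and both are supplied by~\cite[Theorem~1.5]{HHP}.

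Finally I would record the elementary comparison: for $q=3^{2k+1}\ge 27$ one has $q(q-1)/2<(q-1)(q+1)$ because $q/2<q+1$, and $q(q-1)/2$ is visibly smaller than $(q-1)q(q+1)/4$, $q(q-1)(q+1)/2$ and $q(q-1)(q+1)$ as well; hence $q(q-1)/2$ is the minimum non-trivial subdegree, as claimed.
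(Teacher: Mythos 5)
Your proposal is correct and follows the same route as the paper: the lemma is simply quoted from \cite[Theorem~1.5]{HHP}, which the paper records in the lemma's attribution without further argument, exactly as you do. Your added observations (the identification of the coset action with the conjugation action on the unique class of involutions, and the elementary verification that $q(q-1)/2$ is the smallest of the listed values for $q\ge 27$) are accurate but not needed beyond the citation.
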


Given $\varepsilon\in \{1,-1\}$, 
let $$\mathcal{C}_\varepsilon:=\{g\in G\mid {\bf o}(g)>1, {\bf o}(g)\hbox{ divides } q+\varepsilon\sqrt{3q}+1\}$$
and let $C_\varepsilon$ be a maximal torus of $G$ of order $q+\varepsilon\sqrt{2q}+1$. It is not hard to verify that Hypotheses~0--4 are satisfied with respect to this choice of $H$, $\mathcal{C}_\varepsilon$ and $C_\varepsilon$. We prove the following strong result.
\begin{lemma}\label{2G2lemma1}
We have $H\mathcal{C}_\varepsilon=G\setminus H$, where $H\mathcal{C}_\varepsilon:=\{hx\mid h\in H,x\in \mathcal{C}_\varepsilon\}$.
\end{lemma}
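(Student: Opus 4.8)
The plan is to mimic the argument used for Lemma~\ref{3D4lemma1}, exploiting the general machinery of Section~\ref{sec:exceptionalgeneral}, but now with $H={\bf C}_G(\iota)\cong C_2\times\mathrm{PSL}_2(q)$ and with $C=C_\varepsilon$ the maximal torus of order $q+\varepsilon\sqrt{3q}+1$. Since Hypotheses~0--4 hold for this configuration, by Lemma~\ref{l:components}\eqref{components2} it suffices to show that $c(\Delta(H,\mathcal{C}_\varepsilon))=|G:H|-1$, and by Lemma~\ref{l:components}\eqref{components3} together with the subdegree bound of Lemma~\ref{2G2lemma00} it is enough to prove the strict inequality $c(\Delta(H,\mathcal{C}_\varepsilon))>|G:H|-1-q(q-1)/2$. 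By the Cauchy--Schwarz estimate~\eqref{eureka}, this reduces to a purely numerical verification once we know, for each irreducible constituent $\chi$ of $1_H^G$, both the multiplicity $\langle\chi,1_H^G\rangle$ and the quantity $|C_\varepsilon|\langle\chi_{|C_\varepsilon},1\rangle-\chi(1)$.

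First I would determine the permutation character $1_H^G$ for the action of ${}^2G_2(q)$ on the cosets of the involution centralizer. This is classical and can be read off from the character table of ${}^2G_2(q)$ (Ward~\cite{Ree}); because $G$ has rank $6$ on these cosets (the six subdegrees of Lemma~\ref{2G2lemma00}), $1_H^G$ decomposes into (at most) six irreducible constituents, each with multiplicity one, namely the principal character together with the Steinberg-related unipotent characters and the appropriate ``small'' characters. Next, for each such constituent $\chi$ I would compute $|C_\varepsilon|\langle\chi_{|C_\varepsilon},1\rangle-\chi(1)$. Every non-identity element of $C_\varepsilon$ is regular semisimple (its centralizer is exactly $C_\varepsilon$), so for the \emph{unipotent} constituents Lemma~\ref{TZ} applies verbatim: each such $\chi$ is constant on $C_\varepsilon\setminus\{1\}$ with value in $\{0,1,-1\}$, hence $|C_\varepsilon|\langle\chi_{|C_\varepsilon},1\rangle-\chi(1)\in\{|C_\varepsilon|-1,0,-(|C_\varepsilon|-1)\}$. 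For the remaining (non-unipotent) constituents, I would read the relevant character values on the regular torus elements directly from Ward's table; by the standard column-orthogonality argument these values are again small (typically $0$ or $\pm1$), so the corresponding contribution to $\sum_\chi\frac{\langle\chi,1_H^G\rangle}{\chi(1)}(|C_\varepsilon|\langle\chi_{|C_\varepsilon},1\rangle-\chi(1))^2$ is either negligible or zero. Summing, I expect an estimate of the shape
\[
\sum_{\chi\in\mathrm{Irr}(G)}\frac{\langle\chi,1_H^G\rangle}{\chi(1)}\bigl(|C_\varepsilon|\langle\chi_{|C_\varepsilon},1\rangle-\chi(1)\bigr)^2=(|C_\varepsilon|-1)^2\,f_\varepsilon(q),
\]
with $f_\varepsilon(q)=1+O(1/q^{3})$ coming from the large-degree constituent(s), exactly as in the ${}^3D_4(q)$ case.

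Plugging this into~\eqref{eureka} gives $c(\Delta(H,\mathcal{C}_\varepsilon))\ge |G:H|/f_\varepsilon(q)$, and the final step is the elementary inequality $|G:H|/f_\varepsilon(q)>|G:H|-1-q(q-1)/2$, which holds because $|G:H|=q^3(q^3+1)/2$ (so $|G:H|$ is of order $q^6$) while $|G:H|-|G:H|/f_\varepsilon(q)=|G:H|\cdot O(1/q^3)=O(q^3)$, comfortably smaller than the gap term $1+q(q-1)/2$. Then Lemma~\ref{l:components}\eqref{components3} forces $c(\Delta(H,\mathcal{C}_\varepsilon))=|G:H|-1$, and Lemma~\ref{l:components}\eqref{components2} yields $H\mathcal{C}_\varepsilon=G\setminus H$, as claimed. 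The main obstacle I anticipate is bookkeeping with the character table of ${}^2G_2(q)$: I must correctly identify which irreducibles occur in $1_H^G$ and pin down their values on regular torus elements of order dividing $q+\varepsilon\sqrt{3q}+1$ — in particular being careful that the non-unipotent constituents (for which Lemma~\ref{TZ} does not directly apply) really do contribute only a lower-order term, and handling the two sign choices of $\varepsilon$ uniformly. Once the relevant character values are tabulated, the remaining inequality is a routine estimate.
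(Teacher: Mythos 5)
Your overall strategy is exactly the paper's: verify Hypotheses~0--4, decompose $1_H^G$, evaluate $|C_\varepsilon|\langle\chi_{|C_\varepsilon},1\rangle-\chi(1)$ for each constituent (via Lemma~\ref{TZ} for the unipotent ones and Ward's table for the rest), feed the result into~\eqref{eureka}, and close with Lemma~\ref{l:components}\eqref{components3} and the minimal subdegree from Lemma~\ref{2G2lemma00}. However, two concrete numerical claims in your write-up are wrong, and the second one breaks your final step as you justify it. First, the rank of $G$ on the cosets of $H$ is not $6$: the six subdegrees in Lemma~\ref{2G2lemma00} are listed \emph{without} multiplicity, and in fact $\langle 1_H^G,1_H^G\rangle=2q$. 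The decomposition is far from multiplicity-free: it is $1_H^G=\xi_1+2\xi_3+2\sum_r\eta_r+4\sum_t\eta_t+\sum_i\eta_i^++\sum_i\eta_i^-$, with families of constituents whose number grows with $q$ (the $\eta_r$, $\eta_t$ happen to contribute $0$ to the sum, and the non-unipotent $\eta_i^{\pm\varepsilon}$ contribute the value $6$ rather than $0$ or $\pm1$, but one must actually check this rather than assume a six-term multiplicity-one decomposition).

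Second, and more seriously, $|G:H|$ is not $q^3(q^3+1)/2$; since $|H|=q(q^2-1)$ one gets $|G:H|=q^2(q^2-q+1)$, of order $q^4$. This matters: your own estimate $|G:H|-|G:H|/f_\varepsilon(q)=|G:H|\cdot O(1/q^3)$ would be $O(q^3)$ with your index of order $q^6$, which is \emph{not} smaller than the gap term $1+q(q-1)/2\sim q^2/2$ --- so the inequality you need would fail on your own numerology. With the correct index the error term is $O(q)$, comfortably below $q^2/2$ for $q=3^{2k+1}\ge 27$, and the argument closes exactly as in the paper. So the route is right, but you need to fix the index and carry out the actual (non-multiplicity-free) decomposition of $1_H^G$ before the final inequality can be certified.
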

\begin{proof}
We use the character table of $G={}^2G_2(q)$, see~\cite{Ward}. As usual $1_H^G$ is the permutation character for the primitive action of $G$ on the right cosets of $H$. Using~\cite{Ward} and the notation therein, we have
\begin{equation}
\label{2G2eq:1}
1_H^G=\xi_1+2\xi_3+2\sum_{r=1}^{\frac{q-3}{4}}\eta_r+
4\sum_{t=1}^{\frac{q-3}{24}}\eta_t+
\sum_{i=1}^{
\frac{q+\sqrt{3q}}{6}
}\eta_i^+
+\sum_{i=1}^{
\frac{q-\sqrt{3q}}{6}
}\eta_i^-
\end{equation}
and
\begin{align*}
\xi_1(1)&=1,\,\xi_3(q)=q^3,\,\eta_r(1)=q^3+1,\,\eta_t(1)=(q-1)(q^2-q+1),\\
\eta_i^+(1)&=(q^2-1)(q+1-\sqrt{3q}),\,\eta_i^-(1)=(q^2-1)(q+1+\sqrt{3q}).
\end{align*}
In particular, we deduce
\begin{equation*}
\langle 1_H^G,1_H^G\rangle=
1+
4+
4\cdot\frac{q-3}{4}+ 
16\cdot\frac{q-3}{24}+
1\cdot \frac{q+\sqrt{3q}}{6}+
1\cdot \frac{q-\sqrt{3q}}{6}=2q  
\end{equation*}
and hence the rank of the action of $G$ on the right cosets of $H$ is $2q$.

Let $\chi$ be an irreducible constituent of $1_H^G$.
The  quantity $|C_\varepsilon|\langle\chi_{|C_\varepsilon},1\rangle-\chi(1)$ can be computed using the work of Ward~\cite{Ward} on the irreducible characters of ${}^2G_2(q)$. We have
\begin{align}\label{eq2G21}
|C_\varepsilon|\langle\chi_{|C_\varepsilon},1\rangle-\chi(1)=
\begin{cases}
|C_\varepsilon|-1&\textrm{when }\chi=\xi_1,\\
-|C_\varepsilon|+1&\textrm{when }\chi=\xi_3,\\
6&\textrm{when }\chi=\eta_i^\varepsilon\\
0&\textrm{otherwise}.
\end{cases}
\end{align}
Again, observe that,~\eqref{eq2G21} is  perfectly in line with Lemma~\ref{TZ}. Indeed, each non-identity element of $C_\varepsilon$ is regular and the constituents $\xi_1$, $\xi_3$, $\eta_r$ and $\eta_t$ are unipotent, whereas $\eta_i^\pm$ are not unipotent. 
From~\eqref{eq2G21} and with a computation, we find
\begin{align}\label{eq2G2111}
\sum_{\chi\in \mathrm{Irr}(G)}\frac{\langle\chi,1_H^G\rangle}{\chi(1)}\left(|C_\varepsilon|\langle\chi_{|C_\varepsilon}-\chi(1)\rangle\right)^2&=(|C_\varepsilon|-1)^2\left(1+\frac{2}{q^3}+\frac{6}{(q^2-1)(q+1-\varepsilon\sqrt{3q})(q+\varepsilon\sqrt{3q})}\right)\\\nonumber
&=(|C_\varepsilon|-1)^2f_\varepsilon(q),
\end{align} 
where for simplicity, we denote by $f_\varepsilon(q)$ the ugly factor appearing on the right hand side of~\eqref{eq2G2111}.

Summing up, from~\eqref{eureka} and~\eqref{eq2G21}, we obtain
\begin{equation*}
c(\Delta(H,\mathcal{C}))
\ge
\frac{|G:H|}{f_\varepsilon(q)}> |G:H|-1-q(q-1)/2,
\end{equation*}
where the last inequality follows with an elementary computation using the function $f_\varepsilon(q)$ and using the fact that $|G:H|=q^2(q^2-q+1)$.
By Lemma~\ref{l:components}~\eqref{components3} and Lemma~\ref{2G2lemma00}, we have $c(\Delta(H,\mathcal{C}))=|G:H|-1$.
\end{proof}

We are now ready to deduce some information on the Engel graph of $G={}^2G_2(q)$.

\begin{lemma}\label{l:engel2G2}
For each $\varepsilon\in \{1,-1\}$, there exists a non-identity element $g_\varepsilon\in G={}^2G_2(q)$ having order a divisor of $q+\varepsilon\sqrt{3q}+1$ with  $[g_\varepsilon,\iota,\iota]=1$. (Recall that $\iota$ is an involution of $G$.) 
\end{lemma}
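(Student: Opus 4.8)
The plan is to reduce the Engel condition $[g_\varepsilon,\iota,\iota]=1$ to a statement about conjugates of $\iota$, and then feed in Lemma~\ref{2G2lemma1}. Since ${\bf o}(\iota)=2$, Lemma~\ref{2comm} gives $[g,_2\iota]=[g,\iota]^{-2}$ for every $g\in G$, so $[g,\iota,\iota]=1$ if and only if $[g,\iota]^2=1$. Now $[g,\iota]=g^{-1}\iota g\iota=\iota^{g}\iota$ is a product of the two involutions $\iota^{g}$ and $\iota$, and such a product has order at most $2$ precisely when the two involutions commute. Hence
\[
[g,\iota,\iota]=1\iff \iota^{g}\in{\bf C}_G(\iota)=H .
\]
So it suffices to produce, for each $\varepsilon\in\{1,-1\}$, an element $g_\varepsilon\in\mathcal{C}_\varepsilon$ with $\iota^{g_\varepsilon}\in H$.

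First I would note that, since $H={\bf C}_G(\iota)$, we have $\iota^{h}=\iota$ for every $h\in H$, whence $\iota^{H\mathcal{C}_\varepsilon}=\iota^{\mathcal{C}_\varepsilon}$; moreover $\iota^{g}=\iota$ exactly when $g\in H$. Combining this with Lemma~\ref{2G2lemma1}, which asserts $H\mathcal{C}_\varepsilon=G\setminus H$, we obtain
\[
\iota^{\mathcal{C}_\varepsilon}=\iota^{H\mathcal{C}_\varepsilon}=\iota^{G\setminus H}=\iota^{G}\setminus\{\iota\}.
\]
In other words, conjugation by elements of $\mathcal{C}_\varepsilon$ realizes every $G$-conjugate of $\iota$ other than $\iota$ itself.

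To conclude I would exhibit a conjugate of $\iota$ that lies in $H$ and differs from $\iota$. Since $H\cong C_2\times\mathrm{PSL}_2(q)$ with $\langle\iota\rangle$ the $C_2$ direct factor, any involution $t$ of the direct factor $\mathrm{PSL}_2(q)$ of $H$ is distinct from $\iota$ and commutes with $\iota$; in particular $t\in H$, and such a $t$ exists because $\mathrm{PSL}_2(q)$ has even order (here $q=3^{2k+1}\ge 27$). As $G$ has a unique conjugacy class of involutions, $t\in\iota^{G}\setminus\{\iota\}=\iota^{\mathcal{C}_\varepsilon}$, so $t=\iota^{g_\varepsilon}$ for some $g_\varepsilon\in\mathcal{C}_\varepsilon$. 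By definition of $\mathcal{C}_\varepsilon$ this $g_\varepsilon$ is non-identity with ${\bf o}(g_\varepsilon)$ dividing $q+\varepsilon\sqrt{3q}+1$, while $\iota^{g_\varepsilon}=t\in H$ yields $[g_\varepsilon,\iota,\iota]=1$ by the equivalence established above. The only substantive ingredient is Lemma~\ref{2G2lemma1}, whose proof already performed the hard character-theoretic counting (and the subdegree bound from Lemma~\ref{2G2lemma00}); everything in the present argument is elementary, so I do not anticipate a serious obstacle here.
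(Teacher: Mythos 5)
Your proof is correct and follows essentially the same route as the paper: both arguments use Lemma~\ref{2G2lemma1} to deduce $\iota^{\mathcal{C}_\varepsilon}=\iota^G\setminus\{\iota\}$ and then pick a conjugate of $\iota$ that commutes with $\iota$ (the paper takes $\tau$ in an elementary abelian Sylow $2$-subgroup containing $\iota$; you take an involution in the $\mathrm{PSL}_2(q)$ direct factor of $H$, which is the same thing in substance). Your preliminary reduction via Lemma~\ref{2comm} and the order of a product of two involutions is just a slightly more explicit version of the paper's closing computation.
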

\begin{proof}
Let $\mathcal{I}$ be the collection of all the involutions of $G$. Since $G$ has a unique conjugacy class of involutions, we have $\mathcal{I}=\iota^G$. Now let $\tau\in\mathcal{I}$ with $\langle\iota,\tau\rangle$ an elementary abelian $2$-group and observe that  this is possible because a Sylow $2$-subgroup of $G$ is elementary abelian or order $8$. 
Now, from Lemma~\ref{2G2lemma1}, since $H={\bf C}_G(\iota)$, we have
$$
\iota^{\mathcal{C}_\varepsilon}
=\iota^{HC_\varepsilon}=
\iota^{G\setminus H}=
\iota^G\setminus\{\iota\}=\mathcal{I}\setminus\{\iota\}.
$$
In particular, for each $\varepsilon\in\{1,-1\}$, there exists $g_\varepsilon\in \mathcal{C}_\varepsilon$ with $\iota^{g_\varepsilon}=\tau$. Therefore, we have
$$[\iota,g_\varepsilon]=\iota \iota^{g_\varepsilon}=\iota\tau$$
 and hence $[g_\varepsilon,\iota,\iota]=1$.
\end{proof}

\begin{proposition}
The directed graph $\Gamma_2({}{}^2G_2(q))$ is strongly connected.
\end{proposition}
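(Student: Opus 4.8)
The plan is to mimic exactly the structure of the proof of the preceding proposition for ${}^3D_4(q)$, using the ingredients we have just assembled for ${}^2G_2(q)$. Let $G:={}^2G_2(q)$ with $q=3^{2k+1}$. First I would invoke Theorem~\ref{theorem111}: since $G\cong{}^2G_2(q)$ is one of the exceptional cases in that statement, Theorem~\ref{theorem111} does \emph{not} directly give a single even-order component, so instead I must fall back on the fact that $G$ has a unique conjugacy class of involutions and that its Sylow $2$-subgroup is elementary abelian of order $8$; hence the commuting graph $\Gamma_1(G)$ has a connected component $\mathcal I$ containing all elements of even order (this follows because any two involutions generate a dihedral or elementary abelian group, and all intermediate elements of even order are tied in). The key structural input is the classification of the connected components of the prime graph $\Pi(G)$ from~\cite[Table~1]{KoMa}: for ${}^2G_2(q)$ the prime graph components are
\[
\pi(q(q^2-1)),\quad \pi(q-\sqrt{3q}+1),\quad \pi(q+\sqrt{3q}+1),
\]
so by Corollary~\ref{cor} (or the analogous reasoning, since $G$ is on the exclusion list we argue directly) it suffices to show that, for each $\varepsilon\in\{1,-1\}$ and for a generator $h$ of a Hall subgroup whose primes form the component $\pi(q+\varepsilon\sqrt{3q}+1)$, there are $x,y\in\mathcal I$ with $x\mapsto_2 h$ and $h\mapsto_2 y$.

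Next I would use Lemma~\ref{l:engel2G2}: for each $\varepsilon$ there is a non-identity element $g_\varepsilon$ of order dividing $q+\varepsilon\sqrt{3q}+1$ and an involution $\iota$ with $[g_\varepsilon,\iota,\iota]=1$, i.e.\ $g_\varepsilon\mapsto_2\iota$. This furnishes the arc from the ``exotic'' torus elements into the even-order component $\mathcal I$. For the reverse direction I would use the subgroup structure: if $g\in G$ has order exactly $q+\varepsilon\sqrt{3q}+1$ and $C:=\langle g\rangle$, then from~\cite[Table~8.43]{bray} (see also~\cite{Ree}) we have ${\bf N}_G(C)=C\rtimes\langle z\rangle$ with ${\bf o}(z)=6$; in particular $z$ normalizes $\langle g\rangle$, so by Lemma~\ref{norm} $[z,g,g]=1$, i.e.\ $z\mapsto_2 g$. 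Since $z$ has even order it lies in $\mathcal I$, giving the arc from $\mathcal I$ to the torus. Finally, any element of $G$ whose order involves only primes in $\pi(q(q^2-1))$ has even centralizer order or is unipotent and is dealt with by the elementary-abelian Sylow $2$ argument together with parabolic/Levi considerations — more precisely, unipotent elements $u$ satisfy $z\mapsto_2 u$ for a suitable $z$ in the normalizer of the Sylow $3$-subgroup (which contains elements of order $q-1$), and conversely $u\mapsto_2 \iota$ for a suitable involution since ${\bf C}_G(u)$ meets $\mathcal I$. Putting these arcs together, every vertex lies in the strongly connected component $\Omega$ containing $\mathcal I$, so $\Gamma_2(G)$ is strongly connected.

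The main obstacle, as in the ${}^3D_4(q)$ case, is not the final assembly but the input Lemma~\ref{l:engel2G2}, whose proof rests on Lemma~\ref{2G2lemma1} ($H\mathcal C_\varepsilon = G\setminus H$); that in turn is the genuinely delicate step, relying on the character-theoretic estimate~\eqref{eureka}, the explicit permutation-character decomposition~\eqref{2G2eq:1}, the values~\eqref{eq2G21} (justified by Lemma~\ref{TZ} for the unipotent constituents and by~\cite{Ward} for the $\eta_i^\pm$), and the subdegree bound of Lemma~\ref{2G2lemma00}. In the present proposition these are all already available, so the remaining work is genuinely routine: one only needs to verify the normalizer structure ${\bf N}_G(C_\varepsilon)=C_\varepsilon\rtimes C_6$ and check that the unipotent and $(q\pm1)$-torus elements cause no trouble, exactly as in the proof for ${}^3D_4(q)$. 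I would write the proof in three short paragraphs: (i) reduce to the odd-order prime-graph components via the even-order component $\mathcal I$; (ii) handle the two non-split tori $q\pm\sqrt{3q}+1$ using Lemmas~\ref{l:engel2G2} and~\ref{norm}; (iii) dispatch the remaining component using the elementary-abelian Sylow $2$-subgroup and parabolic subgroups, concluding strong connectivity.
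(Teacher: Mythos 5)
Your overall skeleton is right, and you correctly identify Lemma~\ref{l:engel2G2} as the decisive input together with the arc $z\mapsto_2 g$ coming from Lemma~\ref{norm} applied to an element $z$ of even order in ${\bf N}_G(C_\varepsilon)=C_\varepsilon\rtimes C_6$; this part matches the paper. The genuine gap is in your step (i). You claim that $\Gamma_1(G)$ has a connected component containing all elements of even order ``because any two involutions generate a dihedral or elementary abelian group''; this is not an argument — any two involutions in any group generate a dihedral group, and that says nothing about whether they commute or are joined by a path in the commuting graph. Since ${}^2G_2(q)$ has a \emph{single} class of involutions, Theorem~\ref{theorem11} is unavailable, and ${}^2G_2(q)$ is explicitly excluded from Theorem~\ref{theorem111}, so there is no off-the-shelf statement to fall back on here. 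The paper does something genuinely different: it works directly in $\Gamma_2(G)$ (not $\Gamma_1(G)$) and uses L\"uneburg's $2$-$(q^3+1,q+1,1)$ design on the set of Sylow $3$-subgroups, with an involution incident to the $q+1$ Sylow $3$-subgroups it normalizes. Given involutions $\iota,\iota'$, one picks $P,P'$ incident to them, uses that ${\bf N}_G(P)\cong E_q^{1+1+1}{:}(q-1)$ induces a strongly connected subgraph of $\Gamma_2(G)$ and that ${\bf N}_G(P)\cap{\bf N}_G(P')$ is cyclic of order $q-1$, and walks from $\iota$ to $\iota'$ through these two normalizers. This is the step you must supply; without it the proof does not close.

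A second, smaller problem is your treatment of the unipotent elements: the claim ``$u\mapsto_2\iota$ for a suitable involution since ${\bf C}_G(u)$ meets $\mathcal I$'' fails for the elements of order $9$, whose centralizers in ${}^2G_2(q)$ are $3$-groups and hence contain no element of even order. In the paper these elements are absorbed precisely by the strong connectivity of the subgraph induced on ${\bf N}_G(P)$, which contains both the Sylow $3$-subgroup and a cyclic complement of even order $q-1$; so once you have carried out the design argument above, the unipotent elements come for free and no separate centralizer argument is needed (or correct).
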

\begin{proof}
Let $G:={}^2G_2(q)$. Observe that in this proof we may not use Theorem~\ref{theorem111} and Corollary~\ref{cor}, because the Ree group ${}^2G_2(q)$ is an exception in Theorem~\ref{theorem111} and Corollary~\ref{cor}.

Let $\iota$ be an involution of $G$ and let $\mathcal{B}$ be the conjugacy class consisting of involutions. We claim that $\mathcal{B}$ is contained in a unique strongly connected component of $\Gamma_2(G)$. To prove this we use some geometric properties of $G$, see~\cite{Luneburg}. 

Let $\mathcal{P}$ be the collection of all Sylow $3$-subgroups. We consider the incidence relation $\mathcal{I}\subseteq \mathcal{P}\times\mathcal{B}$, where $(P,\sigma)\in\mathcal{I}$ if and only if $\sigma\in {\bf N}_G(P)$. From~\cite{Luneburg}, we see that $(\mathcal{P},\mathcal{B})$ is a $2$-$(q^3+1,q+1,1)$ design. This means that $\mathcal{P}$ has cardinality $q^3+1$, each element of $\mathcal{B}$ is incident to $q+1$ elements of $\mathcal{P}$ and, given any two distinct elements of $\mathcal{P}$, there exists a unique element of $\mathcal{B}$ incident with both. 

Observe that ${\bf N}_G(P)\cong E_{q}^{1+1+1}:(q-1)$ for each element $P\in \mathcal{P}$. From this, it is not hard to verify that the subgraph induced by $\Gamma_2(G)$ on ${\bf N}_G(P)$ is strongly connected. Now, let $\iota,\iota'\in\mathcal{B}$ and let $b_\iota:=\{P\in\mathcal{P}\mid (P,\iota)\in\mathcal{I}\}$ and $b_{\iota'}:=\{P\in\mathcal{P}\mid (P,\iota')\in\mathcal{I}\}$. In particular, $b_\iota$ and $b_{\iota'}$ are two sets of cardinality $q+1$.  Now, let $P\in b_\iota$ and $P'\in b_{\iota'}$. Now, $\iota\in {\bf N}_G(P)$ and $\iota'\in{\bf N}_G(P')$ and ${\bf N}_G(P)\cap {\bf N}_G(P')$ is cyclic of order $q-1$ (actually, when $P=P'$, we have ${\bf N}_G(P)={\bf N}_G(P')$, but this degenerate case is of no concern to us). Therefore, we may reach $\iota'$ from $\iota$ in $\Gamma_2(G)$ using the strongly connectivity of the subgraph induced on ${\bf N}_G(P)$ and on ${\bf N}_G(P')$. Thus $\iota\mapsto_2\iota'$ and $\iota'\mapsto_2\iota$.

From the previous paragraphs it immediately follows that all elements of even order of $G$ are in the same strongly connected component of $\Gamma_2(G)$. Now, using the subgroup structure of $G$, see~\cite[Table~8.43]{bray}, to conclude that $\Gamma_2(G)$ is strongly connected it suffices to prove that, for each $\varepsilon\in \{1,-1\}$, there exists a non-identity element $g$ having order a divisor of $q+\varepsilon\sqrt{3q}+1$ such that $g\mapsto_2\iota$. But this is exactly Lemma~\ref{l:engel2G2}.
\end{proof}

\subsubsection{The Ree groups of type ${}^2F_4(q)$}\label{sec:2F4}
Let $q$ be a prime power with  $q:=2^{2k+1}$, for some positive integer $k\ge 1$. (The strong connectivity of the Engel graph of Tit's group ${}^2F_4(2)'$ can be checked with the auxiliary help of a computer. In fact, $\Gamma_2({}^2F_4(2)')$ is strongly connected.) 
Recall that $${}^2F_4(q)=q^{12}(q^6+1)(q^4-1)(q^3+1)(q-1).$$

The maximal subgroups of ${}^2F_4(q)$ have been classified by Malle~\cite{Malle}. We let
\begin{equation}\label{structure:H}
H\cong q^{10}:({}^2B_2(q)\times (q-1))
\end{equation}
be one of the maximal parabolic subgroups of ${}^2F_4(q)$.
\begin{lemma}[Theorem~5, \cite{Vasilev}]\label{2F4lemma0}
When $k>0$, the subdegrees for the primitive action of ${}^2F_4(q)$ on the right cosets of $H$ are 
$1$, $q(q^2+1)$, $q^{10}$, $q^4(q^2+1)$ and $q^7(q^2+1)$. In particular, the minimum non-trivial subdegree is $q(q^2+1)$.
\end{lemma}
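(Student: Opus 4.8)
The plan is to realise the primitive action of $G={}^2F_4(q)$ on the right cosets of $H$ as the action of $G$ on the point set of the Ree--Tits generalized octagon $\mathcal{O}$ of order $(q,q^2)$, and then to read off the subdegrees as the sizes of the distance spheres in the incidence graph of $\mathcal{O}$. First I would recall, from Tits's construction of $\mathcal{O}$ together with the classification of maximal subgroups in \cite{Malle} (see also \cite{bray}), that $G$ has exactly two classes of maximal parabolic subgroups, namely the stabilizers of a point and of a line of $\mathcal{O}$, and that the parabolic of shape $q^{10}:({}^2B_2(q)\times(q-1))$ in \eqref{structure:H} is the stabilizer $H=G_p$ of a point $p$ lying on $q^2+1$ lines (the unipotent radical has order $q^{10}$ and the Levi factor is ${}^2B_2(q)\times(q-1)$; that this is the ``point'' side rather than the ``line'' side will be forced by the value of the smallest subdegree). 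Since $G$ is flag-transitive on $\mathcal{O}$, the stabilizer $H$ is transitive on the set of points at each fixed even distance from $p$ in the (bipartite, girth~$16$, diameter~$8$) incidence graph; hence the suborbits of $G$ on $G/H$ are exactly the spheres $S_{2i}(p):=\{\,p'\text{ a point of }\mathcal{O}\mid d(p,p')=2i\,\}$ for $i=0,1,2,3,4$.

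It then remains to compute the cardinalities $|S_{2i}(p)|$. In a generalized octagon of order $(s,t)$ every line carries $s+1$ points, every point lies on $t+1$ lines, and the incidence graph has girth $16$, so the ball of radius $7$ about $p$ is a tree; walking outward --- each point of $S_{2i}(p)$ with $i<4$ lies on $t+1$ lines, one ``towards'' $p$ and $t$ ``away'', each of the latter carrying $s$ fresh points of $S_{2i+2}(p)$, and each point of $S_{2i+2}(p)$ is reached through a unique point of $S_{2i}(p)$ --- one obtains the classical distance distribution
\[
|S_0|=1,\qquad |S_2|=s(t+1),\qquad |S_4|=s^2t(t+1),\qquad |S_6|=s^3t^2(t+1),\qquad |S_8|=s^4t^3 ,
\]
the value for $|S_8|$ also using that opposite points of $\mathcal{O}$ are joined by exactly $t+1$ geodesics. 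Substituting $(s,t)=(q,q^2)$ gives the five values $1$, $q(q^2+1)$, $q^4(q^2+1)$, $q^7(q^2+1)$, $q^{10}$. As a sanity check, their sum equals $(1+q)(1+q^3)(1+q^6)$, which is $|G:H|$ (from $|H|=q^{12}(q^2+1)(q-1)^2$ and the order of ${}^2F_4(q)$), so there are no further suborbits. Finally, for $q\ge 2$ one has $q(q^2+1)<q^4(q^2+1)<q^7(q^2+1)$ and $q(q^2+1)=q^3+q<q^{10}$, so the minimum non-trivial subdegree is $q(q^2+1)$.

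The step I expect to be the main obstacle is the geometric input: pinning down that the parabolic in \eqref{structure:H} is the stabilizer of a point lying on $q^2+1$ lines (and not, say, a line carrying $q+1$ points), and that $G$ acts distance-transitively rather than merely so that each sphere is a union of suborbits. Both follow from the known structure of the two maximal parabolics of ${}^2F_4(q)$ together with flag-transitivity of $G$ on $\mathcal{O}$, so once these facts are cited the rest is bookkeeping. If one prefers to avoid the incidence geometry, an alternative is to compute the double coset space $H\backslash G/H$ directly via the Bruhat decomposition: the relative Weyl group of ${}^2F_4$ is dihedral of order $16$, and for the maximal parabolic $H=P_J$ with $|W_J|=2$ the double cosets $W_J\backslash W/W_J$ are represented by $1$ and the four alternating words of odd length at most $7$, giving rank~$5$; the index $[HwH:H]$ is a power of $q$ times $[W_J:W_J\cap {}^wW_J]$, readable off from the root-subgroup decomposition of $H$, and this reproduces the same five subdegrees. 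I would present the octagon argument as the main line and keep the Bruhat-decomposition count as a cross-check.
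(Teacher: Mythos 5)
Your argument is correct, but it takes a genuinely different route from the paper: the paper disposes of this lemma in one line by citing Vasil\cprime ev's computation of the subdegrees of the minimal permutation representations of the twisted exceptional groups (\cite[Theorem~5, page~19]{Vasilev}), whereas you rederive the subdegrees from scratch via the Ree--Tits generalized octagon of order $(q,q^2)$. Your bookkeeping checks out: the index $|G:H|=(q+1)(q^3+1)(q^6+1)$ (computed from \eqref{structure:H}) singles out $H$ as a point stabilizer rather than a line stabilizer, the sphere sizes $1$, $s(t+1)$, $s^2t(t+1)$, $s^3t^2(t+1)$, $s^4t^3$ with $(s,t)=(q,q^2)$ give exactly the five claimed values and sum to $|G:H|$, and the identification of spheres with suborbits is justified --- most cleanly not by flag-transitivity alone (which in general does not force distance-transitivity) but by your Bruhat-decomposition cross-check: $G$ acts strongly transitively on its rank-$2$ building, so the $(H,H)$-double cosets are parametrized by $W_J\backslash W/W_J$ with $W$ dihedral of order $16$ and $|W_J|=2$, giving rank $5$, whence each of the five nonempty distance classes is a single suborbit. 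I would therefore present the Bruhat count as the primary justification for distance-transitivity rather than as a cross-check. What your approach buys is a self-contained, verifiable proof tied to the geometry the paper already exploits elsewhere (the parabolic $H$ and its unipotent radical in Lemma~\ref{l:engel2F4}); what the paper's citation buys is brevity and uniformity, since the same reference also supplies the subdegree data used for ${}^3D_4(q)$ in Lemma~\ref{3D4lemma0}.
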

\begin{proof}
This follows from~\cite[Theorem~5, page~19]{Vasilev}.
\end{proof}

Given $\varepsilon\in \{1,-1\}$, we let $$\mathcal{C}_\varepsilon:=\{g\in G\mid {\bf o}(g)>1, {\bf o}(g)\hbox{ divides } q^2+\varepsilon\sqrt{2q^3}+q+\varepsilon\sqrt{2q}+1\}$$
and we let $C_\varepsilon$ be a maximal torus of $G$ of order $q^2+\sqrt{2q^3}+q^2+\varepsilon\sqrt{2q}+1$. 
It is not hard to verify that Hypotheses~0--4 are satisfied with
respect to this choice of $H$, $\mathcal{C}_\varepsilon$  and $C_\varepsilon$.
We prove the following strong result.
\begin{lemma}\label{2F4lemma1}
For $q>2$, we have $H\mathcal{C}_\varepsilon=G\setminus H$, where $H\mathcal{C}_\varepsilon:=\{hx\mid h\in H,x\in \mathcal{C}_\varepsilon\}$.
\end{lemma}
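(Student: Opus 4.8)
The plan is to follow verbatim the strategy already used for Lemmas~\ref{3D4lemma1} and~\ref{2G2lemma1}: bound the number $c(\Delta(H,\mathcal{C}_\varepsilon))$ of connected components of the auxiliary graph $\Delta(H,\mathcal{C}_\varepsilon)$ from below by means of the Cauchy--Schwarz estimate~\eqref{eureka}, and then use Lemma~\ref{l:components}~\eqref{components3} together with the subdegree computation in Lemma~\ref{2F4lemma0} to upgrade this inequality to the equality $c(\Delta(H,\mathcal{C}_\varepsilon))=|G:H|-1$; the desired identity $H\mathcal{C}_\varepsilon=G\setminus H$ is then exactly the equality case of Lemma~\ref{l:components}~\eqref{components2}. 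As a first step I would record the elementary index computation: from~\eqref{structure:H} and $|{}^2B_2(q)|=q^2(q^2+1)(q-1)$ one gets $|H|=q^{12}(q^2+1)(q-1)^2$, hence $|G:H|=(q^6+1)(q^3+1)(q+1)$, a polynomial of degree $10$ in $q$, whereas by Lemma~\ref{2F4lemma0} the minimum non-trivial subdegree is $q(q^2+1)$, of degree $3$.

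The heart of the proof is the evaluation of the right-hand side of~\eqref{eureka}, that is, the decomposition of the permutation character $1_H^G$ together with the quantities $|C_\varepsilon|\langle\chi_{|C_\varepsilon},1\rangle-\chi(1)$ for its constituents. Since $H$ is a maximal parabolic subgroup of $G$, the character $1_H^G$ is a sum of \emph{unipotent} irreducible characters, and by Lemma~\ref{2F4lemma0} it has rank $5$; I would read off its explicit decomposition and the degrees of its constituents from Malle's determination of the characters of ${}^2F_4(q)$~\cite{Malle}. Because every non-identity element of $C_\varepsilon$ is regular (this is part of the verification of the standing hypotheses on $H$, $\mathcal{C}_\varepsilon$, $C_\varepsilon$), Lemma~\ref{TZ} applies to each constituent $\chi$ and gives $|C_\varepsilon|\langle\chi_{|C_\varepsilon},1\rangle-\chi(1)=(|C_\varepsilon|-1)\eta_\chi$ with $\eta_\chi\in\{0,1,-1\}$; in particular $\eta_\chi=0$ as soon as $|C_\varepsilon|$ divides $\chi(1)$. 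As in~\eqref{eq3D429}, the trivial constituent contributes $\eta=1$ and I expect exactly one further constituent, of large degree, to contribute $\eta=-1$, so that, setting $f_\varepsilon(q):=\sum_\chi\langle\chi,1_H^G\rangle\,\eta_\chi^2/\chi(1)$, one has $f_\varepsilon(q)=1+O(q^{-c})$ with a large exponent $c$ (the error term being the reciprocal of the degree of the $\eta=-1$ constituent).

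Feeding the values $(|C_\varepsilon|-1)\eta_\chi$ into~\eqref{eureka} then gives $c(\Delta(H,\mathcal{C}_\varepsilon))\ge|G:H|/f_\varepsilon(q)$, and an elementary estimate shows that $|G:H|\,(f_\varepsilon(q)-1)<1+q(q^2+1)$ for every $q=2^{2k+1}$ with $k\ge1$ (so that, the range of $q$ being already bounded away from $2$, no separate small cases arise); consequently $c(\Delta(H,\mathcal{C}_\varepsilon))>|G:H|-1-q(q^2+1)$. By Lemma~\ref{l:components}~\eqref{components3} and Lemma~\ref{2F4lemma0} this forces $c(\Delta(H,\mathcal{C}_\varepsilon))=|G:H|-1$, whence $H\mathcal{C}_\varepsilon=G\setminus H$ by Lemma~\ref{l:components}~\eqref{components2}.

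The step I expect to be the main obstacle is the one involving the character theory of ${}^2F_4(q)$: one must pin down the precise multiplicities $\langle\chi,1_H^G\rangle$ and, above all, which constituents have $\eta_\chi\ne0$ and how large their degrees are, since the final numerical comparison goes through only if every constituent $\chi\ne 1$ with $\eta_\chi\ne0$ has degree comfortably larger than $|G:H|/(q(q^2+1))\sim q^7$. Should some small-degree constituent carry $\eta_\chi\ne0$, the estimate would fail and one would have to argue more carefully (compare the ${}^2G_2$ case, where non-unipotent constituents of $1_H^G$ occur). A secondary point to be checked is precisely the regularity of all non-identity elements of $C_\varepsilon$, which is what licenses the application of Lemma~\ref{TZ}.
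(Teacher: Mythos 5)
Your proposal follows the paper's proof essentially verbatim: the paper likewise decomposes $1_H^G$ into five unipotent constituents $\chi_1+\chi_2+\chi_9+\chi_{10}+\chi_{11}$ using Malle's tables of unipotent characters of ${}^2F_4(q)$, finds that exactly one non-trivial constituent ($\chi_9$ for $\varepsilon=-1$, $\chi_{10}$ for $\varepsilon=1$, each of degree $\sim q^{10}/4$) carries $\eta_\chi=-1$, and then combines~\eqref{eureka} with Lemma~\ref{l:components}\eqref{components3} and the minimal subdegree $q(q^2+1)$ from Lemma~\ref{2F4lemma0} exactly as you describe. The points you flag as needing verification (regularity of the non-identity elements of $C_\varepsilon$ and the identification of the $\eta_\chi=-1$ constituent) are precisely what the paper settles via~\cite{Mallec}, so the plan is sound and matches the paper's argument.
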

\begin{proof}
We use the unipotent characters of $G={}^2F_4(q)$, see~\cite{Mallec}. Let $1_H^G$ be the permutation character for the primitive action of $G$ on the right cosets of $H$. Using~\cite{Mallec} and the notation therein, we have
\begin{equation}
\label{2F4eq:1}
1_H^G=\chi_1+\chi_2+\chi_9+\chi_{10}+\chi_{11} 
\end{equation}
and 
\begin{align*}\chi_1(1)&=1,\,\chi_2(1)=q(q^2-q+1)(q^4-q^2+1),\\
\chi_9(1)&=\frac{1}{4}q^2(q+1)^2(q-\sqrt{2q}+1)^2(q^2-q+1)(q^2+\sqrt{2q^3}+q+\sqrt{2q}+1),\\
\chi_{10}(1)&=\frac{1}{4}q^2(q+1)^2(q+\sqrt{2q}+1)^2(q^2-q+1)(q^2-\sqrt{2q^3}+q-\sqrt{2q}+1),\\
\chi_{11}(1)&=\frac{1}{2}q^2(q^2+1)^2(q^4-q^2+1).
\end{align*}
In particular, the rank of the action of $G$ on the right cosets of $H$ is $5$.

Let $\chi$ be an irreducible constituent of $1_H^G$. The quantity $|C_\varepsilon|\langle\chi_{|C_\varepsilon},1\rangle-\chi(1)$ can be computed using the work of Malle~\cite[Tabelle~8]{Mallec} on the unipotent irreducible characters of ${}^2F_4(q)$. We have
\begin{align}\label{eq2F441}
|C_\varepsilon|\langle\chi_{|C_\varepsilon},1\rangle-\chi(1)=
\begin{cases}
|C_\varepsilon|-1&\textrm{when }\chi=\chi_1,\\
-|C_\varepsilon|+1&\textrm{when }\chi=\chi_9 \textrm{ and }\varepsilon=-1,\\
-|C_\varepsilon|+1&\textrm{when }\chi=\chi_{10} \textrm{ and }\varepsilon=1,\\
0&\textrm{otherwise}.
\end{cases}
\end{align}
As usual, observe that,~\eqref{eq2F441} is perfectly in line with Lemma~\ref{TZ}. Indeed, each non-identity element of $C_\varepsilon$ is regular and the
constituents of $1_H^G$ are unipotent, because $H$ is parabolic.

From~\eqref{eq2F441} and a computation, we find
\begin{align}\label{eq2F4111}
\sum_{\chi\in \mathrm{Irr}(G)}\frac{\langle\chi,1_H^G\rangle}{\chi(1)}\left(|C_\varepsilon|\langle \chi_{|C_\varepsilon},1\rangle-\chi(1)\right)^2&=(|C_\varepsilon|-1)^2\left(1+\frac{4}{q^2(q+1)^2(q^2-q+1)(q+\varepsilon\sqrt{2q}+1)^2(q^2-\varepsilon\sqrt{2q^3}+q-\varepsilon\sqrt{2q}+1)}\right)\\\nonumber
&=(|C_\varepsilon|-1)^2f_\varepsilon(q),
\end{align} 
where for simplicity, we denote by $f_\varepsilon(q)$ the ugly factor appearing on the right hand side of~\eqref{eq2F4111}.

Summing up, from~\eqref{eureka} and~\eqref{eq2F441}, we obtain
\begin{equation*}
c(\Delta(H,\mathcal{C}))
\ge
\frac{|G:H|}{f_\varepsilon(q)}> |G:H|-1-q(q^2+1),
\end{equation*}
where the last inequality follows with an elementary computation using the function $f_\varepsilon(q)$ and using the fact that $|G:H|=(q^6+1)(q^3+1)(q+1)$.
By Lemma~\ref{l:components}~\eqref{components3} and Lemma~\ref{2F4lemma0}, we have $c(\Delta(H,\mathcal{C}))=|G:H|-1$.

\end{proof}

\begin{lemma}\label{l:engel2F4}
Given $\varepsilon\in \{1,-1\}$, when $q>2$, there exists a non-identity element $g_\varepsilon\in G={}^2F_4(q)$ having order a divisor of $q^2+\varepsilon\sqrt{2q^3}+q+\varepsilon\sqrt{2q}+1$ and a non-identity unipotent element $\iota$ with  $[g_\varepsilon,\iota,\iota]=1$.  
\end{lemma}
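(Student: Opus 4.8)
The plan is to imitate, step by step, the proof of Lemma~\ref{l:engel3D4}, with ${}^3D_4(q)$ replaced by $G={}^2F_4(q)$ and the Steinberg parabolic replaced by the maximal parabolic $H$ of~\eqref{structure:H}. First I would write $H=L_H\ltimes U_H$, where $U_H$ is the unipotent radical of $H$, of order $q^{10}$, and $L_H\cong {}^2B_2(q)\times(q-1)$ is a Levi complement. Using the description of the (twisted) root subgroups of ${}^2F_4(q)$ and of the unipotent radicals of its maximal parabolic subgroups (see~\cite{Malle} and the Chevalley-type commutator relations), the first task is to single out a root subgroup $Z\le{\bf Z}(U_H)$ lying ``at the top'' of $U_H$ --- the analogue of $X_{3\alpha+2\beta}$ in the ${}^3D_4$ case --- with the two decisive properties that $H$ normalises $Z$ (so $Z^H=Z$) and ${\bf N}_G(Z)=H$ (so $Z^g=Z$ if and only if $g\in H$).

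The second task is to produce an element $n\in G$ with $Z^n=Z'$, where $Z'$ is a root subgroup with $Z'\le U_H$ and $Z'\ne Z$; this is the analogue of the Weyl-group element $n_\beta$, which carries $X_{3\alpha+2\beta}$ to $X_{3\alpha+\beta}\le U_H$ for ${}^3D_4$. Concretely one inspects the root poset of ${}^2F_4$ restricted to the roots occurring in $U_H$ and applies a suitable fundamental reflection to the top root of $U_H$, landing on another root still present in $U_H$.

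Granting these two structural facts, the conclusion follows as in Lemma~\ref{l:engel3D4}. Indeed, since $Z^H=Z$ we have $Z^{\mathcal{C}_\varepsilon}=Z^{H\mathcal{C}_\varepsilon}$, and by Lemma~\ref{2F4lemma1} this equals $Z^{G\setminus H}$; since ${\bf N}_G(Z)=H$, the latter set is precisely the collection of all $G$-conjugates of $Z$ different from $Z$, and in particular it contains $Z'$. Hence there is $g_\varepsilon\in\mathcal{C}_\varepsilon$ with $Z^{g_\varepsilon}=Z'$. Take any non-identity element $\iota\in Z$; it is a non-identity unipotent element of $G$, and $\iota^{g_\varepsilon}\in Z'\le U_H$, so $[\iota,g_\varepsilon]=\iota^{-1}\iota^{g_\varepsilon}\in U_H$. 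Because $\iota\in Z\le{\bf Z}(U_H)$, the element $\iota$ commutes with $[\iota,g_\varepsilon]$, hence with $[g_\varepsilon,\iota]=[\iota,g_\varepsilon]^{-1}$, and therefore $[g_\varepsilon,\iota,\iota]=1$. The hypothesis $q>2$ enters only through Lemma~\ref{2F4lemma1}.

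The main obstacle is entirely in the two structural tasks above: identifying inside the twisted group ${}^2F_4(q)$ a root subgroup $Z$ of ${\bf Z}(U_H)$ whose full normaliser in $G$ is exactly $H$ (and not a larger subgroup), and exhibiting a Weyl-group element conjugating $Z$ into a second root subgroup still contained in $U_H$. For ${}^3D_4(q)$ both points are immediate from the $G_2$ root system, but for ${}^2F_4(q)$ the Frobenius twist together with the interaction of long and short roots makes the relevant commutator relations and the shape of the root poset of $U_H$ more delicate, so the bookkeeping with the root subgroups of ${}^2F_4(q)$ is where the real care is needed.
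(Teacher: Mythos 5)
Your reduction is exactly the paper's: both arguments take a root subgroup $Z$ in the centre of the unipotent radical with ${\bf N}_G(Z)=H$, use Lemma~\ref{2F4lemma1} to write $Z^{\mathcal{C}_\varepsilon}=Z^{G\setminus H}=\{Z^g\mid g\in G\}\setminus\{Z\}$, hit a second root subgroup $Z'\le U_H$ with some $g_\varepsilon\in\mathcal{C}_\varepsilon$, and conclude $[g_\varepsilon,\iota,\iota]=1$ for $1\ne\iota\in Z$ because $[\iota,g_\varepsilon]\in U_H$ is centralised by $\iota$. That final deduction is correct as you state it. However, the two ``structural tasks'' you defer are not a side issue: they are the entire technical content of the proof, and as written your argument only establishes the lemma conditionally on them. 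The paper does not re-derive them from the root system either; it imports them from the literature on ${}^2F_4(q)$: the Sylow $2$-subgroup $U=\{\alpha_1(t)\}\cdots\{\alpha_{12}(t)\}$ of $H$ has centre equal to the root subgroup $X_{12}=\{\alpha_{12}(t)\mid t\in\mathbb{F}_q\}$ of order $q$ (Ree~\cite{Ree2} and Shinoda~\cite{Shinoda,Shinoda1}), $H$ normalises $X_{12}$, and by~\cite[Table~3]{himstedt1} there is a Weyl-group element $n_a$ with $X_{12}^{n_a}=X_{11}\le U_H$. So your plan is sound and matches the paper's route step for step, but to be a complete proof it needs these specific facts (or the references supplying them) rather than the assertion that they can be found; your own closing paragraph correctly identifies this as the point where the real work lies.
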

\begin{proof}
We may write $H=L_H\rtimes U_H$, where $L_H\cong {}^2B_2(q)\times (q-1)$ is the Levi complement of the parabolic subgroup $H$ and $U_H$ is the unipotent radical. We now collect some information on $U_H$, see~\cite{Ree2,Shinoda}. Let $U$ be a Sylow $2$-subgroup of $H$. From~\cite{Ree2,Shinoda,Shinoda1} and from the notation wherein, we have $$U=
\{\alpha_1(t)\mid t\in \mathbb{F}_q\}\cdot
\{\alpha_2(t)\mid t\in \mathbb{F}_q\}\cdots
\{\alpha_{12}(t)\mid t\in \mathbb{F}_q\}.$$
 From~\cite[Section~4]{Ree2}, the center of $U$ has order $q$ and consists of the root subgroup $X:=\{\alpha_{12}(t)\mid t\in\mathbb{F}_q\}$. (The element $\alpha_{12}(t)$ is defined in~\cite[page~407]{Ree2}.)  
 
 From~\cite[Table~3]{himstedt1} see also~\cite[(2.2)]{Shinoda1}, we see that there exists an element $n_a$ in the Weyl group of $G$ with 
 $\{\alpha_{12}(t)\mid t\in \mathbb{F}_q\}^{n_a}=\{\alpha_{11}(t)\mid t\in \mathbb{F}_q\}$.
  Set $X_{12}:=\{\alpha_{12}(t)\mid t\in \mathbb{F}_q\}$ 
  and 
  $X_{11}:=\{\alpha_{11}(t)\mid t\in \mathbb{F}_q\}$.

As $X_{12}^H=X_{12}$ because $H$ normalizes $X_{12}$, from Lemma~\ref{2F4lemma1}, we have
\begin{align}\label{eq:2F42F4}
X_{12}^{\mathcal{C}_\varepsilon}
=X_{12}^{H\mathcal{C}_\varepsilon}=
X_{12}^{G\setminus H}=
\{X_{12}^g\mid g\in G\}\setminus\{X_{12}\}.
\end{align}

Now, from~\eqref{eq:2F42F4}, there exists $c\in \mathcal{C}_\varepsilon$ with $\alpha_{12}(t)^c\in X_{11}\le U_H$. Let $\iota=\alpha_{12}(t)$. Then $\iota^c\in U_H$ and $[\iota,c]=\iota^{-1}\iota^c\in U_H$. Since $\iota\in {\bf Z}(U_H)$, we deduce that 
$\iota $ commutes with $[\iota,c]$, that is, $[c,\iota,\iota]=1$.
\end{proof}

\begin{proposition}
The directed graph $\Gamma_2({}{}^2F_4(q))$ is strongly connected.
\end{proposition}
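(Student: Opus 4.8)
The plan is to adapt, essentially line by line, the argument already carried out above for ${}^2G_2(q)$; the only genuinely new point will be to show by hand that all elements of even order of $G:={}^2F_4(q)$ lie in a single strongly connected component of $\Gamma_2(G)$, since ${}^2F_4(q)$ is one of the groups excluded in Theorem~\ref{theorem111} and in Corollary~\ref{cor}. The Tits group ${}^2F_4(2)'$ (the case $q=2$) is disposed of by the direct computation recorded just before Lemma~\ref{2F4lemma0}, so from now on $q=2^{2k+1}$ with $k\ge 1$, which is exactly the range covered by Lemmas~\ref{2F4lemma1} and~\ref{l:engel2F4}. Two elementary observations will be used throughout: first, since $q$ is even and the $2$-part of $|G|$ equals $q^{12}$, the elements of $G$ of even order are precisely the non-trivial unipotent elements, while by Jordan decomposition every element of $G$ of odd order is semisimple, hence conjugate into a maximal torus; second, $G$ has a unique conjugacy class of involutions --- the long-root and short-root involution classes of $F_4$ being fused by the defining twist --- which is in any case forced by ${}^2F_4(q)$ appearing as an exception in Theorem~\ref{theorem111}.

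I would first prove that all involutions of $G$ lie in one strongly connected component $\Omega$ of $\Gamma_2(G)$; this then puts every element of even order in $\Omega$, because a non-trivial unipotent $g$ commutes with the involution $g^{{\bf o}(g)/2}\in\langle g\rangle$ and so is $\Gamma_1$-adjacent, and hence $\Gamma_2$-adjacent, to an involution. As $G$ has a single class of involutions, it suffices to join any two involutions $\iota,\iota'$ by directed paths of $\Gamma_2(G)$ in both directions, and for this I would mimic the incidence-geometric argument used for ${}^2G_2(q)$: using the flag-transitive action of $G$ on its generalized octagon (equivalently, the subgroup structure of~\cite{Malle}), one fixes a suitable $G$-conjugacy class $\mathcal{P}$ of subgroups $P$ --- the natural candidate being the class of long root subgroups, whose normalizers are the maximal parabolic $H$ of Lemma~\ref{l:engel2F4} --- and checks that (i) for each $P\in\mathcal{P}$ the subgraph of $\Gamma_2(G)$ induced on ${\bf N}_G(P)$ is strongly connected (${\bf N}_G(P)$ is soluble, with a normal $p$-complement acting in a Frobenius-like fashion, so Lemmas~\ref{norm} and~\ref{2comm} suffice), (ii) every involution of $G$ normalizes at least one member of $\mathcal{P}$, and (iii) the normalizers of any two members of $\mathcal{P}$ meet in a non-trivial cyclic subgroup. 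Given these, one picks $P,P'\in\mathcal{P}$ normalized by $\iota,\iota'$ respectively and a non-trivial $w\in{\bf N}_G(P)\cap{\bf N}_G(P')$, and concatenates a directed path from $\iota$ to $w$ inside ${\bf N}_G(P)$ with a directed path from $w$ to $\iota'$ inside ${\bf N}_G(P')$.

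For the elements of odd order, the classification of the connected components of the prime graph of $G$ in~\cite{KoMa} shows that every prime dividing $q-1$, $q+1$, $q^2+1$ or $q^2-q+1$ is joined to $2$, the remaining components being $\pi(q^2+\sqrt{2q^3}+q+\sqrt{2q}+1)$ and $\pi(q^2-\sqrt{2q^3}+q-\sqrt{2q}+1)$; thus the only elements of $G$ of odd order whose centraliser has odd order are those of $\mathcal{C}_1\cup\mathcal{C}_{-1}$, in the notation of the setup preceding Lemma~\ref{2F4lemma1}, where $\mathcal{C}_\varepsilon$ consists of the non-trivial elements conjugate into the cyclic maximal torus $C_\varepsilon$ of order $q^2+\varepsilon\sqrt{2q^3}+q+\varepsilon\sqrt{2q}+1$. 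Fix $\varepsilon\in\{1,-1\}$. By Theorem~\ref{theorem44}, $C_\varepsilon\setminus\{1\}$ is a complete connected component of the commuting graph $\Gamma_1(G)$, hence (as $\Gamma_1(G)\subseteq\Gamma_2(G)$) it lies in a single strongly connected component of $\Gamma_2(G)$, so it is enough to show that one element of $C_\varepsilon$ lies in $\Omega$. Let $g$ generate $C_\varepsilon$: then ${\bf N}_G(\langle g\rangle)={\bf N}_G(C_\varepsilon)$ is, by~\cite{Malle}, a Frobenius group with complement of even order (cyclic of order $12$), so it contains an involution $z$, and $z\mapsto_2 g$ by Lemma~\ref{norm}; since $z\in\Omega$, the element $g$ --- and, $C_\varepsilon$ being abelian, the whole complete subgraph $C_\varepsilon\setminus\{1\}$ --- is reachable from $\Omega$ in $\Gamma_2(G)$. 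Conversely, Lemma~\ref{l:engel2F4} provides an element $g_\varepsilon\in\mathcal{C}_\varepsilon$ and a non-trivial unipotent $\iota$ with $[g_\varepsilon,\iota,\iota]=1$, so (after replacing $g_\varepsilon$ by a conjugate lying in $C_\varepsilon$) $C_\varepsilon\setminus\{1\}$ reaches $\Omega$ via $g_\varepsilon\mapsto_2\iota$. Therefore $C_\varepsilon\setminus\{1\}\subseteq\Omega$, and likewise all of $\mathcal{C}_\varepsilon\subseteq\Omega$. Since every element of $G$ is of even order (in $\Omega$), or of odd order with centraliser of even order (hence $\Gamma_1$-adjacent, so $\Gamma_2$-adjacent, to an involution of $\Omega$), or in $\mathcal{C}_1\cup\mathcal{C}_{-1}$, we conclude that $\Gamma_2(G)=\Omega$ is strongly connected.

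The main obstacle is the first step: with Theorem~\ref{theorem111} unavailable, one has to prove directly that all involutions of ${}^2F_4(q)$ lie in a single strongly connected component of $\Gamma_2(G)$, and this rests on identifying the correct family $\mathcal{P}$ --- the analogue of the Ree unital exploited for ${}^2G_2(q)$ --- and on verifying the connectivity and intersection properties (i)--(iii) of the corresponding subgroup normalizers. Everything else is a routine transcription of the ${}^3D_4(q)$ and ${}^2G_2(q)$ proofs, with Lemmas~\ref{l:engel2F4} and~\ref{norm} delivering the two directions of reachability for the exceptional tori.
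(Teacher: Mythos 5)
Your overall architecture coincides with the paper's: the two exceptional tori $C_{\pm1}$ are handled exactly as in the paper, namely $\iota\mapsto_2 g$ for an involution $\iota\in{\bf N}_G(C_\varepsilon)$ via Lemma~\ref{norm}, and $g_\varepsilon\mapsto_2\iota$ via Lemma~\ref{l:engel2F4}; and for the even-order elements the family $\mathcal{P}$ you propose (long root subgroups, whose normalizers are the conjugates of the parabolic $H$ of~\eqref{structure:H}) is precisely the family the paper uses. Your conditions (i)--(iii) are a geometric repackaging of the paper's two assertions that the subgraph induced by $\Gamma_2(G)$ on $H$ is strongly connected and that $H\cap H^g\ne 1$ for every $g\in G$; the paper gets the latter immediately from Lemma~\ref{2F4lemma0}, since none of the subdegrees equals $|H|$, so no point stabilizer in the coset action is trivial. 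Your treatment of the remaining odd-order elements through Theorem~\ref{theorem44} and the prime graph is also sound and is essentially what the paper leaves implicit in ``using the subgroup structure of $G$''.

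Two of your supporting claims are wrong, one harmlessly and one not. First, ${}^2F_4(q)$ has two conjugacy classes of involutions, not one (see already the classes $2A$, $2B$ of ${}^2F_4(2)'$ in~\cite{atlas}); being listed as an exception in Theorem~\ref{theorem111} does not force a unique class. This does not damage your argument, since joining an arbitrary pair of involutions is what is needed in any case (and, with two classes, Theorem~\ref{theorem11} would even hand you the connectivity of the even-order part of the commuting graph directly). Second, and more seriously, ${\bf N}_G(P)\cong H\cong q^{10}{:}({}^2B_2(q)\times(q-1))$ is \emph{not} soluble and has no ``normal $p$-complement acting in a Frobenius-like fashion'', so your stated route to property (i) --- that the subgraph induced on ${\bf N}_G(P)$ is strongly connected --- collapses as written. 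The property itself is true (the paper asserts it with the same brevity), but a correct verification has to exploit the unipotent radical, e.g.\ routing every element of $H$ through the centre of $U_H$, which is normal in $H$ and on which Lemmas~\ref{norm} and~\ref{2comm} can then be brought to bear; you acknowledge this step as the main obstacle, so you should be aware that the justification you sketched for it is the one part that genuinely needs to be replaced.
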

\begin{proof}
Let $G:={}^2F_4(q)$. From~\eqref{structure:H}, it is not hard to verify that the subgraph induced by $\Gamma_2(G)$ on $H$ is strongly connected. Moreover, from Lemma~\ref{2F4lemma0}, we have $H\cap H^g\ne 1$, for every $g\in G$.
 From these two facts, it follows that all elements of even order of $G$ are in the same strongly connected component of $\Gamma_2(G)$. 

 Moreover, if we let $C=\langle g\rangle$ be a maximal torus of $G$ with $|C|=c$, then $C<{\bf N}_G(C)$ and $|{\bf N}_G(C):C|$ is divisible by $2$. Therefore, there exists an involution $\iota\in {\bf N}_G(C)$. This gives $\iota\mapsto_2 g$. Now, using the subgroup structure of $G$, to conclude that $\Gamma_2(G)$ is strongly connected it suffices to prove that, for each $\varepsilon\in \{1,-1\}$, there exists a non-identity element $g$ having order a divisor of $q^2+\varepsilon\sqrt{2q^3}+q+\varepsilon\sqrt{2q}+1$ and an element $\iota$ having order divisible by a prime in the first connected component of $\Pi(G)$ such that $g\mapsto_2\iota$. This information is obtained in Lemma~\ref{l:engel2F4}.
\end{proof}

\subsubsection{The exceptional group $E_8(q)$}\label{sec:E8}
 Let $q$ be a prime power and let $G := E_8 (q)$. For information on the subgroup structure
of $G$, we use~\cite{KZ}. Now, $G$ has eight maximal parabolic subgroups. Here, we consider a maximal parabolic subgroup $H$ with
\begin{align}
|G:H|=\frac{(q^{30}-1)(q^{12}+1)(q^{10}+1)(q^6+1)}{q-1}. 
\end{align}

\begin{lemma}\label{E8:1} The subdegrees for the primitive action of $G$ on the right cosets of $H$ are 
\begin{align*}
&1,\,\,\,\, \frac{q(q^{14} -1)(q^9+1)(q^5+1)}{q-1},\,\,\, q^{57},\\
&q^{29}\cdot\frac{(q^{14}-1)(q^9+1)(q^5+1)}{q-1},\,\,\,q^{12}\cdot \frac{(q^{14}-1)(q^{12}+q^6+1)(q^8+q^4+1)}{q-1}.\nonumber
\end{align*}
In particular, the minimum non-trivial subdegree is $q\cdot\frac{(q^{14}-1)(q^9+1)(q^5+1)}{(q-1)}$.
\end{lemma}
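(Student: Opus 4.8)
The plan is to identify the listed numbers as the sizes of the orbits of $P:=H$ on the coset space $G/P$ and to compute them from the Bruhat decomposition of $G=E_8(q)$. Writing $W=W(E_8)$ for the Weyl group, the parabolic $P=P_J$ is forced by the index formula to be the one with $W_J=W(E_7)$: indeed $|G:P|=P_W(q)/P_{W_J}(q)$ with $P_W(q)=\prod_i(q^{d_i}-1)/(q-1)$, and dividing out of the $E_8$-degrees $2,8,12,14,18,20,24,30$ the factors forced by the given index leaves precisely the $E_7$-degrees $2,6,8,10,12,14,18$. By the standard theory of split $(B,N)$-pairs, the $P$-orbits on $G/P$ are in bijection with the double cosets $W_J\backslash W/W_J$, and the subdegree attached to such a double coset with minimal-length representative $\dot{d}$ equals $\sum_w q^{\ell(w)}$, the sum running over the minimal-length representatives of the left cosets $W_J w\subseteq W_J\dot{d}W_J$.

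First I would enumerate $W_J\backslash W/W_J$. Since $|W:W_J|=240$ equals the number of roots of $E_8$ and $W$ is transitive on them, this double coset space is identified with the set of $W(E_7)$-orbits on the root system of $E_8$ relative to a fixed ``removed'' root $\alpha$; these orbits are $\{\alpha\}$, $\{-\alpha\}$, the $56$ roots at angle $60^{\circ}$ with $\alpha$, the $56$ roots at angle $120^{\circ}$, and the $126$ roots orthogonal to $\alpha$. Hence the action has rank $5$, matching the five entries in the statement. The orbit $\{\alpha\}$ gives the trivial subdegree $1$; the orbit $\{-\alpha\}$ corresponds to the big Bruhat cell and gives $q^{\dim U}=q^{57}$, since $U$ (the unipotent radical of $P$) has $120-63=57$ root subgroups. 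For each of the remaining three double cosets I would compute the point stabiliser $P\cap P^{\dot{d}}$, which decomposes as $(U\cap U^{\dot{d}})\rtimes(L\cap L^{\dot{d}})$ with $L$ the Levi factor; its index in $P$ is $q^{e}\cdot|L:L\cap L^{\dot{d}}|$, where $e$ is the codimension of $U\cap U^{\dot{d}}$ in $U$. The reductive parts of these intersections have Weyl groups $W(E_6)$, $W(E_6)$ and $W(D_6)$ (the stabilisers in $W(E_7)$ of a root at $60^{\circ}$, at $120^{\circ}$, and orthogonal to $\alpha$), so the factors $|L:L\cap L^{\dot{d}}|$ are $q$-analogues of flag-variety point counts, e.g.\ $(q^{14}-1)(q^9+1)(q^5+1)/(q-1)$ for the $E_6$-parabolic of $E_7$; assembling these with the correct powers of $q$ yields the claimed list. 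As a consistency check, the five subdegrees must sum to $|G:P|=(q^{30}-1)(q^{12}+1)(q^{10}+1)(q^6+1)/(q-1)$.

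The main obstacle is this last step: pinning down, for each of the three ``middle'' double cosets, the exact unipotent codimension $e$ and the precise reductive intersection $L\cap L^{\dot{d}}$ inside the $E_7$-Levi, which is where sign and bookkeeping errors are easiest to make. In practice I would bypass this by quoting the literature, where the subdegrees of the primitive parabolic actions of the exceptional groups of Lie type have been worked out explicitly; in particular the subdegrees of $E_8(q)$ on the cosets of the $E_7$-parabolic are recorded there (cf.\ the analogous statements for ${}^3D_4(q)$ and ${}^2F_4(q)$ taken from~\cite{Vasilev}), and the lemma follows at once. Finally, comparing $q$-degrees — namely $28$ for the first nontrivial subdegree against $57$, $56$ and $45$ for the others — shows that $q(q^{14}-1)(q^9+1)(q^5+1)/(q-1)$ is the smallest nontrivial subdegree for every $q$, giving the last assertion.
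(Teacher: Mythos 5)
Your proposal ends by quoting the literature for the subdegrees, which is exactly what the paper does: its entire proof is the one-line citation of Vasil\'ev \cite[Theorem~3]{Vasilev1} (the companion paper to \cite{Vasilev}, treating the untwisted types $E_6$, $E_7$, $E_8$ rather than the twisted groups). Your preceding Bruhat-decomposition outline (rank $5$ from the $W(E_7)$-orbits on the $240$ roots, $q^{57}$ from the unipotent radical, and the degree comparison for the minimal subdegree) is a correct sketch of how these numbers are obtained, but it plays no role in the paper's proof.
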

\begin{proof} This follows from~\cite[Theorem~3]{Vasilev1}.
\end{proof}

Let $1_H^G$ be the permutation character for the action of $G$ on the right cosets of $H$. From Lemma~\ref{E8:1}, $1_H^G$ is the sum of five irreducible constituents. Now, the list of the complex character degrees of $G=E_8(q)$ can be found in the homepage of Frank L\"{u}beck~\cite{Frank}. (The data is readily available, but this information can also be deduce from~\cite{carter}.) We have
$$1_H^G=\chi_1+\chi_2+\chi_3+\chi_4+\chi_5,$$
where
\begin{align*}
\chi_1(1)=&1,\\
\chi_2(1)=&q(q^4-q^2+1)(q^8-q^6+q^4-q^2+1)(q^4+1)(q^8-q^4+1)(q^2+1)^2,\\
\chi_3(1)=&q^2(q^8-q^6+q^4-q^2+1)(q^4+q^3+q^2+q+1)(q^4-q^3+q^2-q+1)(q^6+q^5+q^4+q^3+q^2+q+1)\cdot \\
&\cdot(q^6-q^5+q^4-q^3+q^2-q+1)
(q^8-q^7+q^5-q^4+q^3-q+1)(q^8+q^7-q^5-q^4-q^3+q+1),\\
\chi_4(1)=&\frac{1}{2}q^3(q^6+q^3+1)(q^4+1)(q^4-q^2+1)(q^6-q^5+q^4-q^3+q^2-q+1)(q^6+q^5+q^4+q^3+q^2+q+1)\cdot \\
&\cdot(q^8-q^6+q^4-q^2+1)(q^8+q^7-q^5-q^4-q^3+q+1)(q^8-q^4+1)(q^2+1)^2,\\
\chi_5(1)&=\frac{1}{2}q^3(q^6-q^3+1)(q^4+1)(q^6-q^5+q^4-q^3+q^2-q+1)(q^6+q^5+q^4+q^3+q^2+q+1)\cdot\\
&\cdot (q^8+q^7-q^5-q^4-q^3+q+1)(q^8-q^4+1)(q^2-q+1)^2(q^4-q^3+q^2-q+1)^2(q+1)^4.
\end{align*}

Now, there are two cases that we want to consider, and these cases depend on $q$ modulo $5$.
\begin{description}
\item[Case~A] When $q\equiv 2,3\pmod 5$, we let $$c\in \left\{\frac{q^{10}-q^5+1}{q^2-q+1},\frac{q^{10}+q^5+1}{q^2+q+1},q^8-q^4+1\right\},$$
we let $C$ be a maximal torus of $G$ of order $c$ and we let $\mathcal{C} := \{g \in G | {\bf o}(g) > 1, {\bf o}(g) \mid  c\}$,
\item[Case~B] When $q\equiv 0,1,4\pmod 5$, we let $$c\in \left\{\frac{q^{10}-q^5+1}{q^2-q+1},\frac{q^{10}+q^5+1}{q^2+q+1},q^8-q^4+1,\frac{q^{10}+1}{q^2+1}\right\},$$
we let $C$ be a maximal torus of $G$ of order $c$ and we let $\mathcal{C} := \{g \in G | {\bf o}(g) > 1, {\bf o}(g) \hbox{ divides }  c\}$.
\end{description}
Considering the various possibilities for $q\pmod 5$ and considering the maximal tori of $G$ in~\cite{KZ}, it is not hard to verify that Hypotheses 0--4 are satisfied with
respect to this choice of $H$, $C$ and $\mathcal{C}$. We prove the following strong result.
\begin{lemma}\label{E8lemma1}
We have $H\mathcal{C}=G\setminus H$, where $H\mathcal{C}:=\{hx\mid h\in H,x\in \mathcal{C}\}$.
\end{lemma}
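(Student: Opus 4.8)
The plan is to argue exactly as in Lemmas~\ref{3D4lemma1},~\ref{2G2lemma1} and~\ref{2F4lemma1}: bound from below the number $c(\Delta(H,\mathcal{C}))$ of connected components of $\Delta(H,\mathcal{C})$ via~\eqref{eureka}, and then invoke Lemma~\ref{l:components}~\eqref{components3} together with the list of subdegrees in Lemma~\ref{E8:1} to force equality. Since $H$ is a maximal parabolic subgroup, every one of the five irreducible constituents of $1_H^G=\chi_1+\chi_2+\chi_3+\chi_4+\chi_5$ is unipotent. Moreover, for each admissible value of $c$ the torus $C$ is a Sylow $\Phi_d$-torus with $d\in\{30,24,20,15\}$: such a $C$ is cyclic and self-centralising in $G$, and every non-identity element of $C$ is regular. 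Hence Lemma~\ref{TZ} applies to each constituent $\chi$ of $1_H^G$ and gives
\[
|C|\langle\chi_{|C},1\rangle-\chi(1)\in\{\,|C|-1,\ 0,\ -|C|+1\,\},
\]
the value $\pm(|C|-1)$ occurring exactly when $\chi$ is a constituent of $\pm R_C^G(1)$.

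The next step is to determine, for each admissible pair $(C,\mathcal{C})$ (three in Case~A and four in Case~B), which of $\chi_1,\dots,\chi_5$ are constituents of $\pm R_C^G(1)$. The trivial character $\chi_1$ always contributes $+(|C|-1)$; and, using the decomposition of the Deligne--Lusztig virtual characters $R_C^G(1)$ for these Coxeter-type tori (equivalently, the generic degree data of L\"{u}beck~\cite{Frank} and the theory in Carter~\cite{carter}), one checks that at most one further constituent $\chi_{i_0}$ satisfies $|C|\langle(\chi_{i_0})_{|C},1\rangle-\chi_{i_0}(1)=\pm(|C|-1)$, all remaining constituents giving $0$. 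Therefore
\[
\sum_{\chi\in\mathrm{Irr}(G)}\frac{\langle\chi,1_H^G\rangle}{\chi(1)}\bigl(|C|\langle\chi_{|C},1\rangle-\chi(1)\bigr)^2=(|C|-1)^2 f(q),\qquad f(q):=1+\frac{1}{\chi_{i_0}(1)},
\]
with $f(q)-1$ a rational function of $q$ of large negative degree. Plugging this into~\eqref{eureka} yields $c(\Delta(H,\mathcal{C}))\ge |G:H|/f(q)$.

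It then remains to verify the elementary inequality $|G:H|/f(q)>|G:H|-1-d$, where $d=q(q^{14}-1)(q^9+1)(q^5+1)/(q-1)$ is the minimum non-trivial subdegree from Lemma~\ref{E8:1} and $|G:H|=(q^{30}-1)(q^{12}+1)(q^{10}+1)(q^6+1)/(q-1)$. Granting it, we obtain $|G:H|-1-c(\Delta(H,\mathcal{C}))<d$, so Lemma~\ref{l:components}~\eqref{components3} forces $c(\Delta(H,\mathcal{C}))=|G:H|-1$, which by Lemma~\ref{l:components}~\eqref{components2} is equivalent to $H\mathcal{C}=G\setminus H$.

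The main obstacle will be the identification of $\chi_{i_0}$ in the second step: for each pair $(C,\mathcal{C})$ one must read off from the unipotent character theory of $E_8(q)$ precisely which constituent of this rank-$5$ permutation character lies in $\pm R_C^G(1)$, and then check that the degree of that constituent is large enough for the final numerical inequality to go through (if the smallest such degree turned out to be too close to $\deg d$, one would need to keep track of the lower-order terms of the polynomials involved, or handle a few small values of $q$ separately, as was done for the other exceptional groups). Everything else is the same bookkeeping already carried out in the preceding sections.
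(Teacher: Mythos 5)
Your overall strategy is exactly the paper's: apply Lemma~\ref{TZ} to the unipotent constituents of $1_H^G$, feed the resulting character sum into~\eqref{eureka}, and combine Lemma~\ref{l:components}~\eqref{components3} with the minimum non-trivial subdegree from Lemma~\ref{E8:1} to force $c(\Delta(H,\mathcal{C}))=|G:H|-1$. The one place where you diverge is the step you flag as the ``main obstacle'': identifying, for each admissible torus $C$, the unique further constituent $\chi_{i_0}$ of $1_H^G$ lying in $\pm R_C^G(1)$. The paper shows this identification is unnecessary: since the square $\bigl(|C|\langle\chi_{|C},1\rangle-\chi(1)\bigr)^2$ is at most $(|C|-1)^2$ for \emph{every} constituent by Lemma~\ref{TZ}, one simply bounds the sum above by
\[
(|C|-1)^2\left(1+\frac{1}{\chi_2(1)}+\frac{1}{\chi_3(1)}+\frac{1}{\chi_4(1)}+\frac{1}{\chi_5(1)}\right),
\]
i.e.\ one pretends all four non-trivial constituents contribute. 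Because the smallest non-trivial degree $\chi_2(1)$ already grows like $q^{29}$ while $|G:H|\sim q^{57}$ and the minimal subdegree $d\sim q^{29}$, this crude $f(q)$ still satisfies $|G:H|/f(q)>|G:H|-1-d$, so the final numerical inequality goes through without knowing which constituent actually occurs in $\pm R_C^G(1)$. Your refinement is correct in principle but buys nothing here and costs a nontrivial excursion into the Deligne--Lusztig decomposition for each of the seven pairs $(C,\mathcal{C})$; if you adopt the worst-case bound instead, your proof coincides with the paper's.
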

\begin{proof}
Let $\chi$ be an irreducible constituent of $1_H^G$ and observe that $\chi$ is unipotent. The quantity $|C_\varepsilon|\langle\chi_{|C_\varepsilon},1\rangle-\chi(1)$ can be estimated using Lemma~\ref{TZ}. Indeed, since each element of $C\setminus\{1\}$ is regular, we deduce 
$$|C|\langle\chi_{|C},1\rangle-\chi(1)=
\begin{cases}
(|C|-1)^2&\textrm{or},\\
0&\textrm{otherwise}.
\end{cases}
$$

We find
\begin{align}\label{eqE8888}
\sum_{\chi\in \mathrm{Irr}(G)}
\frac{\langle\chi,1_H^G\rangle}{\chi(1)}
\left(|C|\langle \chi_{|C},1\rangle-\chi(1)\right)^2
&\le(|C|-1)^2\left(1+\frac{1}{\chi_2(1)}+\frac{1}{\chi_3(1)}+\frac{1}{\chi_4(1)}+\frac{1}{\chi_5(1)}\right)\\\nonumber
&=(|C|-1)^2f(q),
\end{align} 
where for simplicity, we denote by $f(q)$ the factor appearing on the right hand side of~\eqref{eqE8888}.

Summing up, from~\eqref{eureka} and~\eqref{eqE8888}, we obtain
\begin{equation*}
c(\Delta(H,\mathcal{C}))
\ge
\frac{|G:H|}{f(q)}> |G:H|-1-q\cdot\frac{(q^{14}-1)(q^9+1)(q^5+1)}{(q-1)},
\end{equation*}
where the last inequality follows with a computation using the function $f(q)$ and using the fact that $|G:H|=(q^{30}-1)
(q^{12}+1)(q^{10}+1)(q^6+1)/(q-1)$.
By Lemma~\ref{l:components}~\eqref{components3} and Lemma~\ref{2F4lemma0}, we have $c(\Delta(H,\mathcal{C}))=|G:H|-1$.
\end{proof}

\begin{proposition}
The directed graph $\Gamma_2(E_8(q))$ is strongly connected.
\end{proposition}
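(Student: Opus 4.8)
The plan is to follow the strategy already applied to ${}^3D_4(q)$, ${}^2G_2(q)$ and ${}^2F_4(q)$. Set $G:=E_8(q)$. Since $E_8(q)$ is not one of the exceptions in Theorem~\ref{theorem111}, all elements of even order of $G$ lie in one connected component $\mathcal I$ of the commuting graph $\Gamma_1(G)$; hence, by Corollary~\ref{cor}, it suffices to prove that for every connected component $\psi$ of $\Pi(G)$ with $2\notin\psi$ and every Hall $\psi$-subgroup $C$ of $G$ there exist $h\in C\setminus\{1\}$ and $x,y\in\mathcal I$ with $x\mapsto_2 h$ and $h\mapsto_2 y$.

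First I would recall from~\cite{KoMa} that the connected components of $\Pi(E_8(q))$ other than the one containing $2$ are exactly the sets $\pi(c)$, where $c$ ranges over the maximal torus orders isolated in Cases~A and~B above (three values when $q\equiv\pm2\pmod 5$, four otherwise), and that the Hall subgroup attached to $\pi(c)$ is the corresponding cyclic maximal torus $C$ of order $c$; note $c$ is odd. For every such $C$ the Hypotheses~0--4 were verified above, so Lemma~\ref{E8lemma1} gives $H\mathcal C=G\setminus H$, where $H$ is the fixed maximal parabolic and $\mathcal C$ the associated union of conjugacy classes.

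For the out-direction I would argue as in Lemmas~\ref{l:engel3D4} and~\ref{l:engel2F4}. Write $H=L_H\rtimes U_H$ with $U_H$ the unipotent radical; its centre $X:={\bf Z}(U_H)$ is a single root subgroup, is normalized by $H$, and satisfies ${\bf N}_G(X)=H$. Using the Chevalley commutator relations for $E_8$ relative to $H$ one produces a Weyl-group element $n$ with $X^n=X'$, where $X'\le U_H$ is a root subgroup and $X'\neq X$ (the $E_8$-analogue of~\eqref{eq:3d4e}). From $X^H=X$ and Lemma~\ref{E8lemma1} we get $X^{\mathcal C}=X^{H\mathcal C}=X^{G\setminus H}=\{X^g\mid g\in G\}\setminus\{X\}$, and since $n\notin{\bf N}_G(X)=H$ there is $c\in\mathcal C$ with $X^c=X'$. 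Picking $1\neq\iota\in X$, we have $[\iota,c]=\iota^{-1}\iota^c\in U_H$, and as $\iota\in{\bf Z}(U_H)$ it commutes with $[\iota,c]$, so $[c,\iota,\iota]=1$, i.e.\ $c\mapsto_2\iota$; here $\iota$ is unipotent, hence of order a power of the characteristic, which divides none of the orders $c$, so $\iota$ lies in no odd component of $\Pi(G)$ and $\iota\in\mathcal I$. As $\mathcal C$ is a union of $G$-classes meeting $C$, after replacing the pair $(c,\iota)$ by a common $G$-conjugate we may assume $c\in C$; we put $h:=c$ and $y:=\iota$. For the in-direction, since $C$ is cyclic, $\langle h\rangle$ is characteristic in $C$, whence ${\bf N}_G(C)\le{\bf N}_G(\langle h\rangle)$; from the structure of the maximal tori of $E_8(q)$ in~\cite{KZ}, ${\bf N}_G(C)$ contains an involution $z\notin C$ (recall $|C|$ is odd), so $z\in{\bf N}_G(\langle h\rangle)$ and Lemma~\ref{norm} gives $[z,h,h]=1$, i.e.\ $z\mapsto_2 h$, with $z\in\mathcal I$ since $z$ has even order. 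Taking $x:=z$ this verifies the criterion of Corollary~\ref{cor} for every relevant component, and therefore $\Gamma_2(E_8(q))$ is strongly connected.

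I expect the main obstacle to be the production of the Weyl-group element $n$: one has to pin down, for this particular maximal parabolic of $E_8$, an explicit element conjugating the centre of the unipotent radical to a root subgroup inside $U_H$ --- the $E_8$-analogue of the computations with $n_\beta$ in Lemma~\ref{l:engel3D4} and with $n_a$ in Lemma~\ref{l:engel2F4} --- which requires working through the positive roots of $E_8$ occurring in $U_H$ and the relevant structure constants. A lesser point is to check that the numerical inequality in the proof of Lemma~\ref{E8lemma1} survives for the smallest values of $q$, notably $q=2$.
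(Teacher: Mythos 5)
Your proposal follows the paper's proof essentially verbatim: the same reduction via Theorem~\ref{theorem111} and Corollary~\ref{cor} to the odd components of $\Pi(E_8(q))$ listed in Table~\ref{tableE8}, the same use of Lemma~\ref{E8lemma1} together with the root-subgroup/unipotent-radical argument of Lemmas~\ref{l:engel3D4} and~\ref{l:engel2F4} for the arc $g\mapsto_2 z$ into a unipotent element, and the same appeal to an involution in ${\bf N}_G(C)$ for the arc into the torus. The paper merely compresses the out-direction into the phrase ``arguing as in the other Lie type groups,'' which is exactly the step you spell out, so the two arguments coincide.
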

\begin{proof}
By Theorem~\ref{theorem111}, all elements of even order of $G$ are in the same connected component of $\Gamma_1(G)$. In particular, we use the prime graph of $G$ to deduce properties of the commuting graph of $G$, see Corollary~\ref{cor}. When the prime graph of $G$ is connected, then so is $\Gamma_1(G)$ and hence so is $\Gamma_n(G)$. Therefore, we may suppose that $\Pi(G)$ is disconnected. We have reported in Table~\ref{tableE8} informations on the connected components of $\Pi(G)$. This information is taken from~\cite{KoMa}
\begin{table}[!ht]\centering
\begin{tabular}{cll}
\toprule[1.5pt]
conditions&nr. components & components\\
\midrule[1.5pt]
$q\equiv 2,3\pmod 5$&4&$\pi\left(q(q^8-1)(q^{14}-1)(q^{12}-1)(q^{18}-1)\right)$, $\pi\left(\frac{q^{10}-q^5+1}{q^2-q+1}\right)$,\\
&& $\pi\left(\frac{q^{10}+q^5+1}{q^2+q+1}\right)$, $\pi(q^8-q^4+1)$\\
$q\equiv 0,1,4\pmod 5$&5&$\pi\left(q(q^8-1)(q^{14}-1)(q^{12}-1)(q^{18}-1)\right)$, $\pi\left(\frac{q^{10}-q^5+1}{q^2-q+1}\right)$,\\
&& $\pi\left(\frac{q^{10}+q^5+1}{q^2+q+1}\right)$, $\pi(q^8-q^4+1)$, $\pi\left(\frac{q^{10}+1}{q^2+1}\right)$\\
\bottomrule[1.5pt]
\end{tabular}
\caption{Cases when $E_8(q)$ has disconnected prime graph}\label{tableE8}
\end{table}

When $q\equiv 2,3\pmod 5$, let $c\in \{
(q^{10}-q^5+1)/(q^2-q+1),(q^{10}+q^5+1)/(q^2+q+1),q^8-q^4+1\}$ and, when $q\equiv 0,1,4\pmod 5$, let 
let $c\in \{
(q^{10}-q^5+1)/(q^2-q+1),(q^{10}+q^5+1)/(q^2+q+1),q^8-q^4+1,(q^{10}+1)/(q^2+1)\}$.
 Arguing as in the other Lie type groups, Lemma~\ref{E8lemma1} guarantees that, there exists an element $g$
having order divisible by a prime in $c$ and a non-identity unipotent element  $z$ with $g \mapsto_2 z$. Moreover, if we let $C=\langle g\rangle$ be a maximal torus of $G$ with $|C|=c$, then $C<{\bf N}_G(C)$ and $|{\bf N}_G(C):C|$ is divisible by $2$. Therefore, there exists an involution $\iota\in {\bf N}_G(C)$. This gives $\iota\mapsto_2 g$ from which it follows that $\Gamma_2 (G)$ is strongly connected.
\end{proof}

\subsubsection{The exceptional group $G_2(q)$}\label{sec:G2}

\begin{proposition}
Let $q>2$ be a prime power. The directed graph $\Gamma_3(G_2(q))$ is strongly connected.
\end{proposition}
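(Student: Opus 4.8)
The plan is to run the general strategy of Section~\ref{sec:exceptionalgeneral}, exactly as for ${}^3D_4(q)$, ${}^2G_2(q)$ and ${}^2F_4(q)$. Write $G:=G_2(q)$. Since $G_2(q)$ is not among the exceptions in Theorem~\ref{theorem111}, that theorem supplies a connected component $\mathcal{I}$ of the commuting graph of $G$ containing every element of even order, so by Corollary~\ref{cor} it suffices to show that for each connected component $\psi$ of $\Pi(G)$ with $2\notin\psi$ and each Hall $\psi$-subgroup $K$ there exist $h\in K\setminus\{1\}$ and $x,y\in\mathcal{I}$ with $x\mapsto_3 h$ and $h\mapsto_3 y$. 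If $\Pi(G)$ is connected there is nothing to prove, so I would invoke~\cite{KoMa}: after settling the finitely many small $q$ (in particular $q\in\{3,4\}$) with \texttt{magma}, the non-central components of $\Pi(G)$ consist of the primitive prime divisors of $q^3-1$ and of $q^6-1$, that is $\pi((q^2+q+1)/\gcd(3,q-1))$ and $\pi((q^2-q+1)/\gcd(3,q+1))$, realised inside the cyclic maximal tori of $G$ of order $q^2+q+1$ (the $\Phi_3$-torus) and $q^2-q+1$ (the $\Phi_6$-torus). For such a torus $C$ one has ${\bf N}_G(C)/C$ cyclic of order $6$, so ${\bf N}_G(C)$ contains an involution $\tau$ acting on $C$ by inversion; hence for any $1\ne h\in C$ we get $\tau\in{\bf N}_G(\langle h\rangle)$, $\tau\mapsto_2 h$ by Lemma~\ref{norm}, and $\tau\in\mathcal{I}$. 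This produces one of the two arcs required by Corollary~\ref{cor}, and the real task is to move from a suitable torus element into $\mathcal{I}$.

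For this I would apply the counting of Section~\ref{sec:exceptionalgeneral} in the ``generic'' case, i.e.\ when $3\nmid q^2+\varepsilon q+1$ (in particular whenever $q\equiv 0\pmod 3$). Fix such an $\varepsilon\in\{1,-1\}$, let $C$ be the torus of order $q^2+\varepsilon q+1$ and put $\mathcal{C}:=\{g\in G\mid {\bf o}(g)>1,\ {\bf o}(g)\mid q^2+\varepsilon q+1\}$. Take $H$ to be a maximal parabolic of $G$, so $|G:H|=(q+1)(q^4+q^2+1)$ and $G$ acts on the cosets of $H$ as on the points (or lines) of the split Cayley hexagon of order $(q,q)$; its collinearity graph is distance-regular with intersection array $\{q(q+1),q^2,q^2;1,1,q+1\}$, so the subdegrees are $1,\,q(q+1),\,q^3(q+1),\,q^5$ and the least non-trivial subdegree is $q(q+1)$ (classical; compare~\cite{Vasilev}). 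Using the torus structure of $G$ one checks Hypotheses~0--4 for $H$, $\mathcal{C}$, $C$. The permutation character $1_H^G$ is multiplicity-free of rank $4$, and its three non-principal constituents are unipotent characters of degree $\Theta(q^5)$, listed with their degrees in~\cite{carter,Frank}. Since every non-identity element of $C$ is regular and these constituents are unipotent, Lemma~\ref{TZ} gives $|C|\langle\chi_{|C},1\rangle-\chi(1)\in\{\pm(|C|-1),0\}$ for each constituent $\chi$, so the sum in~\eqref{eureka} is at most $(|C|-1)^2\big(1+O(q^{-5})\big)$, whence
\[
c(\Delta(H,\mathcal{C}))\ \ge\ \frac{|G:H|}{1+O(q^{-5})}\ >\ |G:H|-1-q(q+1)
\]
once $q$ exceeds a small explicit bound. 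By Lemma~\ref{l:components}~\eqref{components3} and the subdegree $q(q+1)$ this forces $c(\Delta(H,\mathcal{C}))=|G:H|-1$, i.e.\ $H\mathcal{C}=G\setminus H$. Granting this, I would finish as in Lemmas~\ref{l:engel3D4} and~\ref{l:engel2F4}: writing $H=L_H\rtimes U_H$, there is a root subgroup $X\trianglelefteq H$ together with a Weyl-group element $n$ with $X^n=X'$ a different root subgroup still inside $U_H$; since $X^H=X$, from $H\mathcal{C}=G\setminus H$ we get $X^{\mathcal{C}}=\{X^g\mid g\in G\}\setminus\{X\}$, so some $g\in\mathcal{C}$ satisfies $X^g=X'$, and then for $1\ne\iota\in X$ the element $[\iota,g]=\iota^{-1}\iota^g$ lies in $U_H$; carrying out the commutator computation inside $U_H$ (of nilpotency class $\le 3$) yields $[g,{}_3\iota]=1$, i.e.\ $g\mapsto_3\iota$ (and even $[g,{}_2\iota]=1$ when $X$ can be chosen to be ${\bf Z}(U_H)$). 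As $\iota$ is unipotent, ${\bf C}_G(\iota)$ has even order, so $\iota\in\mathcal{I}$, and with $h:=g$ we obtain $x=\tau\mapsto_3 h\mapsto_3 y=\iota$ with $x,y\in\mathcal{I}$, as required by Corollary~\ref{cor}.

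It remains to treat, for each residue of $q$ modulo $3$, the one component (if any) for which $3\mid q^2+\varepsilon q+1$, where the Frobenius Hypothesis fails for the full $\Phi$-torus. Here I would instead use the subsystem subgroup $\mathrm{SL}_3(q)\le G$ (for the $\Phi_3$-component) or $\mathrm{SU}_3(q)\le G$ (for the $\Phi_6$-component), inside which the relevant torus sits as a Singer-type maximal torus; one then combines the strong connectivity of $\Gamma_2(\mathrm{PSL}_3(q))$, resp.\ $\Gamma_2(\mathrm{PSU}_3(q))$, from Propositions~\ref{proposition:PSL} and~\ref{proposition:PSUUU}, with the elementary fact that a central extension by $\mathbb Z_3$ raises the required Engel index by at most one (if $[x,{}_n y]$ is central then $[x,{}_{n+1}y]=1$). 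This is precisely where the index $3$, rather than $2$, enters the statement. The finitely many small $q$ (notably $q\in\{3,4\}$, and any others for which the estimate in~\eqref{eureka} is too crude) are disposed of with \texttt{magma}. As in the other exceptional groups, I expect the main obstacle to be extracting $H\mathcal{C}=G\setminus H$ from~\eqref{eureka}, which needs the exact list of unipotent constituents of $1_H^G$ and their degrees, together with the bookkeeping needed to cover cleanly the cases $q\equiv\pm1\pmod 3$.
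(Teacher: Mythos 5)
Your proposal is correct in outline, but it takes a far longer route than the paper, which disposes of $G_2(q)$ in three lines: by~\cite{bray} one has $G_2(q)\ge \mathrm{SL}_3(q).2$ and $G_2(q)\ge \mathrm{SU}_3(q).2$, the isolated odd components of $\Pi(G_2(q))$ sit inside Singer-type tori of these subgroups, and strong connectivity of $\Gamma_3(G_2(q))$ is then inherited from Propositions~\ref{proposition:PSL} and~\ref{proposition:PSUUU}. In other words, the paper's entire proof is essentially your ``remaining case'' fallback, promoted to the whole argument. Your main line --- running the $\Delta(H,\mathcal{C})$ machinery of Section~\ref{sec:exceptionalgeneral} on the point-parabolic of the split Cayley hexagon --- is a faithful transplant of the ${}^3D_4(q)$ proof of Section~\ref{sec:3D4}, and the numerics do check out: the three non-principal constituents of $1_H^G$ have degrees $\tfrac{1}{6}q(q+1)^2(q^2+q+1)$, $\tfrac{1}{2}q(q+1)^2(q^2-q+1)$ and $\tfrac{1}{3}q(q^2+q+1)(q^2-q+1)$, so the correction factor in~\eqref{eureka} is $1+O(q^{-5})$ while $|G:H|\approx q^5$, giving $c(\Delta(H,\mathcal{C}))=|G:H|-1$ already for $q\ge 3$ via Lemma~\ref{l:components}, after which the long-root-subgroup argument yields $g\mapsto_2\iota$ exactly as in Lemma~\ref{l:engel3D4}. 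What your route buys is a stronger conclusion ($\Gamma_2$ suffices on the relevant tori, plus another instance of the derangement statement $H\mathcal{C}=G\setminus H$); what it costs is the verification of Hypotheses~0--3 and the character data. One bookkeeping correction: the sets $\pi\bigl((q^2+\varepsilon q+1)/\gcd(3,q-\varepsilon)\bigr)$ are not both components of $\Pi(G)$. When $3\mid q^2+\varepsilon q+1$ the cyclic torus of that order contains elements of order $3r$ for every prime $r$ dividing it, and $3$ divides $q^2-1$, so that entire set of primes is absorbed into the even component; consequently your ``remaining case'' is vacuous for the purposes of Corollary~\ref{cor}, and only the torus with $3\nmid q^2+\varepsilon q+1$ --- precisely the one for which Hypothesis~3 (the Frobenius condition) holds --- ever needs to be handled.
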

\begin{proof}
Set $G:=G_2(q)$. As usual, we may only consider the cases when $\Pi(G)$ is disconnected. These cases are reported in Table~\ref{tableG2}.
\begin{table}[!ht]\centering
\begin{tabular}{cll}
\toprule[1.5pt]
conditions&nr. components & components\\
\midrule[1.5pt]
$2<q\equiv \varepsilon\cdot 2\pmod 3$, $\varepsilon\in \{1,-1\}$&2&$\pi(q(q^2-1)(q^2-\varepsilon))$, $\pi(q^2-\varepsilon q+1)$\\
$q\equiv 0\pmod 3$&3&$\pi(q(q^2-1))$, $\pi(q^2-q+1)$, $\pi(q^2+q+1)$\\
\bottomrule[1.5pt]
\end{tabular}
\caption{Cases when $G_2(q)$ has disconnected prime graph}\label{tableG2}
\end{table}

From~\cite{bray}, we have
$$G\ge \mathrm{SL}_3(q).2 \hbox{ and }G\ge \mathrm{SU}_3(q).2.$$
From Propositions~\ref{proposition:PSL} and~\ref{proposition:PSUUU}, it follows that $\Gamma_3(\mathrm{SL}_3(q).2)$ and $\Gamma_2(\mathrm{SU}_3(q).2)$ are strongly connected and hence so is $\Gamma_3(G)$.
\end{proof}

\subsubsection{The exceptional groups $E_6(q)$ and ${}^2E_6(q)$}\label{sec:E6}

\begin{proposition}
Let $q$ be a prime power. The directed graphs $\Gamma_3(E_6(q))$ and $\Gamma_3({}^2E_6(q))$ are strongly connected.
\end{proposition}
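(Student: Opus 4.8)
We treat $E_6(q)$ in detail; the argument for ${}^2E_6(q)$ is parallel. Set $G:=E_6(q)$. The plan is to invoke the reduction of Corollary~\ref{cor}: by Theorem~\ref{theorem111} all elements of even order of $G$ lie in a single connected component $\mathcal{I}$ of the commuting graph $\Gamma_1(G)$, so we may assume $\Pi(G)$ is disconnected and we must only deal with the unique connected component $\psi$ of $\Pi(G)$ with $2\notin\psi$. By~\cite{KoMa}, $\psi=\pi\left(\frac{q^6+q^3+1}{\gcd(3,q-1)}\right)$ and a Hall $\psi$-subgroup of $G$ is a cyclic maximal torus $C$ with $|C|=(q^6+q^3+1)/\gcd(3,q-1)$. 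Here ${\bf N}_G(C)$ is a Frobenius group of odd order $9\cdot|C|$, so, in contrast with the tori used for ${}^3D_4(q)$, ${}^2G_2(q)$, ${}^2F_4(q)$ and $E_8(q)$, there is no even-order element of ${\bf N}_G(C)$ to exploit, and I shall reach $C$ through a subgroup instead.

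From the subgroup structure of $E_6(q)$ (see~\cite{KZ}), $G$ contains a subgroup $L\cong\mathrm{PSL}_3(q^3)$ of subfield-twisted type $A_2(q^3)$, visible on the $27$-dimensional module as a $9$-dimensional $\mathbb{F}_{q^3}L$-module together with its two Galois conjugates. A Singer maximal torus of $L$ is cyclic of order $\frac{q^9-1}{(q^3-1)\gcd(3,q^3-1)}=\frac{q^6+q^3+1}{\gcd(3,q-1)}=|C|$; since $q^6+q^3+1$ has degree $6$, the rank of $E_6$, any maximal torus of $G$ of that order is a Hall $\psi$-subgroup, so after conjugating we may assume $C\le L\le G$. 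By Proposition~\ref{proposition:PSL} applied over $\mathbb{F}_{q^3}$, the graph $\Gamma_3(L)$ is strongly connected.

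Hence the subgraph of $\Gamma_3(G)$ induced on $L$, which is $\Gamma_3(L)$, is strongly connected, so all of $L$ --- in particular every element of $C$ together with an involution of $L$ --- lies in a single strong component of $\Gamma_3(G)$. That involution lies in $\mathcal{I}$, and $\mathcal{I}$ is strongly connected inside $\Gamma_3(G)$, being already connected in $\Gamma_1(G)\subseteq\Gamma_3(G)$; thus $\mathcal{I}\cup C$ lies in one strong component of $\Gamma_3(G)$. Since $\mathcal{I}$ and $C\setminus\{1\}$ exhaust the connected components of the commuting graph of $G$ (Theorems~\ref{theorem44} and~\ref{theorem111}), it follows that $\Gamma_3(G)$ is strongly connected. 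For ${}^2E_6(q)$ one runs the same argument with $L\cong\mathrm{PSU}_3(q^3)$ (the subgroup of type ${}^2A_2(q^3)$), with $\psi=\pi\left(\frac{q^6-q^3+1}{\gcd(3,q+1)}\right)$ and $|C|=(q^6-q^3+1)/\gcd(3,q+1)$ matching a twisted Singer torus of $\mathrm{PSU}_3(q^3)$, and invoking Proposition~\ref{proposition:PSUUU} (which applies since $(3,q^3)\neq(3,2)$) in place of Proposition~\ref{proposition:PSL}. The point that needs the most care is to confirm, from the classification of subgroups of exceptional groups, that these subfield-twisted subgroups exist and that their (twisted) Singer tori are conjugate to the isolated maximal tori of $E_6(q)$ and ${}^2E_6(q)$; the small cases $q=2$, where $|C|$ equals $73$ and $19$ respectively, are handled by the same reasoning.
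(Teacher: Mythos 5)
Your argument for generic $q$ is essentially the paper's: both proofs reduce via Corollary~\ref{cor} to the unique odd component of the prime graph and then absorb the isolated torus into a reductive subgroup of maximal rank with socle $\mathrm{PSL}_3(q^3)$ (resp.\ $\mathrm{PSU}_3(q^3)$), whose Engel graph is strongly connected by Proposition~\ref{proposition:PSL} (resp.\ Proposition~\ref{proposition:PSUUU}). Your write-up is in fact more explicit than the paper's about why strong connectivity of $\Gamma_3(L)$ propagates to $\Gamma_3(G)$, namely that $L$ meets the even-order component $\mathcal{I}$ and that the Singer torus of $L$ is a Hall $\psi$-subgroup of $G$.

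There is, however, a genuine gap in your treatment of $q=2$. For $G={}^2E_6(2)$ the prime graph does not have just two components: it has four, namely $\{2,3,5,7,11\}$, $\{13\}$, $\{17\}$ and $\{19\}$. Your subfield subgroup $\mathrm{PSU}_3(8)$ has $\pi(\mathrm{PSU}_3(8))=\{2,3,7,19\}$, so it only accounts for the component $\{19\}$ (the torus of order $(2^6-2^3+1)/3=19$). The isolated components $\{13\}$ and $\{17\}$ are not touched by this argument at all, so ``the same reasoning'' does not handle them; elements of order $13$ and $17$ could a priori be sources or sinks. The paper closes this case by exhibiting subgroups $Fi_{22}$ (whose order is divisible by $13$) and $\Omega_8^-(2)$ (whose order is divisible by $17$) inside ${}^2E_6(2)$ and verifying with a computer that $\Gamma_2(Fi_{22})$ and $\Gamma_2(\Omega_8^-(2))$ are strongly connected. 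You would need to supply some such supplementary argument for the primes $13$ and $17$ before your proof of the ${}^2E_6$ case is complete; the $E_6(2)$ case, by contrast, is genuinely covered by your generic argument since its prime graph has only the two expected components.
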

\begin{proof}
Set $G:=E_6(q)$ or $G={}^2E_6(q)$. As usual, we may only consider the cases when $\Pi(G)$ is disconnected. These cases are reported in Table~\ref{tableE6}.
\begin{table}[!ht]\centering
\begin{tabular}{cll}
\toprule[1.5pt]
conditions&nr. components & components\\
\midrule[1.5pt]
$G=E_6(q)$&2&$\pi(q(q^5-1)(q^8-1)(q^{12}-1))$, $\pi\left(\frac{q^6+q^3+1}{\gcd(3,q-1)}\right)$\\
$G={}^2E_6(q)$, $q\ne 2$&2&$\pi(q(q^5-1)(q^8-1)(q^{12}-1))$, $\pi\left(\frac{q^6-q^3+1}{\gcd(3,q+1)}\right)$\\
$G={}^2E_6(2)$&4&$\{2,3,5,7,11\}$, $\{13\}$, $\{17\}$, $\{19\}$\\
\bottomrule[1.5pt]
\end{tabular}
\caption{Cases when $E_6(q)$ or ${}^2E_6(q)$ has disconnected prime graph}\label{tableE6}
\end{table}

The reductive subgroups of maximal rank in $E_6(q)$ and $^{2}E_6(q)$ have been investigated and classified in~\cite{LSS}. In particular, from~\cite[Table~5.1]{LSS}, we see that $E_6(q)$ contains a reductive subgroup of maximal rank having socle $\mathrm{PSL}_3(q^3)$ and ${}^2E_6(q)$ contains a reductive subgroup of maximal rank having socle $\mathrm{PSU}_3(q^3).$
From Propositions~\ref{proposition:PSL} and~\ref{proposition:PSUUU} (when $q\ne 2$), it follows that $\Gamma_3(\mathrm{PSL}_3(q^3))$ and $\Gamma_3(\mathrm{PSU}_3(q^3))$ are strongly connected and hence so is $\Gamma_2(G)$. Finally, when $G={}^2E_6(2)$, we see from~\cite{atlas} that $G$ contains subgroups isomorphic to $\mathrm{PSU}_3(8)$, $Fi_{22}$ and $\Omega_8^-(2)$. Observe that $\pi(\mathrm{PSU}_3(8))=\{2,3,7,19\}$, 
$\pi(Fi_{22})=\{2,3,5,7,11,13\}$ and $\pi(\Omega_8^-(2))=\{2,3,5,7,17\}$. It can be verified with the auxiliary help of a computer that $\Gamma_2(Fi_{22})$ and $\Gamma_2(\Omega_8^-(2))$ are strongly connected. Since $\Gamma_2(\mathrm{PSU}_3(8))$ is also strongly connected from Proposition~\ref{proposition:PSUUU}, it follows that $\Gamma_2({}^2E_6(2))$ is strongly connected.
\end{proof}

\subsubsection{The exceptional groups $E_7(q)$}\label{sec:E7}

\begin{proposition}
Let $q$ be a prime power. The directed graph $\Gamma_3(E_7(q))$ is strongly connected.
\end{proposition}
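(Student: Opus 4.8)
The plan is to mirror the treatment of $E_8(q)$ in Section~\ref{sec:E8}, so that the machinery of Section~\ref{sec:exceptionalgeneral} does almost all of the work. Set $G:=E_7(q)$. By Theorem~\ref{theorem111} all elements of even order of $G$ lie in a single connected component $\mathcal I$ of $\Gamma_1(G)$, so by Corollary~\ref{cor} it suffices to check, for each connected component $\psi$ of $\Pi(G)$ with $2\notin\psi$ and each Hall $\psi$-subgroup $C$ of $G$, that some $h\in C\setminus\{1\}$ admits both an in-neighbour and an out-neighbour in $\mathcal I$ in $\Gamma_3(G)$. By~\cite{KoMa} (see Table~\ref{tableE7}) the prime graph $\Pi(G)$ is connected apart from a short list of cases; when it is connected $\Gamma_1(G)$, hence $\Gamma_3(G)$, is strongly connected and there is nothing to do. So assume $\Pi(G)$ is disconnected: then each component $\psi$ with $2\notin\psi$ consists of primitive prime divisors of $q^{7}-1$ (resp. of $q^{7}+1$), the corresponding Hall $\psi$-subgroup $C$ is a cyclic maximal torus of $G$ of order dividing $(q^{7}-1)/(q-1)$ (resp. $(q^{7}+1)/(q+1)$) — of Carter type $A_6$ (resp. ${}^2\!A_6$) — and ${\bf N}_G(C)/C$ is cyclic of order $14$ with ${\bf N}_G(C)$ a Frobenius group with kernel $C$.

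For the incoming arc, write ${\bf N}_G(C)=C\rtimes\langle z\rangle$ with $\langle z\rangle\cong C_{14}$. Then $z$ has even order and $z\in{\bf N}_G(\langle h\rangle)$ for every $h\in C\setminus\{1\}$, so Lemma~\ref{norm} gives $z\mapsto_2 h$, and $z\in\mathcal I$.

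For the outgoing arc I would apply the general strategy with $\mathcal C:=\{g\in G\mid 1<{\bf o}(g)$ divides $|C|\}$, this same torus $C$, and $H$ a suitable maximal parabolic subgroup of $G$ (for the $A_6$-case, the one with Levi subgroup of type $E_6$, so $H\cong q^{27}{:}((q-1)\times E_6(q))$ up to isogeny and $|G:H|=(q^{14}-1)(q^{9}+1)(q^{5}+1)/(q-1)$; cf.~\cite{KZ}). Since no fundamental degree of $E_6$ is divisible by $7$ we have $|C|\nmid|E_6(q)|$, and clearly $|C|\nmid q^{27}(q-1)$; as $|C|$ is a prime in the relevant cases, $\mathcal C\cap H=\emptyset$, so Hypotheses~0--3 hold for $H$, $\mathcal C$, $C$. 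Because $H$ is parabolic every irreducible constituent of $1_H^G$ is unipotent, and because every nontrivial element of $C$ is regular, Lemma~\ref{TZ} gives $|C|\langle\chi_{|C},1\rangle-\chi(1)\in\{0,\pm(|C|-1)\}$ for each constituent. Feeding the resulting bound
$$\sum_{\chi\in\mathrm{Irr}(G)}\frac{\langle\chi,1_H^G\rangle}{\chi(1)}\bigl(|C|\langle\chi_{|C},1\rangle-\chi(1)\bigr)^{2}\le(|C|-1)^{2}f(q),$$
with $f(q)$ differing from $1$ by a quantity that is tiny for all $q$, into~\eqref{eureka} and comparing with the minimum nontrivial subdegree of $G$ on the cosets of $H$ (obtained as in~\cite{Vasilev1}), Lemma~\ref{l:components}~\eqref{components3} forces $c(\Delta(H,\mathcal C))=|G:H|-1$, i.e.\ $H\mathcal C=G\setminus H$. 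Writing $H=L_H\ltimes U_H$, fixing a root subgroup $X$ of $U_H$ that is normalised by $H$, and using that some Weyl-group element of $G$ carries $X$ to a different root subgroup still lying in $U_H$, the argument of Lemma~\ref{l:engel3D4} now produces $g\in\mathcal C$ and $1\ne\iota\in X$ with $[g,\iota]\in U_H$; since $\iota\in{\bf Z}(U_H)$ this gives $[g,\iota,\iota]=1$, that is $g\mapsto_2\iota$, and the unipotent element $\iota$ commutes with the even-order elements of $L_H$, so $\iota\in\mathcal I$. As the Hall $\psi$-subgroups are conjugate and $g$ is conjugate into $C$, we may take $h=g$; Corollary~\ref{cor} then yields that $\Gamma_2(G)$, a fortiori $\Gamma_3(G)$, is strongly connected.

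The main obstacle is the character-theoretic bookkeeping: unlike for the small-rank exceptional groups, one must identify the constituents of the permutation character $1_H^G$ for the $E_6$-parabolic (and, if the $(q^{7}+1)$-component also occurs, for an analogous parabolic) and verify the numerical inequality against the subdegree; one must also confirm that every disconnected-prime-graph case of $E_7(q)$ involves only components of the above shape, so that the construction applies. In the (few) small $q$ for which $\Pi(G)$ is disconnected the groups are far too large for a direct computation, so the whole argument has to be run through at the level of characters.
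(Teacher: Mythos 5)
Your plan (reduce via Corollary~\ref{cor} to the isolated components of $\Pi(E_7(q))$ and attack them with the $\Delta(H,\mathcal C)$ machinery of Section~\ref{sec:exceptionalgeneral}) is legitimate in outline, but as written it has a genuine gap, resting on two incorrect factual inputs. First, the components: by~\cite{KoMa} (Table~\ref{tableE7}), $\Pi(E_7(q))$ is disconnected only for $q\in\{2,3\}$, and the odd isolated components are $\{73\},\{127\}$ for $q=2$ and $\{757\},\{1093\}$ for $q=3$; these are $\pi(\Phi_9(q))$ and $\pi(\Phi_7(q))$, i.e.\ primitive prime divisors of $q^9-1$ and of $q^7-1$ --- not of $q^7-1$ and $q^7+1$ as you assert (note $\Phi_{14}(2)=43$ lies in the \emph{large} component). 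Second, and fatally for your choice of $H$: since $9$ is a fundamental degree of $E_6$, $\Phi_9(q)$ divides $|E_6(q)|$ --- indeed $E_6(q)$ has a maximal torus of order $(q^6+q^3+1)/\gcd(3,q-1)$, which is precisely what produces the isolated component of $\Pi(E_6(q))$ in Table~\ref{tableE6}. Hence for the $\Phi_9(q)$-component your set $\mathcal C$ meets $H\cong q^{27}{:}((q-1)\circ E_6(q))$, Hypothesis~0 fails, and none of the counting applies; your justification ``no fundamental degree of $E_6$ is divisible by $7$'' only covers the $\Phi_7(q)$-component. On top of this, the decomposition of $1_H^G$ and the comparison of the resulting bound with the minimal subdegree are left entirely unverified (you acknowledge as much), so even the $\Phi_7$ case is not actually established.

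The paper's own proof sidesteps all of this: since only $q=2,3$ occur, it suffices to exhibit subgroups with strongly connected $\Gamma_3$ that absorb the isolated primes. Concretely, $E_7(2)$ contains $\mathrm{PSL}_8(2)$ (order divisible by $127=2^7-1$) and $E_6(2)$ (order divisible by $73\mid 2^9-1$), while $E_7(3)$ contains $\mathrm{PSL}_7(3)$ (order divisible by $1093$) and $E_6(3)$ (order divisible by $757$); the strong connectivity of $\Gamma_3$ of these subgroups, already available from Proposition~\ref{proposition:PSL} and Section~\ref{sec:E6}, links the elements of isolated-prime order in both directions to elements of even order, and Corollary~\ref{cor} finishes. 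If you insist on the character-theoretic route you would at minimum need a different subgroup $H$ for the $\Phi_9(q)$-component (one avoiding tori of order divisible by $\Phi_9(q)$); given that only two values of $q$ arise, the embedding argument is the right tool here.
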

\begin{proof}
Set $G:=E_7(q)$. As usual, we may only consider the cases when $\Pi(G)$ is disconnected. These cases are reported in Table~\ref{tableE7}.
\begin{table}[!ht]\centering
\begin{tabular}{cll}
\toprule[1.5pt]
conditions&nr. components & components\\
\midrule[1.5pt]
$q=2$&3&$\{2,3,5,7,11,13,17,19,31,43\}$, $\{73\}$, $\{127\}$\\
$q=3$&3&$\{2,3,5,7,11,13,19,37,41,61,73,547\}$, $\{757\}$, $\{1093\}$\\
\bottomrule[1.5pt]
\end{tabular}
\caption{Cases when $E_7(q)$ has disconnected prime graph}\label{tableE7}
\end{table}

When $G=E_7(2)$, we see from~\cite{atlas} that $G$ contains subgroups isomorphic to $\mathrm{PSL}_8(2)$ and  $E_6(2)$. Observe that $\pi(\mathrm{PSL}_8(2))=\{2, 3, 5, 7, 17, 31, 127 \}$, 
$\pi(E_6(2))=\{2,3,5,7,13,17,31,73\}$. As $\Gamma_3(\mathrm{PSL}_8(2))$ and $\Gamma_3(E_6(2))$ are strongly connected, so is $\Gamma_3(E_7(2))$. 

When $G=E_7(2)$, using a parabolic subgroup of $G$ and using the fact that $E_6(q)<E_7(q)$, we see that $G$ contains subgroups isomorphic to $\mathrm{PSL}_7(3)$ and  $E_6(3)$. Observe that $\pi(\mathrm{PSL}_7(3))=\{
  2, 3, 5, 7, 11, 13, 1093\}$ and  
$\pi(E_6(3))=\{ 2, 3, 5, 7, 11, 13, 41, 73, 757\}$. As $\Gamma_3(\mathrm{PSL}_7(3))$ and $\Gamma_3(E_6(3))$ are strongly connected, so is $\Gamma_3(E_7(3))$. 
\end{proof}

\subsection{Almost simple groups having socle a Suzuki group ${}^2B_2(q)$}\label{Suzuki}
Let $q:=2^f$ and let $G$ be an almost simple group having socle the Suzuki group $L:={}^2B_2(q)$. Since the Engel graph of $L$ is not strongly connected, in this section we suppose that $G>L$.

Now, $G=L\rtimes\langle\alpha\rangle$, where $\alpha$ is a field automorphism. Let $p$ be a prime with $p$ dividing the order of $\alpha$. Set $\beta:=\alpha^{{\bf o}(\alpha)/p}$, $q_0:=q^{1/p}=2^{f/p}$ and $f':=f/p$. 

We claim that any two elements of $L$ having order $2$ are in the same strongly connected component of $L$. Let $x,y\in L$ with ${\bf o}(x)={\bf o}(y)=2$. Let $P$ be a Sylow $2$-subgroup of $L$ with $x\in P$. If $y\in P$, then $[x,y,y]=1$ because $P$ has nilpotency class $2$. Therefore, suppose $y\notin P$. Let $P^-$ be the opposite Sylow $2$-subgroup of $L$ (here we are thinking of fixing a root system for the Lie group ${}^2B_2(q)$). Now, $P$ acts transitively on the set of Sylow $2$-subgroups of $L$ distinct from $P$. Therefore, replacing $y$ and $x$ with a suitable $P$-conjugate, we may suppose that $y\in P^-$. 
Replacing the field authomorphism $\beta$ with a suitable $L$-conjugate if necessary, we may suppose that $Q:=P\cap {\bf C}_L(\beta)$ is a Sylow $2$-subgroup of ${\bf C}_L(\beta)={}^2B_2(q_0)$. In particular, $Q^-=P^-\cap {\bf C}_L(\beta)$ is the opposite Sylow $2$-subgroup of ${\bf C}_L(\beta)={}^2B_2(q_0)$. Now, let $x_0\in Q\setminus \{1\}$ and $y_0\in Q^-\setminus\{1\}$. We have $[x,x_0,x_0]=1$, $[x_0,\beta]=1$, $[\beta,y_0]=1$ and $[y_0,y,y]=1$, that is,
$$x\mapsto_2 x_0\mapsto_1\beta\mapsto_1 y_0\mapsto_2y,$$ 
which is what we wanted to prove. 

Let $\Omega$ be a strongly connected component of $\Gamma_2(G)$ containing an element of order $2$. From the paragraph above, $\Omega$ contains all the elements having even order and contains all the $L$-conjugates of $\beta$.

Now, when $p\equiv 1,7\pmod 8$, we have that $q_0+\sqrt{2q_0}+1$ divides $q+\sqrt{2q}+1$ and $q_0-\sqrt{2q_0}+1$ divides $q-\sqrt{2q}+1$. Similarly,  when $p\equiv 3,5\pmod 8$, we have that $q_0+\sqrt{2q_0}+1$ divides $q-\sqrt{2q}+1$ and $q_0-\sqrt{2q_0}+1$ divides $q+\sqrt{2q}+1$.  Observe again that the centralizer of $\beta$ in $L$ is isomorphic to ${}^2B_2(q^{1/p})={}^2B_2(q_0)$.

From these facts, it is not hard to deduce  that $\Gamma_2(G)$ is strongly connected, except when $q_0-\sqrt{2q_0}+1=1$. Indeed, when $q_0-\sqrt{2q_0}+1=1$ and $p\equiv 1,7\pmod 8$, we cannot guarantee that we reach the elements of order $q+\sqrt{2q}+1$ in $L$ from elements of ${\bf C}_L(\beta)$. And similarly, when $q_0-\sqrt{2q_0}+1=1$ and $p\equiv 3,5\pmod 8$, we cannot guarantee that we reach the elements of order $q-\sqrt{2q}-1$ in $L$ from elements of ${\bf C}_L(\beta)$.

Suppose $q_0-\sqrt{2q_0}+1=1$, that is, $q_0=2$ and hence $f=p$ is a prime number. Let $T$ be a maximal torus of $L$ of order $q+\sqrt{2q}+1$, let $y\in T$ having order $q+\sqrt{2q}+1$ and let $K:={\bf N}_G(T)$. Cleary, $K$ is a maximal subgroup of $G$. In particular, ${\bf N}_G(K)=K$ and~\eqref{eq:NC1} in Lemma~\ref{NC} is satisfied. It is not hard to verify that also~\eqref{eq:NC2} in Lemma~\ref{NC} is satisfied. Thus the Engel graph of $G$ is disconnected.
\thebibliography{20}
\bibitem{Ab}A.~Abdollahi, Engel graph associated with a group, \textit{J. Algebra} \textbf{318} (2007), 680--691.
\bibitem{aaa}W.~Ananchuen, L.~Caccetta, On the adjacency properties of Paley graphs, \textit{Networks} \textbf{23} (1993), 227--236. 
	
\bibitem{bbpr} C.~Bates, D. Bundy, S.~Perkins, P. Rowley, Commuting Involution Graphs in Special Linear Groups, \textit{Comm. Algebra} \textbf{34} (2004), 4179--4196.

\bibitem{bray}J.~N.~Bray, D.~F.~Holt, C.~M.~Roney-Dougal, \textit{The maximal subgroups of the low-dimensional finite classical groups}, Cambridge: Cambridge University Press, 2013.


\bibitem{cam} P.~J.~Cameron,  Graphs defined on groups. \textit{Int. J. Group Theory} \textbf{11} (2022), no. 2, 53--107. 

\bibitem{carter}R.~W.~Carter, \textit{Finite Groups of Lie Type
Conjugacy Classes and Complex Characters},  Wiley and Sons, Chichester, New York, Brisbane, Toronto
and Singapore, 1985

\bibitem{atlas} J.~H.~Conway, R.~T. Curtis, S.~P.~Norton, R.~A.~Parker and R.~A.~Wilson, An $\mathbb{ATLAS}$ of Finite Groups \textit{Clarendon Press, Oxford}, 1985; reprinted with corrections 2003.

\bibitem{DLN}E.~Detomi, A.~Lucchini, D.~Nemmi, The Engel graph of a finite group,  	
\href{doi.org/10.48550/arXiv.2202.13737}{https://doi.org/10.48550/arXiv.2202.13737}
\bibitem{Fritzsche}T.~Fritzsche, The depth of subgroups of $\mathrm{PSL}(2,q)$, \textit{J. Algebra} \textbf{349} (2012), 217--233.
\bibitem{Geck}M.~Geck, Generalized Gelfand-Graev characters for Steinberg's triality groups and their applications, \textit{Comm. Algebra} \textbf{19} (1991), 3249--3269.
\bibitem{HHP}L.~H\'{e}thelyi, E.~Hov\'ath, F.~Pet\'{e}nyi, The depth of the maximal subgroups of Ree groups, \textit{Comm. Algebra} \textbf{47} (2019), 37--66.
\bibitem{himstedt}F.~Himstedt, Character tables of parabolic subgroups
of Steinberg’s triality groups, \textit{J. Algebra} \textbf{281} (2004) 774--822.
\bibitem{himstedt1}F.~Himstedt, S.~C.~Huang, Character table of a Borel subgroup of the Ree groups ${}2^F_4 (q^2)$, \textrm{LMS J. Comput. Math. }\textbf{12} (2009), 1--53.
\bibitem{H}B.~Huppert, Singer-Zyklen in klassischen Gruppen, \textit{Math. Z.} \textbf{117}, 141--150.
\bibitem{Isaacs}M.~I.~Isaacs, \textit{Character theory of finite groups}, Academic Press Inc., 1976.
\bibitem{KM}E.~I.~Khukhro, V.~D.~Mazurov, Unsolved Problems in Group Theory. The Kourovka Notebook, \href{https://arxiv.org/abs/1401.0300v24}{arXiv:1401.0300v24}.
\bibitem{Kleidman}P.~B.~Kleidman,  The maximal subgroups of the Steinberg triality groups ${}^3D_4(q)$ and of their automorphism groups, \textit{J. Algebra} \textbf{115} (1988), 182--199.

\bibitem{KoMa}A.~S.~Kondrat'ev, V.~D.~Mazurov, Recognition of alternating groups of prime degree from their element orders, \textit{Siberian Math. J.} \textbf{41} (2000), 294--302.
\bibitem{LSS}M.~W.~Liebeck, J.~Saxl, G.~M.~Seitz, Subgroups of maximal rank in finite exceptional groups of Lie type, \textit{Proc. London Math. Soc. (3)} \textbf{65} (1992), 297--325.
\bibitem{KZ}M.~M.~W.~Liebeck, G.~M.~Seitz, A survey of maximal subgroups of exceptional groups of
Lie type. \textit{In Groups, combinatorics and geometry.} \textit{Proceedings of the L. M. S. Durham symposium},
Durham, 2001, pages 139–146. World Scientific, River Edge, NJ, 2003.
\bibitem{Frank}F.~L\"{u}beck, Frank L\"{u}beck's Homepage, 
\href{http://www.math.rwth-aachen.de/~Frank.Luebeck/data/index.html?LANG=en}{http://www.math.rwth-aachen.de/~Frank.Luebeck/chev/DegMult/index.html}
\bibitem{Luneburg}H.~L\"{u}neburg, 
Some remarks concerning the Ree groups of type (G2),
\textit{J. Algebra} \textbf{3} (1966), 256--259.

\bibitem{Mallec}G.~Malle, Die unipotenten Charaktere von ${}^2F_4(q)$, \textit{Comm. Algebra} \textbf{18} (1990), 2361--2381.
\bibitem{Malle}G.~Malle, The maximal subgroups of ${}^2F_4(q^2)$, \textit{J. Algebra} \textbf{139} (1991), 52--69.
\bibitem{mp} G. L. Morgan, C. W. Parker, The diameter of the commuting graph of a finite group with trivial centre, \textit{J. Algebra} \textbf{393} (2013), 41--59.
\bibitem{Ree}R.~Ree, A family of simple groups associated with the simple Lie algebra of type $(G_2)$, \textit{Amer. J. Math.} \textbf{83} (1961), 432--462. 
\bibitem{Ree2}R.~Ree, A family of simple groups associated with the simple Lie algebra of type ($F_4$), \textit{Amer. J. Math.} \textbf{83} (1961), 401--420.
\bibitem{Shinoda}K.~Shinoda, The conjugacy classes of the finite Ree groups of type ($F_4$). \textrm{J. Fac. Sci. Univ. Tokyo Sect. I A Math.} \textbf{22} (1975), 1--15.
\bibitem{Shinoda1}K.~Shinoda, A characterization of odd order extensions of the Ree groups ${}2F_4(q)’$, \textit{J. Fac. Sci. Univ. Tokyo Sect. I A Math.} \textbf{22} (1975) 79--102.
\bibitem{Spalstein}N.~Spaltenstein, Caract\`{e}res unipotens de ${}^3D_4(\mathbb{F}_q)$, \textit{Comment. Math. Helvetici} \textbf{57} (1982), 676--691.
\bibitem{TZ}P.~H.~Tiep, A.~E.~Zalesski, Hall-Higman theorems for exceptional groups of Lie type, I, \href{doi.org/10.48550/arXiv.2106.03224}{ 	
https://doi.org/10.48550/arXiv.2106.03224}
\bibitem{Vasilev1}A.~V.~Vasil\'{}ev,
Minimal permutation representations of finite simple exceptional groups of  types $E_6$, $E_7$, and $E_8$, 
\textit{Algebra i Logika} \textbf{36} (1997), 302--310.
\bibitem{Vasilev}A.~V.~Vasil\'{}ev,
Minimal permutation representations of finite simple exceptional groups of twisted type,
\textit{Algebra i Logika} \textbf{37} (1998), 9--20.

\bibitem{Ward}H.~N.~Ward, Harold N. On Ree's series of simple groups, \textit{Trans. Amer. Math. Soc.} \textbf{121} (1966), 62--89.
\bibitem{Williams}J.~S.~Williams, Prime graph components of ﬁnite groups, \textit{J. Algebra} \textbf{69} (1981), 487--513.
\end{document}